\newtheorem{thm}{Theorem}[section]
\newtheorem{cor}[thm]{Corollary}
\newtheorem{lem}[thm]{Lemma}
\newtheorem{prop}[thm]{Proposition}
\newtheorem{Note}[thm]{Note}
\theoremstyle{remark}
\newtheorem{defn}[thm]{Definition}
\newcommand{\om}{\omega}
\renewcommand{\L}{\mathcal{L}}
\newcommand{\la}{\lambda}
\newcommand{\al}{\alpha}
\newcommand{\fy}{\varphi}
\newcommand{\I}{\infty}
\newcommand{\R}{\mathbb{R}}
\newcommand{\C}{\mathbb{C}}
\renewcommand{\bar}{\overline}
\renewcommand{\hat}{\widehat}
\newcommand {\zn}{{\noindent}}
\newcommand{\EQ}[1]{\begin{equation} \begin{split} #1 \end{split} \end{equation}}
\newcommand{\Del}[1]{}
\newcommand{\de}{\delta}
\newcommand{\Q}{\mathbb{Q}}
\newcommand{\HH}{\mathcal{H}}
\def\({\left(}
\def\){\right)}
\def\nn{\nonumber}
\def\eps{\varepsilon}
\def\const{\mathrm{const}}
\def\lam{\lambda}
\def\f{\frac}
\def\ds{\displaystyle}
\def \h{\boldsymbol{\chi}}
\def\Ei{\mathrm{Ei}}
\def\L{\left[}
\def\J{\right]}
\begin{document}

\title{On the spectral properties of $L_{\pm}$ in three dimensions}

\author{Ovidiu Costin}

\author{Min Huang}

\author{Wilhelm Schlag}

\begin{abstract}
This paper is part of the radial asymptotic stability analysis of the ground state soliton
for either the cubic nonlinear Schr\"odinger or Klein-Gordon equations in three
dimensions. We demonstrate by a rigorous method that the linearized scalar operators which arise in this setting, traditionally denoted
by $L_{\pm}$, satisfy the {\em gap property}, at least over the radial functions.
This means that the interval $(0,1]$ does not contain any eigenvalues of $L_{\pm}$
and that the threshold~$1$ is neither an eigenvalue nor a resonance. The gap property is required in  order to
prove {\em scattering to the ground states} for  solutions starting on the center-stable
manifold associated with these states. This paper therefore provides the final installment in the proof of this scattering
property for the cubic Klein-Gordon and Schr\"odinger equations  in the radial case, see the recent theory of Nakanishi and the third author, as well
as the earlier work of the third author and Beceanu on NLS.
The method developed here is quite general, and applicable to other spectral problems which arise in the theory of nonlinear equations.
\end{abstract}

\thanks{    The first author was partially  support by NSF DMS-0600369 and NSF DMS-0807266. The third author was partially supported by NSF DMS-0617854,
and he thanks the Mathematisches Forschungsinstitut at ETH Z\"urich for its hospitality during the Fall of~2010 where part of this work was conducted. 
}

\maketitle

\section{Introduction}

\subsection{The nonlinear context}

Consider the nonlinear Schr\"odinger equation
\begin{equation}\label{eq:NLS}
i\partial_{t}\psi - \Delta\psi= \pm |\psi|^{p-1}\psi \qquad (t,x)\in \R^{1+d}_{t,x}
\end{equation}
with powers $ 1<p< 2^{*}-1$ where $2^{*}=\frac{2d}{d-2}$ in dimensions $d\geqslant 3$ and $2^{*}=\infty$
in dimensions $d=1,2$. Assuming that $\psi(t,x)$ is a smooth solution of sufficient spatial decay,
one verifies by differentiating under the integral sign that  {\em mass and energy} are conserved:
\begin{align*}
M[\psi(t)] &:= \frac12 \|\psi(t)\|_{2}^{2}= M[\psi(0)] \\
E[\psi(t)] &:= \int_{\R^d} \Big(\frac12 |\nabla \psi(t)|^{2} \mp \frac{1}{p+1}|\psi(t)|^{p+1}\Big)\, dx = E[\psi(0)]
\end{align*}
The range $p<2^{*}-1$ is referred to as {\em energy subcritical} regime due to the fact that in the conserved
energy the nonlinear term $\|\psi\|_{p+1}^{p+1}$ is controlled by the $H^{1}$-norm of $\psi(t)$ via Sobolev
embedding.

The choice of sign in front of the nonlinearity of~\eqref{eq:NLS} is crucial: the $-$ sign
(known as the defocusing nonlinearity) leads to a positive definite conserved energy and one has
global existence and scattering to a free wave for any data in~$H^{1}(\R^{d})$, see \cite{Caz}, \cite{SS99}, and~\cite{Tao} for an account of these classical
results. {\em Scattering} here refers to the property
that there exists $\psi_{0}\in H^{1}$ so that with the associated free solution  $\psi_{0}(t):=e^{-it\Delta}\psi_{0}$
\[
\| \psi(t) - \psi_{0}(t)\|_{H^{1}}\to 0 \qquad t\to\infty
\]
On the other hand, the focusing sign $+|\psi|^{p-1}\psi$ on the nonlinearity renders the energy indefinite and finite-time blowup may occur,
for example for all data of negative energy and finite variance, see Glassey~\cite{Glassey}. {\em Blowup } here refers to the property that $\| \psi(t)\|_{H^{1}}\to\infty$ as $t\to T- <\infty$.
In addition, the focusing nonlinearity admits special stationary wave solutions of the form $e^{-it\alpha^{2}}\phi(x)$ with $\alpha\ne0$,  where
\begin{equation}\label{eq:semi linear}
-\alpha^{2}\phi + \Delta \phi = |\phi|^{p-1}\phi
\end{equation}
Existence of nontrivial decaying solutions to this equation has been known for a long time, see for example Strauss~\cite{Strauss}
and Berestycki, Lions~\cite{BerLions}. On the one-dimensional line, there are exactly two nonzero decaying solutions which are
given by
\[
Q(x) =  \pm \alpha \cosh^{-\frac{1}{\beta}}(\beta x),\qquad \al=\big(\frac{p+1}{2}\big)^{\frac{1}{p-1}}, \;\; \beta=\frac{p-1}{2}
\]
The existence and uniqueness  can be read off from the phase-portrait in the $(\phi,\phi')$-plane. In higher dimensions no explicit
formulas exist and one obtains existence via variational arguments. Moreover,  uniqueness in the strong sense as in one dimension fails,
as it is know that there are infinitely many solutions~\cite{BerLions}. However, there is exactly one positive, radial solution called the ground state.
In fact, any positive decaying solution of~\eqref{eq:semi linear} is necessarily radial about some point (by Gidas, Ni, Nirenberg~\cite{GNN}) as well
as exponentially decaying. This unique solution is called {\em ground state} and it is the one we consider in this paper.

The orbital stability analysis of this ground state standing wave was settled many years ago and depends on the power of the nonlinearity,
see~\cite{BerLions}, \cite{CazLions},  \cite{GSS}, \cite{Wein}, \cite{Wein2}:
for $p< p_{2}:=\frac{4}{d}+1$ (the latter being called  $L^{2}$-critical power)  the ground state is {\em stable}, whereas for $p_{2}\leqslant p< 2^{*}-1$
the ground state is {\em unstable} in the orbital sense. In fact, the instability is very strong: arbitrarily small perturbations of initial data $Q$ with respect to
the~$H^{1}$-topology can lead to finite-time blowup, see~\cite{BerCaz}, \cite{Caz}, \cite{SS99} (and for Klein-Gordon~\cite{PS}). The transition at the power $p_{2}$ can
be seen at the linearized level. More precisely, linearizing about the standing wave with $\alpha=1$ (which we may assume by scaling) and splitting into real
and imaginary parts yields the matrix operator $\HH:=\left[\begin{matrix} 0 & L_{-} \\ -L_{+} & 0\end{matrix}\right]$ where
\[
L_{-} = -\Delta+1 -  Q^{p-1},\qquad L_{+} = -\Delta +1 - p Q^{p-1}
\]
One then finds that for $p\leqslant p_{2}$ the spectrum of $\HH$ lies entirely on the real axis, whereas for $p>p_{2}$ there
exists a pair of imaginary simple eigenvalues, signifying exponential linear instability.

The more difficult {\em asymptotic stability problem} was considered in Buslaev, Perelman~\cite{BP1}, \cite{BP2},  Soffer, Weinstein~\cite{SofWei1}, Cuccagna~\cite{Cucc}, but it would take us too far
to review the literature on this topic. More relevant for our purposes is the {\em conditional asymptotic stability problem} which refers to the following question:  in the {\em orbitally unstable regime},
does the ground state remain asymptotically
stable in forward time  under a finite co-dimension condition on the perturbation?
 In fact, due to the structure of the spectrum in that case one might expect that a co-dimension~$1$ condition should suffice
in order to stabilize the ground state. This is indeed the case, as shown in the orbital stability sense by Bates and Jones~\cite{BJ} for the nonlinear Klein-Gordon equation and for the NLS equation in~\cite{GJLS} (following Bates and Jones).
\cite{BJ}~implemented the Hadamard (or graph transform) method in the infinite-dimensional setting given by nonlinear dispersive Hamiltonian PDEs such as NLS and Klein-Gordon.
The graph transform together with the Lyapunov-Perron fixed point approach constitute the only two known methods available for  the construction of invariant manifolds, and they were both intensely developed
in finite dimensions (in other words, for ODEs). See the introduction of~\cite{NakS} and the references cited there.

The asymptotic stability question for the cubic NLS in three dimensions was studied in~\cite{S} and~\cite{Bec2}, where the existence of a center-stable
manifold near the ground state was established on which the solutions remain {\em asymptotically stable and scatters to the ground state}.  See~\cite{KrS1}
for the one-dimensional case, and~\cite{NakS}, \cite{StefStan} for the Klein-Gordon equation.
Finally, in the monograph by Nakanishi and the third author~\cite{NakS} (see the references there for the original papers) it was shown that these center-stable manifolds divide a small ball around
the ground state into two halves which respectively give rise to  blowup in finite positive time on the one hand, and global existence in forward time and
scattering to zero, on the other hand.

The most delicate part of the conditional asymptotic stability analysis turns out to be the {\em scattering property} of solutions starting on the center-stable
manifold.    This refers to the fact that solutions starting on the manifold decompose into a ground state standing wave (with slightly different parameters - this is the phenomenon of {\em modulation})
plus a free wave plus a term which is $o(1)$ in the energy space as $t\to+\infty$.  By some dispersive PDE machinery  this is equivalent to the property that the perturbation of the modulated standing wave
satisfies {\em global dispersive estimates}, such as pointwise decay (as in~\cite{S}) or Strichartz estimates as in~\cite{Bec2}.

The reason that such global dispersive estimates can be considered  delicate lies with the fact that they appear to require detailed knowledge of the entire spectrum
of the linearized operator, including the behavior of the resolvent at the thresholds
of the essential spectrum.
In~\cite{S}, \cite{Bec2} one therefore needed to assume the gap property of $L_{\pm}$ for the cubic power nonlinearity (i.e., $p=3$) in~$\R^{3}$. As already mentioned before,
this refers to the fact that $L_{\pm}$ have no eigenvalues in $(0,1]$ and that $1$ is not a threshold resonance. And finally, the gap property which
we verify in this paper {\em implies} via the Lyapunov-Perron method that solutions on the center-stable manifold scatter to the ground state, see~\cite{NakS}.

Demanet and the third author~\cite{DS} implemented the {\em Birman-Schwinger} method numerically and showed that this assumption is indeed correct (in the general nonradial setting). Moreover,
they found that the gap property is even more delicate than expected: it fails if the power~$p$ on the nonlinearity is lowered slightly below~$p=3$.
This is somewhat surprising, as
 Krieger and the third author~\cite{KrS1}, based on Perelman~\cite{P}, had shown by analytical arguments  that the gap property holds in the entire $L^{2}$-supercritical regime in one dimension.
 This was facilitated by the explicit form of
the ground state in dimension~$1$ and one finds, moreover, that $L_\pm$ retain the gap property for all powers down to the completely integrable
cubic NLS, where a threshold resonance appears.

While \cite{DS} appears
  to be accurate on all empirical accounts, the numerical method implemented there
is not a proof since it seems very difficult --- if not impossible --- to give  rigorous  error bounds for all numerical approximations and calculations required
by the Birman-Schwinger method.  For example, an approximate soliton is computed numerically, but without any rigorous  bounds on the error introduced
by this approximation.

The purpose of this paper is  to offer {\em a completely rigorous}, albeit rather computational, proof
which confirms the gap property of $L_{\pm}$ in the cubic radial case in~$\R^{3}$.

\subsection{The main equations}

Over the radial functions, the equation for the ground state reduces to
\EQ{\label{mainODE}
  -y''(r)-\f{2}{r}y'(r)+y(r)-y^{3}(r)=0
}
By Coffman's theorem~\cite{Coff} there is a unique, positive decaying solution of \eqref{mainODE}
which is smooth on $[0,\I)$. It is denoted by $Q$ and called the {\em ground state}.
The eigenvalue problem over the radial subspace  now becomes
\begin{equation}
  \label{eqorL}
  L_+ u=\lambda u;\ \ \text{where}\ \  L_{+}=-\frac{d^{2}}{dr^{2}}-\frac{2}{r}\frac{d}{dr}+1-3Q^{2}
\end{equation}
and
\begin{equation}
  \label{eq:eqlm}
  L_- y=\lambda y;\ \ \text{where}\ \  L_{-}=-\frac{d^{2}}{dr^{2}}-\frac{2}{r}\frac{d}{dr}+1-Q^{2}
\end{equation}
We show that in the gap $[0,1]$ the operator~$L_+$ has no eigenvalues or resonance,
and the same is true for $L_-$ on $(0,1]$.

\subsection{Technical approach} \label{tech}
Eq.~\eqref{mainODE} is likely nonintegrable, and no useful
closed form representation  for $Q$ is known. However, in order to resolve the
aforementioned gap problem, an exact expression of
$Q$ or even the exact values of $Q(r)$ are clearly not required: a
sufficiently accurate approximation, which we denote by~$\tilde{Q}$, suffices.
Unfortunately, we have found that the required accuracy  for $\|Q/\tilde{Q}-1\|_\I$ in our problem is on the order of~$10^{-4}$.
This is a reflection of the phenomenon seen in~\cite{DS}, namely that the gap property is only {\em barely} correct.
More mathematically, it must mean that $L_{\pm}$ have complex resonances very close to the real axis that become eigenvalues
in the gap once the power is lowered slightly below~$p=3$. We remark that this phenomenon may also account for the failure of
``softer'' approaches based on bounds on the number of eigenvalues etc.

 We proceed as
follows. We find a suitable approximation~$\tilde{Q}$ in the form of a piecewise explicit function;
for $r\geqslant 5/2$ it is given by
\begin{equation}
  \label{defy3}
  \tilde{Q}(r)=  y_{3}(r;{\beta})=   r^{-1} \beta_{1} e^{-r} + \beta_{2}g(r);\ \
  g(r):=  r^{-1}(2e^{r}\Ei(-4r)-e^{-r}\Ei(-2r))
\end{equation}
for specific $\beta_{1},\beta_{2}$, see \S\ref{S21}. On $[0,5/2)$, the reciprocal  $ \frac{1}{\tilde{Q}}$ is a piecewise
polynomial. The coefficients of the polynomials arising in this construction are listed in the first two lines of the table on page~\pageref{table:1}, and
we refer the reader to Section~\ref{ApQ} for more details of the construction, see especially Definition~\ref{def:tildeQ}.

The representation of $\tilde{Q}$ described above is found in the
following manner. For $r\geqslant 5/2$ we iterate the Volterra
equation once to obtain $y_3$. On $[0,5/2)$, we construct by Taylor
series a solution which is well-behaved at zero and then we extend it
by matched Taylor series up to $5/2$. We take the value $\tilde{Q}(0)$ as a
parameter and determine it so that $\tilde{Q}(5/2^-)=y_3(5/2^+)$.  We
then optimize the polynomial representation by sampling points from the
collection of {\em reciprocals} of the aforementioned Taylor series and using least squares fitting with three polynomials\footnote{ \label{ftn}(i) This is essentially the discrete version of $L^2([a,b])$ orthogonal projection using Legendre polynomials. Projecting on Chebyshev polynomials would
  provide an even more economical representation, but we prefer the
  simplicity of least squares fitting; (ii) we look at
  the reciprocals since they are smoother in that the singularities in
  $\C$ are farther away from the real axis and  result in more efficient representations.}.
  We then rationalize the coefficients of
the polynomials by suitably accurate truncated continued fractions. This is the procedure by which
 the rational numbers listed in the tables at the end of the paper are obtained.

The next step is to show that $\tilde{Q}$ is close to $Q$. At this
stage the problem is already, in some sense, perturbative: $\delta
=Q-\tilde{Q}$ is very small. We thus proceed  in a natural way, by solving
a contractive equation for~$\delta$. A slight
hurdle arises at this point since the Green's function $G(r,r')$ in the integral equation for
$\delta$ is not explicit either: $G$ solves a linear second order ODE
with nontrivial coefficients (combinations of exponential integrals
and polynomials). We overcome this by finding a nearby {\em equation} with
explicit solutions, and contract out the difference between the two
equations.

Estimating the remainder as a result of replacing $Q$ by $\tilde{Q}$
in~\eqref{mainODE} reduces to bounding rational functions with
rational coefficients.  This is done rigorously, as the degrees of the
polynomials in the denominator and numerator are manageable. There are
many ways to estimate polynomials. Perhaps the most straightforward one
is to place absolute values on all coefficients, or at least on all coefficients of powers
higher than three, say: a polynomial with positive coefficients is easy to bound by
monotonicity; it is largest at the largest argument. However, inspection of the tables at the end reveals that
this method cannot be applied here, as the coefficients of higher powers are not within the small interval we need (which is $<10^{-4}$).
In order to overcome this problem, we
re-expand the polynomials at a number of intermediate points selected
so that the coefficients of the monomials of degree exceeding~$3$ are
small enough to be discarded modulo small errors. Polynomials of
degree~$3$ of course have explicit extrema. The re-expansion refers to nothing other than passing from $P(r)\in\Q[r]$ to the new
polynomial in $\Q[z]$ defined as $P(r_0+az)$, where we always keep $|z|\leqslant 1$. The values of~$r_0$ and~$a$ are always stated
explicitly in the text (we use the notation of  ``partitions" for this purpose). See Note~\ref{met1} for more on this issue.

After having obtained $Q$ up to explicitly controlled errors, we then analyze the
spectra of~$L_{\pm}$. This is done essentially in the same way, by
finding an accurate Jost quasi-solution for $r\geqslant 5/2$, and a
well-behaved one on $[0,5/2]$ (which simply refers to the requirement that
the solution remains bounded with a horizontal tangent at the origin). The quasi-solutions are
explicit combinations of exponential integrals and polynomials. The
way the quasi-solutions are obtained also mirrors the soliton
approach: iterating the Volterra equation for large $r$ and using
orthogonally projected Taylor series in the complement. This time
around, we need the solution $u_1$ only on $[0,5/2]$ and $u_2$ on
$[5/2,\infty]$. However, since the quasi-solutions depend on the
spectral parameter $\lambda$ as well, the calculations are more
involved.  We then check that $\inf_{\lambda\in [0,1]} |W(\lambda)|>0$ (for
$L^+$) ($\inf_{\lambda\in [0,1]} |W(\lambda)/\lambda|>0$ for $L^-,$ resp.)
where $W$ is the Wronskian of $u_1,u_2$ at $5/2$, and this concludes
the proof\footnote{A solution bounded near $0$ must be a multiple of
  $u_1$ since solutions linearly independent of $u_1$ are unbounded
  at zero; similarly any solution bounded as $r\to \infty$ is a
  multiple of $u_2$.}.

We emphasize that all coefficients involved are in $\mathbb{Q}$, the
calculations are exact, and the proof, tedious at places -- for
instance in having to repeatedly re-expand polynomials at numerous intermediate points --  is fully
rigorous.

In addition,  the integral operators upon which the contractive mappings are
based have small norm (see, e.g., \eqref{eqint}), allowing for the
calculation of the solutions rapidly and, in principle, with arbitrary
accuracy. Therefore,
this approach is useful numerically as well, to obtain rapidly convergent approximants.

While our approach can {\em in principle} be carried out by hand, it
is of course unrealistic to attempt this in praxis as the calculations in their current form
are too numerous as well as too long.     
While we organized Section~\ref{ApQ} in such a way that the calculations can be in practice
done by hand,  in later sections we preferred to use the computer algebra
packages {\tt Maple} and {\tt Mathematica} to perform basic operations
(such as multiplications of polynomials with rational coefficients and
solving quadratic equations). The later sections involve longer calculations, but of the
simple type mentioned above. The exponential integral, Ei,
is the only transcendental function needed; we estimate it  using
the asymptotic inequalities it obeys and/or  by integrating  inequalities
satisfied by its derivative, an elementary function. Once more, there are {\em no numerical}
calculations involved (such as numerical integration or numerical location of zeroes), and with substantially more optimization
effort, it is likely that every step could have been done by hand; we
felt there is little to gain from this, as human error has a
considerably higher chance to occur in such a setting.


We wish to emphasize again that all  calculations that were carried out by either {\tt Maple} or {\tt Mathematica} are
completely error free  as they only involve finitely many algebraic operations in the polynomial ring~$\Q[r]$.

Let us also emphasize that the concrete implementation of the method as it appears below is by no means the only
possible one. As mentioned in Footnote~\ref{ftn} on the previous page, one may substantially reduce the amount of computations required
(as well as the length of the tables in the appendix) by relying on Chebyschev polynomials instead of least squares fitting in order to
carry out the aforementioned projections of the matched Taylor series. We intend to present this simpler implementation, together
with the {\em nonradial gap property} in a future publication.

Finally, we would like to mention that the approach developed in this paper is by no means restricted to the specific
problem that we study here.  For example, in a forthcoming publication a similar approach will be used to settle
a long-standing spectral question arising in completely integrable equations (pertaining to a Painlev\'e equation).


\section{The approximate soliton}\label{ApQ}

In this section we  find an approximation of $Q$ by means of simple functions.
 Let  $$r_{1}:=3/10,\;\; r_{2}:=17/25, \;\; r_{3}:=9/10, \;\; r_{4}:=1, \;\; r_{5}:=3/2,\;\; r_{6}:=12/5,\;\; r_{7}:=5/2$$
 These increasing numbers define the partition points relative to which we will define the piecewise approximations.
  We denote the characteristic function $\boldsymbol{\chi}([r_i,r_j))$ by  $\boldsymbol{\chi}_{ij}, j\ne 7$ and  $\boldsymbol{\chi}_{i7}=\boldsymbol{\chi}([r_i,r_7])$; similarly  $\boldsymbol{\chi}([0,r_j))=: \boldsymbol{\chi}_{0j}$ and $\boldsymbol{\chi}([r_j,\infty))=: \boldsymbol{\chi}_{j\infty}$. We also denote $\boldsymbol{\chi}_{\overline{ij}}=\boldsymbol{\chi}([r_i,r_j])$.

\subsection{Solving \eqref{mainODE} from $\I$}\label{S21}

The following lemma describes all possible solutions of \eqref{mainODE} which decay
at~$\I$ (at least within a certain range of parameters chosen to suit our needs, and  up to some error).
 In what follows, we shall repeatedly encounter the {\em exponential integral }
\[
\Ei(x) := PP\int_{-\infty}^{x} \frac{e^{u}}{u}\, du; \ \ x\in\R
\]
where $PP$ denotes the Cauchy principal value of the integral. Define the nonlinear operator
\begin{equation}
  \label{eq:defn}
(\mathcal{N}(f))(r)=   \int_{r}^{\I} \sinh(r-s) s^{-2}f^{3}(s)\, ds
\end{equation}
With $g$ as in \eqref{defy3} one checks that
\begin{equation}
  \label{eq:formg}
  rg(r)=\mathcal{N}(e^{-r})
\end{equation}

The exponential integral admits the following asymptotic expansions.

\begin{lem}\label{N1}
(i) For each positive integer $N$ one has
\EQ{\label{Ei1 bd}
 e^{-x}\sum_{k=0}^{2N-1} k! \f{(-1)^k}{x^{k+1}}  < -\Ei(-x) <  e^{-x}\sum_{k=0}^{2N} k! \f{(-1)^k}{x^{k+1}} \quad \forall\; x>0
}
and
\begin{equation}
  \label{4p}
  \begin{aligned}
0>g(r) &=  \frac{e^{-3r}}{r^3}\left(-\frac{1}{8} +\frac{3}{16r}-\frac{21}{64r^2}+\frac{45}{64 r^3}\right)+\frac{15e^r}{64r}\int_r^\I(e^{-4s}-16e^{-2(s+r)})\,\frac{ ds}{s^6} \\
& >  \frac{e^{-3r}}{r^3}\left(-\frac{1}{8} +\frac{3}{16r}-\frac{21}{64r^2}+\frac{45}{64 r^3}  - \frac{465}{256 r^4} \right)
\end{aligned}
\end{equation}
 In particular $0>g(r)>-{e^{-3r}}/{8r^3}$ for $r\geqslant r_7$.

\zn (ii) The function  $h(r)=-re^rg(r)$ is positive and decreasing.

\noindent (iii) Define the norm $\|\psi\|=\sup_{r\geqslant r_{7}}|\psi(r)e^r|$ on continuous functions on $[r_7,\I)$. Then we have
\begin{equation}
  \label{est11}
 \|\mathcal{N}(e^{-r})\|\leqslant 1/8900
\end{equation}
\end{lem}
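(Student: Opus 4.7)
The plan is to handle the three claims in turn; the only genuine obstacle lies in the tightness of the numerical estimate in~(iii).

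For part~(i), I would derive the alternating asymptotic series for $-\Ei(-x)=\int_x^\infty u^{-1}e^{-u}\,du$ by iterated integration by parts. Each IBP step contributes the next term of the series together with a remainder integral whose integrand $k!\,u^{-k-1}e^{-u}$ has one sign, so truncating at an odd or even index produces the lower and upper brackets in~\eqref{Ei1 bd}, respectively. The identity in~\eqref{4p} is then obtained by inserting the partial expansions of $\Ei(-4r)$ and $\Ei(-2r)$ into $g(r)=r^{-1}(2e^r\Ei(-4r)-e^{-r}\Ei(-2r))$, collecting powers through $r^{-3}$ inside the brackets (the $r^{-1}$ coefficient cancels), and packaging the combined remainders into the single integral displayed in~\eqref{4p}; the subsequent strict lower bound in~\eqref{4p} is obtained by one further integration by parts applied to that remainder. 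Finally, the assertion $0>g(r)>-e^{-3r}/(8r^3)$ for $r\geqslant r_7$ reduces to showing that $\tfrac{3}{16r}-\tfrac{21}{64r^2}+\tfrac{45}{64r^3}-\tfrac{465}{256r^4}\geqslant 0$ at $r=r_7=5/2$, which is an explicit rational verification (the numerator clears to a manifestly positive integer).

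For part~(ii), multiplying the defining formula for $g$ by $-re^r$ yields $h(r)=\Ei(-2r)-2e^{2r}\Ei(-4r)$; positivity is immediate from $g<0$. Differentiating via $\frac{d}{dr}\Ei(-ar)=e^{-ar}/r$ gives
\[
h'(r)=-\frac{e^{-2r}}{r}-4e^{2r}\Ei(-4r),
\]
so $h'(r)<0$ is equivalent to $-\Ei(-4r)<e^{-4r}/(4r)$. This last inequality is the crudest instance of the IBP estimate used in~(i)---a single integration by parts---so no additional work is required.

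Part~(iii) is the crux. The identity~\eqref{eq:formg} gives $\mathcal{N}(e^{-r})(r)=rg(r)=-e^{-r}h(r)$, and therefore
\[
\|\mathcal{N}(e^{-r})\|=\sup_{r\geqslant r_7}\bigl|e^r\mathcal{N}(e^{-r})(r)\bigr|=\sup_{r\geqslant r_7}h(r)=h(r_7)
\]
by the monotonicity from~(ii). The main obstacle is that the final estimate is genuinely tight: the weaker bound $|g(r)|<e^{-3r}/(8r^3)$ yields only $h(r_7)<e^{-5}/50\approx 1.35\cdot 10^{-4}$, which \emph{exceeds} the target $1/8900\approx 1.12\cdot 10^{-4}$. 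One must therefore invoke the refined estimate from~\eqref{4p}, giving
\[
h(r_7)\;\leqslant\;\frac{e^{-2r_7}}{r_7^2}\Bigl(\tfrac{1}{8}-\tfrac{3}{16r_7}+\tfrac{21}{64r_7^2}-\tfrac{45}{64r_7^3}+\tfrac{465}{256r_7^4}\Bigr).
\]
At $r_7=5/2$ the rational factor simplifies to $13/125$, and the desired bound reduces, after clearing denominators, to the elementary inequality $e^5\geqslant 148.096$, which is established by any sharp enough rational lower bound on $e$ (e.g.\ via a truncated Taylor series).
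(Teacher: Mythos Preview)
Your proposal is correct and follows essentially the same approach as the paper: repeated integration by parts for~(i), direct differentiation of $h$ for~(ii), and evaluation of the bound from~\eqref{4p} at $r=r_7$ for~(iii). The only cosmetic difference is in~(ii), where the paper packages the derivative as the manifestly negative expression $h'(r)=-e^{2r}\int_r^\infty e^{-4s}s^{-2}\,ds$ (one further integration by parts applied to your $-4e^{2r}\Ei(-4r)$ term), thereby avoiding the appeal back to the inequality $-\Ei(-4r)<e^{-4r}/(4r)$; your route is equally valid.
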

\begin{proof}
(i) Both \eqref{Ei1 bd} and \eqref{4p} follow by means of repeated integrations by parts. (ii) $h$
is manifestly positive, while
$$h'(r)=-e^{2r}\int_r^\I e^{-4s}s^{-2}\,ds<0$$ (iii) By (ii),  $\sup_{r\geqslant r_{7}}| e^r\mathcal{N}(e^{-r})|$ is reached at $r=r_{7}$
and \eqref{est11} is now immediate from \eqref{4p}.
\end{proof}

Henceforth we assume $0<{\beta}\leqslant 3$, which is sufficient for our purposes.
Also, recall the definition of $y_3$, see~\eqref{defy3}:
\[
y_{3}(r;{\beta})=   r^{-1}{\beta}e^{-r} + {\beta}^{3}g(r)
\]
By the lemma, this defines a positive function for all $0<\beta\leqslant 3$ and $r\geqslant  r_7$.

\begin{lem}
\label{lem:infty_asymptotics}
There exists a unique positive solution $y(r;{\beta})$ to~\eqref{mainODE}
with the property that $y(r;{\beta})\sim {\beta}r^{-1}e^{-r}$ as $r\to\I$. It satisfies
\EQ{
\label{yAasymp}
\Big| \frac{y(r;{\beta})}{y_{3}(r;{\beta})}-1\Big | &<  3.2\cdot  10^{-6}\qquad\forall\;r\geqslant r_{7}
 }
 uniformly in $0<\beta\leqslant 3$.
\end{lem}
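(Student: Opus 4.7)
The plan is to reduce \eqref{mainODE} on $[r_7,\infty)$ to a Volterra fixed-point problem. The substitution $u(r) := r y(r)$ turns \eqref{mainODE} into the simpler equation $-u''+u=u^3/r^2$, and any positive solution with $y\sim\beta r^{-1}e^{-r}$ corresponds to a $u$ with $u\sim\beta e^{-r}$. Such a $u$ must satisfy the Volterra equation
\[
u(r) \;=\; \beta e^{-r} + \int_r^{\infty}\sinh(r-s)\,s^{-2}\,u^3(s)\,ds \;=\; \beta e^{-r} + \mathcal{N}(u)(r),
\]
as one checks by differentiating and using decay at $\infty$. Writing $u=\beta w$ converts this to $w=e^{-r}+\beta^2\mathcal{N}(w)$, whose first Picard iterate starting from $e^{-r}$ is exactly $w_\ast:=e^{-r}+\beta^2 r g(r)$ by \eqref{eq:formg}, so that $y_{3}(r;\beta)=\beta w_\ast(r)/r$ by construction.

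Next I would close a contraction for $T(w):=e^{-r}+\beta^2\mathcal{N}(w)$ on the closed ball $B_\rho:=\{w\in X:\|w-e^{-r}\|\leq \rho\}$ inside the Banach space $X=(C[r_7,\infty),\|\cdot\|)$ of Lemma~\ref{N1}(iii), with a small fixed $\rho$ (say $\rho=10^{-2}$). For $w\in B_\rho$ one has $|w(s)|\leq (1+\rho)e^{-s}$, and the factorization $w_1^3-w_2^3=(w_1-w_2)(w_1^2+w_1w_2+w_2^2)$ together with the identity $r g(r)=\mathcal{N}(e^{-r})$ gives the pointwise bound
\[
\bigl|\mathcal{N}(w_1)-\mathcal{N}(w_2)\bigr|(r) \;\leq\; 3(1+\rho)^2\,\|w_1-w_2\|\,|rg(r)|.
\]
Multiplying by $e^r$ and invoking Lemma~\ref{N1}(iii) yields the Lipschitz estimate $\|\beta^2(\mathcal{N}(w_1)-\mathcal{N}(w_2))\|\leq L\|w_1-w_2\|$ with $L:=3\beta^2(1+\rho)^2/8900$. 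For $\beta\leq 3$ and $\rho=10^{-2}$ this is $L<28/8900<1/300$; an analogous computation gives $\|\beta^2\mathcal{N}(w)\|\leq 9(1+\rho)^3/8900<\rho$, so $T(B_\rho)\subset B_\rho$. The Banach fixed point theorem therefore supplies a unique $w\in B_\rho$, which is automatically positive ($w\geq (1-\rho)e^{-r}$), and then $y:=\beta w/r$ is the required positive decaying solution. Uniqueness among all positive solutions with $y\sim\beta r^{-1}e^{-r}$ follows because any such solution eventually lies in $B_\rho$ on some $[R,\infty)$ and so agrees with the fixed point there, hence on $[r_7,R]$ by standard ODE uniqueness.

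For the quantitative estimate I would peel off the first iterate: put $v:=w-w_\ast$; since $w_\ast=T(e^{-r})$,
\[
v \;=\; \beta^2\bigl[\mathcal{N}(w_\ast+v)-\mathcal{N}(w_\ast)\bigr] + \beta^2\bigl[\mathcal{N}(w_\ast)-\mathcal{N}(e^{-r})\bigr].
\]
The first bracket is bounded by $L\|v\|$ and the second by $L\|w_\ast-e^{-r}\|\leq L\beta^2/8900$, whence
\[
\|v\| \;\leq\; \frac{L}{1-L}\cdot\frac{\beta^2}{8900} \;=\; \frac{3\beta^4(1+\rho)^2}{8900^2\,(1-L)}.
\]
For $\beta\leq 3$ and $\rho=10^{-2}$ this is $<3.14\cdot 10^{-6}$. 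Since $y/y_{3}-1 = v/w_\ast$ and $e^r w_\ast\geq 1-\beta^2/8900\geq 1-9/8900$ by Lemma~\ref{N1}(iii), dividing out produces
\[
\Bigl|\frac{y}{y_{3}}-1\Bigr| \;\leq\; \frac{\|v\|}{1-9/8900} \;<\; 3.2\cdot 10^{-6}
\]
uniformly in $\beta\in(0,3]$. The whole architecture is just Banach fixed point; the one critical analytic ingredient is the sharp bound $\|rg\|\leq 1/8900$ already supplied by Lemma~\ref{N1}(iii), and the only delicate point is the arithmetic required to squeeze the final numerical constant below the stated threshold of $3.2\cdot 10^{-6}$.
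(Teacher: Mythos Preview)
Your proposal is correct and follows essentially the same route as the paper: the substitution $u=ry$, the same Volterra integral equation, a contraction in the weighted sup-norm using the key input $\|\mathcal{N}(e^{-r})\|\le 1/8900$, and the quantitative bound obtained by measuring the distance from the fixed point to the first Picard iterate $w_\ast=e^{-r}+\beta^2 rg$. The only differences are cosmetic: you scale out $\beta$ and center the ball at $e^{-r}$, whereas the paper works with $z=\beta w$ in the ball $\{\|z\|\le\alpha\beta\}$ with $\alpha=1+1/985$ and reaches the same numerical conclusion through~\eqref{ratio1}.
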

\begin{proof}
Setting $z(r)=ry(r)$ (and suppressing the ${\beta}$-dependence for notational convenience) yields the ODE
\EQ{\label{ODEmod}
-z''(r)+z(r)=r^{-2} z^{3}(r)
}
Let $z_{0}(r)={\beta}e^{-r}$. For solutions $z$ with  $z(r)e^r$ bounded for large $r$,  \eqref{ODEmod} can be written as
\EQ{ \label{eqint}
z(r) = z_{0 }(r) +\mathcal{N}(z)(r)=:\mathcal{M}(z)(r)\quad r>0
}
Take the ball $B=\{h:\|h\|\leqslant \alpha {\beta}\}$ and choose  $\alpha$ so that
$\mathcal{M}B\subset B$; the latter condition gives $$1+{\beta}^2\alpha^3\|\mathcal{N}\|-\alpha\leqslant 0$$ which is satisfied for ${\beta}\in[0,3]$ if $\alpha = 1+1/985$, for example. Using \eqref{est11} again,  expanding out $(z+\delta)^3-z^3=\delta(3z^2+3z\delta+\delta^2)$ and estimating each term in the last parenthesis by its largest norm in $B$ (such as $\|\delta\|\leqslant 2\alpha\beta$) we get
\begin{equation}
  \label{NrmN}
  \left\|\mathcal{M}(z+\delta)-\mathcal{M}(z)\right\|\leqslant \frac{13\alpha^2 {\beta}^2\|\delta\|}{8900} <  \frac{\|\delta\|}{76}
\end{equation}
Thus $\mathcal{M}$ is contractive in $B$ and  \eqref{eqint} has a unique solution  $z_s$ there. First, since  $\mathcal{N}(e^{-r})=O(e^{-3r})$ for large $r$,  we have $y(r;{\beta})\sim {\beta}r^{-1}e^{-r}$, as claimed. It remains to show \eqref{yAasymp}. Note that $r y_3(r;{\beta})=\mathcal{M}(z_0)(r)$.  Since $\delta:=z_s-z_0=\mathcal{N}(z_s)$
we have
 $$\|\delta\|\leqslant \|\mathcal{N}(z_s)\|\leqslant {\beta}^3\alpha^3\|\mathcal{N}(e^{-r})\| < \f{\alpha\beta}{500}  $$
 Using this estimate to improve on~\eqref{NrmN} we conclude that
\begin{equation}
  \label{ratio1}
\Big| \frac{y(r;{\beta})}{y_{3}(r;{\beta})}-1\Big |=  \left|\frac{z_s-\mathcal{M}(z_0)}{\mathcal{M}(z_0)}\right|\leqslant
\frac{1}{327}\frac{{\beta}^3\alpha^3\|\mathcal{N}(e^{-r})\|}{{\beta}-{\beta}^3\|\mathcal{N}(e^{-r})\|}< 3.2\cdot  10^{-6}
\end{equation}
as claimed.
\end{proof}
\subsection{Solving \eqref{mainODE} on $[0,\I)$ up to a small error}

In this section, we study  an approximate solution of~\eqref{mainODE}. For the heuristics behind
this construction, we refer the reader to the introduction.
In particular, we specialize the value of ${\beta}$ in $y_{3}(r;{\beta})$ in Lemma~\ref{lem:infty_asymptotics}
so as to most closely approximate the ground state $Q(r)$.

\begin{defn} \label{def:tildeQ}
The approximate soliton $\tilde Q$ is defined as follows. First, set
 \begin{equation}
\label{p1p2}
\begin{aligned}
p_1(r) &:= q_1(r)    \\
p_2(r) &:= q_2(r)+p_1(r_{3})-q_2(r_{3})+(p_1'(r_{3})-q_2'(r_{3}))(r-r_{3}) \\
p_3(r) &:= \f{r}{ Ae^{-r} + Brg(r) }
\end{aligned}
\end{equation}
where $q_{1}, q_{2}$ are the explicit polynomials  $q_{\ell}(r)=\sum_{j=0}^{11} a_{j}^{\ell} \, r^{j}$ with coefficients as in Table~\ref{table:1},
and $A,B$ are chosen so that $p_2, p_3$ match up in a $C^1$ fashion at $r=r_{7}$; $p_3$ is very close to the solution in Lemma~\ref{lem:infty_asymptotics}, see  \eqref{A B error} below. Finally, set
 \begin{equation}
  \label{tildeQ}
\tilde Q:={\boldsymbol{\chi}_{03}}/{p_1}+{\boldsymbol{\chi}_{37}}/{p_2}+{\boldsymbol{\chi}_{7\infty}}/{p_3}
\end{equation}
\end{defn}

\bigskip
We remark that with $f(r):= \frac{e^{-r}}{r}$  the coefficients $A,B$ in~\eqref{p1p2} are
\EQ{\label{A B}
A &:=  \f{p_2(r_{7})g'(r_{7})+p_2'(r_{7})g(r_{7})}{p_2^2(r_{7})(f(r_{7})g'(r_{7})-f'(r_{7})g(r_{7}))} \\
B &:= - \f{p_2(r_{7})f'(r_{7})+p_2'(r_{7})f(r_{7})}{p_2^2(r_{7})(f(r_{7})g'(r_{7})-f'(r_{7})g(r_{7}))}
}
The specific form of $p_3$ of course originates with the exact solution from~\eqref{yAasymp}.  From \eqref{A B} one verifies that
\EQ{\label{A B error}
\frac{217}{80}<A<\frac{350}{129},\quad
|B-A^3| < 33\cdot 10^{-5};\ B <20
}
We now need to show that $\tilde Q$ from~\eqref{tildeQ}    is close to the actual unique ground state $Q$.
We begin by checking that $\tilde Q$ satisfies~\eqref{mainODE} up to a small error. Below we denote by $C^2_p$ the space of piecewise $C^2$ functions.
\begin{lem}
\label{lem:tildeQ1}
As defined above, $\tilde Q$ satisfies the following properties:
\begin{enumerate}
\item  It is decreasing for $r\in[0,r_{7}]$, and $0<\tilde Q(r)<{22}/{5}$.
\item  It belongs to  $C^{1}([0,\I))\cap C^2_p([0,\I))$,  and $\tilde Q'(0)=0$.
\item  It satisfies the bounds
\EQ{
\tilde Q(r) < 5(1+r)e^{-2r}\boldsymbol{\chi}_{02}(r)+\frac{11}{2}e^{-8r/5}\boldsymbol{\chi}_{27}(r)  \qquad \forall\, r>0
}
and
\begin{equation}
  \label{eq:bdQ2}
  \f{187}{69}\f{e^{-r}}{r}<\tilde Q(r)<\f{350}{129}\frac{e^{-r}}{r},\ \ \text{for $r\geqslant r_{7}$}
\end{equation}
\item \label{item4} In the complement of the three-point set  $\{r_{3},r_4, r_{7}\}$ the error
\[
R(r):=  -\tilde Q''(r)-\f{2}{r}\tilde Q'(r)+\tilde Q(r)-\tilde Q(r)^{3}
\]
satisfies the bound
\EQ{\label{diffeq error}
 |R(r) |< \rho_1({11}/{10}-r)\boldsymbol{\chi}_{04}(r)+\rho_2({13}/{5}-r)\boldsymbol{\chi}_{47}(r)+\frac{e^{-3r}}{25r^{3}}\boldsymbol{\chi}_{7\infty}(r)
 }
where $\rho_1:=15\cdot 10^{-6}$ and $\rho_2:=25\cdot 10^{-8}$.
\end{enumerate}
\end{lem}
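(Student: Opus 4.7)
My plan is to verify the four items in order, noting that they all reduce to either (a) algebraic identities built into the definition of $\tilde Q$, (b) polynomial inequalities in $r$ on subintervals of $[0,r_7]$, or (c) explicit estimates on $[r_7,\infty)$ built from Lemma~\ref{N1}. Items (1)--(3) are straightforward consequences of the construction; item~(4) is the technical heart and requires the re--expansion procedure emphasized in the introduction.

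For item~(2), the $C^1$ match at $r_3$ is automatic from \eqref{p1p2}: the definition of $p_2$ as $q_2$ plus the first--order Taylor correction $p_1(r_3)-q_2(r_3)+(p_1'(r_3)-q_2'(r_3))(r-r_3)$ forces $p_2(r_3)=p_1(r_3)$ and $p_2'(r_3)=p_1'(r_3)$, and since $\tilde Q=1/p_i$ piecewise the match passes to $\tilde Q$ and $\tilde Q'$. The $C^1$ match at $r_7$ is built into the formulas \eqref{A B} for $A$ and $B$. The identity $\tilde Q'(0)=0$ follows from inspecting Table~\ref{table:1} and verifying that the linear coefficient $a_1^1$ of $q_1$ vanishes, so $\tilde Q'(0)=-p_1'(0)/p_1(0)^2=0$. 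For items~(1) and~(3) on $[0,r_7]$, the positivity/monotonicity/upper bound statements reduce (via $\tilde Q=1/p_i$) to showing that $p_1$ and $p_2$ are positive and increasing on the relevant subintervals and bounded below by explicit rational functions; these are polynomial inequalities which I would handle by the shifting trick described below. On $[r_7,\infty)$, the bounds in \eqref{eq:bdQ2} follow by substituting the two--sided estimates on $A$ and $B$ from \eqref{A B error} into $p_3(r)=r/(Ae^{-r}+Brg(r))$ and using $-e^{-3r}/(8r^3)<g(r)<0$ from Lemma~\ref{N1}(i).

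For item~(4), I would substitute $\tilde Q=1/p$ into the operator and obtain
\[
R(r)=\frac{r\,p(r)\,p''(r)-2r\,p'(r)^2+2p(r)\,p'(r)+r\,p(r)^3-r}{r\,p(r)^3}.
\]
On $[0,r_3)$ and $[r_3,r_7)$ the numerator $N(r)$ lies in $\Q[r]$ of controlled degree, and the claim $|R(r)|<\rho_j\,(s_j-r)$ is equivalent to the polynomial inequality $|N(r)|<\rho_j\,(s_j-r)\,r\,p(r)^3$. Since $\rho_1=15\cdot 10^{-6}$ is far smaller than the individual coefficients of $N$, a naive coefficient--wise estimate cannot possibly succeed. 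Instead I would, for each partition subinterval $[r_i,r_{i+1}]$ from $\{r_1,\dots,r_6\}$, rewrite the polynomials in the shifted variable $z=(r-r_0)/a$ with $|z|\leqslant 1$ and $r_0,a$ chosen so that the coefficients of $z^k$ for $k\geqslant 4$ are small enough to be absorbed into the $\rho_j$ budget. The residual inequality then reduces to a cubic inequality in $z$ (plus a tiny tail), and cubics admit explicit extrema. On $[r_7,\infty)$ the calculation is genuinely different: plugging $p_3=r/(Ae^{-r}+Brg)$ into $R$ and using $rg=\mathcal N(e^{-r})$ from \eqref{eq:formg}, the leading $Ae^{-r}$ and $A^3 e^{-3r}$ contributions to $R$ cancel exactly (this is why $y_3$ was chosen), and the remainder splits into the piece $B-A^3$ (of size $33\cdot 10^{-5}$ by \eqref{A B error}) multiplying $r^2g(r)$, plus higher iterates of $g$. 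Using the sharp bound \eqref{4p} these are all $O(e^{-3r}/r^3)$ and yield the bound $e^{-3r}/(25r^3)$ claimed.

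The main obstacle is the tightness of $\rho_1$ and $\rho_2$, which leaves essentially no slack. Concretely this forces a careful choice of the intermediate shift centers $r_0$ inside each subinterval, and several successive re--expansions of polynomials that are degree~$11$ in $p_i$ (hence up to degree $33$ after cubing). Since every operation is purely polynomial--algebraic over $\Q$, the computations are exact; but they are too voluminous to do by hand, which is why I would delegate the polynomial arithmetic to a computer algebra system while keeping the logical structure of the inequality manipulations (shifts, absolute--value bounds on tails, cubic extremum computation) transparent in the text.
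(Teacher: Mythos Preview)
Your approach is essentially the same as the paper's: items~(1)--(3) by direct inspection and polynomial monotonicity, item~(4) on $[0,r_7]$ by clearing denominators and re-expanding the resulting polynomial $M_j(r)=r^{j-1}p_j^3(r)R_j(r)$ at shifted centers so that degree-$\geqslant 4$ tails become negligible, and item~(4) on $[r_7,\infty)$ by writing $R=R_0+\eps R_1+\eps^2 R_2$ with $\eps=B-A^3$ and bounding each piece via Lemma~\ref{N1}. Two small corrections: in your displayed formula for $R$ the term $r\,p(r)^3$ in the numerator should be $r\,p(r)^2$ (coming from $+\tilde Q=1/p$); and the partition $\{r_1,\dots,r_6\}$ is too coarse---the paper needs the finer partitions $\boldsymbol{\pi}_1=(0,\tfrac1{10},\tfrac15,\tfrac3{10},\tfrac25,\tfrac35,\tfrac45,\tfrac9{10})$ and $\boldsymbol{\pi}_2=(\tfrac9{10},\tfrac{11}{10},\tfrac{13}{10},\tfrac{83}{50},\tfrac{21}{10},\tfrac52)$ to push the high-degree tails below the $\rho_j$ budget.
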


\begin{Note} \label{met1}{\rm (i)
 To estimate a higher order polynomial $P$  on an interval, we partition the interval and Taylor-re-expand $P$ in each subinterval.
 We define the partitions so that $P$ equals the first four terms  plus a small error. There are of course other
ways to estimate polynomials, but this method leads to   straightforward calculations.  We represent the partition by a vector $\boldsymbol{\pi}$, whose components $\pi_i$
are precisely the partition points. Unless otherwise specified,  in each interval $[\pi_i,\pi_{i+1})$  we shall write $$\ell_i(z)=\frac{1}{2}(1-z)\pi_i+\frac{1}{2}(1+z)\pi_{i+1},\quad -1\leqslant z \leqslant 1$$ and re-expand
$P(\ell_i(z))$ around $z=0$.

(ii) We bound  a polynomial $P(z)$ from below on $|z|\leqslant 1$ by the minimum of the
cubic polynomial $P\mod O(z^4)$ minus the sum of the absolute value
of the remaining coefficients. Likewise, to obtain an upper bound, we
take the maximum of the cubic polynomial $P$ mod $O(z^4)$ plus the
sum of the absolute value of the remaining coefficients.}
  \end{Note}

\begin{proof}[Proof of Lemma~\ref{lem:tildeQ1}]
For property (i) we first note by inspection that $p_1'(r)$ has the following property: the coefficient of its first term ($r$ term) is bigger than 1 while the sum of absolute values of the remaining coefficients is less than 1, implying that $p_1'(r)>0$ for $r\in[0,r_{3}]$. Since obviously $p_1(0)>0$, we see that $1/p_1(r)$ is decreasing and positive for $r\in [0,r_{3}]$. All  coefficients of $p_2'(r_{3}+z)$ are positive. Thus $1/p_2(r)$ is decreasing for $r\geqslant r_{3}$. In particular $\tilde Q(r)\leqslant \tilde Q(0)<{22}/{5}$.

Property (ii) is immediate by construction.  Indeed, note that \eqref{p1p2} defines a $\tilde Q\in C^2_p$
in such a way that the values of the function and the value of its first derivative match up at $r_{3}, r_{7}$.
The vanishing $\tilde Q'(0)=0$ is a consequence of the fact that $p_{1}(r)$ has no linear component ($a_{1}^{1}=0$, see Table~\ref{table:1}).

For (iii) we start with the interval $[0,r_{2}]$, where we will show
$$(1+r)e^{-2r}p_{1}(r)>1/5$$
Instead of showing this directly, we first notice that
$$e^{-2r}\left(p_1(0)+\f{17r}{100}+r^2\right)>1/5$$
by explicitly finding the maximum via differentiation. Thus it is sufficient to show
$$(1+r)p_{1}(r)>p_1(0)+\f{17r}{100}+r^2$$
or equivalently $$p_{11}(r):=((1+r)p_1(r)-p_1(0))/r-\f{17}{100}-r>0$$

For this purpose we use the partition ${\boldsymbol{\pi}}=(0,r_{2})$ and  Note~\ref{met1} to re-expand
$p_{11}$ in $z$. The coefficients of $z^j$, $j> 3$, are all very small and
a direct calculation shows that $p_{11}>1/50>0$ on
$[0,r_{2}]$. Therefore $\tilde Q(r)<5(1+r)e^{-2r}$ for $r\in[0,r_{2})$.

Similarly, one can show $e^{-8r/5}p_{1}(r)$ is increasing on
$[r_{2},r_{3}]$ by using the partition
${\boldsymbol{\pi}}=(r_{2},r_{3})$ and estimating  $e^{8r/5}(e^{-8r/5}p_{1})'(r)$
using Note~\ref{met1}.

In the interval $[r_{3},r_{7}]$ we consider the polynomial $\tilde{p}_2(r):=e^{8r/5}(e^{-8r/5}p_2(r))''$. Using the partition ${\boldsymbol{\pi}}=(r_{3},r_{7})$ we obtain $\tilde{p}_2(r)<-1/5<0$ and thus $e^{-8r/5}p_2(r)$ is concave. This, combined with the fact that $e^{-8r_k/5}p_2(r_k)>2/11$ for $k=3,7$, shows that $\tilde Q(r)<\f{11}{2}e^{-8r/5}$ for $r\in[r_{3},r_{7}]$.

For $r\geqslant r_{7}$ we have using \eqref{Ei1 bd}
$$(re^{r}/p_3(r))'=Br^{-1}{e^{-2r}}-4Be^{2r}\Ei(4r)>0$$
Therefore $re^{r}/p_3(r)$ is increasing. Since obviously $\lim_{r\to
  \infty}re^{r}/p_3(r)=A<\f{350}{129}$ and
$r_{7}e^{r_{7}}/p_3(r_{7})>\f{187}{69}$, the estimate follows.

For (iv), we first introduce the notation $I_{1}:=[0,r_{3}]$ and $I_{2}:=[r_{3},r_{7}]$. For $j=1,2$ we let $y_{j}(r)=1/{p_{j}(r)}$ and define
\EQ{\nn
R_j(r) = - y_{j}''(r) -2r^{-1}y_{j}'(r) + y_{j}(r) - y_{j}^{3}(r),\quad r\in I_j
}
and  consider the polynomial $M_j(r)$ in $\Q[r]$  
given by
$$ M_j(r): = r^{j-1}p_j^3(r) R_j(r) $$
We introduce the partitions $\boldsymbol{\pi}_1=(0,\f{1}{10},\f{1}{5},\f{3}{10},\f{2}{5},\f{3}{5},\f{4}{5},\f{9}{10})$
and $\boldsymbol{\pi}_2=(\f{9}{10},\f{11}{10},\f{13}{10},\f{83}{50},\f{21}{10},\f52)$ and define
\EQ{   \label{M1k}
  M_{jk}(z) := M_j(\ell_{jk}(z));\ p_{jk}(z) := p_j(\ell_{jk}(z));\ j=1,2,\  |z|\leqslant 1. }
(where $\ell_{jk}$ is the $k$th component of $\ell_j$). We proceed to estimate $M_{jk}(z)$ 
on the unit disk as described in Note~\ref{met1}.

This yields, for each $1\leqslant k\leqslant 7$ and for any $-1\leqslant z\leqslant 1$,
\EQ{\label{M11M12}
 \left|\f{M_{1k}(z)}{p_{1k}(z)^3}\right| \leqslant \frac{\sup{|M_{1k}(z)|}}{p_{1k}(-1)^3}<\rho_1 \left(\f{11}{10}-\ell_{1k}(1)\right)\leqslant\rho_1 \left(\f{11}{10}-\ell_{1k}(z)\right) \qquad
}

On the interval $[r_3,r_4]$ we have
\begin{equation}
  \label{Rest0}
 \left|\f{M_{21}(z)}{\ell_{1k}(z)p_{21}(z)^3}\right| \leqslant \frac{10\sup{|M_{21}(z)|}}{9{p_{21}(-1)}^3}
 <\f{1}{10}\rho_1\leqslant \rho_1\left(\f{11}{10}-\ell_{21}(z)\right)
\end{equation}

On the interval $[r_4,r_7]$ we have for each $k=2,...,6$
\begin{equation}
  \label{Rest}
 \left|\f{M_{2k}(z)}{\ell_{2k}(z)p_{2k}(z)^3}\right| \leqslant \frac{\sup{|M_{2k}(z)|}}{{\ell_k(-1)p_{2k}(-1)}^3}
 <\rho_2\left(\f{13}{5}-\ell_{2k}(1)\right)\leqslant \rho_2\left(\f{13}{5}-\ell_{2k}(z)\right)
\end{equation}

These give the desired estimates of $|R(r)|$ for $r\in[0,r_{7}]$.

Finally, for $r\in I_3:=[r_{7},\I)$  we write, with $A$ and $g(r)$ as in \eqref{A B},
\[
y_3(r):=\f{1}{p_3(r)} =  y_3(r;A) +  (B-A^3) g(r) =\frac{A}{re^r}+A^3g(r) +
 (B-A^3) g(r)
\]
Then the error $R(r)$ has the form, with $\eps:=B-A^3$,
\[
R(r) = R_0(r) + \eps R_1(r) + \eps^2 R_2(r;\eps)
\]
where
\begin{align}\label{R0}
 & R_0(r)  :=  -  y_3''(r;A)-\f{2}{r}  y_3'(r;A)+ y_3(r;A)   -  y_3(r;A)^{3} \\
& R_1(r) := - \f{1}{r} (rg(r))''   + g(r) -3 y_3(r;A)^2 g(r)=\frac{e^{-3r}}{r^3}-3g(r)\left(A\frac{e^{-r}}{r}+A^3g(r)\right)^2\\
& R_2(r;\eps) := -3Ag^2(r)\left(\frac{e^{-r}}{r}+A^2g(r)+\frac{\eps g(r)}{3A}\right)
\end{align}
First, using Lemma~\ref{N1} we obtain $h(r_{7})\leqslant 9 \cdot 10^{-5}$ (with $h$ defined in that lemma) and thus
\EQ{
\label{R00}
|R_0(r)| &= \left|3\f{A^{5} e^{-3r}}{r^{3}}h(r)\left[1 -A^{2}h(r) + \f13 A^{4} h^{2}(r) \right] \right|\leqslant 3A^5h(\tfrac52)\frac{e^{-3r}}{r^3}
}
By \eqref{4p},  $0>g>-e^{-3r}/(8r^3)$ and thus, noting that $-r^{-1}(rg)''+g=e^{-3r}/r^3$, we get
\begin{equation}
  \label{R1}
|R_1(r)| < \frac{e^{-3r}}{r^3}\left(1+\frac{3A^2e^{-2r}}{8r^2}\right)<\f{11}{10}\frac{e^{-3r}}{r^3}\qquad\forall r\geqslant r_{7}
\end{equation}
Finally,
\EQ{\label{R2}
|R_2(r)|  < \frac{e^{-3r}}{r^3}\frac{3Ae^{-3r}}{64r^3}\left(\frac{e^{-r}}{r}+\frac{A^2e^{-3r}}{8 r^3}+\frac{\eps e^{-3r}}{8Ar^3}\right) < \f32\cdot 10^{-7}\frac{e^{-3r}}{r^3}\qquad \forall\; r\geqslant r_{7}
}
Combining \eqref{R0}, \eqref{R1} and \eqref{R2} with \eqref{A B} yields
\EQ{ \nn
|R(r)| &< \Big(3A^5h(5/2)+ 11 \eps/10 +  3\cdot 10^{-7}\eps^2/2\Big)\f{e^{-3r}}{r^3}  <   \f{1}{25} \; \f{e^{-3r}}{r^3}
}
for all $r\geqslant r_{7}$ as stated.
\end{proof}

\section{The exact soliton}\label{ExQ}

\subsection{Finding the exact ground state $Q$ near the approximate one $\tilde Q$}

We now need to show that $\big|1- \tilde Q/{Q} \big|$ is small. Eq.\ \eqref{mainODE} implies \EQ{
\label{de eq}
-\de''(r)  - 2\frac{\de'(r)}{r}  +(1-3\tilde Q^{2}(r)) \de(r) = -R(r) + 3\tilde Q(r)\de^{2}(r) + \de^{3}(r) \ \ (\delta:=Q-\tilde Q)
}
The boundary conditions are $\de'(0)=0$, $\de(\I)=0$.
We shall describe, again via polynomials, how to find an approximate fundamental system for~\eqref{de eq}.
The challenge here is of course that we cannot hope to find an exact fundamental system for this equation, but require
something close to it in order to set up a contraction for~$\delta$. We find it technically convenient to find an {\em exact}
fundamental system for a homogeneous equation which is slightly different from the one in~\eqref{de eq}.

\subsubsection{An approximate Green's function}
\begin{defn}
\label{Green Poly}
We define $\fy_{1}(r), \fy_{2}(r)$ as follows. Set  $J_{1}:=[0,r_{1})$, $J_{2}:=[r_{1}, r_{4})$,
$J_{3}:=[r_{4}, r_{7})$, $J_{4}:=[r_{7},\I)$ and let
\begin{equation}
  \fy_{j}(r) :=   q_{j+2}(3r)\h_{01}+q_{j+2}(r-1/2)\h_{14}+ q_{j+2}(r-2)\h_{47}+a_je^{\sigma_j r} \h_{7\infty}
 \end{equation}
 for $j=1,2$, where $a_1=1,a_2=1/2, \sigma_1=-1,\sigma_2=1$, $q_{3}, q_{4}$ are of the form
 $q_{j}(r) = \sum_{k=0}^{13} b_{k}^{j}\, r^{k}$ where the $ b_{k}^{j}$ are given in Table~\ref{table:1}.
 The factor $\f12$ in front of $e^{r}$ in the definition of $\fy_{2}$ is chosen so as to normalize a Wronskian to~$1$.
Finally, set $ g_{j}=\fy_{j}$ on $J_{4}$ and
\EQ{\label{gj def}
g_{j}(r) := \fy_{j}(r) + g_{j}(r_{\ell}'+) - \fy_{j}(r_{\ell}'-) + (g_{j}'(r_{\ell}'+) - \fy_{j}'(r_{\ell}'-))(r-r_{\ell}')  \quad \forall\; r\in J_{\ell-1}
}
for all $\ell=2,3,4$, where $J_{\ell-1}=[r_{\ell-1}',r_\ell')$ (we use this notation only for~\eqref{gj def}).
\end{defn}

We remark that the jumps appearing in \eqref{gj def} are very small;  more precisely,
\[
\max(|\fy_{j}(r_{\ell}+) - \fy_{j}(r_{\ell}-)|, | \fy_{j}'(r_{\ell}+) - \fy_{j}'(r_{\ell}-)|) < 3\cdot 10^{-4} \quad\forall\; \ell= 2,3,4,\; j=1,2
\]
The   functions $g_{1}, g_{2}$ from the previous definition satisfy an ODE which is a perturbation of our main Sturm-Liouville equation.

\begin{lem}
\label{lem:fund sys}
The functions $g_{1}, g_{2}$ are in $C^{1}([0,\I))\cap C^2_p$, and solve the ODE
\EQ{\label{ U V ODE}
-y''(r) + U(r) y'(r) + V(r) y(r) =0
}
where $U$ and $V$ are piecewise rational functions which obey the estimates
\EQ{\label{UV est}
\| U\|_{\I} < U^e:=\f1{165}, \quad \| V -1 + 3 \tilde Q^{2}\|_{\I} < V^e:=\f1{36}
}
as well as $U=0$, $V=1$ on $J_{4}$.
\end{lem}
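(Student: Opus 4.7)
The plan is to use the fact that $g_1$ and $g_2$ together determine a second order linear homogeneous ODE, recover $U$ and $V$ by Cramer's rule, and then bound the resulting rational functions using the partition/re-expansion technique of Note~\ref{met1}.

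First, I will verify regularity. By construction, on each of the four subintervals $J_1,\ldots,J_4$, the functions $g_j$ are polynomials (after affine change of variable) or $a_j e^{\sigma_j r}$, hence $C^\infty$. The linear correction in~\eqref{gj def} is precisely engineered so that at each interior junction point $r_1,r_4,r_7$ the values and first derivatives of $g_j$ match from both sides. Thus $g_j \in C^1([0,\infty)) \cap C^2_p$, and $g_j''$ is still a polynomial (resp.\ exponential) on each open subinterval.

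Second, I define $U$ and $V$ pointwise. On each open subinterval, set $W(r):=g_1'(r)g_2(r)-g_1(r)g_2'(r)$ and, wherever $W\ne 0$, solve the $2\times 2$ linear system
\EQ{\nn
g_1''(r) = U(r) g_1'(r) + V(r) g_1(r), \qquad g_2''(r) = U(r) g_2'(r) + V(r) g_2(r)
}
by Cramer's rule, giving
\EQ{\nn
U(r) = \frac{g_1''(r)g_2(r)-g_1(r)g_2''(r)}{W(r)}, \qquad V(r) = \frac{g_1'(r)g_2''(r)-g_1''(r)g_2'(r)}{W(r)}.
}
On $J_4$ we have $g_1 = e^{-r}$, $g_2 = \frac12 e^r$, so $g_j'' = g_j$ and a direct computation yields $U\equiv 0$, $V\equiv 1$, as claimed. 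The specific normalization $a_2=1/2$ was chosen exactly so that $W\equiv 1$ on $J_4$.

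Third, I bound $U$ and $V-1+3\tilde Q^2$ on $J_1 \cup J_2 \cup J_3$. On each subinterval the numerators $N_U := g_1''g_2 - g_1 g_2''$ and $N_V := g_1' g_2'' - g_1'' g_2'$ are explicit polynomials of moderate degree in $\mathbb Q[r]$, and so is $W$. Since the $\varphi_j$ were constructed as approximate solutions of $-y'' + (1-3\tilde Q^2)y = 0$, and the small jump corrections in~\eqref{gj def} contribute only linear perturbations (of size $< 3\cdot 10^{-4}$), I expect $N_V/W$ to be close to $1-3\tilde Q^2$ and $N_U/W$ to be close to $0$. To convert this into rigorous bounds, I apply the procedure of Note~\ref{met1}: partition each $J_i$ into subintervals small enough that the cubic truncation of each relevant polynomial (after rescaling to $|z|\le 1$) captures all but a negligibly small tail, and use these partitions to bound $|W|$ from below (ensuring $W$ does not vanish) and $|N_U|$, $|p_j^2 N_V - (1-3\tilde Q^2)p_j^2 W|$ and the like from above. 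The numerical targets $1/165$ and $1/36$ are loose enough that this brute-force bound will go through.

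The main obstacle is the lower bound on $|W|$: although the $g_j$ are designed to be a fundamental pair, $W$ is a high-degree polynomial whose dominant behavior changes across $J_1, J_2, J_3$, so I will need a careful choice of partition on each subinterval (analogous to $\boldsymbol\pi_1, \boldsymbol\pi_2$ in Lemma~\ref{lem:tildeQ1}(iv)) to certify $W$ stays quantitatively away from zero before dividing. Once $|W|$ has an explicit positive lower bound on each partition piece, the upper bounds on $|U|$ and $|V-1+3\tilde Q^2|$ reduce to polynomial sup-norm estimates of exactly the type handled by Note~\ref{met1}(ii), with the totals over all pieces summing to at most $1/165$ and $1/36$ respectively.
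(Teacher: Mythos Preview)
Your proposal is correct and follows essentially the same route as the paper: define $U,V$ by Cramer's rule from the pair $g_1,g_2$, read off $U=0$, $V=1$ on $J_4$ directly, and then bound the resulting rational functions on $[0,r_7]$ via the partition/re-expansion method of Note~\ref{met1}. The paper uses the explicit partition $\boldsymbol{\pi}=(0,\tfrac{4}{25},\tfrac{13}{50},\tfrac{3}{10},\tfrac{2}{5},\tfrac{3}{5},\tfrac{4}{5},1,\tfrac{3}{2},2,\tfrac{5}{2})$ and, rather than bounding the denominator below by cubic truncation, uses the cruder $|P(0)|-\bar P(1)$ estimate (absolute value of the constant term minus the $\ell^1$ norm of the remaining coefficients); for $V$ it clears $\tilde Q^2$ by multiplying through by $p_j(r)^2$ exactly as you suggest. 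One small correction: the coefficients are not in $\mathbb{Q}[r]$ but in $\mathbb{Q}[e^{r_7},e^{-r_7}][r]$, since the $C^1$-matching at $r_7$ brings in the values $e^{\pm r_7}$ from $J_4$.
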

\begin{proof}
That $g_{j}$ are in $C^{1}([0,\I))\cap C^2_p$ is clear by construction.
One has for $r\in J_{\ell}$, $1\leqslant \ell\leqslant 4$,
\EQ{\nn
U(r) = \f{ g_{1}''(r) g_{2}(r) - g_{2}''(r) g_{1}(r) }{g_{1}'(r) g_{2}(r) - g_{2}'(r) g_{1}(r)}, \quad
V(r) = -\f{    g_{1}''(r)    g_{2}'(r) -   g_{2}''(r)  g_{1}'(r) }{ g_{1}'(r)  g_{2}(r) -  g_{2}'(r)  g_{1}(r)}
}
which are manifestly rational functions. Moreover, $U=0$, $V=1$ on $J_{4}$.
To obtain the bounds of~\eqref{UV est} we rely on Note~\ref{met1}. Let
${\boldsymbol{\pi}}=\(0,\f{4}{25},\f{13}{50},\f{3}{10},\f{2}{5},\f{3}{5},\f{4}{5},1,\f{3}{2},2,\f52\)$.

To estimate $U$, one writes within each subinterval $U(r)={M(z)}/{P(z)}$ with polynomials $M,P\in \Q[e^{ r_{7}},e^{- r_{7}}][z]$ and obtain
\[
|U(r)| \leqslant \f{\bar M(1)}{|P(0)|-\bar P(1)}
\]
where $\bar M(z)$ is obtained by placing absolute values on all coefficients of $M$, whereas $\bar P$ is the result of the same
procedure applied to $P(z)-P(0)$.  For $V$, one proceeds similarly, by first writing $p_j(r)^2(V-1+3\tilde Q^2)$ as a rational function on $(0,r_{7})$
with $j=1,2$ depending on whether $r<r_{3}$, or $r>r_{3}$, respectively. Substituting the same affine change of variable as for $U$
into the numerator and denominator of this rational function now establishes the desired bound.
\end{proof}

We shall need to modify the system $g_{j}$ to accommodate the $\frac{2}{r}\f{d}{dr}$ term
in the three-dimensional radial Laplacian. In the following lemma, note that $g_{0}$ is regular at $r=0$,
but grows exponentially, whereas $g_{\I}$ decays exponentially, but is singular at $r=0$.
\medskip

\begin{lem}\label{lem:fund sys 2}
The functions $$ g_{0}(r):=r^{-1}\left(g_{2}(r)-\frac{g_{2}(0)}{g_{1}(0)} g_{1}(r)\right), \quad   g_{\I}(r):= r^{-1}g_{1}(r)$$
form a fundamental system for  the equation
\begin{equation}
  \label{ U V ODE 2}
  -y''(r)+\left(U(r)-\f{2}{r}\right)y'(r)+\left(V(r)+\f{U(r)}{r}\right)y(r)=0,
\end{equation}
and their Wronskian satisfies the estimate
\EQ{
\label{W bd}
\quad W(r)=r^{-2} \text{\ \ if\ \ }r\geqslant r_{7}\\
\f{9}{10} < r^{2} |W(r)|\leqslant 1 \qquad \forall\ 0<r<r_{7}
}
One has the following pointwise bounds: $$|g_{\I}(r)|\leqslant (\tfrac{1}{4r} + \tfrac{3}{20})\h_{04}(r)+\tfrac{e^{-r}}{r}\h_{4\infty}(r)\text{\ \  and\ \  }|g_{\I}'(r)|\leqslant
\tfrac{1}{2r^{2}}\h_{04}(r) + \tfrac{1+r}{r^2}{e^{-r}} \h_{4\infty}(r)$$
as well as
$$|g_{0}(r)|< \tfrac{13}{2}\h_{[0, \tfrac12)}(r)+ \tfrac{7}{5} \tfrac{e^{r}}{1+r}\h_{[\tfrac12,\infty)}(r)\text{\ \  and \ \ } |g_{0}'(r)|< 18\h_{04}(r)+\tfrac{e^{r}}{2r}\h_{4\infty}(r)$$
\end{lem}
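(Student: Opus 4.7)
The plan is to derive \eqref{ U V ODE 2} from \eqref{ U V ODE} by the substitution $y(r)=z(r)/r$. A direct computation shows that if $z$ solves $-z''+Uz'+Vz=0$, then with $z=ry$ one has $z'=y+ry'$ and $z''=2y'+ry''$, so the equation becomes $-2y'-ry''+U(y+ry')+Vry=0$; dividing by $r$ produces exactly \eqref{ U V ODE 2}. Hence $r^{-1}g_1$ and $r^{-1}g_2$ both solve \eqref{ U V ODE 2}, and so does every linear combination, in particular $g_{\I}$ and $g_0$. The constant $c:=g_2(0)/g_1(0)$ (well-defined since $g_1(0)\neq 0$, which can be verified from the coefficients in Table~\ref{table:1}) is chosen precisely so that $g_2-cg_1$ vanishes at the origin; since this difference is polynomial near $0$ by Definition~\ref{Green Poly}, it is $O(r)$ there, and so $g_0=r^{-1}(g_2-cg_1)$ extends to a $C^1$ function on $[0,\I)$ with $g_0(0)=g_2'(0)-cg_1'(0)$.

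For the Wronskian, an algebraic identity holds: $W(g_0,g_{\I})=-r^{-2}W(g_1,g_2)$. Expanding the derivatives, the $cg_1$ correction in $g_0$ contributes nothing because $g_{\I}\propto g_1$, the cross terms involving $r^{-3}$ cancel, and one is left with a single $r^{-2}$ multiplying $W(g_1,g_2)=g_1g_2'-g_2g_1'$. On $J_4$, the explicit forms $g_1=e^{-r}$ and $g_2=\frac{1}{2}e^r$ give $W(g_1,g_2)\equiv 1$---this is the purpose of the normalizing factor $\frac{1}{2}$ in $\fy_2$---so $r^2|W(r)|=1$ on $J_4$, and in particular $W(g_0,g_{\I})$ never vanishes so the two functions are independent. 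For $r\in[0,r_7)$, Abel's formula applied to \eqref{ U V ODE} together with $C^1$-continuity of $g_j$ at $r_7$ yields $W(g_1,g_2)(r)=\exp\bigl(-\int_r^{r_7}U(s)\,ds\bigr)$. The bound $\|U\|_{\I}<1/165$ from Lemma~\ref{lem:fund sys}, combined with $r_7=5/2$, gives an exponent of modulus at most $1/66$, immediately yielding the lower bound $r^2|W(r)|>e^{-1/66}>9/10$.

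For the pointwise estimates I would split into the regimes $r<r_4$ and $r\geqslant r_4$. On $J_4$ the identities $|g_{\I}|=e^{-r}/r$, $|g_{\I}'|=(1+r)e^{-r}/r^2$, and $g_0(r)=\frac{1}{2r}e^r-cr^{-1}e^{-r}$ are explicit and give all the stated bounds after noting that the first term of $g_0$ dominates for large $r$, producing the $e^r/(1+r)$ envelope. On $[r_4,r_7)$, the functions $g_1$ and $g_2-cg_1$ are concrete piecewise polynomials (built from $q_3,q_4$ in Table~\ref{table:1} together with the affine corrections of~\eqref{gj def}), and the bounds reduce to estimating these polynomials divided by $r$ on each subinterval; this is carried out by the Taylor re-expansion and coefficient-bounding technique of Note~\ref{met1}. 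On $[0,r_4)$, $|g_{\I}|=|g_1|/r$ produces the singular term $1/(4r)$, extracted by writing $g_1(r)=g_1(0)+r\tilde g_1(r)$ and bounding $\tilde g_1$ via Note~\ref{met1}; for $g_0$ and $g_0'$ near the origin one uses that $g_2-cg_1=\alpha r+\beta r^2+\cdots$ is divisible by $r$, so $g_0$ and $g_0'$ are themselves polynomial near $0$ and hence bounded by the same methods.

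The main obstacle I foresee is the upper bound $r^2|W(r)|\leqslant 1$ on $(0,r_7)$: Abel's formula with $\|U\|_{\I}<1/165$ alone gives only $\leqslant e^{1/66}$, which exceeds $1$. Closing this gap requires either extracting a sign for $\int_r^{r_7}U(s)\,ds$ (plausibly $U\geqslant 0$ on $[0,r_7]$, which could be verified from the explicit rational form of $U$ given in the proof of Lemma~\ref{lem:fund sys}), or a direct re-estimation of $W(g_1,g_2)(r)$ via Note~\ref{met1} on each subinterval using the explicit polynomial representations of $g_j$. The remaining bookkeeping---tracking the $C^1$ matching across the junctions $r_1,r_4,r_7$ and accounting for the small jumps introduced in~\eqref{gj def}---is routine but tedious.
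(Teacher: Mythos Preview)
Your approach matches the paper's: the substitution $z=ry$, Abel's formula for the Wronskian normalized at $r_7$, and polynomial re-expansion for the pointwise bounds are exactly what the paper does. Two remarks are worth making. First, the paper is just as terse as you on the upper bound $r^2|W(r)|\leqslant 1$---it simply asserts that \eqref{W bd} follows from \eqref{UV est}---so your concern is well-founded, but note that only the \emph{lower} bound $9/10$ is actually used downstream (in the integral operator estimates of Lemma~\ref{hh12}), so the upper bound is inessential. Second, for the sharp inequality $|g_\I(r)|\leqslant e^{-r}/r$ on $[r_4,r_7]$ (equality holds at $r_7$), direct re-expansion is too crude; the paper instead shows that $\varphi(r):=e^r g_1(r)$ is concave there with $\varphi'(r_7)=0$ and $\varphi(r_7)=1$, which forces $\varphi\leqslant 1$. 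You may need similar monotonicity/concavity arguments wherever the target bound is attained at an endpoint.
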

\begin{proof}
The first claim follows immediately from \eqref{ U V ODE}.
Let $\tilde g_{j}(r):=r^{-1}g_{j}(r)$ for $j=1,2$. Then
the Wronskian $W(r)=g_{0}(r)g_{\I}'(r)-g_{0}'(r)g_{\I}(r)$ of $g_{\I}$ and $g_{0}$ satisfies
\EQ{ \label{Wronskian}
W(r):=\tilde g_1(r) \tilde g_2'(r)- \tilde g_1'(r) \tilde g_2(r)=r^{-2}\, e^{\int_{r_{7}}^r U(s)\, ds}
}
This follows from the fact that $W$ is continuous by Lemma~\ref{lem:fund sys}, and
\[
W'(r)= (U(r)-2/r) W(r), \quad W(r)=r^{-2} \text{\ \ if\ }r\geqslant r_{7}
\]
by \eqref{ U V ODE 2}. From \eqref{UV est} one now obtains \eqref{W bd}.

For the
estimates of $g_0$ and $g_{\infty}$ we use re-expansions and the following simple observation.

\begin{Note}\label{N2.4}{\rm  Estimates of  functions $f$ for which $f'$ is a quadratic polynomial multiplied by a monomial of any degree or by an exponential are elementary.  We can use this in our estimates as follows:
if $f=\sum_{j=1}^m f_j$ and  $J\subset\mathbb{R}$ then, clearly,
\begin{equation}
  \label{eq:eqh3}
\inf_Jf\geqslant  \sum_{j=1}^m\inf_J f_j;\ \   \sup_Jf\leqslant \sum_{j=1}^m\sup_J f_j;
\end{equation}
If $f$ is a polynomial we write $f$ as a sum of  subpolynomials $f_j$, the first one containing the monomials
of degree $\leqslant 3$ and the others consist of the monomials
of degrees $\in [3l+1,3l+3]$ for all $l$ with $3l+1\leqslant$deg$f$.  If $f(r)=e^{\pm r} P(r)$ , the same applies
to a decomposition $e^{\pm r}P_j$ in which $P_1$  has degree two and the  $P_j,\; j\geqslant 2$ consist of the monomials of degrees $2j+1$ and $2j+2$.}
\end{Note}
To establish $|g_1(r)|=|rg_\I (r)|<\f1{4}+\f{3r}{20}$ on $(0,1)$ let
$m_1(r)=\tfrac14+\tfrac{3r}{20}$. The result follows using Note~\ref{N2.4} for the polynomials $g_1\pm m_1$ taking
$r=\tfrac3{20}+\tfrac{3s}{20}$ and re-expanding (as a polynomial in $s$) $s\in
[-1,1]$ and then $r=\tfrac{3}{5}+s$, $s\in [-\tfrac3{10},\tfrac25].$

To establish $|g_\I' (r)|<\f{1}{2r^2}$ on $(0,r_4)$, one proves the
equivalent \EQ{\label{g1'g1} |rg_1'(r)-g_1(r)|< \f12 \qquad 0<r<r_4 }
The result follows once more from
Note~\ref{N2.4}, re-expanding $rg_1'(r)-g_1(r)$ via $r=\tfrac3{20}+\tfrac{3s}{20}$
for $|s|<1$, $r=\tfrac12+s$,  $s\in [-\tfrac15,\tfrac15]$,  and $r=\f7{10}+s$,   $s\in [0, \f{3}{10}]$.

Next, we turn to the estimate $|g_\I(r)|< \f{e^{-r}}{r}$ on $r\geqslant r_4$. On $r\geqslant r_{7}$ we
have an exact equality to $\f{e^{-r}}{r}$  so it suffices to deal with $r_4\leqslant r< r_{7}$. We note that the inequality is sharp, and we need a different method:
we let  $\varphi =e^rg_1$ and look at $\varphi''$. Here we use Note~\ref{N2.4} and expansions at $r_{6}+s$,
$|s|\leqslant \f1{10}$, and $r=2+s,s\in [-\f15, r_{1}]$ and $r=r_{5}+s,s\in [-\f12, r_{1}]$, to see that $\varphi''<0$. Since $\varphi'(r_4)>0$ and $\varphi'( r_{7})=0$,
we see that $\varphi'\geqslant 0$. Since $\varphi( r_{7})=1$, the property follows.

Finally, one has the bound $|g_\I'(r)|\leqslant (1+r){e^{-r}}/{r^2}$ for $r>r_4$. In view
of the exact expression one has for $r\geqslant r_{7}$ it suffices to deal with $r_4\leqslant r< r_{7}$.  Thus, we
need to verify that
$${e^{r}}| rg_1'(r)- g_1(r)|  \leqslant 1+r , \quad  r_{4}\leqslant r< r_{7}
$$
We let $\varphi_1(r):={e^{r}} (rg_1'(r)- g_1(r)$). Using the partition $\boldsymbol{\pi}=(r_{4},\tfrac{3}{2},\tfrac{9}{5},\tfrac{21}{10},r_{7})$, explicitly
minimizing the leading cubic polynomial and taking the $\ell^1$
norm of the rest, we see that $\varphi_1''(r)>0$. Thus, by monotonicity,   $\varphi_1'(r)\leqslant \varphi_1'(r_7)<-1 $.
This implies that $\varphi_1(r)< 0$ on $[r_4,r_7]$. Moreover,
$ \varphi_1(r)+1+r\geqslant 0$ on $[r_4, r_{7}]$ since $\varphi_1(r)+r$ is decreasing and $\varphi(r_7)+r_7+1\geqslant 0$.

For $g_0$ one proceeds in a similar fashion.
We begin with the bound $|g_0(r)|<{13}/{2}$ on $(0,\f12)$. On $[0,r_1]$
we apply Note~\ref{N2.4} for $f$ with $r=\f{3}{20}+\f{3}{20}s$ while on $[r_1,r_4]$ we look at
the polynomials $rg_0(r)\pm \frac{13r}{2}$ with $r=\frac{13}{20}+s$ and $s\in[-\f{7}{20},\f{7}{20}]$.

Next, one verifies that $|g_0(r)|<\f75 \f{e^r}{1+r}$ on $r\geqslant\f12$. On the interval $r \geqslant r_{7}$ one checks
the explicit expression $g_0(r) = \f{e^r}{2r} + \f{k}{r e^r}$ where $k=-\frac{g_2(0)}{g_1(0)}\in (0,4)$ and thus
\[ g_0(r) = \f{e^r}{2r} + \f{k}{r e^r}=\f{e^r}{1+r}\left[\f{1+r}{r}\left(\frac{1}{2}+\frac{k}{e^{2r}}\right)\right]\leqslant \f{7}{5}\left(\frac{1}{2}+\frac{k}{100}\right)\f{e^r}{1+r}\leqslant \f{7}{5}\f{e^r}{1+r}
\]
On the interval $[\f12, r_{7}]$ one can check via the partition $\boldsymbol{\pi}=(\f12,1,\f{9}{5},\f52)$ that $g_0>0$.
Furthermore, we apply the same partition to the expression
$$
5(g_2(r)+k g_1(r))(1+r)-7re^r
$$
which is of the form admitted by Note~\ref{N2.4}. One then sees that it is negative.

For the bound $|g_0'(r)|<18 $ on $(0,r_1)$, we multiply through by~$r^2$.  We then use the substitution~$r=\f75+s$. On $[ r_{1},r_{4}]$
one uses $r=\f12+s$.

Finally, we verify $|g_0'(r)| < \f{e^r}{2r}$  for $r\geqslant r_4$.  On $r\geqslant r_{7}$ one checks that
\[
r e^{-r} g_0'(r) = \f12 - \alpha e^{-2r} -\f{1}{2r} - \f{\beta}{r} e^{-2r}
\]
with $\alpha, \beta>0$. The right-hand side is clearly increasing in $r$
and $<\frac12$. Since    $g_0'( r_{7})>0$, the claim holds for $r\geqslant r_{7}$.  On $r_4\leqslant r< r_{7}$ we re-expand  $r g_0'(r)e^{-r}$ via $r=\f74+s$ with $|s|\leqslant\f34$.
\end{proof}

Now we come to the main result of this section, which is the estimate of the relative error between $\tilde Q$ and $Q$.

\begin{prop}
\label{prop:Q error}
Let $Q$ be the exact ground state of \eqref{mainODE} and $\tilde Q$ be the approximate one  given in
Definition~\ref{def:tildeQ}. Then one has the error bound
\EQ{\label{dQ}
| \tilde Q(r) - Q(r) | \leqslant  \eps_{0}\f{e^{-r}}{1+r} \quad\forall\; r\geqslant0
;\ \ \eps_{0}:=7\cdot 10^{-5}}
\end{prop}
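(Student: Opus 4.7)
The plan is to convert \eqref{de eq} into an integral equation using the fundamental system $\{g_0, g_\I\}$ from Lemma~\ref{lem:fund sys 2} and to solve it by a contraction mapping argument. Since $\{g_0, g_\I\}$ satisfies the \emph{perturbed} equation \eqref{ U V ODE 2} rather than the homogeneous part of \eqref{de eq}, the first step is to rewrite \eqref{de eq} as
\[
-\de'' - \tfrac{2}{r}\de' + \bigl(V + \tfrac{U}{r}\bigr)\de \;=\; U\de' - R + 3\tilde Q\,\de^{2} + \de^{3} + \bigl[(V - 1 + 3\tilde Q^{2}) + \tfrac{U}{r}\bigr]\de,
\]
so that the left-hand side is precisely the operator annihilating $g_0, g_\I$. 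Variation of parameters with the Green's function
\[
G(r,s) \;=\; W(s)^{-1}\bigl(g_\I(r)g_0(s) - g_0(r)g_\I(s)\bigr)\,s^{2}
\]
(with the usual switch at $r=s$, chosen so that $\de'(0)=0$ via regularity of $g_0$ at $0$ and $\de(\I)=0$ via the decay of $g_\I$) then produces a fixed-point equation $\de=\mathcal{T}(\de)$, where $\mathcal{T}$ collects: the source integral against $R$; the linear perturbation integrals against $U\de'$ and $[V-1+3\tilde Q^{2}+U/r]\de$; and the nonlinear terms $3\tilde Q\de^{2}+\de^{3}$.

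Next I would introduce the weighted Banach space with norm $\|f\|:=\sup_{r\geqslant 0}(1+r)e^{r}|f(r)|$ matched to the target bound, work on the closed ball $B_{\eps_0}:=\{f:\|f\|\leqslant \eps_0\}$ (with $\eps_0=7\cdot 10^{-5}$ from the statement), and verify both $\mathcal{T}(B_{\eps_0})\subset B_{\eps_0}$ and that $\mathcal{T}$ is a strict contraction there. The Banach fixed-point theorem then yields a unique $\de$ in $B_{\eps_0}$, which, being a (sufficiently regular) solution of \eqref{de eq} with the correct boundary conditions, must be $Q-\tilde Q$. A companion estimate on $\de'$ (needed because $U\de'$ appears in $\mathcal{T}$) is obtained by differentiating the integral representation and using the $|g_j'|$ bounds from Lemma~\ref{lem:fund sys 2}; equivalently one can work in a stronger norm $\|f\|_{*}:=\|f\|+\|(1+r)e^{r}f'\|_{\I}$, which remains stable under $\mathcal{T}$ because the derivative of the Green's-function representation reproduces the same structure with a bounded logarithmic loss at $r=0$ that is killed by the $(1+r)$ weight.

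The ingredients are all in hand. The source contribution $\mathcal{T}(0)$ is controlled by splitting into $[0,r_4]$, $[r_4,r_7]$ and $[r_7,\I)$ and plugging the three regimes of Lemma~\ref{lem:tildeQ1}(iv) against the pointwise estimates on $g_0,g_\I$ and the Wronskian lower bound $r^{2}|W(r)|\geqslant 9/10$ from \eqref{W bd}; on $[r_7,\I)$ the fundamental system reduces to $e^{\pm r}/r$ and the integral is explicit, while on the bounded region the integrand is rational with $R$-factors of size at most $\rho_1=15\cdot 10^{-6}$. The contraction constant, in turn, is bounded by $C(U^{e}+V^{e}+\|\tilde Q\|_{\I}\,\eps_0+\eps_0^{2})$, which by \eqref{UV est} and Lemma~\ref{lem:tildeQ1}(i) is well below $1/2$ for an absolute constant $C$ coming from the $g_0,g_\I$ estimates. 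A direct accounting then gives $\|\mathcal{T}(0)\|\leqslant \eps_0/2$, closing the contraction and proving \eqref{dQ}.

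The main technical obstacle is the bookkeeping across the transition at $r=r_7$ and the handling of the singular factor $1/r$ in the term $U/r$. Near $r=0$ the factor $g_\I(r)\sim 1/r$ is singular, but the Green's function carries the radial weight $s^{2}\,ds$ from the three-dimensional Laplacian and $g_0$ is regular at $0$, so the kernel $g_0(s)g_\I(s)s^{2}$ stays bounded and the $U/r$ term is integrable once multiplied by $s^{2}$. The only genuine care is ensuring that the $(1+r)^{-1}$ weight in the target is actually produced on $[0,r_4]$, where the decay $e^{-r}$ alone is not tight; this is where the explicit $|g_0|\leqslant 13/2$ and $|g_\I|\leqslant 1/(4r)+3/20$ bounds on $[0,r_4]$ are sharp enough, yielding a $(1+r)^{-1}$ improvement after evaluating $r\int_0^{r}(\cdots)\,ds$ against the $e^{-3r}/r^{3}$ tail from $[r_7,\I)$.
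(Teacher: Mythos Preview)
Your approach is essentially the paper's: rewrite \eqref{de eq} against the operator of \eqref{ U V ODE 2}, invert via the Green's function built from $g_0, g_\I$, and contract in a weighted norm matched to the target---the paper uses $\|f\|_X = \sup_{r\geqslant 0}(1+r)e^r\bigl(|f| + \tfrac15|f'|\bigr)$, folding the derivative in with a fixed weight rather than treating it as a separate companion estimate. Two cautions: your Green's function carries a spurious extra $s^2$ (the factor $W(s)^{-1}$ already supplies it, since $W(s)\sim s^{-2}$ by \eqref{W bd}), and the self-mapping and contraction claims require genuine piecewise numerical verification (the paper's Lemma~\ref{hh12} and Corollary~\ref{c2.7})---the smallness of $U^e+V^e$ alone does not close the loop without explicit control of the integral-operator constants.
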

\begin{proof}
Rewrite \eqref{de eq} in the form
\EQ{\label{de eq 2}
   -\delta''(r)+\Big(U(r)-\f{2}{r}\Big)\delta'(r)+\Big(V(r)+\f{U(r)}{r}\Big)\delta(r) &= h_{1}(\delta,r)
}
where
\EQ{\nn
h_{1}(\de,r) &:=-R(r)+U(r)\delta'(r)+\Big[V(r)-1+3\tilde Q^2(r)+\f{U(r)}{r}\Big]\delta(r)+3\tilde Q(r)\delta^{2}(r)+\delta^{3}(r)
}
We seek a solution to \eqref{de eq 2} which obeys the boundary conditions $$\de(0+) \in \R, \quad \de(\I)=0$$
In fact, this solution is unique and is of the form
$\delta=H(\delta)$
where
\EQ{  \label{H}
H(\delta)(r)  &= g_{\I}(r)\int_{0}^{r}\frac{g_{0}(s)h_{1}(\delta,s)}{W(s)} \, ds + g_{0}(r)\int^{\infty}_{r}\frac{g_{\I}(s)h_{1}(\delta,s)}{W(s)}\, ds \\
&=: H_{1}(\delta)(r) + H_{2}(\delta)(r)
}
in terms of the fundamental system from Lemma~\ref{lem:fund sys 2}.
We also have
\begin{multline}\label{Hp}
  [H(\delta)(r)]'=:H'=g_{\I}'(r)\int_{0}^{r}\frac{g_{0}(s)h_{1}(\delta,s)}{W(s)} \, ds + g_{0}'(r)\int^{\infty}_{r}\frac{g_{\I}(s)h_{1}(\delta,s)}{W(s)}\, ds \\
=: H'_{1}(\delta)(r) + H'_{2}(\delta)(r)
\end{multline}
\begin{lem}\label{LQ}
  $H$ is a contraction in the ball
\begin{equation}
  \label{eq:eps0}
  X:= \big\{ f\in C^{1}((0,\I))\mid \|f\|_{X}\leqslant \eps_{0}\}
\end{equation}
with norm
\[ \| f \|_{X}:=\sup_{r\geqslant0}\, (r+1)e^{r}\left(|f(r)|+\tfrac15|f'(r)|\right) \]
Thus there  is a unique fixed point $\de_{0}\in X$.
\end{lem}
\begin{cor}
   Therefore,  $y:=\tilde Q+\de_{0}>0$ solves~\eqref{mainODE}
on $(0,\I)$\footnote{Since $y\in C^1$ by construction, it is a weak solution of~\eqref{mainODE} and by standard Sturm-Liouville theory therefore also a smooth one.}\label{f2}, remains bounded as $r\to0+$, and decays as $r\to\I$. By Coffman's theorem~\cite{Coff},
this uniquely characterizes $Q$ whence $Q-\tilde Q=\de_{0}$.
\end{cor}
\subsection{Proof of Lemma~\ref {LQ}}
 For any $r\in[0,r_{7}]$, denoting $\omega(r):={e^{-r}}/({1+r})$, any $\de\in X$ satisfies
\begin{multline}\label{h1 bd 1}
|h_{1}(\de,r)| \leqslant |R(r)| + \|U\|_{\I} |\de'(r)| + (\|V-1+3\tilde Q^2\|_{\I} + r^{-1}\| U\|_{\I}) |\de(r)|\\
\qquad  + (3\|\tilde Q\|\|\de\|_{\I}  + \|\de\|_{\I}^{2} )|\de(r)|\\
 \leqslant   |R(r)| +\left[ \f{\eps_{0}}{33}  + \(\f{1}{36}  + r^{-1} \f{1}{165}\) \eps_{0} + 14 \eps_{0}^{2}+ \eps_{0}^{3}\right] \om(r)
\leqslant |R(r)|+\left[\f{3\eps_{0}}{50} + \f{\eps_{0}}{165r}\right] \om(r)
\end{multline}
For $r>r_{7}$ we state a different bound, see the case $r>r_{7}$
in  Lemmas~\ref{lem:fund sys} and~\ref{lem:tildeQ1}: with $\rho_{3}:=\f{9}{200}$,
\begin{multline}\label{h1 bd 2}
|h_{1}(\de,r)| \leqslant \f{e^{-3r}}{25r^3} + 3\tilde Q^2(r)|\delta(r)|+3\tilde Q(r)\delta^{2}(r)+|\delta^{3}(r)|
\leqslant  (\rho_{3}+ 24\eps_{0}) \f{e^{-3r}}{r^{3}}  \ \ \forall\; r>r_{7}
\end{multline}

We now show that $H$ takes $X$ to itself, which is based on the estimates of the following lemma.
This is the most technical part of the contraction argument for $\delta$, and it is proved by inserting \eqref{h1 bd 1}
and~\eqref{h1 bd 2} into~\eqref{H} and using the estimates from Lemma~\ref{lem:fund sys 2}.
We suppress the argument $r$ for the most part in the formulation of the following lemma.

\begin{lem}\label{hh12}
Let $\delta\in X$ be fixed and consider \eqref{H}.
For all $r>0$ one has the estimates
\begin{multline}
  | H_1(\de)| <  \(\tfrac{11}{10}\cdot 10^{-5}r^2+\tfrac{\eps_0}{50}\)\boldsymbol{\chi}_{[0,\tfrac12)}+
\left(\tfrac{9}{10}-\tfrac12 r\right)\left(8\cdot 10^{-6}+\tfrac{\eps_0}{25}\right) \boldsymbol{\chi}_{[\tfrac12,1)}\\+\tfrac{1}{r}\left(\tfrac32\cdot 10^{-6}\left(3 - r\right)+\tfrac{13\eps_0}{1000}\right)\boldsymbol{\chi}_{47}+\tfrac{e^{-r}}{r}\left(10^{-4}\left(\tfrac{33}{100}-36e^{-2r}\right)+\tfrac{7\eps_0}{50}-\tfrac{48\eps_0}{25}e^{-2r}\right) \boldsymbol{\chi}_{7\infty}
\end{multline}
\begin{multline}
  | H'_1(\de) | < \left[10^{-5}r\left(2-\tfrac{6r}{5}\right)+\tfrac{\eps_0}{20} \right]\boldsymbol{\chi}_{[0,\tfrac12)}+\left(\tfrac72-3r\right) \left(8\cdot10^{-6}+\tfrac{\eps_0}{25}\right) \boldsymbol{\chi}_{[\tfrac12,1)}\\+\tfrac{2}{r}\left({\tfrac32\cdot 10^{-6}}\left(3 - r\right)+\tfrac{13}{1000}\eps_0\right) \boldsymbol{\chi}_{47}\\+\tfrac{\left(r+1\right)e^{-r}}{r^2}\left(10^{-4}\left(\tfrac{33}{100}-36e^{-2r}\right)+\left(\tfrac{7}{50}-\tfrac{48}{25}e^{-2r}\right)\eps_0\right)\boldsymbol{\chi}_{7\infty}
\end{multline}
\begin{multline}
  |H_2(\de) | <  \left[10^{-5}\left(\tfrac{23}{25}-\tfrac{6}{5}r^2\right)+\tfrac{\eps_0}{15}\right]\boldsymbol{\chi}_{[0,\tfrac12)}+\tfrac{7e}{10}\left[ 10^{-6}\left(\tfrac{11}{50}+2\left(1-r\right)\right)+\tfrac{1}{100}\eps_0\right]\boldsymbol{\chi}_{[\tfrac12,1)}\\+ \tfrac{7}{5(r+1)}\left(\tfrac{11}{10}\cdot10^{-6}+10^{-2}(6-\tfrac{11}{5} r)\eps_0\right) \boldsymbol{\chi}_{47}+\tfrac{21e^{-3r}\left(3+1600\eps_0\right)}{4000r^2(r+1)}\boldsymbol{\chi}_{7\infty}
\end{multline}
\begin{multline}
  |H'_2(\de) | <  \tfrac{36}{13}\left[10^{-5}\left(\tfrac{23}{25}-\tfrac{6}{5}r^2\right)+\tfrac{\eps_0}{15}\right]\boldsymbol{\chi}_{[0,\tfrac12)} + 18\left(10^{-6}\left(\tfrac{11}{50}+2\left(1-r\right)\right)+\tfrac{1}{100}\eps_0\right) \boldsymbol{\chi}_{[\tfrac12,1)}\\+
 \tfrac{1}{r+1}\left(\tfrac{11}{10}\cdot10^{-6}+10^{-2}(6-\tfrac{11}{5} r)\eps_0\right)\boldsymbol{\chi}_{47}+\tfrac{3e^{-3r}\left(3+1600\eps_0\right)}{1600r^3}\boldsymbol{\chi}_{7\infty}
\end{multline}

\end{lem}

\begin{proof}  To avoid working with Ei$(x)$, we write  $\om\left(s\right)\leqslant {e^s}/({a+1})$ inside every definite integral from $a$ to $b$, and ${e^{-3s}}/{s^3}\leqslant {4e^{-3s}/(25s)}$ for $s\geqslant r_{7}$. Also, estimating  sums or products  is of course elementary: for
instance, $e^{x}+ax^2+bx+c<d$ is equivalent to
$e^{-x}\left(d-\left(ax^2+bx+c\right)\right)-1>0$ -- checked by examining the derivative. Using this, the result  follows by straightforward
calculations of the integrals using Lemma~\ref{lem:fund sys 2},
\eqref{h1 bd 1}, and \eqref{h1 bd 2}. See Section~\ref{sec:app1} for details.
\end{proof}
Now we can show that  $H\left(\de\right)$ takes
the $\eps_{0}$-ball of $X$ to itself
\begin{cor}\label{c2.7}
Let  $(H_0(\delta))(r)=|H\left(\delta\right)\left(r\right)|+\tfrac{1}{5}|H'\left(\delta\right)\left(r\right)|$. We have
  \begin{equation}
    \label{eq:estH0*}
    (r+1)e^r (H_0(\delta))(r)\leqslant \eps_{0}\ \ \forall r\in\mathbb{R}^+
    \end{equation}
\end{cor}
\begin{proof} This is a straightforward consequence of Lemma~\ref{hh12};
the details are given in~\S\ref{Pc29}.
\end{proof}

\subsection{Contractivity of the map}
Let $\de_0(r)=\de_1(r)-\de_2(r)$. Clearly, $\delta_{1,2}$ only occur in the integrands in $H_0(\de_1(r)) -H_0(\de_2(r))$  (through $h_{1}(\de_1,r)-h_{1}(\de_2,r)$).  Note now that
\begin{multline}\label{m54}
  |h_{1}(\de_1,r)-h_{1}(\de_2,r)| \leqslant (\|V-1+3\tilde Q^2\|_{\I} + r^{-1}\| U\|_{\I}) |\de_0(r)|\\
 + (6\|\tilde Q\|\|\de_0\|_{\I}  +3 \|\de_0\|_{\I}^{2} )|\de_0(r)|+ \|U\|_{\I} |\de_0'(r)|\leqslant \(\f{3}{50}+\f{1}{165r}\)\om(r)\|\de_0\|
\end{multline}
If we replace $\eps_0$ by $\|\delta_0\|$ and set $R=0$, the last term  in \eqref{h1 bd 1}
is identical to the last term in \eqref{m54}.  Hence, with the replacements above, all contractivity calculations
shadow those for the bounds on $h_1$.  Thus, with virtually the same proof as that of Corollary \ref{c2.7}, see Section~\ref{Pc29},  one derives the estimate
\begin{multline}
(r+1)e^r(|H(\de_1)(r)-H(\de_2)(r)|+|H'(\de_1)(r)-H'(\de_2)(r)|/5)/\|\de_0\|\\\leqslant
  \f{4e^r(r+1)}{25}\boldsymbol{\chi}_{[0,\tfrac12)}+\left(\f{13}{100}-\f{11r}{250}\right)(r+1)e^r\boldsymbol{\chi}_{[\tfrac12,1)}+\f{10^{-2}e^r}{r}\left(\f{37}{20}+9r-\f{12r^2}{5}\right)\boldsymbol{\chi}_{47}\\+\left[\frac{21}{125}+\frac{7}{250 r^2}+\frac{49}{250 r}+e^{-2r}\left(-\frac{23}{10}+\frac{3}{5 r^3}+\frac{431}{50 r^2}-\frac{67}{25 r}\right)\right]\boldsymbol{\chi}_{7\infty}<1/2
\end{multline}
where the final bounds follow by differentiating in $r$ for the first three and in $1/r$ for the last. For the last one we note that $-\frac{23}{10}+\frac{3}{5 r^3}+\frac{431}{50 r^2}-\frac{67}{25 r}$ is negative and decreasing.
\end{proof}
\subsection{Further estimates}In the study of $L_{\pm}$ we need a sharper estimate of $Q-\tilde{Q}$.
\begin{lem}\label{210}
For $r\geqslant \f{5}{2}=r_7$, we have
$$\de(r)=Q(r)-\tilde{Q}(r)= b_1e^{-r}/r+b_2(r)$$
where $|b_1|<5\cdot 10^{-5}$ and $|b_2(r)|<\f{3}{50}\frac{e^{-3r}}{r^3}$.
\end{lem}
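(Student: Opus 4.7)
The key observation is that on $J_{4}=[r_{7},\I)$ the coefficients $U,V$ from Lemma~\ref{lem:fund sys} are exactly $U\equiv 0$ and $V\equiv 1$, so equation \eqref{de eq 2} reduces to the constant-coefficient inhomogeneous ODE
\EQ{\nn
-\delta''(r)-\frac{2}{r}\delta'(r)+\delta(r) = 3\tilde Q^{2}(r)\delta(r)-R(r)+3\tilde Q(r)\delta^{2}(r)+\delta^{3}(r).
}
Making the natural substitution $u(r):=r\delta(r)$ removes the $r^{-1}$-term and yields $-u''+u=f$ on $[r_{7},\I)$ with
\EQ{\nn
f(r) := r\bigl[3\tilde Q^{2}(r)\delta(r)+3\tilde Q(r)\delta^{2}(r)+\delta^{3}(r)-R(r)\bigr].
}

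The unperturbed operator $-\partial_{r}^{2}+1$ has the explicit fundamental system $\{e^{-r},e^{r}\}$ with Wronskian $2$, so variation of parameters gives the general decaying solution in the form
\EQ{\nn
u(r) = b_{1}\,e^{-r} + \tilde b(r), \qquad \tilde b(r) := \frac{e^{r}}{2}\int_{r}^{\I}\! e^{-s}f(s)\,ds \;-\; \frac{e^{-r}}{2}\int_{r}^{\I}\! e^{s}f(s)\,ds.
}
This automatically puts $\delta = u/r$ in the desired form $\delta(r)=b_{1}e^{-r}/r+b_{2}(r)$ with $b_{2}(r):=\tilde b(r)/r$; the constant $b_{1}$ is fixed by the matching $u(r_{7})=r_{7}\delta(r_{7})$, i.e.\ $b_{1}=e^{r_{7}}\bigl[r_{7}\delta(r_{7})-\tilde b(r_{7})\bigr]$, and equivalently $b_{1}=\lim_{r\to\I} r e^{r}\delta(r)$.

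To bound $b_{2}$ I would first dominate $|f(s)|$ by combining the inputs already at our disposal: $|R(s)|\leqslant e^{-3s}/(25s^{3})$ from Lemma~\ref{lem:tildeQ1}(iv), the bound $\tilde Q(s)<\tfrac{350}{129}\,e^{-s}/s$ from \eqref{eq:bdQ2}, and $|\delta(s)|\leqslant \eps_{0}\, e^{-s}/(1+s)$ from Proposition~\ref{prop:Q error}. A short calculation shows $|f(s)|\leqslant c\,e^{-3s}/s^{2}$ where $c$ is only slightly larger than $1/25$, since the nonlinear and linear-in-$\delta$ contributions are each suppressed by a factor of $\eps_{0}$. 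Pulling $1/s^{2}\leqslant 1/r^{2}$ out of both integrals defining $\tilde b$ and performing the elementary remaining exponential integrations gives $|\tilde b(r)|\leqslant c\,e^{-3r}/(8r^{2})+c\,e^{-3r}/(4r^{2})=(3c/8)\,e^{-3r}/r^{2}$, whence $|b_{2}(r)|\leqslant (3c/8)\,e^{-3r}/r^{3}$, comfortably below $3/50$.

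The estimate $|b_{1}|<5\cdot 10^{-5}$ is the delicate step. Evaluating at $r_{7}=5/2$ and using the Proposition bound naively gives $r_{7}e^{r_{7}}|\delta(r_{7})|\leqslant r_{7}\eps_{0}/(1+r_{7})=5\cdot 10^{-5}$ with an additional correction $e^{r_{7}}|\tilde b(r_{7})|\lesssim e^{-2r_{7}}/r_{7}^{2}\approx 10^{-5}$. The straightforward insertion therefore just barely fails to give strict inequality, and the main obstacle is to recover the missing slack. This can be done by exploiting the structure of the $X$-norm used in Lemma~\ref{LQ}: the contribution of $|\delta'|/5$ to $\|\delta\|_{X}$ is nontrivial on $[r_{7},\I)$ (since $\delta$ is of order $e^{-r}/r$ there, forcing $|\delta'|$ to be comparable to $|\delta|$), so the sup-norm of $\delta(r_{7})/[e^{-r_{7}}/(1+r_{7})]$ is strictly smaller than $\eps_{0}$ by a calculable margin, enough to absorb the $\tilde b(r_{7})$ correction. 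The remaining work is entirely routine bookkeeping of exponential integrals.
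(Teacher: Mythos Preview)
Your decomposition on $[r_{7},\I)$ and the bound on $b_{2}$ are correct and parallel the paper's argument. The paper also writes $\delta = g_{\I}\cdot b_{1} + (\text{remainder})$ with $g_{\I}(r)=e^{-r}/r$ on $J_{4}$, obtaining the remainder bound $\tfrac{1}{25}e^{-3r}/r^{3}+\tfrac{1}{50}e^{-3r}/r^{3}=\tfrac{3}{50}e^{-3r}/r^{3}$ from two integrals over $[r,\I)$ in exactly the way you describe.

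The genuine gap is in your $b_{1}$ estimate. Your proposed fix---using the $|\delta'|/5$ contribution to $\|\delta\|_{X}$ at $r_{7}$---is circular: you argue that ``$\delta$ is of order $e^{-r}/r$ there, forcing $|\delta'|$ to be comparable to $|\delta|$,'' but a pointwise upper bound on $\delta$ imposes no lower bound on $|\delta'(r_{7})|$ (there could be a local extremum). Even if one inserts the decomposition $\delta=b_{1}e^{-r}/r+b_{2}$ into the $X$-norm and uses a reverse triangle inequality, the correction $(r_{7}+1)e^{r_{7}}|b_{2}(r_{7})|\approx 2.6\cdot 10^{-5}$ is \emph{comparable} to, not dominated by, the slack gained from $|\delta'|/5$; the numbers do not close at $r_{7}$. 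Your idea can be rescued by evaluating instead at some $r\approx 4$, where $b_{2}$ and $b_{2}'$ are negligible while the $X$-norm factor $\tfrac{r+1}{r}\bigl(1+\tfrac{r+1}{5r}\bigr)\approx 1.56$ still exceeds $\tfrac{\eps_{0}}{5\cdot10^{-5}}=\tfrac{7}{5}$, giving $|b_{1}|<4.5\cdot 10^{-5}$. You did not identify this, and the phrase ``routine bookkeeping'' understates the tightness.

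The paper takes a different and cleaner route for $b_{1}$: it does not work only on $[r_{7},\I)$ but instead uses the \emph{global} representation \eqref{H}, rewriting $\delta(r)=g_{\I}(r)\int_{0}^{\I}\frac{g_{0}h_{1}}{W}+(\text{two tail integrals})$, so that $b_{1}=\int_{0}^{\I}\frac{g_{0}(s)h_{1}(\delta,s)}{W(s)}\,ds$. This integral over \emph{all} of $[0,\I)$ has already been controlled in Lemma~\ref{hh12} (see~\eqref{estH1d}, whose $r\to\I$ limit gives $|b_{1}|\leqslant 3.3\cdot 10^{-5}+\tfrac{7}{50}\eps_{0}\approx 4.3\cdot 10^{-5}$). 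The paper thus gets $|b_{1}|<5\cdot 10^{-5}$ essentially for free from work already done, whereas your local approach discards the information from $[0,r_{7}]$ and must squeeze the bound out of the $X$-norm alone.
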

\begin{proof}
It follows directly from \eqref{H} that
\begin{equation}
\begin{aligned} \label{Hnew}
\delta(r) & = g_{\I}(r)\int_{0}^{\infty}\frac{g_{0}(s)h_{1}(\delta,s)}{W(s)} \, ds +
g_{\I}(r)\int_{\infty}^{r}\frac{g_{0}(s)h_{1}(\delta,s)}{W(s)} \, ds \\
&\qquad +g_{0}(r)\int^{\infty}_{r}\frac{g_{\I}(s)h_{1}(\delta,s)}{W(s)}\, ds
\end{aligned}
\end{equation}
It then follows from \eqref{h1 bd 2}  that
\begin{equation}
  \label{eq:*1}
 \left|\int_{0}^{\infty}\frac{g_{0}(s)h_{1}(\delta,s)}{W(s)} \, ds\right|\leqslant 5\cdot10^{-5};\ \ \left|g_{\I}(r)\int_{\infty}^{r}\frac{g_{0}(s)h_{1}(\delta,s)}{W(s)} \, ds\right |\leqslant
 \f{1}{25} \f{e^{-3r} }{r^3}
\end{equation}
and
\EQ{
\left|g_{0}(r)\int^{\infty}_{r}\frac{g_{\I}(s)h_{1}(\delta,s)}{W(s)}\, ds\right |\leqslant \f{1}{50} \f{e^{-3r}}{r^3}
}
which concludes the proof.
\end{proof}

\section{The operator $L_+$}

In this section we prove the first of our main results, namely the gap property of $L_+$.

\begin{thm}\label{P1}
The operator $L_{+}$   has no ($L^2(\R^+)$) eigenvalue or resonance in  $[0,1]$.
\end{thm}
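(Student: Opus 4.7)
The strategy is the one sketched in Section~\ref{tech}: reduce the spectral question for $L_+$ on $[0,1]$ to a single statement about a nonvanishing Wronskian of two explicit quasi-solutions at $r=r_7$, and then verify this Wronskian estimate by the rational-polynomial machinery developed in Sections~\ref{ApQ}--\ref{ExQ}.

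First, I would set $\kappa=\kappa(\lam):=\sqrt{1-\lam}\in[0,1]$ and rewrite the eigenvalue equation $L_+u=\lam u$ in the form $-v''+(\kappa^2-3Q^2)v=0$ with $v(r):=r\,u(r)$, so that it suffices to study an ODE without first-order term on $\R^+$. An $L^2(\R^+,r^2\,dr)$-eigenvalue corresponds to a $v\in L^2(\R^+)$ solving this ODE with $v(0)=0$; a threshold resonance at $\lam=1$ corresponds to a bounded (but non-$L^2$) solution with $v(0)=0$. If $u_1(r;\lam)$ denotes any nonzero solution which is regular at $r=0$ (i.e.\ $v_1(0)=0$, $v_1'(0)\ne0$) and $u_2(r;\lam)$ denotes the Jost-type solution at $+\infty$ (i.e.\ $v_2(r)\sim e^{-\kappa r}$ as $r\to\infty$ for $\lam<1$, respectively $v_2(r)\to\text{const}$ at the threshold $\lam=1$), then the existence of an eigenfunction or resonance in $[0,1]$ is equivalent to the vanishing of the Wronskian $W(\lam):=v_1v_2'-v_1'v_2$ evaluated at $r=r_7$. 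The footnote in Section~\ref{tech} explains why these two one-sided solutions are the only ones admissible at $0$ and at $\infty$.

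Next, I would construct both quasi-solutions mirroring the two-piece construction of $\tilde Q$. On $[r_7,\infty)$, I write the ODE in Volterra form with unperturbed operator $-v''+\kappa^2 v$ and forcing $-3Q^2 v$, using the explicit Green's kernel $\kappa^{-1}\sinh(\kappa(r-s))$ (respectively $r-s$ at $\lam=1$), insert $Q=\tilde Q+\delta$ with $\delta$ controlled by Lemma~\ref{210}, and iterate once to produce an explicit $u_2(r;\lam)$ made of combinations of exponentials and exponential-integral expressions (of the same type as $g(r)$ in~\eqref{defy3}); a contraction in a weighted sup-norm analogous to that in Lemma~\ref{lem:infty_asymptotics} then yields the exact Jost solution as a small perturbation of $u_2$, uniformly in $\lam\in[0,1]$. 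On $[0,r_7]$, I build $u_1(r;\lam)$ by the same piecewise polynomial / reciprocal-polynomial scheme used for $\tilde Q$: starting from the unique power series solution normalized by $u_1(0;\lam)=1$, $u_1'(0;\lam)=0$, I produce matched Taylor expansions on the subintervals determined by the partition points $r_1,\dots,r_7$, and project onto low-degree polynomials in $r$ with rational coefficients in $\lam$. The approximate fundamental system $(g_1,g_2)$ from Definition~\ref{Green Poly} and Lemma~\ref{lem:fund sys} can be re-used to absorb the residual $R(r;\lam)$ of the approximate construction via a contraction identical in form to that of Proposition~\ref{prop:Q error}, giving an explicit bound of the form $|u_1-\tilde u_1|+|u_2-\tilde u_2|\le C\,\eps_0$, uniformly in $\lam\in[0,1]$.

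The final and main step is the nonvanishing statement
\begin{equation*}
\inf_{\lam\in[0,1]} |W(\lam)| \; > \; 0,
\end{equation*}
where $W$ is computed at $r=r_7$ from the approximate $\tilde u_1,\tilde u_2$ and then corrected by the explicit error. Since both $\tilde u_1(r_7;\lam)$ and $\tilde u_2(r_7;\lam)$ are, by construction, rational in $\lam$ with rational coefficients, $W(\lam)$ reduces to a rational function $N(\lam)/D(\lam)$ with $D$ easy to bound away from $0$. To bound $|N(\lam)|$ from below on $[0,1]$, I would apply Note~\ref{met1} in the $\lam$ variable, partitioning $[0,1]$ into a modest number of subintervals and re-expanding $N$ about the midpoint of each, so that the coefficients of monomials in $\lam$ of degree $\ge 4$ are small enough to be absorbed into a remainder while the cubic principal part admits explicit extremization. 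The error coming from $Q-\tilde Q$ is controlled by Lemma~\ref{210} and, because of the exponential weights in the integral operator analogous to~\eqref{eqint}, is a few orders of magnitude below the lower bound produced for $|W|$.

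I expect the principal obstacle to be the threshold $\lam=1$: there $\kappa=0$, the kernel $\kappa^{-1}\sinh(\kappa(r-s))$ degenerates to $r-s$, and $u_2$ only decays algebraically, so both the contraction setup and the pointwise estimates on $u_2$ and $u_2'$ at $r=r_7$ need a regularization (e.g.\ factoring out $\kappa$ or switching to the variable $\kappa$ throughout). The rest of the argument is, in principle, the same controlled but tedious polynomial bookkeeping that was carried out in Sections~\ref{ApQ}--\ref{ExQ}.
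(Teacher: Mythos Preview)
Your overall strategy matches the paper's: reduce to a nonvanishing Wronskian at $r=r_7$, build a Jost-type quasi-solution on $[r_7,\infty)$ by Volterra iteration and a regular-at-zero quasi-solution on $[0,r_7]$ by piecewise polynomials in $(r,\lambda)$, control each against the exact solution by contraction, and bound the Wronskian from below uniformly in $\lambda\in[0,1]$.

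Two of your proposed shortcuts would not work as stated. First, the approximate fundamental system $(g_1,g_2)$ of Definition~\ref{Green Poly} and Lemma~\ref{lem:fund sys} was tailored to the soliton correction equation, whose linear part is $-\delta''-(2/r)\delta'+(1-3\tilde Q^2)\delta$; the eigenvalue equation carries an extra $-\lambda\delta$. Since $\lambda$ ranges over all of $[0,1]$, this is not a small perturbation of~\eqref{ U V ODE 2}: re-using $(g_1,g_2)$ would place a term of size $\lambda\,|\delta|$ on the right-hand side of the contraction and destroy the closing estimate (one would need $\|V-1+\lambda+3\tilde Q^2\|_\infty$ small, which fails by $|\lambda|$). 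The paper therefore builds \emph{new} $\lambda$-dependent quasi-fundamental systems $g^{+}_{1,2}$ (piecewise polynomials in both $r$ and $\lambda$) and a new nearby equation~\eqref{eq:unkn} with $|B(r)-1+\lambda+3\tilde Q^2|<3/500$; see \S\ref{S312} and Lemma~\ref{L4.4}.

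Second, $\tilde u_2(r_7;\lambda)$ is not rational in $\lambda$: the Jost quasi-solution $w^{+}_2$ of~\eqref{eqw22} involves $\sigma=\sqrt{1-\lambda}$ and $\mathrm{Ei}(-2(1+\sigma)r)$, so $w^{+}_2(r_7)$ and $\partial_r w^{+}_2(r_7)$ are transcendental in $\lambda$. The paper first approximates these by degree-$7$ polynomials in $z=2\sigma-1$ with rigorous remainder bounds (Lemma~\ref{L14}), and only then applies the re-expansion procedure of Note~\ref{met1} to the resulting polynomial Wronskian in $\sigma$. Your anticipated regularization at $\lambda=1$ is correct in spirit --- the apparent $\sigma^{-1}$ is absorbed into the combination $\sigma f_1(r)$ in~\eqref{eqw22}, which is smooth at $\sigma=0$ --- but the final lower bound on $|W|$ is obtained in the $\sigma$-variable, not by treating $W$ as a rational function of $\lambda$.
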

\zn Standard ODE analysis shows that there are
two solutions $u_1(r;\lambda)$ and $u_2(r;\lambda)$ of \eqref{eqorL} with the properties $u_1(0;\lambda)=1$ and $u_2(r;\lambda)=r^{-1}e^{-r\sqrt{1-\lambda}}(1+o(1)), \,\,r\to\infty$. These are, up to constants, the only ones
acceptable at zero and infinity respectively, see  \S\ref{S51}.

Let $W=u_1u_2'-u_2u_1'$ be the Wronskian of the two special
solutions $u_{1,2}$. As mentioned in Section \ref{tech}, the existence of an eigenvalue or resonance of
$L_+$ is equivalent to $W=0$ for some $\lambda$.  Note that $W$ is not constant due to the first order
derivative in $L_\pm$. However, $ W'(r)=-\frac{2}{r}W(r)$ whence $W(r_0)=0$ at one point
implies that $W(r)=0$ everywhere.
Theorem~\ref{P1} is a corollary of the following result.

\begin{prop}\label{P3.2}
 We have the following estimate
 \begin{equation}
   \label{eq:eqW}
   \inf_{\lambda\in[0,1]}|W(r_7;\lambda)|\geqslant 43\cdot 10^{-4}
 \end{equation}
\end{prop}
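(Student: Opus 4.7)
I would follow the same scheme already executed for $Q$ versus $\tilde Q$: build explicit piecewise quasi-solutions $\tilde u_1(r;\lambda)$ on $[0,r_7]$ and $\tilde u_2(r;\lambda)$ on $[r_7,\infty)$, argue by contraction that the true solutions $u_1,u_2$ are close to these, compute the quasi-Wronskian $\tilde W(r_7;\lambda):=(\tilde u_1\tilde u_2'-\tilde u_2\tilde u_1')|_{r=r_7}$ in closed form, bound it from below uniformly in $\lambda\in[0,1]$ by a quantity well in excess of $43\cdot 10^{-4}$, and then transfer the estimate to $W$ using a bound on $|W-\tilde W|$ that is small enough to preserve \eqref{eq:eqW}.

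For the inner piece, I would normalize $\tilde u_1(0;\lambda)=1$, $\tilde u_1'(0;\lambda)=0$ and construct $\tilde u_1$ by matched Taylor polynomials on the subintervals of the partition $0<r_1<\cdots<r_7$, in direct analogy with the inner piece of $1/\tilde Q$ in Definition~\ref{def:tildeQ}. The crucial point is that each coefficient of these polynomials is itself an explicit low-degree polynomial in $\lambda$, obtained from the recursion generated by substituting a Taylor series into $L_+u=\lambda u$ with $Q$ replaced by $\tilde Q$. For the outer piece I would substitute $v:=ru_2$ so that the equation becomes $-v''+(1-\lambda)v=3Q^2 v$, and set $\mu:=\sqrt{1-\lambda}\in[0,1]$ as the natural parameter. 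One Picard iteration on the associated Volterra integral equation with zeroth term $v_0(r)=e^{-\mu r}$, applied with $\tilde Q$ in place of $Q$, yields $\tilde u_2$ as an explicit combination of $e^{-\mu r}/r$ and exponential integrals of the form $\Ei(-(2\pm\mu)r)$ and $\Ei(-4r)$ arising from the $\tilde Q^2\sim e^{-2r}/r^2$ factor --- the direct generalization of $y_3$ in Lemma~\ref{lem:infty_asymptotics}.

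The contraction argument for $|u_j-\tilde u_j|$ is a straightforward adaptation of the proof of Proposition~\ref{prop:Q error}: the approximate fundamental system $g_0,g_\I$ of Lemma~\ref{lem:fund sys 2} only needs its effective potential shifted by $-\lambda+3(Q^2-\tilde Q^2)$, both of which are controlled (the first trivially, the second by Proposition~\ref{prop:Q error}). The residuals $L_+\tilde u_j-\lambda\tilde u_j$ are small explicit piecewise rational expressions whose supremum norms in $(r,\lambda)$ I would estimate by the re-expansion method of Note~\ref{met1}. This yields $|u_j(r_7;\lambda)-\tilde u_j(r_7;\lambda)|$ and $|u_j'(r_7;\lambda)-\tilde u_j'(r_7;\lambda)|$ much smaller than $10^{-4}$, so that $|W(r_7;\lambda)-\tilde W(r_7;\lambda)|$ comfortably fits inside the slack between the target $43\cdot 10^{-4}$ and the lower bound one establishes for $|\tilde W(r_7;\lambda)|$.

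The final step --- the lower bound on $|\tilde W(r_7;\mu)|$ uniformly for $\mu\in[0,1]$ --- is the main obstacle. Since $\tilde W(r_7;\mu)$ is an explicit combination of a polynomial in $\mu^2$ (coming from evaluating $\tilde u_1,\tilde u_1'$ at $r_7$), a factor $e^{-r_7\mu}$, and exponential integrals at $-(2\pm\mu)r_7$ and $-4r_7$, I would partition $[0,1]$ into several $\mu$-subintervals, re-expand in a local variable $z\in[-1,1]$ on each one, bound $\Ei$ via the inequalities of Lemma~\ref{N1}(i), and apply Note~\ref{met1} to the resulting cubic majorant plus $\ell^1$ tail. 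The delicate region is $\mu$ near $0$, i.e.\ $\lambda$ near the edge of the essential spectrum: $\tilde u_2$ loses its exponential decay there, and the absence of a threshold resonance is precisely the requirement that $\tilde W(r_7;0)$ stay bounded away from zero --- the most sensitive quantitative assertion of the gap property, consistent with the empirical observation in~\cite{DS} that the property barely holds at $p=3$.
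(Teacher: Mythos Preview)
Your high-level plan coincides with the paper's: piecewise polynomial quasi-solution $w_1^+$ on $[0,r_7]$, one Volterra iteration for $w_2^+$ on $[r_7,\infty)$, contraction to reach the true $u_1,u_2$, explicit lower bound on the quasi-Wronskian, then transfer. But there is a genuine gap in your inner contraction step.

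You propose to reuse the $\lambda$-independent pair $g_0,g_\infty$ of Lemma~\ref{lem:fund sys 2} and absorb the discrepancy ``$-\lambda+3(Q^2-\tilde Q^2)$'' into the right-hand side. The second term is indeed tiny by Proposition~\ref{prop:Q error}, but the first is not: $\lambda$ ranges over $[0,1]$, so the perturbation $\lambda\,\delta$ fed through the Green's operator on $[0,r_7]$ (an interval of length $5/2$ for a second-order operator with order-one coefficients) produces a Lipschitz constant of order one, and the map is no longer contractive. Concretely, the analogue of $h_2$ in the paper's set-up would contain a term $[B-1+\lambda+3Q^2]\delta$; with the soliton's $B\approx 1-3\tilde Q^2$ this becomes $\approx\lambda\,\delta$, and the bounds of Lemma~\ref{lem:fund sys 2} give an integral contribution of size $\gtrsim\lambda\|\delta\|$ rather than the $\lesssim 10^{-1}\|\delta\|$ needed. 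The paper handles this by constructing an entirely new, \emph{$\lambda$-dependent} approximate fundamental system $g_1^+,g_2^+$ (piecewise polynomials in both $r$ and $\lambda$, tables $d^+_{kl;j},e^+_{kl;j}$) whose effective potential satisfies $|B(r)-1+\lambda+3\tilde Q^2|<3/500$; only then does the inner contraction close (Lemma~\ref{lpcon}), and even so the resulting error is $\|\delta\|_X\le 1/1080\approx 9\cdot 10^{-4}$, not ``much smaller than $10^{-4}$'' as you anticipate. The outer piece is less delicate---there the exponential weight does the work and your sketch is essentially right---but the missing $\lambda$-dependent Green's system on $[0,r_7]$ is a real obstacle, not a bookkeeping detail.
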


\subsection{Proofs}
As mentioned in Section \ref{tech}, we construct the quasi-solution
$w^{+}_1$, a piecewise polynomial of two variables $r$ and $\la$, on
the interval $[0,r_7]$, and $w^{+}_2$ whose expression involves
exponential integrals on the interval $[r_7,\infty)$. To show that
$w^{+}_1$ is close to $u_1$ and $w^{+}_2$ is close to $u_2$, we use
the same contractive mapping strategy we used for $Q-\tilde{Q}$
outlined in Section \ref{tech}: the only difference is that the
equations and solutions depend on the parameter $\la$. The method for
obtaining the estimates is explained in Note~\ref{met2}
below. Finally, to estimate the Wronskian, we first approximate
$w^{+}_{2}(r_7)$ and $\partial_r w^{+}_{2}(r_7)$ by polynomials of degree~$7$ in
$\la$, and the calculation then reduces to estimating a polynomial
using Note~\ref{met1}.
\subsubsection{A polynomial quasi-solution for $r\leqslant r_{7}$}
To build the quasi-solution, we first define
  \begin{equation}
    \label{eq:qp1}
  \tilde{w}_1(r)=\sum_{(j,k,l)\in S}\boldsymbol{\chi}_jc_{kl;j}\lambda^{k}z^l    \end{equation}
where $S=\{j,k,l\mid 1\leqslant j\leqslant 3,\ 0\leqslant k\leqslant M_j,\ 0\leqslant l  \leqslant M_j\}$,
$c_{kl;j}$ are given in the appendix, $\boldsymbol{\chi}_j$, $j=1,2,3$,
are the characteristic functions of   $[0,r_{2})$, $[r_{2},r_5)$, and $[r_5,r_{7}]$, respectively, $M_j\leqslant 15$ and
$z$ depends on  $r$ as specified in the top rows of the tables in the appendix. For the most part, we suppress the $\lambda$-dependence in
our notations.
To ensure that $w^{+}_1$ is $C^1$ we next let
\begin{multline}\label{w1pdef}
   w^{+}_1(r)= \tilde{w}_1(r)\boldsymbol{\chi}_{{57}}+\left[\tilde{w}_1(r)+\tilde{w}_1(r_{5})-\tilde{w}_1(r_{5}{-})+(\tilde{w}'_1(r_{5})-\tilde{w}'_1(r_{5}{-}))(r-r_{5})\right]\boldsymbol{\chi}_{25}\\+
\left[\tilde{w}_1(r)+w^{+}_1(r_2)-\tilde{w}_1(r_2{-})+\tfrac{1}{2r_2}(\partial_{r} w_{1}^{+}(r_2)-\tilde{w}'_1(r_2{-}))(r^{2}-r_2^{2})\right]\boldsymbol{\boldsymbol{\chi}}_{02}
\end{multline}

\begin{Note}\label{met2}{\rm

In this section, we divide $[a,b]\times [0,1]$ ``vertically'',  using a partition $\boldsymbol{\pi}$ of $[a,b]$ in $r$ and the trivial partition $(0,1)$ in $\lambda$. Clearly,
such partitions are determined  by $\boldsymbol{\pi}$.

(i) Let  $P(y,z)$ be a polynomial  with $(y,z)\in [-1,1]^2$ and
 $P_1$ defined to be the sub-polynomial consisting of all terms of $P$ of the form $c_{k,0}z^k,0\leqslant k\leqslant 3$ and $c_{0,j}y^j,1\leqslant j\leqslant 3$. We bound  $P$ above (below)  by the maximum (minimum, resp.) of $P_1$ plus (minus, resp.) the sum of the absolute value of the coefficients of $P-P_1$. Since $P_1$ is the sum of a cubic polynomial in $z$ and a cubic polynomial in $y$, the extrema calculations reduce to solving one variable quadratic equations. We proceed in this way for simplicity, given that the mixed terms typically do not contribute too significantly
to the final estimates of our polynomials.
(ii) If  $f(r)=e^{ar}Q(r)$ (equivalently, $f(r)=e^{ar}/Q(r)$) where $Q$ is a quadratic polynomial, then $f'=0$
iff a quadratic is zero; thus $e^{ar}\pm Q(r)>c$ where $c$ is any constant can be verified by elementary means.
}
\end{Note}
\begin{lem}\label{w1b}
  We have $ |w^{+}_1(r)| \leqslant M(r):=\tfrac{11(1-r)}{10}\h_{02}+\tfrac{9}{20}\h_{{27}}$
\end{lem}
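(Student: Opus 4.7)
By construction \eqref{w1pdef}, $w^{+}_1(r;\lambda)$ is, on each of the three subintervals $[0,r_2)$, $[r_2,r_5)$, $[r_5,r_7]$, a polynomial in $(r,\lambda)$ of total degree at most $2M_j\leqslant 30$ (the correction terms added in \eqref{w1pdef} to enforce $C^1$ matching are themselves polynomial in $r$ and $\lambda$, and the values $\tilde w_1(r_\ell\pm)$, $\tilde w_1'(r_\ell\pm)$ are polynomials in $\lambda$). Since $\lambda\in[0,1]$ and $r$ ranges over a bounded interval, the lemma is a bound on a polynomial in two variables over a rectangle, which is exactly the setting in which the re-expansion machinery of Notes~\ref{met1} and~\ref{met2} applies. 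The plan is to bound $|w^+_1(r;\lambda)|$ piece by piece using this machinery, and then to compare each local bound against the piecewise linear envelope $M(r)$.

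First, I would partition each of the three $r$-intervals further into subintervals of length small enough that, after the affine substitutions
\[
r=\tfrac12(1-y)\pi_i+\tfrac12(1+y)\pi_{i+1},\qquad \lambda=\tfrac12(1+z),\qquad (y,z)\in[-1,1]^2,
\]
the coefficients of all monomials $y^j z^k$ with $\max(j,k)\geqslant 4$ (or more conservatively, all mixed terms and all $y^j$, $z^k$ of degree $>3$) become negligibly small compared with the target gaps $\tfrac{11(1-r)}{10}-|w^+_1|$ and $\tfrac{9}{20}-|w^+_1|$. A reasonable initial choice is to reuse the partitions already employed for $\tilde Q$ in Lemma~\ref{lem:tildeQ1}, namely
$\boldsymbol\pi_1=(0,\tfrac1{10},\tfrac15,\tfrac3{10},\tfrac25,r_2)$ on $[0,r_2)$, a partition of $[r_2,r_5)$ with nodes near $\tfrac45,1,\tfrac65,\tfrac75,r_5$, and a partition of $[r_5,r_7]$ with intermediate nodes at $\tfrac{17}{10},\tfrac{19}{10},\tfrac{21}{10},\tfrac{23}{10}$; finer subdivision is added wherever the higher-order coefficients remain too large.

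On each resulting subrectangle I would then apply Note~\ref{met2}(i): write the re-expanded polynomial as $P_1(y,z)+R(y,z)$, where $P_1$ collects the pure $y$- and $z$-monomials of degree $\leqslant 3$, and bound
\[
\sup_{(y,z)\in[-1,1]^2}|w^+_1|\ \leqslant\ \max_{[-1,1]^2}|P_1(y,z)|\ +\ \sum_{\text{coeffs of }R}|c_{jk}|.
\]
The maximum of $P_1$ reduces to extremizing two independent cubic polynomials in one variable, which has closed-form roots. This produces a numerical upper bound valid on the entire subrectangle. I would compare it against the worst-case value of $M(r)$ on the corresponding $r$-interval: on each subinterval of $[0,r_2)$ the target bound $\tfrac{11(1-r)}{10}$ is a linear function, so its minimum is simply the value at the right endpoint; on $[r_2,r_7]$ the target is the constant $\tfrac{9}{20}$. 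Checking that the computed sup on each subrectangle is strictly less than this minimum of $M(r)$ concludes the estimate.

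The main obstacle I anticipate is twofold: first, ensuring that the partition near $r_2$ is fine enough, since $\tfrac{11(1-r)}{10}$ is smallest there and must be matched from both sides (the left endpoint of $[r_2,r_5)$ starts with the tightest constraint $\tfrac{9}{20}$, but the correction terms in \eqref{w1pdef} inherit whatever behaviour $w^+_1$ had at $r_2^-$, so any looseness there propagates); and second, verifying that in the intermediate region, where $\tilde w_1$ has high degree in both $r$ and $\lambda$ (up to $M_j\leqslant 15$), the $\lambda$-dependence is tame enough on $[0,1]$ that the crude $\ell^1$ estimate of the discarded monomials remains well below the gap. If the initial partition is insufficient, the remedy is purely combinatorial, namely to bisect the offending subrectangle and recompute; since these are rational-coefficient algebraic manipulations, the procedure terminates in finitely many steps.
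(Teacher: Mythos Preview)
Your approach is correct and is essentially the paper's own method: re-expand the two-variable polynomial on subrectangles via Note~\ref{met2} and compare against $M(r)$. Two efficiency differences are worth noting. First, the paper needs only the coarse partition $\boldsymbol{\pi}=(0,r_1,r_2,r_5,r_7)$, i.e.\ four subintervals in $r$, not the much finer ones you propose. Second, rather than bounding $\sup|w^+_1|$ on each subrectangle and then comparing to $\min_r M(r)$, the paper applies Note~\ref{met2} directly to the polynomials $w^+_1(\ell_k(z),\tfrac12(1+y))\pm M(\ell_k(z))$ and checks their sign; this lets the linear variation of $M$ on $[0,r_2)$ work in your favour and is why the coarse partition suffices. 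Your separation into two steps would still succeed, but likely only after the extra bisections you anticipate near $r_2$.
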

\begin{proof}

We introduce the  partition of $[0,r_{7}]\times[0,1]$ induced by ${\boldsymbol{\pi}}=(0,3/10,17/25,3/2,5/2)$.
The lemma now follows by applying the method in Note~\ref{met2} to $ w^{+}_1(\ell_{k}(z),\tfrac12 (1+y))\pm M(\ell_{k}(z))$ for $1\leqslant k\leqslant 4$.
\end{proof}

Now we invoke the estimates for $|Q-\tilde{Q}|$ (Proposition~\ref{prop:Q error}) and $\tilde{Q}$ (Lemma~\ref{lem:tildeQ1}) to obtain
  \begin{multline}\label{3q2}
    |3Q^{2}(r)-3\tilde{Q}^{2}(r)|\leqslant 3(2\tilde{Q}+|Q-\tilde{Q}|)|Q-\tilde{Q}|(r)\\ \leqslant
    \tfrac{1}{10000}\left[ 21 e^{-3r}\h_{02}+ \f{116}{5} (1+r)^{-1}e^{-13r/5}\h_{{27}}\right]+\f32\cdot10^{-8}e^{-2r}
    \\ \leqslant \tfrac{1}{5000}\left[ 11 e^{-3r}\h_{02}(r)+ 12 (1+r)^{-1}e^{-13r/5}\h_{{27}}(r)\right]
\end{multline}
We shall use this estimate repeatedly, for example in the following  bound on the remainder
\begin{equation}
  \label{eq:R1*}
  R_1:=-w^{+}_1{''}-2w^{+}_1{'}/r+(1-\lambda-3Q^{2})w^{+}_1
\end{equation}

\begin{lem}\label{lpr1}
  We have
  \begin{equation}\label{estr1}
 |R_{1}(r)|\leqslant 10^{-4}\left\{ \tfrac{33}{10}(11-35r+34r^2)\boldsymbol{\chi}_{02}+
 \tfrac{11-6r}5\boldsymbol{\chi}_{25}+\tfrac{39-9r}{100}\boldsymbol{\chi}_{{57}}\right\}
  \end{equation}
\end{lem}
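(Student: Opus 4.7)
The strategy is to separate the error caused by replacing $Q$ with $\tilde Q$ from the intrinsic residual of the polynomial quasi-solution. Write
\[
 R_1 \;=\; \tilde R_1 \;+\; 3\bigl(\tilde Q^2-Q^2\bigr)\, w_1^+, \qquad
 \tilde R_1 \;:=\; -w_1^{+\prime\prime}-\frac{2}{r}w_1^{+\prime}+\bigl(1-\lambda-3\tilde Q^2\bigr)w_1^+.
\]
The second term is controlled immediately by combining \eqref{3q2} with the pointwise bound $|w_1^+(r)|\leqslant M(r)$ of Lemma~\ref{w1b}; the resulting contribution is on the order of $10^{-4}$ times a rapidly decaying factor, and can be absorbed into the claimed right-hand side of \eqref{estr1} with room to spare.

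The main task is therefore to estimate $\tilde R_1$ piecewise on the three intervals $J_1=[0,r_2)$, $J_2=[r_2,r_5)$, $J_3=[r_5,r_7]$. On each $J_i$, $w_1^+$ is the sum of the polynomial $\tilde w_1$ (from \eqref{eq:qp1}) and, on $J_1\cup J_2$, a tiny linear or quadratic correction coming from the $C^1$-matching in \eqref{w1pdef}. Since $\tilde Q=1/p_i$ with $p_i$ a polynomial of modest degree, multiplying $\tilde R_1$ by $r\,p_i(r)^2$ clears all denominators and yields a polynomial $T_i(r,\lambda)\in\Q[r,\lambda]$ of controlled degree. Bounding $\tilde R_1$ then reduces to bounding $T_i/(r\,p_i(r)^2)$ on $J_i\times[0,1]$, and the denominator is uniformly bounded below by a positive rational number on each subinterval (this follows from Lemma~\ref{lem:tildeQ1}(i)).

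For the polynomial $T_i$, I would apply the partition-and-re-expansion procedure of Note~\ref{met2} exactly in the spirit of Lemma~\ref{lem:tildeQ1}(iv): choose a fine enough partition $\boldsymbol{\pi}$ of $J_i$ so that, after the affine changes of variables $r=\ell_k(z)$ and $\lambda=(1+y)/2$ on each cell, the resulting bivariate polynomial splits as a pure-cubic-in-$z$ plus pure-cubic-in-$y$ dominant part (whose extrema over $[-1,1]^2$ are computed by solving one-variable quadratics) plus a remainder whose monomials have coefficients small enough to absorb into the error by taking $\ell^1$-norms. The corrections from \eqref{w1pdef} are handled analogously: they enter $\tilde R_1$ only through the tiny jump constants of $\tilde w_1$ at $r_2$ and $r_5$ (bounded, as in the remark following Definition~\ref{Green Poly}, by direct inspection of the coefficients $c_{kl;j}$) multiplied by the elementary functions $1$, $r-r_5$, $r^2-r_2^2$, and their derivatives, so their contribution can be added in using the same re-expansion step.

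The principal obstacle is purely bookkeeping: the polynomials $\tilde w_1$ have degree up to $15$ in each of $r$ and $\lambda$, so $T_i$ carries many monomials, and the partition of $J_1$ in particular must be taken fine enough to match the relatively tight target $\tfrac{33}{10}\cdot 10^{-4}(11-35r+34r^2)$, which is smallest near $r=\tfrac{35}{68}$. This is nevertheless a finite computation in $\Q[r,\lambda]$, carried out in {\tt Maple}/{\tt Mathematica} as elsewhere in the paper, and the progressively looser bounds on $J_2$ and $J_3$ reflect the fact that the Volterra-style quasi-solution $\tilde w_1$ is increasingly accurate as $r$ grows.
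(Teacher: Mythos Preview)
Your plan is essentially the paper's own argument: split $R_1=R_0+3(\tilde Q^2-Q^2)w_1^+$ with $R_0=\tilde R_1$, bound the second piece via \eqref{3q2} and Lemma~\ref{w1b}, and handle $R_0$ by clearing denominators and applying Note~\ref{met2} on a partition. The paper does exactly this, obtaining the intermediate bound \eqref{tildeQ1} and then checking that the right-hand side of \eqref{estr1} dominates it.

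One genuine slip to fix: on $J_1=[0,r_2]$ your denominator $r\,p_1(r)^2$ is \emph{not} uniformly bounded below---it vanishes at $r=0$---so the step ``denominator bounded below by a positive rational'' fails there as written. The repair is that on $J_1$ you do not need the factor $r$ at all: by construction (the $(r^2-r_2^2)$ correction in \eqref{w1pdef} and $c_{1l;1}=0$) one has $\partial_r w_1^+(0)=0$, so $w_1^{+\prime}/r$ is already a polynomial and $p_1(r)^2\,\tilde R_1$ is a polynomial in $(r,\lambda)$. This is precisely why the paper defines $\tilde R_0=R_0/\tilde Q^2$ on $[0,r_2]$ but $\tilde R_0=rR_0/\tilde Q^2$ on $[r_2,r_7]$ (see \eqref{eq:eqR0p}); adopt the same split and your argument goes through.
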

\begin{proof}
We first consider $R_0=-w^{+}_1{''}-2w^{+}_1{'}/r+(1-\lambda-3\tilde{Q}^{2})w^{+}_1$ and the piecewise  polynomial
\begin{equation}
  \label{eq:eqR0p}
  \tilde{R}_0(r)=[ R_0(r)/\tilde{Q}^2(r)]\h_{02}+[rR_0(r)/\tilde{Q}^2(r)]\h_{{27}}
  \end{equation}
  On $[0,r_{2}]$
we use the partition $
 \boldsymbol{\pi}= \({0}, \tfrac{7}{100}, \tfrac{21}{100}, \tfrac{7}{20}, \tfrac{12}{25}, \tfrac{59}{100}, \tfrac{33}{50}, r_2\)$. Since $\tilde{Q}$ is positive and decreasing, in each subinterval we only need to show that $|\tilde{R}_0(\ell_k(z))|\tilde{Q}^2(\ell_k(-1))$ is bounded by the right-hand side of \eqref{estr1}. It is easy to see that this can be reduced
to applying the method in Note~\ref{met2} for $\tilde{R}_0(\ell_k(z))\tilde{Q}^2(\ell_k(-1))\pm 10^{-4}(11-43r+54r^2)$. In this fashion, we obtain the estimate
$$|R_0(\ell_k(z))|\leqslant|\tilde{R}_0(\ell_k(z))|\tilde{Q}^2(\ell_k(-1))<10^{-4}(11-43r+54r^2)$$
for all $1\leqslant k\leqslant 7$.

Next we introduce the partition   of  $[r_{2},r_{5}]$ provided  by
$\boldsymbol{\pi}=\(r_{2}, \f {37}{50}, r_{3}, \f{57}{50}, \f{69}{50}, r_{5}\)$
to estimate $|\tilde{R}_0(\ell_k(z))|\tilde{Q}^2(\ell_k(-1))\pm 2\cdot 10^{-5}\ell_k(z)$.  This yields the estimate
$$\ell_k(z)|R_0(\ell_k(z))|\leqslant|\tilde{R}_0(\ell_k(z))|\tilde{Q}^2(\ell_k(-1))<2\cdot 10^{-5}\ell_k(z)\ \  \forall\; 1\leqslant k\leqslant 5 $$
For the interval $[r_{5}, r_{7}]$ we use the partition  induced by
$\boldsymbol{\pi}=\(r_{5}, \tfrac{17}{10}, \tfrac{19}{10}, \tfrac{52}{25}, \tfrac{56}{25}, \tfrac{59}{25}, \tfrac{61}{25}, r_{7}\)$
and concludes that $$\ell_k(z)|R_0(\ell_k(z))|\leqslant|\tilde{R}_0(\ell_k(z))|\tilde{Q}^2(\ell_k(-1))<15\cdot 10^{-6}\ell_k(z)$$
We have $|R_1-R_0|=3|Q^2-\tilde{Q}^2|\, |w_1^+|$.
Combining these results with \eqref{3q2} and Lemma~\ref{w1b} to estimate
$|3Q^2-3\tilde{Q}^2\|w^{+}_1|$, we obtain
\begin{multline}\label{tildeQ1}
  | R_1(r)| \leqslant 10^{-4}\left\{\left[11-43r+54r^2+\tfrac{121(1 - r)}{5}e^{-3r}\right]\h_{02}+\left(1+\tfrac{{54}e^{-13r/5}}{1+r}\right)\frac{\h_{25}}{5}\right.\\\left.+\left(\tfrac{3}{20}+\tfrac{54}{5}\tfrac{e^{-13 r/5}}{1+r}\right)\h_{{57}}\right\}
\end{multline}
The right-hand side
of \eqref{estr1} minus the right-hand side of \eqref{tildeQ1}
is positive; this follows using Note~\ref{met2} (ii), after
the substitution of  ${5e^{-3r}}\h_{02}$ by the bound
 $(5 - 11 r + 7 r^2)\h_{02}$.
\end{proof}

\subsubsection{A quasi-fundamental system of solutions on $[0,r_{7}]$}\label{S312}
To show there is an actual solution $u_1$ of \eqref{eqorL} on $[0,r_7]$ with $\delta:=u_1-w_1^+$ small, we construct two functions $g^{+}_{1,2}$, approximating two linearly independent solutions of \eqref{eqorL}; define first
\begin{equation}
   \tilde{g}^{+}_{1}(r):=\sum_{(j,k,l)\in S}d_{kl;j}^+\boldsymbol{\chi}_j\lambda^{k}z^l;\ \ \tilde{g}^{+}_{2}(r):=\sum_{(j,k,l)\in S}e_{kl;j}^+\boldsymbol{\chi}_j\lambda^{k}z^l
\end{equation}
where $S=\{(j,k,l)\mid 1\leqslant j\leqslant 3, \ 0\leqslant k\leqslant M_j,\ 0 \leqslant l\leqslant 15\}$ and now $\boldsymbol{\chi}_j$, $j=1,2,3$, are the characteristic functions of
$[0,r_{1})$, $[r_{1},r_{4})$, and $[r_{4},r_{7}]$,
respectively. The coefficients, the intervals corresponding to
$j=1,2,3$, and the expressions  $z=z(r)$ are given in the appendix.

To ensure $C^1$ behavior
we use the same method as in Definition \ref{Green Poly} to set $ \hat{g}^{+}_{j}=\tilde{g}^{+}_{j}$ on $J_{1}$ and
 \EQ{
\hat{g}^{+}_{j}(r) := \tilde{g}^{+}_{j}(r) + \hat{g}^{+}_{j}(r_{\ell}-) - \tilde{g}^{+}_{j}(r_{\ell}+) + (\hat{g}^{+}_{j}{'}(r_{\ell}-) - \tilde{g}^{+}_{j}{'}(r_{\ell}+))(r-r_{\ell})  \quad \forall\; r\in J_{\ell}
}
for all $\ell=2,3,4$
 and\footnote{ We note that $g^{+}_1$ is approximately $w^{+}_1$ redefined on the same interval as $g^{+}_2$ for convenience.}
 $g^{+}_j(r)=\hat{g}^{+}_{j}(r)/r$.  In contrast to Definition~\ref{Green Poly} where the $C^{1}$-matching is done from right to left, we find it necessary to carry out the matching by going from left
 to right.

We construct a second order equation satisfied
by $g^{+}_1$, $g^{+}_2$ in the form
\begin{equation}
  \label{eq:unkn}
 -g''+(A(r)-2/r)g'+(B(r)+A(r)/r)g=0
\end{equation}
where
$$A(r)=\frac{\hat{g}^{+}_1(r) \hat{g}^{+}_2{''}(r)-\hat{g}^{+}_2(r) \hat{g}^{+}_1{''}(r)}{\hat{g}^{+}_2(r) \hat{g}^{+}_1{'}(r)-\hat{g}^{+}_1(r) \hat{g}^{+}_2{'}(r)} \text{ \rm and } B(r)=\frac{\hat{g}^{+}_1{'}(r) \hat{g}^{+}_2{''}(r)-\hat{g}^{+}_2{'}(r) \hat{g}^{+}_1{''}(r)}{\hat{g}^{+}_2(r) \hat{g}^{+}_1{'}(r)-\hat{g}^{+}_1(r) \hat{g}^{+}_2{'}(r)}$$

\begin{lem}\label{L4.4}
   We have $|A(r)|\leqslant \frac{1}{1000}(\tfrac{1}{2}\h_{06}+4\h_{{67}})$ and $|B(r)-1+\lambda+3\tilde{Q}^{2}(r)|<\frac{3}{500}$.
  \end{lem}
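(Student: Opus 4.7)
The plan is to follow the same rational-function bookkeeping used in the proof of Lemma~\ref{lem:fund sys} for $U$ and $V$, now with two variables $(r,\lambda)$ rather than one, using the bivariate re-expansion scheme of Note~\ref{met2}. Since $\hat g^+_1,\hat g^+_2$ are piecewise polynomials in $\Q[r,\lambda]$, the numerators and denominators of $A$ and $B$ defined just above the lemma are piecewise polynomials in $\Q[r,\lambda]$ as well, and the denominator in both cases is the common Wronskian
\[
D(r,\lambda):=\hat g^+_2(r)\hat g^+_1{}'(r)-\hat g^+_1(r)\hat g^+_2{}'(r).
\]
So the estimates reduce, on each piece of the partition, to bounding a rational function $N(r,\lambda)/D(r,\lambda)$ by estimating $N$ and $D$ separately, the latter from below.

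First I would treat $A$. Partition $[0,r_7]$ using the $r$-partition
\[
\boldsymbol{\pi}=\bigl(0,\tfrac{4}{25},\tfrac{13}{50},\tfrac{3}{10},\tfrac{2}{5},\tfrac{3}{5},\tfrac{4}{5},1,\tfrac{3}{2},2,\tfrac{12}{5},r_7\bigr)
\]
(matching the one already used for $U,V$; the extra node $r_6$ isolates the interval $[r_6,r_7]$ where the larger bound $4\cdot 10^{-3}$ is allowed) and use the trivial $\lambda$-partition $(0,1)$. On each rectangle, re-expand $A(r,\lambda)=N_A(z,y)/D(z,y)$ under the affine change of variables $r=\ell_k(z)$, $\lambda=\frac12(1+y)$, obtaining polynomials in $(z,y)\in[-1,1]^2$ with rational coefficients. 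Apply Note~\ref{met2}(i) to bound $|N_A|$ from above and $|D|$ from below in the form
\[
|A(r,\lambda)|\leqslant \frac{\bar N_A(1,1)}{|D(0,0)|-\overline{D-D(0,0)}(1,1)},
\]
where the overline denotes taking absolute values of all coefficients. On the rectangles contained in $[0,r_6]\times[0,1]$ the target bound is $\tfrac{1}{2000}$; on $[r_6,r_7]\times[0,1]$ the target is $\tfrac{4}{1000}$. Continuity across the matching points $r_1,r_4,r_7$ is guaranteed by construction of $\hat g^+_j$, so only the two pointwise bounds on the open rectangles have to be checked by hand (or by \texttt{Maple}).

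Next, for $B$, the slight twist is that the target quantity is $B-1+\lambda+3\tilde Q^2$, not $B$ itself, and $\tilde Q=1/p_\ell$ is itself piecewise rational. Following the recipe used for $V$ in Lemma~\ref{lem:fund sys}, I would clear denominators by forming the piecewise polynomial
\[
P_B(r,\lambda):=p_\ell(r)^2\bigl[D(r,\lambda)\bigl(B(r,\lambda)-1+\lambda\bigr)+3\,D(r,\lambda)\bigr]\cdot\frac{1}{p_\ell(r)^2}\cdot p_\ell(r)^2
\]
which, after clearing, is a genuine polynomial in $(r,\lambda)$ with rational coefficients, on the two sub-partitions $r<r_3$ and $r>r_3$ (matching the definition of $\tilde Q$). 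On each rectangle of the same $(r,\lambda)$-partition as above (with $r_3$ added as a partition point), I then re-expand both $P_B$ and the denominator $p_\ell^2 D$ in $(z,y)$ and apply Note~\ref{met2}(i) to obtain
\[
\bigl|B(r,\lambda)-1+\lambda+3\tilde Q(r)^2\bigr|\leqslant\frac{\bar P_B(1,1)}{\,p_\ell(\ell_k(-1))^2\,(|D(0,0)|-\overline{D-D(0,0)}(1,1))\,},
\]
and check that this is $<\tfrac{3}{500}$ on every rectangle.

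The main obstacle is the lower bound on the Wronskian-like denominator $D(r,\lambda)$, which must be shown strictly positive (away from $0$) uniformly in $\lambda\in[0,1]$. This is the analogue of the bound \eqref{W bd} in Lemma~\ref{lem:fund sys 2}, but here $D$ also depends on $\lambda$, so the cubic-in-$y$ truncation of Note~\ref{met2}(i) enters essentially: one needs the constant term $D(0,0)$ in each rectangle to dominate the $\ell^1$ norm of the remaining coefficients by a comfortable margin. If it fails on some rectangle, one simply refines the $r$-partition at that rectangle until it holds, exactly as was done for $U,V$ in Lemma~\ref{lem:fund sys}. Once $D$ is under control, everything else is a finite sequence of polynomial multiplications in $\Q[r,\lambda]$ and bookkeeping of coefficients, executed by \texttt{Maple}/\texttt{Mathematica} exactly.
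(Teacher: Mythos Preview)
Your approach is essentially the same as the paper's: partition $[0,r_7]\times[0,1]$, re-expand numerator and denominator of the rational functions $A$ and $B-1+\lambda+3\tilde Q^2$ on each rectangle via Note~\ref{met2}, and bound the numerator above and the denominator (the Wronskian $D$) below. The paper uses a somewhat finer partition $\boldsymbol{\pi}=(0,\tfrac{7}{50},\tfrac{7}{25},r_1,\tfrac{9}{25},\tfrac{1}{2},\tfrac{18}{25},\tfrac{46}{50},r_4,\tfrac{7}{5},\tfrac{19}{10},\tfrac{11}{5},r_6,r_7)$ than the one you borrowed from Lemma~\ref{lem:fund sys}, but since you explicitly allow refinement where the crude bound fails, this is not a gap. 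One cosmetic point: your displayed definition of $P_B$ is garbled (the trailing $\cdot\frac{1}{p_\ell^2}\cdot p_\ell^2$ is a no-op and the bracket should read $D(B-1+\lambda)+3D/p_\ell^2$ before clearing); the intended polynomial is $p_\ell^2 D\,(B-1+\lambda)+3D$, which is what you actually bound in the next display.
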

\begin{proof}
We use a partition $\boldsymbol{\pi}=\({0}, \tfrac{7}{50}, \tfrac{7}{25}, r_1, \tfrac{9}{25}, \tfrac{1}{2}, \tfrac{18}{25}, \tfrac{46}{50}, r_{4},\tfrac{7}{5}, \tfrac{19}{10}, \tfrac{11}{5}, r_{6}, r_{7}\)$ and estimate above/below, by re-expansion,  the absolute value of the numerator (denominator, resp.). \end{proof}
 We rewrite the equation for $\delta$ (see beginning of \S\ref{S312}) in the form
\begin{multline*}
  -\delta''+(A(r)-2/r)\delta'+(B(r)+A(r)/r)\delta=R_1(r)+A(r)\delta'(r)\\
  +[B(r)-1+\lambda+3\tilde{Q}(r)^{2}+A(r)/r+3(Q(r)^{2}-\tilde{Q}(r)^{2})]\delta
  =:h_2
\end{multline*}

\zn {\bf Note.} In the following we write  $\|f\|$ for  $\sup_{[0,r_{7}]} |f|$.
\begin{lem}
  We have
  \begin{multline}
  |h_{2}(\delta,r)|\leqslant 10^{-4}\left[{33}(11 - 35 r + 34 r^2)/10+5\|\delta'\bigr\|+(82+5/r)\|\delta\|\right]\h_{02}\\
+10^{-4}\L(11-6r)/5+70\|\delta\|+5\|\delta'\| \J\h_{25}+10^{-4}\L(39-9r)/100+64 \|\delta\|+5\|\delta'\|\J\h_{56}\\+10^{-4}\L (39-9r)/{100}+78\|\delta\| +40\|\delta'\|\J\h_{{67}}
  \end{multline}
  \end{lem}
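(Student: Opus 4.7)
The proof is essentially an exercise in applying the triangle inequality to the explicit definition
\[
h_2(\delta,r)=R_1(r)+A(r)\delta'(r)+\Bigl[B(r)-1+\lambda+3\tilde Q(r)^2+\tfrac{A(r)}{r}+3\bigl(Q(r)^2-\tilde Q(r)^2\bigr)\Bigr]\delta(r),
\]
using the four already-established pointwise estimates: (a) the bound on $R_1$ from Lemma~\ref{lpr1}, (b) the bound $|A(r)|\leqslant \tfrac{1}{2000}\boldsymbol{\chi}_{06}+\tfrac{4}{1000}\boldsymbol{\chi}_{67}$ from Lemma~\ref{L4.4}, (c) the bound $|B(r)-1+\lambda+3\tilde Q(r)^2|<\tfrac{3}{500}$ from Lemma~\ref{L4.4}, and (d) the bound on $|3Q^2-3\tilde Q^2|$ from~\eqref{3q2}. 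My plan is to split the estimate in the obvious way
\[
|h_2(\delta,r)|\leqslant |R_1(r)|+|A(r)|\,\|\delta'\|+\Bigl(\tfrac{3}{500}+\tfrac{|A(r)|}{r}+3|Q(r)^2-\tilde Q(r)^2|\Bigr)\|\delta\|,
\]
and then insert the four bounds subinterval by subinterval.

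The partition has already been set up so that the stated constants $5$, $70$, $64$, $78$, $40$ come out cleanly. Concretely: on $[0,r_2)$ one takes $|A|\leqslant \tfrac{5}{10^4}$ (giving the $5\|\delta'\|$ term), $|A|/r\leqslant \tfrac{5}{10^4 r}$, and $3|Q^2-\tilde Q^2|\leqslant \tfrac{11}{5000}e^{-3r}\leqslant \tfrac{22}{10^4}$, so adding $\tfrac{3}{500}=\tfrac{60}{10^4}$ gives the coefficient $(82+5/r)10^{-4}$ in front of $\|\delta\|$, while the stand-alone term is just the bound for $R_1$ on $\boldsymbol{\chi}_{02}$ from Lemma~\ref{lpr1}. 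On $[r_2,r_5)$ one uses $|A|\leqslant \tfrac{5}{10^4}$ again, but now $|A|/r\leqslant \tfrac{5}{10^4 r_2}\leqslant \tfrac{8}{10^4}$, and $3|Q^2-\tilde Q^2|\leqslant \tfrac{12}{5000(1+r)}e^{-13r/5}$ which is at most $\tfrac{2.5}{10^4}$ on this interval; summing with $\tfrac{60}{10^4}$ and rounding up yields~$70$. On $[r_5,r_6)$ the same bounds give $60+\tfrac{5}{10^4 r_5}+(\text{tiny})\leqslant 64\cdot 10^{-4}$, and on $[r_6,r_7]$ the jump in $|A|$ to $\tfrac{4}{1000}$ yields both the enlarged $40\|\delta'\|$ coefficient and $60+\tfrac{40}{10^4 r_6}\leqslant 78\cdot 10^{-4}$ for $\|\delta\|$. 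The stand-alone terms on $[r_2,r_5)$, $[r_5,r_6)$, $[r_6,r_7]$ are read off from Lemma~\ref{lpr1}, noting that $\boldsymbol{\chi}_{57}=\boldsymbol{\chi}_{56}+\boldsymbol{\chi}_{67}$.

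There is no real obstacle: every quantity bounded here is evaluated at an endpoint of a monotone exponential or rational expression, and each numerical sum is a verification of the kind $\tfrac{3}{500}+\tfrac{11}{5000}=\tfrac{82}{10^4}$. The only judgment call is keeping $1/r$ rather than absorbing it into a constant on $[0,r_2)$, which is necessary since $|A|/r$ is unbounded as $r\to 0^+$ while $|A|$ itself is not; for all subsequent intervals $r$ is bounded away from zero and $|A|/r$ can be absorbed into the constant coefficient. This explains the $(82+5/r)$ shape on $\boldsymbol{\chi}_{02}$ versus the purely constant coefficients on the other pieces, and completes the proof.
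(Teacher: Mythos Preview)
Your proposal is correct and follows essentially the same approach as the paper: the paper's proof reads ``This is obtained by combining \eqref{estr1}, Lemma~\ref{L4.4} and \eqref{3q2} and using the monotonicity of the coefficients containing exponentials,'' which is precisely the triangle-inequality-plus-endpoint-evaluation strategy you carry out in detail. One small caution: on $[r_2,r_5)$ your stated intermediate roundings $|A|/r\leqslant 8\cdot10^{-4}$ and $3|Q^2-\tilde Q^2|\leqslant 2.5\cdot10^{-4}$ would sum with $60$ to $70.5$, overshooting the target; you need the sharper values $125/17\approx 7.35$ and $\approx 2.44$ (both evaluated at $r=r_2$ by monotonicity) to land at $\approx 69.8<70$.
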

\begin{proof}
This is obtained by combining  \eqref{estr1}, Lemma~\ref{L4.4} and \eqref{3q2}
and using the monotonicity of the coefficients containing exponentials. \end{proof}

 \begin{lem}
   We have
$|g^{+}_{1}|\leq \tfrac{11}{10}\h_{01}+\tfrac{1}{2}\h_{{17}}
     \ \  \text{\rm and }     |g^{+}_{2}|\leqslant \tfrac{11}{10r}+\tfrac{17}{10} $
\end{lem}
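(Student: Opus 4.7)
The plan is to verify both bounds by the same partition-plus-re-expansion technique already used for Lemma~\ref{w1b}, applied to the polynomial representations of $\hat g_1^+$ and $\hat g_2^+$. Since by construction $g_j^+(r)=\hat g_j^+(r)/r$, the two desired estimates are equivalent to the polynomial inequalities
\[
|\hat g_1^+(r,\lambda)| \leqslant \tfrac{11r}{10}\boldsymbol{\chi}_{01}(r)+\tfrac{r}{2}\boldsymbol{\chi}_{17}(r), \qquad |\hat g_2^+(r,\lambda)| \leqslant \tfrac{11}{10}+\tfrac{17r}{10}
\]
for all $(r,\lambda)\in[0,r_7]\times[0,1]$. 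Both sides are now polynomial in $r$ (and $\lambda$ on the left), so we can treat them by Note~\ref{met2}.

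First I would introduce a partition of $[0,r_7]$ fine enough that on each subinterval the re-expansion of $\hat g_j^+(\ell_k(z),\tfrac12(1+y))$ in the variables $(y,z)\in[-1,1]^2$ has the property that after stripping off the cubic-in-$z$ and cubic-in-$y$ pure parts, the remaining coefficients sum to a quantity much smaller than the margin in the claimed bound. A natural starting choice, mirroring the partitions already used in this section (e.g.\ the one in the proof of Lemma~\ref{w1b} and Lemma~\ref{L4.4}), is $\boldsymbol{\pi}=(0,\tfrac{7}{50},\tfrac{7}{25},r_1,\tfrac{9}{25},\tfrac{1}{2},\tfrac{18}{25},\tfrac{46}{50},r_4,\tfrac{7}{5},\tfrac{19}{10},\tfrac{11}{5},r_6,r_7)$; the subdivisions can be refined locally if the residual sums are borderline on any block.

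Then on each subinterval $[\pi_k,\pi_{k+1}]$ I would apply Note~\ref{met2}(i) to the two auxiliary polynomials
\[
P_k^{(1,\pm)}(y,z) := \hat g_1^+(\ell_k(z),\tfrac12(1+y)) \pm \bigl(\tfrac{11}{10}\boldsymbol{\chi}_{01}(\ell_k(z))+\tfrac{1}{2}\boldsymbol{\chi}_{17}(\ell_k(z))\bigr)\,\ell_k(z)
\]
and the analogous $P_k^{(2,\pm)}$ for $\hat g_2^+$. For each of these the sign of $P_1$ (the cubic-in-$z$ plus cubic-in-$y$ part) on $[-1,1]^2$ is found by the elementary one-variable minimization in $z$ and in $y$ separately, and then the tail is controlled by the $\ell^1$-norm of its coefficients, as in Note~\ref{met1}(ii). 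Since the $C^1$-matching in the definition of $\hat g_j^+$ introduces corrections that are linear in $r$ with very small slope (the same phenomenon noted after Definition~\ref{Green Poly}), these corrections do not affect the leading cubic behavior and are absorbed into the tail.

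The main obstacle is bookkeeping rather than anything conceptual: the partition must straddle the matching points $r_1$ and $r_4$ (where the characteristic functions $\boldsymbol{\chi}_j$ switch, and where $C^1$-jumps are inserted), so one must check the two one-sided polynomial representations separately on each side of those points, and one must verify that the $C^1$-correction terms do not destroy the bound. The block containing $r_1$ is also the one where the piecewise bound for $|g_1^+|$ drops from $11/10$ to $1/2$, so the corresponding inequality is tightest there; if the re-expansion does not already give the margin, I would refine $\boldsymbol{\pi}$ near $r_1$. Near $r=0$ the inequality for $g_2^+$ is in principle singular, but in the polynomial formulation it simply becomes $|\hat g_2^+(0,\lambda)|\leqslant 11/10$, which is read off directly from the constant coefficient of the $j=1$ block listed in the appendix.
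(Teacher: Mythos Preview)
Your proposal is correct and follows essentially the same approach as the paper: convert to polynomial inequalities for $\hat g_j^+=r g_j^+$, then apply the partition-and-re-expand method of Note~\ref{met2} to the differences $\pm$(bound $-$ $\hat g_j^+$). The paper uses the coarser partition $\boldsymbol{\pi}=(0,r_1,\tfrac{3}{5},r_4,2,r_7)$ and, on $[0,r_1]$, works directly with $\tfrac{11}{10}\pm g_1^+$ (already polynomial there since $\hat g_1^+$ vanishes at $r=0$), but otherwise the two arguments are the same.
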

 \begin{proof}
We use the partition induced by $\boldsymbol{\pi}=(0,r_1,\f35,r_4,2,r_7)$ and apply the method in Note~\ref{met2}
 to $\h_{01}(\tfrac{11}{10}\pm g^{+}_{1})$, $\h_{17}(\tfrac{r}{2}\pm rg^{+}_{1})$ and $\tfrac{11}{10}+\tfrac{17r}{10}\pm rg^{+}_{2}$.
 \end{proof} \zn In addition, one has the following.
 \begin{lem}
   There are the bounds $| g^{+}_{1}{'}(r)|\leqslant 3\h_{02}+\h_{25}+\tfrac3{10}\h_{{57}};\ \ |g^{+}_{2}{'}(r)|\leqslant \frac{27}{10r^2}\h_{02}+\f{4}{r^2}\h_{25}+\frac{17}{10}\h_{{57}}$.
\end{lem}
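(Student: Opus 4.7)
The plan is to mirror the strategy of the preceding lemma, applying Note~\ref{met2} to a suitable partition of $[0,r_7]\times[0,1]$. Since $g_j^+(r)=\hat g_j^+(r)/r$ with $\hat g_j^+$ piecewise polynomial in $r$ and polynomial in $\lambda$, one has
\[
r^2 g_j^+{'}(r) = r\,\hat g_j^+{'}(r) - \hat g_j^+(r),
\]
which is again piecewise polynomial in $(r,\lambda)$. The three claimed bounds for $|g_2^+{'}|$ on $[0,r_2)$ and $[r_2,r_5)$ are of the form $C/r^2$; multiplying through by $r^2$ turns them into polynomial inequalities for $r\hat g_2^+{'}-\hat g_2^+$. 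The bound on $[r_5,r_7]$ is a constant and is handled directly. For $|g_1^+{'}|$ each claimed bound is a constant on its subinterval; near the origin one uses that $\hat g_1^+$ vanishes to first order at $r=0$ (built into the coefficients $d_{kl;1}^+$), so that the factor $1/r^2$ in $r^2 g_1^+{'}=r\hat g_1^+{'}-\hat g_1^+$ is cancelled by two factors of $r$ in the numerator, leaving a bona-fide polynomial.

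Concretely, I would take the same partition $\boldsymbol{\pi}=(0,r_1,\tfrac35,r_4,2,r_7)$ as in the previous lemma, together with the trivial partition $(0,1)$ in $\lambda$. On each cell I reparametrize by $r=\ell_k(z)$ and $\lambda=(1+y)/2$ with $|y|,|z|\leqslant 1$, so that the estimate to be verified takes the form
\[
M_k(\ell_k(z)) \;\pm\; \bigl(r^2 g_j^+{'}\bigr)\bigl(\ell_k(z),\tfrac{1+y}{2}\bigr) \;\geqslant\; 0 \qquad \text{on } [-1,1]^2,
\]
where $M_k$ is the relevant piece of the right-hand side (appropriately multiplied by $r^2$ and by $\ell_k(-1)^2$ where monotonicity of $1/r^2$ is invoked, to match the weakest value of $r$ on the subinterval). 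Each such polynomial inequality is discharged by the recipe of Note~\ref{met2}(i): extract the cubic truncation $P_1$ consisting of the pure-$z$ and pure-$y$ monomials of degree $\leqslant 3$, compute its extrema by solving two independent one-variable quadratics, and absorb the remaining monomials into their $\ell^1$ coefficient norm. At the matching points $r_1$, $r_4$ the $C^1$-corrections introduced in the construction of $\hat g_j^+$ are linear in $r-r_\ell$ and contribute explicit small terms already encoded in the coefficients of $\hat g_j^+$.

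The main obstacle here is the same bookkeeping issue that pervades the paper: the polynomials $\tilde g_j^+$ have degrees up to $15$ in each of $r$ and $\lambda$, so the re-expansion $P(\ell_k(z),(1+y)/2)$ produces a sizeable dense coefficient array. The estimates go through only because the partition is fine enough that the coefficients of monomials of total degree exceeding three shrink geometrically, leaving comfortable margins against the constants $3$, $1$, $\tfrac{3}{10}$ and the rational numerators $\tfrac{27}{10}$, $4$, $\tfrac{17}{10}$. Given the explicit tables of coefficients this is a purely mechanical, error-free verification in $\mathbb{Q}[r,\lambda]$, entirely parallel to the bounds on $|g_1^+|$, $|g_2^+|$, $A$, $B$, and $R_1$ already established, and introduces no new analytic ingredients.
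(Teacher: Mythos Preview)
Your approach is exactly the one the paper takes: reduce to polynomial inequalities for $r\hat g_j^+{'}-\hat g_j^+$ and discharge them cell by cell via Note~\ref{met2}. The one concrete discrepancy is the partition. You propose reusing $\boldsymbol{\pi}=(0,r_1,\tfrac35,r_4,2,r_7)$ from the previous lemma, but this does not contain the breakpoints $r_2$ and $r_5$ at which the claimed piecewise bounds change. For instance your cell $[\tfrac35,r_4]$ straddles $r_2$, so you would be forced to verify the stronger bound $|g_1^+{'}|\leqslant 1$ on all of $[\tfrac35,r_4]$, including the portion $[\tfrac35,r_2)$ where only $|g_1^+{'}|\leqslant 3$ is asserted; likewise $[r_4,2]$ straddles $r_5$, forcing $|g_1^+{'}|\leqslant\tfrac{3}{10}$ on $[r_4,r_5)$ where only $1$ is claimed. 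These strengthened inequalities need not hold, and in any case are not what is being proved.

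The paper instead uses $\boldsymbol{\pi}=(0,r_1,r_2,r_4,r_5,\tfrac{17}{10},\tfrac{21}{10},r_7)$, which aligns the partition with the breakpoints $r_2,r_5$ of the target bounds and further refines $[r_5,r_7]$ so that the cubic-plus-$\ell^1$ estimate of Note~\ref{met2} succeeds there. With this partition your argument goes through verbatim.
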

 \begin{proof}
Similar, using
$\boldsymbol{\pi}=({0}, r_{1}, r_2, r_{4}, r_{5}, \tfrac{17}{10}, \tfrac{21}{10}, r_{7}) $.
 \end{proof}

\subsubsection{The actual smooth solution on $[0,r_{7}]$}\label{up0}
Let
\begin{equation}
  \label{eq:eqH0}
  H_{0}(\delta)=g^{+}_{2}(r)\int_{0}^{r}\frac{g^{+}_{1}(s)h_{2}(\delta,s)}{W(s)}\, ds-g^{+}_{1}(r)\int_{0}^{r}\frac{g^{+}_{2}(s)h_{2}(\delta,s)}{W(s)}\, ds
\end{equation}
Clearly, we have $\delta=H_0(\delta)$ and  $\delta'=H'_0(\delta)$ where
\begin{equation} \label{eq:eqH0p}
   H'_{0}(\delta)=g^{+}_{2}{'}(r)\int_{0}^{r}\frac{g^{+}_{1}(s)h_{2}(\delta,s)}{W(s)}\, ds-g^{+}_{1}{'}(r)\int_{0}^{r}\frac{g^{+}_{2}(s)h_{2}(\delta,s)}{W(s)}\, ds
\end{equation}

\begin{lem}
\label{lpcon}
  There is the bound
  \begin{equation}
    \label{eq:estH0}
    |H_{0}(\delta)(r)|+|H'_{0}(\delta)(r)|/5\leqslant 1/1200+\|\delta\|/10+\|\delta'\|/80
  \end{equation}
 Thus $H$ is a contraction in the ball
 \begin{equation}
   \label{eq:sizeball}
   X:= \big\{ f\in C^{1}((0,r_{7}))\mid \|f\|_{X}\leqslant 1/1080\}
 \end{equation}
where
$\| f \|_{X}:=\sup_{r\in[0,r_{7})}\, \left(|f(r)|+\tfrac15|f'(r)|\right)$ (cf.\ also footnote~\ref{f2} on p.~\pageref{f2}).
\end{lem}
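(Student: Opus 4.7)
The plan is to establish the pointwise bound \eqref{eq:estH0} by inserting the estimates just obtained for $h_2(\delta,s)$, together with the pointwise bounds on $g_j^+$ and ${g_j^+}'$ from the previous two lemmas, into the variation-of-parameters formulas \eqref{eq:eqH0}, \eqref{eq:eqH0p}, and to estimate the resulting integrals interval by interval. The contraction in $X$ then follows from \eqref{eq:estH0} by an elementary arithmetic step. The overall structure closely mirrors the analogous argument for Lemma \ref{LQ} and Lemma \ref{hh12}.

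First, to put the variation-of-parameters formulas to work one needs a two-sided estimate for the Wronskian $W(s) = g_1^+(s){g_2^+}'(s) - g_2^+(s){g_1^+}'(s)$. Since $g_1^+, g_2^+$ solve \eqref{eq:unkn}, the Abel identity yields
\[
s^2 W(s) = s_0^2 W(s_0) \exp\Bigl(\int_{s_0}^s A(u)\,du\Bigr)
\]
for any reference point $s_0 \in (0, r_7)$. Choosing $s_0$ so that $W(s_0)$ can be computed directly from the polynomial representation of $g_j^+$, and invoking $\|A\|_\infty \leqslant 4/1000$ from Lemma \ref{L4.4}, one obtains two-sided bounds on $s^2|W(s)|$; in particular $1/W(s)$ contributes into the integrand as a pointwise-bounded rational factor.

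Next I decompose $h_2(\delta,s) = R_1(s) + A(s)\,\delta'(s) + C(s)\,\delta(s)$, where $C(s) := B(s) - 1 + \lambda + 3\tilde{Q}^2(s) + A(s)/s + 3(Q^2(s) - \tilde{Q}^2(s))$. Using Lemmas \ref{lpr1}, \ref{L4.4} and \eqref{3q2}, each term in $g_j^+(s) h_2(\delta,s)/W(s)$ is a piecewise rational/exponential function of $s$ multiplied by either $1$, $\|\delta\|$ or $\|\delta'\|$. On the partition $[0,r_1],[r_1,r_2],[r_2,r_4],[r_4,r_5],[r_5,r_7]$ the resulting $\int_0^r$-integrals are estimated by the re-expansion technique of Notes \ref{met1} and \ref{met2}; multiplying these by the prefactors $|g_j^+(r)|$ and $|{g_j^+}'(r)|/5$ furnished by the preceding two lemmas yields, uniformly in $\lambda \in [0,1]$ and $r \in [0, r_7]$, the three contributions $1/1200$, $\|\delta\|/10$ and $\|\delta'\|/80$ displayed in \eqref{eq:estH0}.

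Finally, the contraction claim is a consequence of \eqref{eq:estH0}. For $\delta \in X$ one has $\|\delta\|_\infty \leqslant \|\delta\|_X$ and $\|\delta'\|_\infty \leqslant 5\|\delta\|_X$, so inserting into \eqref{eq:estH0} and taking the sup in $r$ controls $\|H_0(\delta)\|_X$ by $1/1200$ plus a small multiple of $\|\delta\|_X$, showing that $H_0$ maps $X$ into itself. For the Lipschitz bound one applies the same chain of estimates to $H_0(\delta_1) - H_0(\delta_2)$; since $R_1$ is independent of $\delta$, the absolute term $1/1200$ cancels and only the linear contribution in $\|\delta_1 - \delta_2\|, \|\delta_1' - \delta_2'\|$ survives, yielding a Lipschitz constant strictly less than $1$. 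The main technical burden is the piecewise bookkeeping of the integrals, since both $g_j^+$ and the bound on $h_2$ are piecewise in $r$ and depend polynomially on the spectral parameter $\lambda$; this however reduces, exactly as in Lemma \ref{hh12}, to bounding polynomials with rational coefficients on $[-1,1]$ in the re-expansion variable after a fixed partition.
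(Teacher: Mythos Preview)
Your proposal is correct and follows essentially the same approach as the paper. The paper normalizes the Wronskian via the limit $r^2 W(r)\to 1$ as $r\to 0$ (yielding $1/|W(r)|<\tfrac{51}{50}\,r^2$) rather than a reference point $s_0$, works on the partition $[0,r_1),[r_1,r_2),[r_2,r_5),[r_5,r_6),[r_6,r_7)$ (the split at $r_6$ being dictated by the piecewise bound on $|A|$ in Lemma~\ref{L4.4}), and instead of re-expansions simply replaces $1/r,1/r^2$ by their left-endpoint values and verifies that the resulting low-degree polynomials are increasing, so the maxima occur at the right endpoints, with the worst constants coming from $[r_6,r_7)$.
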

\begin{proof}
   We crudely
 estimate the quantities $H_0$ and $H_0'$ by placing absolute values on all terms, and by using the bounds already calculated for $g^{+}_{1,2}$ etc; $|\delta|$
and $|\delta'|$ are estimated by their supremum norms, written as $\|\cdot\|$.
   Since $W(r):=g^{+}_1(r)g^{+}_2{'}(r)-g^{+}_1{'}(r)g^{+}_2(r)={r^{-2}}\exp\Big(\int_{0}^rA(s)ds\Big)$ $\geqslant r^{-2}\exp\Big(-\int_0^{r_{7}}|A(s)|ds\Big)$ (we note that $W(r) r^2\to 1$ as $r\to 0$) we have
$$1/|W(r)|<\frac{626}{625}r^2<\frac{51r^2}{50}$$
On the first interval, $[0,r_1)$ a direct calculation shows that $\tilde{H}_0(\delta)(r):=|H_{0}(\delta)|+|H_{0}(\delta)'(r)|/5$ is majorized by
    \begin{equation}
 \max\(  | rP_4(r)|+|rP_2(r)|\|\delta'\| +|P_3(r)|\|\delta\|\)\leqslant 2\cdot 10^{-6}({200+1300\|\delta\|+57\|\delta'\|})
    \end{equation}
where $P_j$ are polynomials of degree $j$,
 easily maximized since they are  increasing on this interval (all have positive coefficients except
$rP_4$; $(rP_4)'$ has positive coefficients after
we replace $r^2$ by $rr_1$ and $r^3$ by $r r_1^2$); thus $rP_4$ is increasing.

 Calculating $\tilde{H}_0(\delta)(r)$  for $r\in [r_1,r_{2})$,
we obtain a rational function; we first replace $1/r,1/r^2$  by
$1/r_1,1/r_1^2$ respectively (their coefficients are positive)  and we get
an expression of the form $P_5(r)+P_{3}(r)\|\delta\|+\tilde{P}_3(r)\|\delta'\|$,
with the same convention as above for the polynomials (different from
the $Ps$ on the previous interval). Once more, all polynomials except
$P_5$ have positive coefficients.  In $P_5$
we first replace $r^5$ (whose coefficient is positive)
by $r^4 r_2$; the derivative of the new polynomial has an explicit
positive minimum. Thus the maximum of $P_5$
is reached at $r=r_2$. Thus, by taking $r=r_2$, we get,
for $r\in [r_1,r_2)$,
\begin{equation}
\tilde{H}_0(\delta)(r)\leqslant \frac{19}{25000}+\frac{\|\delta\|}{125}+\frac{\|\delta'\|}{2500}
\end{equation}
On the interval $[r_{2},r_{5})$ we proceed in the same way, replacing $1/r,1/r^2$
by $1/r_2,1/r_2^2$ resp. This results in an expression of the
form $P_4+P_3\|\delta\|+\tilde{P}_3\|\delta'\|$ with the same
properties and conventions as above. Now the derivative of $P_4$ can
be minimized explicitly: it is positive and thus $P_4$ is maximal at the right-hand endpoint. We get
   the following majorization of $| \tilde{H}_0(\delta)(r)|$
\begin{equation}
\tilde{H}_0(\delta)(r)\leqslant
\frac{1}{1200}+\frac{\|\delta\|}{34}+\frac{\|\delta'\|}{515}\ \forall \ r\in [r_2,r_5)
\end{equation}
On the interval $[r_{5},r_{6})$ we replace $1/r$ by $1/r_5$ and
obtain an expression very similar to the one on $[r_{2},r_{5})$. It is dealt with
in the same way, whence
\begin{equation}
\tilde{H}_0(\delta)(r)\leqslant {1}/{1250}+{\|\delta\|}/{11}+{\|\delta'\|}/{164}
\ \forall \ r\in [r_5,r_6)
\end{equation}
Finally, on $[r_6,r_7)$, after replacing $1/r$ by $1/r_6$ in
the positive terms and by $1/r_7$ in the negative ones,  we reduce $\tilde{H}_0(\delta)(r)$ to the form $P_4+P_3\|\delta\|+\tilde{P}_3\|\delta'\|$. $P_3$ and $\tilde{P}_3$
are manifestly increasing and $\min_{[r_6,r_7)} P_4'>0$. Thus the maximum
is reached at $r_7$ and we get
\begin{equation}\label{lasteqH}
  \tilde{H}_0(\delta)(r)\leqslant  1/1200+{\|\delta\|}/{10}+{\|\delta'\|}/{80}
  \quad  \forall \ r\in [r_6,r_7)
\end{equation} Of all estimates, the worst bounds are in \eqref{lasteqH};
contractivity as well as preservation of the ball thus follow from  \eqref{lasteqH}.
\end{proof}

\begin{cor}\label{d1d}
The function $w^{+}_1$ differs from an actual solution $u_0$ of $L_+u=\lambda u$ by at most $1/1080$ in $\|\cdot\|_X$. Furthermore,
$$ |\delta'(r_{7})|\leqslant1/1800 $$
\end{cor}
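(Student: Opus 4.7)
The first assertion is immediate. Lemma~\ref{lpcon} establishes that $H_0$ is a contraction on the closed ball $X = \{f : \|f\|_X \leq 1/1080\}$, so Banach's fixed point theorem yields a unique fixed point $\delta_0 \in X$ with $\delta_0 = H_0(\delta_0)$. Setting $u_0 := w_1^+ + \delta_0$, the identity $\delta_0 = H_0(\delta_0)$, together with the construction of $H_0$ via variation of parameters for \eqref{eq:unkn}, is equivalent to $L_+ u_0 = \lambda u_0$ on $[0,r_7]$; so $u_0$ is the desired solution and $\|u_0 - w_1^+\|_X = \|\delta_0\|_X \leq 1/1080$.

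For the sharper estimate $|\delta_0'(r_7)| \leq 1/1800$, the crude consequence $|\delta_0'(r)| \leq 5\|\delta_0\|_X \leq 1/216$ is an order of magnitude too weak. Instead I would return to the explicit representation \eqref{eq:eqH0p} and specialize to $r=r_7$, writing $\delta_0'(r_7) = g_2^{+\prime}(r_7)\, I_1 - g_1^{+\prime}(r_7)\, I_2$, where $I_j := \int_0^{r_7} g_j^+(s)\, h_2(\delta_0,s)/W(s)\,ds$. The crucial observation is that at $r=r_7$ the prefactors are substantially smaller than the ones entering $H_0$ itself: from the derivative bounds on $g_j^+$ stated just before Lemma~\ref{lpcon} one has $|g_1^{+\prime}(r_7)| \leq 3/10$ and $|g_2^{+\prime}(r_7)| \leq 17/10$. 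I would then rerun the piecewise integration over $[0,r_1), [r_1,r_2), [r_2,r_5), [r_5,r_6), [r_6,r_7)$ exactly as in the proof of Lemma~\ref{lpcon}, but tracking $I_1, I_2$ individually rather than combining them into $|H_0| + |H_0'|/5$. Inserting $\|\delta_0\| \leq 1/1080$ and $\|\delta_0'\| \leq 1/216$ into the bound on $h_2$ and weighting by $|g_j^{+\prime}(r_7)|$ should produce an explicit rational expression comfortably below $1/1800$.

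The main obstacle here is arithmetic precision rather than anything structural. The target $1/1800$ is nearly an order of magnitude sharper than what one gets from Lemma~\ref{lpcon} by merely extracting the derivative term, so the calculation must cleanly separate the $H_0'$ contribution from the $H_0$ contribution and fully exploit the small numerical values of $|g_j^{+\prime}(r_7)|$ (as opposed to the larger $|g_j^+|$ used in the combined estimate of Lemma~\ref{lpcon}). Since all ingredients are rational and the integrals on each subinterval reduce to elementary antiderivatives, the computation is mechanical and exact in $\mathbb{Q}$, but must be performed carefully to close the margin.
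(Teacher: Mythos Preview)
Your approach is correct and essentially identical to the paper's: the first part is indeed just Lemma~\ref{lpcon}, and for the second part the paper likewise evaluates $|H_0'|$ at $r=r_7$ using the smaller derivative prefactors, obtaining the compact bound $|H_0'(\delta)(r_7)| \leqslant 1/2080 + \|\delta\|_X/17 \leqslant 1/2080 + 1/(17\cdot 1080) < 1/1800$. The paper simply reports this final inequality rather than spelling out the piecewise integral estimates you describe.
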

\begin{proof}
The first part is just \eqref{eq:sizeball}. The second part comes from
 direct substitution in $|H_0'|$ followed by
 Lemma~\ref{lpcon}:
$$|{H}_0(\delta)'(r_{7})|\leqslant 1/2080+\|\delta\|_X/17\leqslant  1/2080 +1/17\cdot 1/1080$$
as claimed.
\end{proof}

\begin{lem}\label{w1d}
  We have $
|w^{+}_1(r_{7})|<19/47,|w^{+}_1{'}(r_{7})|<4/19$.
\end{lem}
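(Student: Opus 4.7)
The plan is to reduce this to a one-variable polynomial bound in $\lambda \in [0,1]$ and then apply the standard re-expansion technique of Note~\ref{met1}. First, observe that $r_7 \in [r_5, r_7]$ lies in the third subinterval of the partition used in \eqref{w1pdef}, so the corrections arising from the $C^1$-matching at $r_5$ and $r_2$ do not contribute:
\[
w^{+}_1(r_7;\lambda) = \tilde w_1(r_7;\lambda), \qquad \partial_r w^{+}_1(r_7;\lambda) = \tilde w_1'(r_7;\lambda).
\]
Using \eqref{eq:qp1} with $j=3$, the quantity $\tilde w_1(r_7;\lambda)$ is an explicit polynomial in $\lambda$ of degree at most $M_3 \leqslant 15$, with rational coefficients obtained by substituting the specific value $z = z(r_7)$ from the top row of the relevant table into $\sum_{k,l} c_{kl;3}\lambda^k z^l$. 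The derivative $\tilde w_1'(r_7;\lambda)$ is computed similarly using $\partial_r z$.

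Next I would apply the change of variable $\lambda = \tfrac12(1+y)$ with $y\in[-1,1]$, as in Notes \ref{met1} and \ref{met2}, to obtain two polynomials
\[
P(y) := \tilde w_1(r_7; \tfrac12(1+y)), \qquad P_1(y) := \tilde w_1'(r_7; \tfrac12(1+y))
\]
in $\Q[y]$ of degree $\leqslant 15$. For each of these I would split off the cubic part $P_{\leqslant 3}(y)$, compute its extrema on $[-1,1]$ explicitly (a quadratic equation), and bound the remainder $P - P_{\leqslant 3}$ by the $\ell^1$-norm of its coefficients. The claimed bounds $|w^+_1(r_7)|<19/47$ and $|w^+_1{'}(r_7)|<4/19$ then follow by verifying
\[
\max_{|y|\leqslant 1} |P_{\leqslant 3}(y)| + \sum_{k\geqslant 4} |[y^k]P| < \tfrac{19}{47}, \qquad \max_{|y|\leqslant 1} |(P_1)_{\leqslant 3}(y)| + \sum_{k\geqslant 4} |[y^k]P_1| < \tfrac{4}{19}.
\]

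Since the entire argument reduces to extracting two rational vectors of coefficients from the appendix tables, performing one affine substitution in $\lambda$, and solving two quadratic equations, there is no genuine analytic obstacle. The only point requiring care is bookkeeping: one must use the correct third-block coefficients $c_{kl;3}$ and the correct normalizing substitution $z = z(r_7)$ so that the polynomial identities are exact in $\Q$. This is routine symbolic computation of the type already carried out repeatedly above (e.g.\ in Lemma~\ref{w1b}), and the margins $19/47 - |P|$ and $4/19 - |P_1|$ should be comfortably positive given the smallness of the coefficients $c_{kl;3}$ for large $k,l$.
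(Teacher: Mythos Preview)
Your proposal is correct and follows essentially the same approach as the paper: substitute $\lambda=\tfrac12(1+y)$ and apply the one-variable version of Note~\ref{met2} to the resulting polynomials. The only refinement worth noting is that on the third block $[r_5,r_7]$ the coefficients $c_{kl;3}$ vanish for $k>5$, so $w^{+}_1(r_7;\lambda)$ and $\partial_r w^{+}_1(r_7;\lambda)$ are in fact quintic in~$\lambda$ (as the paper states), not merely of degree~$\leqslant 15$; this makes the $\ell^1$-tail after the cubic part very short.
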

\begin{proof}
Viewed as quintic polynomials in $\lambda$,  $w^{+}_1(r_{7})$ and $w^{+}_1{'}(r_{7})$ are simply estimated as in Note~\ref{met2} (reduced here to one
variable $y$ where $\lambda=\frac12(1+y)$).
\end{proof}

\subsubsection{The quasi-solution bounded on $[r_{7},\infty)$}\label{upi}
 Let $\sigma=\sqrt{1-\lambda}$ and $\sigma_1=1+\sigma$.
We let $a_0={1413}/{64}$. In this region, we look for $u_{\infty}$ in the form $w^{+}_2+\delta_0$  where
\begin{equation}\label{eqw22}
  w^{+}_2(r)=\frac{e^{- \sigma  r}}{  r}\Big(1 +a_0f_1(r)\Big);\ \ \ \sigma f_1(r):=-\mathrm{Ei}(-2r)+e^{2 \sigma  r}\sigma_1\mathrm{Ei}(-2\sigma_1r)
\end{equation}
is close to an exponentially  decaying
solution of~\eqref{eqorL}.\footnote{  To obtain this approximation,
we rewrite  \eqref{eqorL} as $(-\frac{d^{2}}{dr^{2}}-\frac{2}{r}\frac{d}{dr}+1-\lambda)u=3Q^2u$, replace $Q$ by the leading term of $\tilde{Q}$, and iterate the associated integral equation.}
Then, $w^{+}_2$  satisfies
\begin{equation}
  \label{eq:eq48}
-w^{+}_2{''}-2w^{+}_2{'}/r+\(1-\lambda-a_0\frac{e^{-2r}}{r^{2}}\)w^{+}_2=- a_0^2\frac{e^{-2r-\sigma r}}{  r^{3}}f_1(r)=:R_2
\end{equation}

\begin{lem}\label{l313}  We have (i)
$|w^{+}_2(r)|<e^{-\sigma r}/r$ and (ii)
$|R_2(r)|<\f{2e^{-2r}}{25 r^3}$
\end{lem}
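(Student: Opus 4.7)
The plan is to reduce both statements to a single uniform bound on $f_1(r)$ for $\sigma := \sqrt{1-\lambda} \in [0,1]$ and $r \geq r_7 = 5/2$. Setting $E(x) := -\mathrm{Ei}(-x)$ and $G(\mu) := (1+\mu)\, e^{2\mu r}\, E(2(1+\mu)r)$, one has $G(0) = E(2r)$, so
$$\sigma f_1(r) \;=\; E(2r) - G(\sigma) \;=\; -\int_0^{\sigma} G'(\mu)\, d\mu.$$
Using $E'(x) = -e^{-x}/x$, direct differentiation gives $G'(\mu) = e^{2\mu r}\, E(2(1+\mu) r) \bigl[1 + 2r(1+\mu)\bigr] - e^{-2r}$, in which the $-e^{-2r}$ is precisely the cancellation of the leading term of $(1+x)E(x)$. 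This is the structural reason why $f_1$ is genuinely bounded despite the apparent $1/\sigma$ in its definition.

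Next I apply Lemma~\ref{N1}(i) with $N=2$ (or $N=3$ if the constants demand it) to bracket $E(2(1+\mu)r)$ above and below. Multiplying by $1+2r(1+\mu)$ and by $e^{2\mu r}$ produces the announced cancellation at the constant level, and a further cancellation at the $e^{-2r}/r$ level. After integrating in $\mu\in [0,\sigma]$ and dividing by $\sigma$ I obtain an explicit representation of the form
$$f_1(r) \;=\; -\frac{e^{-2r}}{4\sigma_1 r^2} \;+\; \frac{(\sigma+2)\,e^{-2r}}{4\sigma_1^2 r^3} \;+\; E_2(\sigma, r),$$
where $|E_2(\sigma, r)| \leq C\, e^{-2r}/r^4$ follows from the rigorous tail estimate $\int_{2r}^{\infty} e^{-s}/s^n \, ds \leq e^{-2r}/(2r)^n$. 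The leading term dominates the positive correction for $r\geq r_7$, so $f_1(r) < 0$ throughout, and $|f_1(r)|$ is tightly controlled by $e^{-2r}/r^2$.

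Statement (i) is then equivalent to $-2 < a_0 f_1(r) < 0$: the right inequality is the sign of $f_1$, and the left is easy since $a_0 |f_1(r)|$ is of order $10^{-3}$ at $r = r_7$ (monotonically decaying thereafter). For (ii), write $|R_2(r)| = a_0^2 e^{-(2+\sigma) r} |f_1(r)|/r^3$, so the claim reduces to $a_0^2 e^{-\sigma r}|f_1(r)| < 2/25$. Substituting the expansion above and using $e^{-\sigma r}\leq 1$, this becomes a finite-dimensional inequality in $(\sigma, r) \in [0,1]\times[r_7,\infty)$ that is verified at $r = r_7$ and then extended by monotonicity in $r$.

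The main obstacle is that the crude single-term bound $|f_1(r)| \leq e^{-2r}/(4\sigma_1 r^2)$ is just too weak to close the $2/25$ inequality in (ii) at the worst case $(\sigma, r) = (0, r_7)$. One therefore has to keep the next-order correction so as to exploit the favorable partial cancellation $1/(4\sigma_1) - (\sigma+2)/(4\sigma_1^2 r) > 0$ in the coefficient of $e^{-2r}/r^2$. Once this bookkeeping is in place, all remaining estimates reduce to routine algebra governed entirely by Lemma~\ref{N1}(i).
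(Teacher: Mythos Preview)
Your approach is valid but takes a rather different route from the paper's, and it is considerably more laborious.  The paper does \emph{not} expand $f_1$ asymptotically in $1/r$.  Instead it observes directly that
\[
f_1'(r)=2\sigma_1 e^{2\sigma r}\mathrm{Ei}(-2\sigma_1 r)+\frac{e^{-2r}}{r}>0
\]
(an immediate consequence of the one–term bound in Lemma~\ref{N1}), so $f_1$ is increasing with $\lim_{r\to\infty}f_1=0$; hence $f_1(r_7)\le f_1(r)<0$ for all $r\ge r_7$.  To handle the $\sigma$–dependence at $r=r_7$, the paper writes $f_1(r_7;\sigma)$ as the explicit integral $\int_{-\infty}^{-5}\frac{e^u(u+5)}{u(u-5\sigma)}\,du$, which is manifestly monotone in $\sigma$ with worst case $\sigma=0$, where $f_1(r_7;0)=6\,\mathrm{Ei}(-5)+e^{-5}>-1/6500$.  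Both (i) and (ii) then follow from this single numerical bound: $a_0/6500\approx 3.4\times 10^{-3}$ for (i), and $a_0^2/6500\approx 0.075<2/25$ for (ii).

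Your parametric integral $\sigma f_1=-\int_0^\sigma G'(\mu)\,d\mu$ is a nice device for resolving the apparent $1/\sigma$ singularity, and your two–term expansion is correct.  The concern is quantitative: at $r=r_7=5/2$ the asymptotic series for $(1+x)E(x)-e^{-x}$ has terms $1/(4r^2),\,1/(2r^3),\,9/(8r^4),\,3/r^5,\ldots$ of sizes $0.040,\,0.032,\,0.029,\,0.031,\ldots$, so the ``routine'' tail bound $|E_2|\le C e^{-2r}/r^4$ needs $C$ worked out with some care before the inequality $a_0^2|f_1|<2/25$ closes (the margin is only about $10\%$).  You also invoke monotonicity in $r$ to extend from $r=r_7$, but your expansion does not by itself give that; you would still need something like the paper's $f_1'>0$, at which point the expansion becomes unnecessary.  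In short: your plan works, but the paper's monotonicity argument replaces your asymptotic bookkeeping by a two–line computation.
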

\begin{proof}
By straightforward algebra, using the bounds in \eqref{Ei1 bd} one obtains
\begin{equation}
  \label{eq:eqdiff}
  f'_1(r)=2\sigma_1 e^{2\sigma r}\mathrm{Ei}(-2\sigma_1r)+ \frac{ e^{-2 r}}{r}>0
\end{equation}
Since $\lim_{r\to\infty}f_1(r)=0$, we clearly have $f_1(r_7)\leqslant f_1(r)<0$.
Now $$f_1(r_7;\sigma)=\int_{-\infty}^{-5}\f{e^{u}(u+5)}{u(u-5\sigma)}\, du\geqslant   f_1(r_7;0)=6\mathrm{Ei}(-5)+e^{-5}>-\f{1}{6500}$$
and the estimates follow.
\end{proof}

\subsubsection{The equation for $\delta_0$} The difference $u_{\infty}-w_2^+=\delta_0$ satisfies the equation
\begin{equation}
  \label{eq:eq50}
  -\delta_0''-2\delta_0'/r=R_2+(\lam-1+a_0\frac{e^{-2r}}{r^{2}})\delta_0+\left(3Q^{2}-a_0\frac{e^{-2r}}{r^{2}}\right)(w^{+}_2+\delta_0)=:h_{3}(r)
\end{equation}

\begin{Note}\label{NdQ}
  One has
(i)  \begin{equation*}
    |Q^{2}(r)-\tilde{Q}^{2}(r)|\leq(2\tilde{Q}(r)+|Q(r)-\tilde{Q}(r)|)|Q(r)-\tilde{Q}(r)|\leqslant 4\cdot10^{-4}\frac{e^{-2r}}{r^{2}}
  \end{equation*}
(ii)
\begin{equation*}
  \label{eq:eq51}
|3Q^{2}(r)-a_0\frac{e^{-2r}}{r^{2}}|\leqslant|3Q^{2}(r)-3\tilde{Q}^{2}(r)|+\left|a_0\frac{e^{-2r}}{r^{2}}-3\tilde{Q}^{2}(r)\right|<\frac{1}{20}\frac{e^{-2r}}{r^{2}}
\end{equation*}
\end{Note} Indeed,
(i) follows from \eqref{eq:bdQ2} and \eqref{dQ}. (ii) uses \eqref{eq:bdQ2} and \eqref{dQ} and (i).

\medskip

\noindent
Now using \eqref{eq:eq50}, Lemma~\ref{l313}, and Note~\ref{NdQ} we have
$$|h_3(r)|\leqslant |R_2|+\left|3Q^{2}-a_0\frac{e^{-2r}}{r^{2}}\right||w^{+}_2(r)|+|\lam-1+3Q^{2}||\delta_0(r)| <\f{13e^{-2r}}{100r^3}+|\de_0(r)|$$ Looking  for exponentially decreasing solutions,  we write \eqref{eq:eq50}
 in the integral form
\begin{equation}
  \label{eq:eqd2}
  \delta_0 =H_1 (\delta_0):=-\int_r^{\infty}\f{dt}{t^2}\int_t^\infty s^2 h_3(s)\, ds
\end{equation}

\subsubsection{The actual solution   on $[r_{7},\infty)$}
\begin{lem}\label{l315}
$H_1$ is contractive in the ball $\{f\mid \|f\|\leqslant 13/300\}$ in the Banach space $\{f\mid \|f\|=\sup_{r>r_{7}}r^3e^{2r}|f(r)|<\infty\}$.
\end{lem}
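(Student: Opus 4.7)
The plan is to show both ball-preservation and contractivity by direct estimation of the iterated integral defining $H_1$, using the a priori bound on $h_3$ already derived in the text just above the lemma.

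For ball-preservation, suppose $\|\delta_0\|\leqslant 13/300$, so $|\delta_0(s)|\leqslant \tfrac{13}{300}\,s^{-3}e^{-2s}$. Substituting into the bound
\[
|h_3(s)|\leqslant \tfrac{13}{100}\,s^{-3}e^{-2s}+|\delta_0(s)|
\]
gives $|h_3(s)|\leqslant \tfrac{13}{75}\,s^{-3}e^{-2s}$, since $\tfrac{1}{100}+\tfrac{1}{300}=\tfrac{1}{75}$. Then the inner integral satisfies
\[
\int_t^\infty s^{2}|h_3(s)|\,ds\leqslant \tfrac{13}{75}\int_t^\infty \tfrac{e^{-2s}}{s}\,ds\leqslant \tfrac{13}{75t}\cdot\tfrac{e^{-2t}}{2}=\tfrac{13\,e^{-2t}}{150\,t},
\]
using $s\geqslant t$ to pull out $1/t$. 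Inserting this into the outer integral and again using $t\geqslant r$ to pull out $1/r^{3}$ yields
\[
|H_1(\delta_0)(r)|\leqslant \tfrac{13}{150}\int_r^\infty \tfrac{e^{-2t}}{t^3}\,dt\leqslant \tfrac{13}{150}\cdot\tfrac{e^{-2r}}{2r^3}=\tfrac{13}{300}\cdot\tfrac{e^{-2r}}{r^3},
\]
so $\|H_1(\delta_0)\|\leqslant 13/300$ as required.

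For contractivity, observe that the $R_2$ term and the $w^{+}_2$ term in \eqref{eq:eq50} are independent of $\delta_0$, so $h_3$ is affine in $\delta_0$. Writing $\delta:=\delta_1-\delta_2$, one has
\[
h_3(\delta_1,r)-h_3(\delta_2,r)=(\lambda-1+3Q^{2}(r))\,\delta(r),
\]
and on $[r_7,\infty)$ the factor is bounded by $1$ in absolute value: since $\lambda\in[0,1]$ and $Q(r)$ satisfies \eqref{eq:bdQ2}, one has $|\lambda-1+3Q^{2}(r)|\leqslant \max(1,3Q^{2}(r_7))\leqslant 1$. Hence, with $|\delta(s)|\leqslant \|\delta\|\,s^{-3}e^{-2s}$, exactly the same two-step integration as above (but with coefficient $1$ rather than $13/75$ in front) gives
\[
|H_1(\delta_1)(r)-H_1(\delta_2)(r)|\leqslant \|\delta\|\int_r^\infty\!\frac{dt}{t^2}\int_t^\infty\!\frac{e^{-2s}}{s}\,ds\leqslant \tfrac{\|\delta\|}{2}\int_r^\infty\!\frac{e^{-2t}}{t^3}\,dt\leqslant \tfrac{\|\delta\|}{4}\,\tfrac{e^{-2r}}{r^3}.
\]
Therefore $\|H_1(\delta_1)-H_1(\delta_2)\|\leqslant \tfrac14\|\delta_1-\delta_2\|$, which is a strict contraction.

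Combining ball-preservation with the $\tfrac14$-Lipschitz bound gives the result via the Banach fixed point theorem. The main (and really the only) potential obstacle is carefully tracking the two nested integrals with the weight $s^{-3}e^{-2s}$; the monotonicity trick of pulling $1/t$ and $1/r^{3}$ outside the respective integrals is what makes the constants come out cleanly enough that $1/4<1$.
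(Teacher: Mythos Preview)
Your proof is correct and follows essentially the same approach as the paper: both arguments estimate the iterated integral by pulling monotone factors $1/s$ and $1/t$ (equivalently, using $s/t\geqslant 1$ and $t\geqslant r$) outside, arriving at the same Lipschitz constant~$1/4$. The only cosmetic difference is that the paper packages ball-preservation and contractivity into the single affine bound $\|H_1(\delta_0)\|\leqslant 13/400+\|\delta_0\|/4$ (which suffices since $h_3$ is affine in~$\delta_0$), whereas you treat the two properties separately.
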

\begin{proof}
  Noting that $(s/t)>1$ and $t>r$ we write
  \begin{equation}
       |H_1 (\delta_0)|(r) \leqslant \f{1}{r^3}
\int_r^{\infty}dt\int_t^\infty s^3h_3(s)ds \leqslant \f{e^{-2r}}{r^3}\(\frac{13}{400}+\frac{\|\de_0\|}{4}\)
  \end{equation}
  whence the claim.
\end{proof}

\noindent Hence $\|H_1(\delta_0)\|\leqslant 13/400+\|\delta_0\|/{4}$ and the claim follows.
\begin{cor}\label{crbd}
  We have
  \begin{equation}
  \begin{split}
    \label{eq:esfin}
    |u_2(r_{7})-w^{+}_2(r_{7})| &=  |\delta_2(r_{7})|\leqslant \frac{13}{300}\frac{2^3}{5^3} {e^{-5}}< 2\cdot 10^{-5} \\
 |u'_2(r_{7})-w'^{+}_2(r_{7})| &=  |\delta'_2(r_{7})|<4\cdot10^{-5}
\end{split}  \end{equation}
\end{cor}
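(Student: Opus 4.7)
The plan is to extract both estimates directly from the fixed point produced by Lemma~\ref{l315} and the integral representation~\eqref{eq:eqd2}. Since the unique solution $\delta_0$ of $\delta_0 = H_1(\delta_0)$ in the ball $\{\|f\|\leqslant 13/300\}$ satisfies $\|\delta_0\| = \|H_1(\delta_0)\| \leqslant \tfrac{13}{400} + \tfrac14 \|\delta_0\|$, we immediately get the sharper bound $\|\delta_0\| \leqslant \tfrac{13}{300}$, i.e.\ $|\delta_0(r)|\leqslant \tfrac{13}{300}\,e^{-2r}/r^3$ for every $r\geqslant r_7$. Evaluating at $r=r_7=5/2$ yields
\[
|\delta_0(r_7)| \leqslant \frac{13}{300}\cdot\frac{8}{125}\, e^{-5} < 2\cdot 10^{-5},
\]
which is the first half of~\eqref{eq:esfin}.

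For the derivative bound, the first step is to differentiate~\eqref{eq:eqd2}: using the fundamental theorem of calculus,
\[
\delta_0'(r) \;=\; \frac{1}{r^2}\int_r^{\infty} s^2 h_3(s)\,ds.
\]
Next, combine the pointwise bound $|h_3(s)| \leqslant \tfrac{13}{100}\,e^{-2s}/s^3 + |\delta_0(s)|$ established just before~\eqref{eq:eqd2} with the $\|\delta_0\|$ bound above to get
\[
s^2|h_3(s)| \;\leqslant\; \Bigl(\tfrac{13}{100}+\tfrac{13}{300}\Bigr)\frac{e^{-2s}}{s} \;=\; \frac{13}{75}\,\frac{e^{-2s}}{s}.
\]
Then use the elementary tail estimate $\int_r^{\infty} e^{-2s}/s\, ds \leqslant e^{-2r}/(2r)$ (immediate by monotonicity of $1/s$) to obtain
\[
|\delta_0'(r)| \;\leqslant\; \frac{1}{r^2}\cdot\frac{13}{75}\cdot\frac{e^{-2r}}{2r} \;=\; \frac{13}{150}\,\frac{e^{-2r}}{r^3}\qquad\forall\, r\geqslant r_7.
\]
Evaluating again at $r_7=5/2$ gives $|\delta_0'(r_7)| \leqslant \tfrac{13}{150}\cdot\tfrac{8}{125}\,e^{-5} < 4\cdot 10^{-5}$, as claimed. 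Since $\delta_2(r_7)=\delta_0(r_7)$ and $\delta_2'(r_7)=\delta_0'(r_7)$ by construction of $u_2=u_\infty$, both inequalities in~\eqref{eq:esfin} follow.

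There is no real obstacle here; the only point requiring a little care is the derivative estimate, where one must remember that differentiating the outer integral in $H_1$ only produces the $1/r^2$ factor (not a $1/r^3$), and then use the $\int e^{-2s}/s$ tail bound to recover the missing factor of $1/r$. The numerics are then comfortable: $\tfrac{13}{300}\cdot\tfrac{8}{125}\,e^{-5}\approx 1.87\cdot 10^{-5}$ and $\tfrac{13}{150}\cdot\tfrac{8}{125}\,e^{-5}\approx 3.74\cdot 10^{-5}$, each safely below the stated thresholds.
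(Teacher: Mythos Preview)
Your proof is correct and essentially identical to the paper's: the first inequality is read off directly from the ball in Lemma~\ref{l315}, and for the derivative you differentiate the outer integral in~\eqref{eq:eqd2} and bound the resulting single integral. The only cosmetic difference is that the paper uses $s^{2}/r^{2}\leqslant s^{3}/r^{3}$ (from $s\geqslant r$) before integrating, whereas you keep $s^{2}/r^{2}$ and use $1/s\leqslant 1/r$ afterwards; both yield the same constant $13/150$ and the same final bound.
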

\begin{proof}
  Only $\delta'$ needs to be estimated; this is immediate:
$$|\delta_0'(r)|=|(H_{1}\delta_0)'(r)|\leqslant \f{1}{r^3}\int_r^\infty s^3h_3(s)\, ds\leqslant \f{e^{-2r}}{r^3}
\(\frac{13}{200}+\frac{\|\de_0\|}{2}\)  <4\cdot10^{-5}$$
and we are done.
\end{proof}

\subsubsection{The Wronskian of the well-behaved quasi-solutions}
Eq.~\eqref{eqw22} implies
\begin{multline}
  w^{+}_{2}(r_{7})=\frac{2e^{-r_{7} \sigma  }}{5 \sigma  }( \sigma  -a_0\mathrm{Ei}(-5)+a_0e^{5 \sigma  }\sigma_1\mathrm{Ei}(-5\sigma_1));\ \
w^{+}_{2}{'}(r_{7})=\\\frac{2e^{-r_{7} \sigma  }}{25 \sigma  }(\sigma (2a_0e^{-5}-2-5\sigma)  +\mathrm{Ei}(-5)a_0(2+5  \sigma)+a_0e^{5 \sigma  }(5\sigma ^2+3 \sigma -2)\mathrm{Ei}(-5\sigma_1))
\end{multline}
\begin{lem}\label{L14}
(i)   Let $z=2\sigma -1$. The functions  $w^{+}_{1}$ and $w^{+}_{1}{'}$ satisfy the estimates
{\small \begin{multline}\label{tsz}
\left|\frac{79}{691}-\frac{737 }{2580}\frac{z}{2}+\frac{147 }{412}\frac{z^2}{2^2}-\frac{49 }{165}\frac{z^3}{2^3}+\frac{103 }{556}\frac{z^4}{2^4}-\frac{131 }{1419}\frac{z^5}{2^5}+\frac{13 }{340}\frac{z^6}{2^6}-\frac{6 }{445}\frac{z^7}{2^7}-w^{+}_{2}(r_{7})\right|<3\cdot10^{-5}\\
\left|-\frac{52}{509}+\frac{26 }{185}\frac{z}{2}-\frac{10 }{313}\frac{z^2}{2^2}-\frac{80 }{857}\frac{z^3}{2^3}+\frac{41 }{307}\frac{z^4}{2^4}-\frac{41 }{392}\frac{z^5}{2^5}+\frac{66 }{1109}\frac{z^6}{2^6}-\frac{13 }{480}\frac{z^7}{2^7}-w^{+}_{2}{'}(r_{7})\right|<6\cdot10^{-5}
\end{multline}}
(ii) We have $|w^{+}_2(r_{7})|\leq 21/50$ and $|w^{+}_2{'}(r_{7})|<19/100$, which will be used in estimating the Wronskian of the two possible eigenfunctions.
\end{lem}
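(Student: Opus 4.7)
My plan is to prove (i) by Taylor expansion of $w^{+}_{2}(r_{7})$ and $w^{+}_{2}{'}(r_{7})$ in the variable $z=2\sigma-1$ around $z=0$ (i.e.\ $\sigma=1/2$), and then derive (ii) by estimating the displayed degree-$7$ polynomials on $|z|\leqslant 1$ via the method of Note~\ref{met2}, adding the errors from~(i).

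First I would rewrite the expressions from \eqref{eqw22} at $r=r_{7}=5/2$. The constant $\mathrm{Ei}(-5)$ and the factor $e^{-5\sigma/2}/\sigma$ are elementary; the only non-elementary $\sigma$-dependence comes through
\[
F(\sigma):=e^{5\sigma}\sigma_{1}\,\mathrm{Ei}(-5\sigma_{1}),\qquad \sigma_{1}=1+\sigma.
\]
From $\mathrm{Ei}'(x)=e^{x}/x$ one gets the recursion
\[
F'(\sigma)=\Bigl(5+\frac{1}{1+\sigma}\Bigr)F(\sigma)+e^{-5},
\]
which computes all higher derivatives $F^{(k)}$ inductively as $F$ times a rational function of $\sigma$ plus a polynomial in $1/(1+\sigma)$ times $e^{-5}$. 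On $\sigma\in[0,1]$ the inequality \eqref{Ei1 bd} yields $|F(\sigma)|\leqslant e^{-5}$ up to small corrections, so the recursion gives explicit uniform bounds on each $F^{(k)}$, in particular on $|F^{(8)}(\sigma)|$.

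Second, I would Taylor-expand $w^{+}_{2}(r_{7})$ and $w^{+}_{2}{'}(r_{7})$, viewed as functions of $z$, around $z=0$ to order $7$. The exact Taylor coefficients are specific real combinations of $e^{-5/4}$, $\mathrm{Ei}(-5)$ and the numbers $F^{(k)}(1/2)$. The rational numbers appearing in \eqref{tsz}, such as $79/691$ or $-737/2580$, are truncated continued-fraction rationalizations of these exact coefficients; I would verify in exact arithmetic that each listed rational lies within, say, $2\cdot 10^{-6}$ of the corresponding exact Taylor coefficient, so that on $|z|\leqslant 1$ the cumulative coefficient-replacement error is bounded by a known small multiple of $10^{-5}$. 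Taylor's remainder formula then controls the truncation error by $\sup_{\sigma\in[0,1]}|(d/dz)^{8}w^{+}_{2}(r_{7})|/8!$, which by the derivative recursion above is in turn small. Summing the two error contributions yields the bounds $3\cdot 10^{-5}$ for $w^{+}_{2}(r_{7})$ and the larger $6\cdot 10^{-5}$ for $w^{+}_{2}{'}(r_{7})$ (the latter being larger because the $r$-derivative multiplies the coefficients by their index and introduces an extra $\sigma$ in the denominator).

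Finally, for (ii) I would apply Note~\ref{met2} in its one-variable form to the explicit degree-$7$ polynomials in $z$: extract the sub-polynomial of degree $\leqslant 3$, maximize/minimize it on $[-1,1]$ by elementary calculus, and add the $\ell^{1}$-norm of the remaining coefficients together with the error from (i). The resulting estimates comfortably produce $|w^{+}_{2}(r_{7})|\leqslant 21/50$ and $|w^{+}_{2}{'}(r_{7})|<19/100$. The main obstacle is the rigorous verification in step two that the displayed rationals are sufficiently accurate approximations of the exact Taylor coefficients, since the total error budget is tight; however this is a purely mechanical computation in $\mathbb{Q}$ after bounding $\mathrm{Ei}(-5)$ and $F^{(k)}(1/2)$ via \eqref{Ei1 bd} and the recursion, and does not involve any numerical approximation beyond keeping enough digits.
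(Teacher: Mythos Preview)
Your overall strategy for (i)---Taylor-expand at $\sigma=\tfrac12$ and bound the remainder---matches the paper's, and your argument for (ii) is correct (the paper uses Note~\ref{met1} with the partition $\boldsymbol{\pi}=(0,\tfrac12,1)$, but the unpartitioned estimate already suffices here). However, there is a real gap in your remainder bound for~(i).

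The recursion you wrote for $F(\sigma)=e^{5\sigma}\sigma_1\,\mathrm{Ei}(-5\sigma_1)$ is correct and does control $\sup_{[0,1]}|F^{(k)}|$. But $w^{+}_2(r_7;\sigma)$ is not $F$: by~\eqref{eqw22} it equals $\tfrac{2}{5}e^{-5\sigma/2}\,\sigma^{-1}\bigl(\sigma-a_0\mathrm{Ei}(-5)+a_0F(\sigma)\bigr)$, which carries an explicit factor $1/\sigma$. The singularity at $\sigma=0$ is removable (since $F(0)=\mathrm{Ei}(-5)$), so the function is analytic on $[0,1]$; but Leibniz applied to the product $\sigma^{-1}\cdot(\ldots)$ produces terms of order $k!/\sigma^{k+1}$ that cancel only after resummation. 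Thus your $F$-recursion alone does \emph{not} give a usable bound on $(d/d\sigma)^8 w^{+}_2(r_7)$ uniformly down to $\sigma=0$, which is exactly the endpoint $\lambda=1$ that cannot be excluded. The same issue, more acutely, affects $w^{+\prime}_2(r_7)$ (you allude to ``an extra $\sigma$ in the denominator'' but do not treat it).

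The paper deals with precisely this point: it sets $w^{+}_3(\sigma)=\sigma\,w^{+}_2(r_7;\sigma)$ and writes
\[
w^{+}_2(r_7;\sigma)=\int_0^1 (w^{+}_3)'(\sigma s)\,ds,
\]
which removes the $1/\sigma$ and reduces everything to Taylor-expanding an elementary integrand (together with $\mathrm{Ei}(-5-5\sigma s)=\mathrm{Ei}(-10)+\int_{5\sigma s}^{5}\tfrac{e^{-5-u}}{-5-u}\,du$) with Cauchy remainders, then integrating term by term. Your approach becomes rigorous once you insert this device; equivalently, write $\tfrac{F(\sigma)-F(0)}{\sigma}=\int_0^1 F'(\sigma t)\,dt$, whose $k$-th $\sigma$-derivative is $\int_0^1 t^k F^{(k+1)}(\sigma t)\,dt$, now controlled by your recursion. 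Without this step the Lagrange-remainder argument as written does not go through on all of $\sigma\in[0,1]$.
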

\begin{proof}
The polynomials in \eqref{tsz} are simply truncates
of the Taylor series of the functions involved. These, and the estimates, are obtained as follows. Denoting $w^{+}_3(\sigma)=\sigma w^{+}_2(r_{7};\sigma)$ we have $w^{+}_2(r_{7};\sigma)=\int_{0}^{1}w^{+}_3{'}(\sigma s)\,ds$. We then approximate $w^{+}_3{'}(\sigma s)$ using a Taylor polynomial around $\sigma =\tfrac{1}{2}$ with rigorous bounds for the remainder, using Cauchy's formula. This
can be obtained by expanding the exponential functions and expanding
the integrands and then integrating the series term by term. For
example, $\f{e^{-5-u}}{-5-u}$ differs from the sum of
the first 12 terms of its Taylor series expansion in $u$ at $u=\f52$
by no more than $2\cdot 10^{-7}$ (this follows from by Cauchy's integral
formula).  Hence $\mathrm{Ei}(-5-5\sigma s)=\mathrm{Ei}(-10)+\int_{5\sigma s}^{5}\f{e^{-5-u}}{-5-u}\, du$
differs from the integral of the Taylor polynomial by no more than
$10^{-6}$. After obtaining a polynomial approximation of $w^{+}_2(r_7)$ in
this way, we re-expand it in $z=2\sigma-1$, and it so turns out
that the coefficients of $z^k$ for $k>7$ are manifestly
small. Discarding them and using rational approximations of the remaining
coefficients we obtain the polynomial in Lemma~\ref{L14}. The result
for $w^{+}_2{'}(r_7)$ follows in a similar way. The proof of (ii) follows from (i) using Note~\ref{met1} with the partition ${\boldsymbol{\pi}}=(0,1/2,1)$.
\end{proof}

\begin{cor}\label{C15} Let  $W[w^{+}_1,w^{+}_2](r_{7})=w^{+}_{1}(r_{7})w^{+}_{2}{'}(r_{7})-w^{+}_{2}(r_{7})w^{+}_{1}{'}(r_{7})$. We have
  \begin{equation}
    \label{eq:firstW}
    \sup_{\lambda\in[0,1]}\Big|W[w^{+}_1,w^{+}_2]\Big| \geqslant 48\cdot  10^{-4}
  \end{equation}
\end{cor}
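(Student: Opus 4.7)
The plan is to write the Wronskian $W[w^{+}_1, w^{+}_2](r_7)$ as an explicit rational polynomial in a single real variable on $[-1,1]$ plus a small controlled error, and then to bound that polynomial from below by the partition-and-re-expansion method of Note~\ref{met1}; I interpret the statement in the standard way as an $\inf_{\lambda\in[0,1]}$ lower bound (consistent with the role of this corollary in proving Proposition~\ref{P3.2}).

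First I would change variables by setting $\sigma=(1+z)/2$, so that $\lambda=1-\sigma^2=(3-2z-z^2)/4$ is a quadratic bijection between $z\in[-1,1]$ and $\lambda\in[0,1]$. By the construction in~\S\ref{S312}, $w^{+}_1(r_7)$ and $w^{+}_1{'}(r_7)$ are rational polynomials in $\lambda$ of degree $\le 5$ (cf.\ Lemma~\ref{w1d}), and hence become rational polynomials $P_3(z), P_4(z)$ in $z$ of degree $\le 10$ after this substitution. By Lemma~\ref{L14}(i), one has $w^{+}_2(r_7)=P_1(z)+e_1(z)$ and $w^{+}_2{'}(r_7)=P_2(z)+e_2(z)$, where $P_1, P_2$ are the explicit degree-$7$ rational polynomials displayed in~\eqref{tsz} and $|e_1(z)|\le 3\cdot 10^{-5}$, $|e_2(z)|\le 6\cdot 10^{-5}$. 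Then
\[
W[w^{+}_1,w^{+}_2](r_7) \;=\; W_0(z)+E(z), \qquad W_0(z):=P_3(z)P_2(z)-P_1(z)P_4(z),
\]
where $W_0\in\mathbb{Q}[z]$ has degree $\le 17$ and, using the sup-bounds from Lemma~\ref{w1d},
\[
|E(z)| \;\le\; |w^{+}_1(r_7)|\,|e_2(z)|+|e_1(z)|\,|w^{+}_1{'}(r_7)| \;\le\; \tfrac{19}{47}\cdot 6\cdot 10^{-5}+3\cdot 10^{-5}\cdot\tfrac{4}{19} \;<\; 3\cdot 10^{-5}.
\]

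What remains, and this is the real work, is to prove $|W_0(z)|\ge 49\cdot 10^{-4}$ uniformly for $z\in[-1,1]$. I would apply Note~\ref{met1}: choose a partition $\boldsymbol{\pi}$ of $[-1,1]$ (uniform spacing $1/8$ or $1/16$ should suffice), and on each subinterval $[\pi_i,\pi_{i+1}]$ re-expand $W_0$ about the midpoint under the standard affine change $z=\ell_i(\zeta)$ with $|\zeta|\le 1$; then bound $|W_0(\ell_i(\zeta))|$ from below by the explicit minimum of its cubic truncation minus the $\ell^1$-norm of the remaining coefficients. The main obstacle is purely computational: $W_0$ has degree $17$ with a fair number of nonzero rational coefficients, and one must choose a partition fine enough that, after re-expansion, the coefficients of monomials of degree exceeding $3$ sum to well under $10^{-4}$ on every subinterval, yet coarse enough to keep the total number of pieces tractable. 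All operations involved, namely polynomial multiplication, affine substitution, and extremization of cubics, are finite and exact in $\mathbb{Q}$, so the resulting bound is fully rigorous. Combining the uniform lower bound on $|W_0|$ with $|E(z)|<3\cdot 10^{-5}$ then yields the stated estimate $48\cdot 10^{-4}$ with a safe margin.
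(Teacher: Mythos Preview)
Your approach is essentially identical to the paper's: substitute $\lambda=1-\sigma^2$ in the explicit polynomial expressions for $w^{+}_1(r_7)$, $w^{+}_1{'}(r_7)$, replace $w^{+}_2(r_7)$, $w^{+}_2{'}(r_7)$ by the degree-$7$ polynomials of Lemma~\ref{L14}, then write $\sigma=\tfrac12+\tfrac12 z$ and estimate the resulting polynomial via Note~\ref{met1}. One minor arithmetic slip: your error bound $\tfrac{19}{47}\cdot 6\cdot 10^{-5}+\tfrac{4}{19}\cdot 3\cdot 10^{-5}=\tfrac{2730}{893}\cdot 10^{-5}\approx 3.06\cdot 10^{-5}$ is not strictly $<3\cdot 10^{-5}$ (the paper records it as $\pm 4\cdot 10^{-5}$), but this has no effect on the final margin.
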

\begin{proof}
  We substitute $\lambda=1+\sigma^2$ in~\eqref{w1pdef}, and use the
  polynomials in \eqref{tsz} to calculate the Wronskian within an
  accuracy of~$\pm 4\cdot10^{-5}$ (obtained by crudely bounding
  away the effects of the errors on the right-hand side of~\eqref{tsz}).  For
  estimating the resulting polynomials, we write $\sigma
  =\tfrac12+\tfrac12z$, re-expand and use Note~\ref{met1}.
\end{proof}

\subsubsection{End of the proof of Proposition~\ref{P3.2}: the
  Wronskian of the actual solutions} \begin{lem} \label{417} The
  Wronskian $W[u_1,u_2](r_{7})$ of the actual solutions satisfies
\begin{equation}
  \label{eq:eq54*}
  |W[u_1,u_2](r_{7})-W[w^{+}_1,w^{+}_2](r_{7})|\leqslant  5\cdot 10^{-5}
\end{equation}
\end{lem}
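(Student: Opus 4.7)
The plan is to reduce the statement to a direct substitution of estimates already established. Writing $u_j = w_j^{+} + \delta_j$ for $j=1,2$ and expanding the Wronskians algebraically at $r=r_{7}$ yields
\[
W[u_1,u_2](r_{7}) - W[w_1^+,w_2^+](r_{7}) = w_1^+ \delta_2' - w_2^+ \delta_1' + \delta_1 w_2^{+\prime} - \delta_2 w_1^{+\prime} + \delta_1 \delta_2' - \delta_2 \delta_1',
\]
where every function is evaluated at $r_{7}$. There is no interaction between the two regions here: $\delta_1$ is the error on $[0,r_{7}]$ controlled by the contraction in Corollary~\ref{d1d}, while $\delta_2$ is the error on $[r_{7},\infty)$ controlled by the contraction in Corollary~\ref{crbd}, and they meet at the single matching point $r_{7}$.

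Next I would bound each of the six terms by a product of two previously recorded estimates:
for $\delta_1, \delta_1'$ the bounds come from Corollary~\ref{d1d}, namely $|\delta_1(r_{7})|\leqslant 1/1080$ and $|\delta_1'(r_{7})|\leqslant 1/1800$;
for $\delta_2, \delta_2'$ the bounds come from Corollary~\ref{crbd}, namely $|\delta_2(r_{7})|\leqslant 2\cdot 10^{-5}$ and $|\delta_2'(r_{7})|\leqslant 4\cdot 10^{-5}$;
for the quasi-solutions I would invoke Lemma~\ref{w1d} ($|w_1^{+}(r_{7})|<19/47$, $|w_1^{+\prime}(r_{7})|<4/19$) and Lemma~\ref{L14}(ii) ($|w_2^{+}(r_{7})|\leqslant 21/50$, $|w_2^{+\prime}(r_{7})|<19/100$). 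The two quadratic cross-terms $\delta_1\delta_2'$ and $\delta_2\delta_1'$ are of size $\sim 10^{-8}$ and can be absorbed into the final constant essentially for free.

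Summing the four leading linear contributions
\[
\tfrac{19}{47}\cdot 4\!\cdot\! 10^{-5} \;+\; \tfrac{1}{1080}\cdot\tfrac{19}{100} \;+\; \tfrac{21}{50}\cdot \tfrac{1}{1800} \;+\; 2\!\cdot\! 10^{-5}\cdot \tfrac{4}{19}
\]
and comparing with the spread $48\cdot 10^{-4}-43\cdot 10^{-4}$ between Corollary~\ref{C15} and Proposition~\ref{P3.2} gives the claimed bound (one order of magnitude larger than in the displayed statement, consistently with that spread). Finally, combining Lemma~\ref{417} with Corollary~\ref{C15} yields $\inf_{\lambda\in[0,1]}|W[u_1,u_2](r_{7})|\geqslant 43\cdot 10^{-4}>0$, proving Proposition~\ref{P3.2} and hence Theorem~\ref{P1}. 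The computation is not delicate: all the difficulty has already been absorbed into the two contraction arguments of \S\ref{up0} and~\ref{upi} that produce the pointwise bounds on $\delta_1$ and $\delta_2$ at the matching point $r_{7}$; once those are in hand, the only obstacle is bookkeeping, and one must pay attention that the sharp bound $|\delta_1'(r_{7})|\leqslant 1/1800$ from Corollary~\ref{d1d} (rather than the weaker $5/1080$ one would get by extracting $|\delta_1'|$ from $\|\delta_1\|_X$) is used against the relatively large factor $|w_2^+(r_{7})|$.
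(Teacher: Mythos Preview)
Your approach is exactly the paper's: expand $W[u_1,u_2]-W[w_1^+,w_2^+]$ bilinearly and bound each term using Corollary~\ref{d1d}, Lemma~\ref{w1d}, Corollary~\ref{crbd}, and Lemma~\ref{L14}(ii). The paper's proof is a one-sentence pointer to precisely these four results.

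Your numerical observation is also correct and worth flagging. With the bounds actually recorded in the paper ($|\delta_1(r_7)|\le 1/1080$, $|\delta_1'(r_7)|\le 1/1800$, $|\delta_2(r_7)|\le 2\cdot10^{-5}$, $|\delta_2'(r_7)|\le 4\cdot10^{-5}$, $|w_1^+(r_7)|<19/47$, $|w_1^{+\prime}(r_7)|<4/19$, $|w_2^+(r_7)|\le 21/50$, $|w_2^{+\prime}(r_7)|<19/100$), the six-term sum comes to roughly $4.3\cdot10^{-4}$, not $5\cdot10^{-5}$. Since the gap between the $48\cdot10^{-4}$ of Corollary~\ref{C15} and the $43\cdot10^{-4}$ of Proposition~\ref{P3.2} is precisely $5\cdot10^{-4}$, the constant in \eqref{eq:eq54*} is almost certainly a typo for $5\cdot10^{-4}$; your bound is exactly what the architecture of the argument requires, and Proposition~\ref{P3.2} (hence Theorem~\ref{P1}) follows as you say. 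Your remark about using the sharper $1/1800$ for $|\delta_1'(r_7)|$ rather than $5/1080$ is on point: without it the $w_2^+\delta_1'$ term alone would exceed the available budget.
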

\begin{proof}
  This follows by estimating $W[u_1,u_2]$ via
   \eqref{eq:firstW}.  This is straightforward and uses Corollary
  \ref{d1d}, Lemma~\ref{w1d}, Corollary \ref{crbd}  and Lemma
  \ref{L14} (ii) to bound $|u_{1,2}-w^{+}_{1,2}|$,
  $|u'_{1,2}-\partial_r w^{+}_{1,2}|$ as well as $|u_{1,2}|$, $|u'_{1,2}|$,
  $|w^{+}_{1,2}|$ and $|\partial_r w^{+}_{1,2}|$.
\end{proof}

\section{The Operator $L_-$}\label{sec:lmm}

The second main result of this paper is the following one, which establishes the gap property for $L_-$.

\begin{thm}\label{P1m}
The operator $L_-$ has no eigenvalue or resonance for $\lambda$ in the interval $(0,1]$ in $L_2(\mathbb{R}^+)$.
\end{thm}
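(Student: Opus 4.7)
The plan is to follow the template of Theorem~\ref{P1}: construct piecewise quasi-solutions $w_1^-(r;\lambda)$ on $[0,r_7]$ (polynomial in $r$ and $\lambda$, regular at the origin) and $w_2^-(r;\lambda)$ on $[r_7,\infty)$ (of the same Jost form as in~\eqref{eqw22}, decaying at infinity), but built for the equation $L_- u=\lambda u$. The only structural difference from~\S\ref{upi} is that the leading coefficient in the potential at infinity becomes $a_0=A^2$ instead of $3A^2=1413/64$, since the potential of $L_-$ is $Q^2$ rather than $3Q^2$; a smaller potential makes every contractive estimate at least as favorable, so the analogs of Lemmas~\ref{lpcon} and~\ref{l315} and Corollaries~\ref{d1d} and~\ref{crbd} go through by essentially the same calculations, supported by the polynomial-bounding techniques of Notes~\ref{met1} and~\ref{met2}. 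This yields actual solutions $u_1,u_2$ of $L_- u=\lambda u$ with $u_1$ regular at $0$ and $u_2$ decaying at $\infty$, together with explicit error bounds on $|u_j-w_j^-|$ and $|\partial_r(u_j-w_j^-)|$ at $r=r_7$.

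The main new obstacle is that the Wronskian argument must be refined. Since~\eqref{mainODE} is nothing other than $L_- Q=0$ and $Q$ is smooth at the origin, bounded, and decays exponentially, the value $\lambda=0$ is itself an eigenvalue of $L_-$ with eigenfunction $Q$. Consequently $u_1(\cdot;0)$ and $u_2(\cdot;0)$ are each proportional to $Q$ and $W[u_1,u_2](r_7;0)=0$; one therefore cannot bound $|W|$ below uniformly on $[0,1]$, and must instead establish
\[
\inf_{\lambda\in(0,1]}\left|\frac{W[u_1,u_2](r_7;\lambda)}{\lambda}\right|>0.
\]
Since the left-hand side is continuous in $\lambda$ on $[0,1]$ (interpreted as $|\partial_\lambda W(r_7;0)|$ at the origin), such a bound simultaneously rules out zeroes of $W$ in $(0,1]$ and encodes the simplicity of the zero at $\lambda=0$.

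To prove this, I would mirror Lemma~\ref{L14}: expand $w_2^-(r_7;\lambda)$ and $\partial_r w_2^-(r_7;\lambda)$ in $z=2\sqrt{1-\lambda}-1$ using Taylor truncation with rigorous Cauchy-formula remainders, and form the polynomial quasi-Wronskian $\mathcal{W}(\lambda):=W[w_1^-,w_2^-](r_7;\lambda)$. The critical quantitative step is to verify that $\mathcal{W}(0)$ is of the order of the combined errors in $\tilde Q$ and in the $w_j^-$ rather than of order one; this can be arranged by designing $w_1^-(r;0)$ and $w_2^-(r;0)$ to be exactly a fundamental system for the equation obtained by substituting $\tilde Q$ for $Q$ in $L_- u=0$, so that the zeroth-order Wronskian inherits the $O(\varepsilon_0)$ smallness from Proposition~\ref{prop:Q error}. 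One then writes $\mathcal{W}(\lambda)=\mathcal{W}(0)+\lambda\,\widetilde{\mathcal{W}}(\lambda)$, bounds $|\widetilde{\mathcal{W}}(\lambda)|$ below on $[0,1]$ by an explicit positive constant via the partition/re-expansion method of Note~\ref{met1}, and checks that $|\mathcal{W}(0)|$ is dominated by $\lambda\cdot\inf|\widetilde{\mathcal{W}}|$ uniformly on $(0,1]$.

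The hard part will be maintaining enough precision in the quasi-solutions near $\lambda=0$ that the zero of $W$ at the origin is genuinely simple in the quantitative sense required: if the $w_j^-$ are not tuned so that $w_j^-(\cdot;0)$ lies on the $Q$-line up to the $\tilde Q$-approximation error, then $\mathcal{W}(0)$ may exceed the available linear budget and the ratio $\mathcal{W}(\lambda)/\lambda$ will blow past the lower bound for $\lambda$ small. Once this precision is in hand, the passage from the quasi-Wronskian to the true Wronskian is a direct analog of Lemma~\ref{417}: the differences $u_j-w_j^-$ and $\partial_r(u_j-w_j^-)$ depend smoothly on $\lambda$ through the fixed-point equations~\eqref{eq:eqH0} and~\eqref{eq:eqd2}, and the resulting perturbation of the Wronskian is bounded by the same $O(10^{-5})$ margin as in the $L_+$ argument. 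Combining these ingredients with $\inf_{(0,1]}|\mathcal{W}(\lambda)/\lambda|>0$ yields $W[u_1,u_2](r_7;\lambda)\neq0$ throughout $(0,1]$, which by the Wronskian characterization of eigenvalues and resonances gives Theorem~\ref{P1m}.
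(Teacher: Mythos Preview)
You have correctly identified the central difficulty: since $L_-Q=0$ with $Q$ regular at the origin and decaying at infinity, the Wronskian $W[u_1,u_2](r_7;\lambda)$ vanishes at $\lambda=0$, and the target estimate must be $\inf_{(0,1]}|W/\lambda|>0$ rather than $\inf|W|>0$. But the mechanism you propose for obtaining this does not close.

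The step ``checks that $|\mathcal{W}(0)|$ is dominated by $\lambda\cdot\inf|\widetilde{\mathcal{W}}|$ uniformly on $(0,1]$'' is impossible as stated: the left side is a fixed positive number (of order $\varepsilon_0$, since $\tilde Q\neq Q$), while the right side tends to zero with~$\lambda$. More substantively, to pass from $\mathcal{W}/\lambda$ to $W/\lambda$ you need to control $(W-\mathcal{W})/\lambda$ uniformly on $(0,1]$. The contraction arguments you invoke (analogs of Lemma~\ref{lpcon} and Corollary~\ref{crbd}) only give $|W-\mathcal{W}|\leqslant\delta$ uniformly in~$\lambda$; they say nothing about $(W-\mathcal{W})/\lambda$ for small~$\lambda$. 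What you actually need is a bound on $\partial_\lambda(W-\mathcal{W})$, which would require differentiating the fixed-point maps in~$\lambda$ and running a second layer of contraction estimates. That is a substantial additional argument you have not supplied.

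The paper avoids this entirely by factoring out~$\lambda$ at the level of the \emph{solutions} rather than the Wronskian. One sets
\[
\tilde u_1=\lambda^{-1}\bigl(y_1-Q/Q(0)\bigr),\qquad \tilde u_2=\lambda^{-1}\bigl(y_2-Q/A_1\bigr),
\]
which satisfy the \emph{inhomogeneous} equation $-u''-2u'/r+(1-\lambda-Q^2)u=cQ$ and are therefore smooth in~$\lambda$ down to $\lambda=0$. The quasi-solutions $w_1^-,w_2^-$ are built to approximate $\tilde u_1,\tilde u_2$ (not $y_1,y_2$), so the contraction errors are uniform in~$\lambda$ without any derivative bounds. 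A direct algebraic identity then expresses $\lambda^{-1}W[y_1,y_2](r_7)$ as a combination of $\tilde u_j(r_7)$, $\tilde u_j'(r_7)$, $Q(r_7)$, $Q'(r_7)$, and the lower bound follows exactly as in Corollary~\ref{C15}. This subtraction-and-division trick is the missing idea in your outline.
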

\zn As in the  case of $L_+$, there are
two solutions $y_1(r;\lambda)$ and $y_2(r;\lambda)$ of the equation \eqref{eq:eqlm}
with the properties $y_1(0;\lambda)=1$ and $y_2(r;\lambda)=r^{-1}e^{-r\sqrt{1-\lambda}}(1+o(1)), \,\,r\to\infty$.
Let $W[y_1,y_2](r;\lambda)=y_1y_2'-y_2y_1'$ be the Wronskian of these two special
solutions.  Theorem~\ref{P1m}
is a corollary to the following result.

\begin{prop}\label{p42}
One has the lower bound
 \begin{equation}
   \label{eq:eqWm}
\sup_{\lambda\in [0,1]} \lambda^{-1} |W[y_1,y_2](r_{7};\lambda)|\geqslant {17}/{1000}
 \end{equation} Therefore $y_1$ and $y_2$ are linearly independent  for all $\lam\in(0,1]$.
\end{prop}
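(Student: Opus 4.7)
The plan is to adapt the two-interval quasi-solution construction from Proposition~\ref{P3.2} to the operator $L_-$. On $[0,r_7]$ I would build piecewise polynomial quasi-solutions $w_1^-(r;\lambda)$, tabulated in the appendix in the same format as $w_1^+$, satisfying $w_1^-(0;\lambda)=1$ and $\partial_r w_1^-(0;\lambda)=0$ and making the residual $L_- w_1^- - \lambda w_1^-$ pointwise small. On $[r_7,\infty)$ I would use the ansatz
\[
w_2^-(r;\lambda)=\frac{e^{-\sigma r}}{r}\bigl(1+\tilde a_0 f_1(r)\bigr),\qquad \sigma=\sqrt{1-\lambda},
\]
with $f_1$ as in \eqref{eqw22} and with $\tilde a_0\approx a_0/3=471/64$ chosen so that the leading exponential in $Q^2\approx \tilde a_0 e^{-2r}/r^2$ is cancelled at this order, exactly as in Lemma~\ref{l313} but with the coefficient adjusted for the fact that $L_-$ has $Q^2$ rather than $3Q^2$.

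The contraction arguments that upgrade $w_{1,2}^-$ to the actual solutions $y_{1,2}$ would be verbatim analogues of Lemmas~\ref{lpcon} and~\ref{l315}: only the numerical constants change, because the perturbative potential is now $\lambda-1+Q^2$ instead of $\lambda-1+3Q^2$, and the estimates for $|Q^2-\tilde Q^2|$ from Note~\ref{NdQ} and Proposition~\ref{prop:Q error} are already in hand. This would yield bounds on $|y_i-w_i^-|$ and $|y_i'-\partial_r w_i^-|$ at $r=r_7$ of order at most $10^{-5}$, i.e.\ negligible compared with the target constant $17/1000$.

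The genuinely new feature, relative to Section~4, is that $W[y_1,y_2](r_7;0)=0$ exactly: at $\lambda=0$ the ground state $Q$ itself satisfies $L_- Q=0$, is bounded at the origin, and decays exponentially at infinity, so $y_1(\cdot;0)$ and $y_2(\cdot;0)$ are both proportional to $Q$. Consequently the bound to prove is really an estimate on $W/\lambda$. I would calibrate the quasi-solutions so that $w_1^-(r;0)=\tilde Q(r)/\tilde Q(0)$ and $w_2^-(r;0)=c\,\tilde Q(r)$ for an explicit rational $c$ determined from the matching; this forces $W[w_1^-,w_2^-](r_7;0)$ to be essentially zero (up to controlled error terms of size $\lesssim 10^{-5}$), so that after Taylor-expanding $w_2^-(r_7;\lambda)$ and $\partial_r w_2^-(r_7;\lambda)$ in $\sigma$ around $\sigma=1/2$ with rigorous Cauchy-integral remainders (as in Lemma~\ref{L14}) and re-expressing everything as a polynomial in $\lambda$, the constant term can be divided out explicitly. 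The resulting polynomial $W[w_1^-,w_2^-](r_7;\lambda)/\lambda$ would then be bounded from below on $[0,1]$ by the partition/re-expansion scheme of Note~\ref{met1}.

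The main obstacle will be the numerical bookkeeping around this exact cancellation at $\lambda=0$: because the quasi-solution errors at $\lambda=0$ are nonzero, one must ensure that what remains after extracting $\lambda$ is still of size $\gtrsim 17/1000$, despite the residual errors. Concretely, this means choosing the partitions and the degrees of the polynomial approximants fine enough that the absolute errors in $W$ are $o(\lambda)$ uniformly, so that $|W(\lambda)/\lambda|\geqslant |W[w_1^-,w_2^-](r_7;\lambda)/\lambda|-(\text{error})/\lambda$ remains above the threshold. Once this calibration is in place, the proof reduces, as for $L_+$, to a finite sequence of elementary rational polynomial estimates on the unit interval, and Theorem~\ref{P1m} follows because $W[y_1,y_2](r_7;\lambda)\neq 0$ for every $\lambda\in(0,1]$.
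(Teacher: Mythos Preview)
Your diagnosis of the new feature is exactly right: $W[y_1,y_2](r_7;0)=0$ because $Q$ itself lies in the kernel of $L_-$, so the statement is really a lower bound on $\lambda^{-1}W$. But the proposed remedy has a genuine gap. Refining partitions makes the quasi-solution residuals $R_i$ small and hence, through the contraction, makes $|\delta_i|:=|y_i-w_i^-|$ small \emph{uniformly in $\lambda$}; it does not make $|\delta_i|$ of order $\lambda$. At $\lambda=0$ one has $y_1=Q/Q(0)$ and, under your calibration, $w_1^-=\tilde Q/\tilde Q(0)$, so $\delta_1(\cdot;0)$ is essentially $Q-\tilde Q$, which is of fixed size $\sim\eps_0$ regardless of how fine the partition is. Consequently the error $E(\lambda):=W[y_1,y_2]-W[w_1^-,w_2^-]$ obeys only a uniform bound $|E(\lambda)|\leqslant C$, not $|E(\lambda)|\leqslant C\lambda$, and $|E(\lambda)|/\lambda$ is uncontrolled as $\lambda\to0$. (That $E(0)=0$ is true but unhelpful: you would need a Lipschitz estimate $|E(\lambda)-E(0)|\leqslant L\lambda$ with small $L$, which the contraction map does not provide and which you do not propose to prove.)

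The paper handles this analytically rather than numerically: it subtracts the exact ground state before dividing by $\lambda$, setting
\[
\tilde u_1=\frac{y_1-Q/Q(0)}{\lambda},\qquad \tilde u_2=\frac{y_2-Q/A_1}{\lambda},
\]
which satisfy the \emph{inhomogeneous} equations $-\tilde u_i''-2\tilde u_i'/r+(1-\lambda-Q^2)\tilde u_i=\text{(multiple of }Q)$. One then builds quasi-solutions $w_i^-$ approximating $\tilde u_i$ (not $y_i$), and the Wronskian identity
\[
\lambda^{-1}W[y_1,y_2]=\frac{\tilde u_1 Q'-\tilde u_1' Q}{A_1}+\frac{Q\tilde u_2'-Q'\tilde u_2}{Q(0)}+\lambda\bigl(\tilde u_1\tilde u_2'-\tilde u_2\tilde u_1'\bigr)
\]
contains no $\lambda^{-1}$ on the right. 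Now the errors $|\tilde u_i-w_i^-|\lesssim 10^{-3}$ from the contraction enter only with $O(1)$ coefficients, and the lower bound follows by the same polynomial-estimation routine as for $L_+$. The point is that the division by $\lambda$ must be done at the level of the unknowns, not at the level of the final Wronskian estimate.
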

\subsection{Proofs}
Let
\begin{equation}
  \label{eq:eqy-u}
   \tilde{u}_1(r)=\lam^{-1}\left[y_1(r)-\f{Q(r)}{Q(0)}\right]; \ \   \tilde{u}_2(r)=\f{1}{\lambda}\left[y_2(r)-\f{1}{A_1}{Q(r)}\right]\  \text{where }\ A_1=A+b_1
\end{equation}
(see Lemma~\ref{210}). Then  $\tilde{u}_1$ satisfies
\begin{equation}\label{lmm}
-u''-2u'/r+(1-\lambda-Q^{2})u=Q/Q(0)
\end{equation}
We construct a pair of functions that agree with $y_1$ and $y_2$
within relatively small errors. To this effect,
we first define ${g}^{-}_{1,2}$ in the following way. Consider the piecewise polynomials
 \begin{equation}
   \tilde{g}^{-}_{1}(r)=\sum_{(j,k,l)\in S}d_{kl;j}^-\boldsymbol{\chi}_j\lambda^{k}z^l;\ \ \tilde{g}^{-}_{2}(r)=\sum_{(j,k,l)\in S}e_{kl;j}^-\boldsymbol{\chi}_j\lambda^{k}z^l
\end{equation}
where $S=\{(j,k,l)\mid 1\leqslant j\leqslant 3, \ 0\leqslant k\leqslant M_j,\ 0 \leqslant l\leqslant 15\}$,  $d_{kl;j},e_{kl;j}$ are given in the appendix, $\boldsymbol{\chi}_j$, $j=1,2,3$,
are the characteristic functions of   $[0,r_{5})$, $[r_{5},r_1)$ and $[r_1,r_{7}]$ respectively, $M_j\leqslant 15$ and  $z$ depends on  $r$ as specified in the top rows of the tables in the appendix.
To ensure that ${g}^{-}_{1,2}$ are $C^1$ we next let
\begin{multline}
  \hat{g}^{-}_j(r)=  \tilde{g}^{-}_j(r)\h_{{47}}+\L\tilde{g}^{-}_j(r)+\tilde{g}^{-}_j(1)-\tilde{g}^{-}_j(1{-})+(\tilde{w}'_1(1)-\tilde{w}'_1(1{-}))(r-1) \J\h_{14}\\
+\L \tilde{g}^{-}_j(r)+\hat{g}^{-}_j(r_{1})-\tilde{g}^{-}_j(r_{1}{-})+5(\hat{g}'_j(r_{1})-\tilde{g}'_j(r_{1}{-}))(r^{2}-r_{1}^{2})/3\J\h_{01}
  \end{multline}
where $j=1,2$.
We let ${g}^{-}_j(r)=\hat{g}^{-}_j(r)/r^{j-1}$ and $w^-_1(r)=({g}^{-}_{1}(r)-{g}^{-}_{1}(r)|_{\lam=0})/\lambda$.\footnote{Note that for $L_+$ we constructed $w^+_1(r)$ using a different piecewise representation due to the high accuracy required. Here, however, it is sufficient to use ${g}^{-}_{1}(r)$ to define $w^-_1(r)$.  }

\begin{lem}
\label{bdm}
The following bounds hold:
\begin{equation}
  |w^{-}_1(r)| \leqslant {1}/{100}+{3r}/{25} \quad \forall\; 0\leqslant r\leqslant r_{7}
\end{equation}
 \begin{equation}\label{eq:bdgm}
   r|{g}^{-}_1(r)| \leqslant \tfrac{11r}{10}\h_{01}+\tfrac{r+3}{10}\h_{{17}};\ \ r|{g}^{-}_2(r)|\leqslant
\tfrac{11}{10}\h_{01} +\tfrac{23r}{10}\h_{14}+ \tfrac{22r}{5}\h_{{47}}
  \end{equation}
   \begin{equation}
     \label{eq:bdp}
      |{g}^{-}_1{'}(r)| \leqslant \tfrac{13}{10}\h_{04}+\tfrac12\h_{{47}};\ \  r^2|{g}^{-}_2{'}(r)| \leqslant  \tfrac85\h_{04}+ 3r^2\h_{{47}}
   \end{equation}
  \end{lem}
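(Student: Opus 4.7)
The proof is a direct application of the polynomial estimation machinery developed in Notes~\ref{met1} and~\ref{met2}, exactly as carried out in Lemmas following Lemma~\ref{w1b} in the $L_+$ section. All four bounds are inequalities between piecewise polynomials in $(r,\lambda)$ with rational coefficients (the $C^1$ corrections in the definitions of $\hat g^-_j$ being themselves polynomials of low degree with explicitly computable small coefficients), and the quantity $w^-_1(r) = \lambda^{-1}(g^-_1 - g^-_1|_{\lambda=0})$ is again polynomial in $\lambda$, of one lower degree, so nothing transcendental appears.

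For the bound $|w^-_1(r)| \le \tfrac{1}{100}+\tfrac{3r}{25}$ on $[0,r_7]$, I would use the partition
\[
\boldsymbol{\pi} = \Big(0,\, r_1,\, r_5,\, r_3,\, 1,\, \tfrac{5}{4},\, \tfrac{3}{2},\, 2,\, r_7\Big)
\]
in the $r$-variable and the trivial partition $(0,1)$ in $\lambda$, as in Note~\ref{met2}. On each subinterval I set $r=\ell_k(z)$ with $|z|\le 1$ and $\lambda=\tfrac12(1+y)$ with $|y|\le 1$, re-expand the two sided expressions $\pm\bigl(\tfrac{1}{100}+\tfrac{3\ell_k(z)}{25}\bigr) - w^-_1(\ell_k(z),\tfrac12(1+y))$ as polynomials in $(y,z)$, and apply Note~\ref{met2}(i): bound below (resp.\ above) by the minimum (resp.\ maximum) of the cubic-in-$y$ plus cubic-in-$z$ sub-polynomial minus (plus) the $\ell^1$ norm of the coefficients of the remainder. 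The two $C^1$ matchings at $r_1$ and $1$ contribute small jump-correction polynomials whose coefficients are bounded in the same pass.

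For the three bounds in \eqref{eq:bdgm} and \eqref{eq:bdp}, the same procedure applies, with the natural partitions compatible with the piecewise structure of $g^-_{1,2}$. For $r|g^-_1(r)|$ on $[0,r_1]$ I would use a short partition refining $[0,r_1]$ (say at $\tfrac{3}{20}$) and on $[r_1,r_7]$ a partition such as $\boldsymbol{\pi}=(r_1,r_3,1,r_5,r_6,r_7)$; likewise for $r|g^-_2(r)|$ one splits according to the pieces $[0,r_1)$, $[r_1,1)$, $[1,r_7]$. The derivative bounds in \eqref{eq:bdp} are handled identically after formal differentiation, producing again polynomials of bounded degree in both variables.

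The only conceptual step is the treatment of $w^-_1$, where one must use that $g^-_1$ is polynomial in $\lambda$ so that division by $\lambda$ yields an honest polynomial and no singularity at $\lambda=0$ is created. The only computational obstacle is bookkeeping: the re-expanded polynomials have many coefficients, so the $\ell^1$ tails must be tracked carefully to beat the stated numerical margins (e.g.\ $\tfrac{1}{100}$, $\tfrac{3}{10}$ etc.). Since these margins are an order of magnitude larger than the typical re-expansion errors encountered in the analogous $L_+$ lemmas above, each inequality follows by a finite, completely rigorous computation in $\mathbb{Q}[r,\lambda]$ performed via {\tt Maple} or {\tt Mathematica}, and no further analytic input is required.
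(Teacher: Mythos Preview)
Your approach is correct and is essentially the same as the paper's: apply the re-expansion method of Note~\ref{met2} on a partition of $[0,r_7]$ to verify each of the polynomial inequalities. The paper uses the single partition $\boldsymbol{\pi}=(0,r_1,\tfrac35,r_4,r_5,2,r_7)$ for all bounds and notes that for the $g^-_1$ estimate on $[0,r_1]$ one first divides \eqref{eq:bdgm} through by $r$; your proposed partitions differ in detail (and the list $(0,r_1,r_5,r_3,1,\ldots)$ contains an ordering typo since $r_5=3/2>r_3=9/10$), but this is immaterial to the method.
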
 \begin{proof}
We check this using Note~\ref{met2} and the partition $\boldsymbol{\pi}=\(0,r_{1},\f35,r_{4},r_{5},2,r_{7}\)$. For  ${g}^{-}_1$ in $[0,r_1]$ we first divide
\eqref{eq:bdgm} by $r$. \end{proof}
\zn Define
\begin{equation}
  \label{eq:eqR1}
  R_1=-w^{-}_{1}{''}-2w^{-}_{1}{'}/r+(1-\lambda-Q^{2})w^{-}_{1}-Q/Q(0)
\end{equation}
and
\begin{equation}
  \label{eq:eqR00}
  \tilde{R}_0=-w^{-}_{1}{''}-2w^{-}_{1}{'}/r+(1-\lambda-\tilde{Q}^{2})w^{-}_{1}-\tilde{Q}/\tilde{Q}(0)
\end{equation}
\begin{lem}\label{64} We have $|\tilde{R}_0|\leqslant 10^{-5}(3\h_{01}+2\h_{{17}})$
and $|R_1|\leqslant 10^{-5}(6\h_{04}+3\h_{{47}}) $.
\end{lem}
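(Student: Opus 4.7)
The plan is to handle the two bounds in sequence: first the one for $\tilde R_0$, which is a purely polynomial/rational estimate, and then bootstrap to the estimate for $R_1$ by controlling the difference $R_1-\tilde R_0$ with the already established error bounds on $Q-\tilde Q$.

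First I would estimate $\tilde R_0$. Since $w^-_1$ is a piecewise polynomial in $(r,\lambda)$ and $\tilde Q$ is a piecewise rational function (from Definition~\ref{def:tildeQ} with denominators $p_1$ on $[0,r_3]$ and $p_2$ on $[r_3,r_7]$), the quantity $p_j^2(r)\tilde R_0(r)$ is a polynomial in $\Q[r,\lambda]$ on each subinterval of the natural partition refining the partitions used to define both $w^-_1$ and $\tilde Q$. My plan is to apply Note~\ref{met2} exactly as in the proof of Lemma~\ref{lpr1}: on each subinterval I re-expand $p_j^2(r)\tilde R_0$ as a two-variable polynomial $M(y,z)$ on $[-1,1]^2$, estimate it by the method of Note~\ref{met2}(i), and then divide by $\inf p_j^2$ on that subinterval. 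I would choose a partition refining $(0,r_1,r_4,r_5,r_7)$ (compatible with the three pieces of $w^-_1$ and of $\tilde Q$), with enough internal subdivision points that the re-expanded coefficients of degree $\geqslant 4$ are well below $10^{-5}$; this is the same mechanical procedure used throughout Section~\ref{ApQ}.

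Next I would deduce the bound on $R_1$ by writing
\begin{equation*}
R_1-\tilde R_0 = (\tilde Q^2 - Q^2)\,w^-_1 \;+\; \Bigl(\frac{\tilde Q}{\tilde Q(0)}-\frac{Q}{Q(0)}\Bigr).
\end{equation*}
For the first term I would use the pointwise bound $|Q^2-\tilde Q^2|\leqslant(2\tilde Q+|\delta|)|\delta|$ from Note~\ref{NdQ}(i) together with $|\delta|\leqslant \eps_0 e^{-r}/(1+r)$ from Proposition~\ref{prop:Q error}, the bounds on $\tilde Q$ from Lemma~\ref{lem:tildeQ1}, and the bound on $|w^-_1|$ from Lemma~\ref{bdm}. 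These combine to give an $O(\eps_0)$ contribution on $[0,r_7]$ that is comfortably below $10^{-5}$ on each subinterval. For the second term I would write
\begin{equation*}
\frac{\tilde Q}{\tilde Q(0)}-\frac{Q}{Q(0)} = \frac{\tilde Q - Q}{Q(0)} + \tilde Q\,\frac{Q(0)-\tilde Q(0)}{Q(0)\tilde Q(0)},
\end{equation*}
and estimate each piece using $|Q-\tilde Q|\leqslant \eps_0 e^{-r}/(1+r)$ (which in particular gives $|Q(0)-\tilde Q(0)|\leqslant \eps_0$) together with the explicit lower bound $\tilde Q(0)=1/p_1(0)$ from Table~\ref{table:1}, which yields a lower bound on $Q(0)$ as well. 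Adding the two bounds $\tilde R_0$ and $R_1-\tilde R_0$ on each piece of the partition produces the stated piecewise constants $3\cdot 10^{-5}$, $2\cdot 10^{-5}$ and, respectively, $6\cdot 10^{-5}$, $3\cdot 10^{-5}$.

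The main obstacle is purely bookkeeping: the bound on $\tilde R_0$ is delicate because the allowed error ($10^{-5}$) is much tighter than for $R_1$ on the corresponding intervals, so the re-expansion partition must be fine enough that the tails of the $(y,z)$-coefficient series are provably under control, yet coarse enough to keep the computation tractable. The $R_1$ bound is then automatic once the $\tilde R_0$ bound is in place, because $\eps_0=7\cdot 10^{-5}$ combined with the modest supremum of $|w^-_1|$ and of $1/\tilde Q(0)$ gives a perturbation strictly smaller than the stated slack $(6-3)\cdot 10^{-5}$ and $(3-2)\cdot 10^{-5}$; no new ideas are needed beyond those already developed for Lemmas~\ref{lem:tildeQ1} and~\ref{lpr1}.
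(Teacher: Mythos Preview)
Your proposal is correct and follows essentially the same route as the paper: first bound $\tilde R_0$ by the partition/re-expansion method of Note~\ref{met2} (exactly as in Lemma~\ref{lpr1}), then pass to $R_1$ via $|R_1|\leqslant|\tilde R_0|+|(Q^2-\tilde Q^2)w^-_1|+|\tilde Q/\tilde Q(0)-Q/Q(0)|$ using Proposition~\ref{prop:Q error}, Lemma~\ref{lem:tildeQ1}, and Lemma~\ref{bdm}. The paper makes two choices you leave abstract: it uses the explicit partitions $\boldsymbol{\pi}=(0,\tfrac{3}{25},\tfrac{6}{25},\tfrac{3}{10})$ on $[0,r_1]$ and $\boldsymbol{\pi}=(r_1,\tfrac25,\tfrac35,\tfrac45,\tfrac{9}{10},1,\tfrac32,\tfrac{19}{10},\tfrac{11}{5},r_6,r_7)$ on $[r_1,r_7]$, and it bounds the last term simply by $|Q-\tilde Q|/Q(0)$ rather than your two-piece decomposition; your version is actually more careful, since the extra piece $\tilde Q\,(Q(0)-\tilde Q(0))/(Q(0)\tilde Q(0))$ is of the same order and must in principle be accounted for, but numerically both fit inside the stated slack.
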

\begin{proof} Note that
\[
|R_1|\leqslant |\tilde{R}_0|+|(Q^{2}-\tilde{Q}^{2})w^{-}_{1}|+|Q-\tilde{Q}|/Q(0)\]
We start with $\tilde{R}_0$, for which we use the same idea as in Lemma~\ref{lpr1} (we kept the same notations although the functions are
different; this should cause no confusion). Due to the  the monotonicity of $\tilde{Q}$ and the fact that $\tilde{R}_0(r)/\tilde{Q}(r)^2$ is a polynomial we have
 $$|\tilde{R}_0(\ell_k(z))|\leqslant |\tilde{R}_0(\ell_k(z))\tilde{Q}^2(\ell_k(-1))/\tilde{Q}(\ell_k(z))^2|\leqslant 3\cdot10^{-5}$$
using the method in Note~\ref{met2}  and  the partition given by
$\boldsymbol{\pi}=\(0,\f{3}{25},\f{6}{25},\f{3}{10}\)$.

Similarly on $[r_1, r_7]$ we use the partition $ \boldsymbol{\pi}=\(r_{1},\f{2}{5},\f{3}{5},\f{4}{5},\f{9}{10},1,\f{3}{2},\f{19}{10},
\f{11}{5},r_{6},r_{7}\)$ and obtain
$$|\tilde{R}_0(\ell_k(z))|\leqslant |\ell_k(z)/\ell_k(-1) \tilde{Q}^2(\ell_k(-1))/\tilde{Q}(\ell_k(z))^2\tilde{R}_0(\ell_k(z))|\leqslant 2\cdot 10^{-5}$$ The rest of the proof is relatively
straightforward, and the details are given in Section~\ref{d64}.
\end{proof}

 The functions ${g}^{-}_1$, ${g}^{-}_2$ solve a second order equation
\begin{equation}
  \label{eq:unkn*}
 -g''+(A(r)-2/r)g'+(B(r)+A(r)/r)g=0
\end{equation}
where
$$A(r)=\frac{\hat{g}^{-}_2(r) \hat{g}^{-}_1{''}(r)-\hat{g}^{-}_1(r) \hat{g}^{-}_2{''}(r)}{\hat{g}^{-}_2(r) \hat{g}^{-}_1{'}(r)-\hat{g}^{-}_1(r) \hat{g}^{-}_2{'}(r)} \text{ \rm and } B(r)=\frac{\hat{g}^{-}_1{'}(r) \hat{g}^{-}_2{''}(r)-\hat{g}^{-}_2{'}(r) \hat{g}^{-}_1{''}(r)}{\hat{g}^{-}_2(r) \hat{g}^{-}_1{'}(r)-\hat{g}^{-}_1(r) \hat{g}^{-}_2{'}(r)}$$
\begin{lem}\label{65}
   We have the following bounds on $[0,r_7]$
   \begin{equation}\label{BABr}
     |A(r)|\leqslant 2\cdot 10^{-4}\big(\h_{[0,\frac{11}{5})}(r)+3\h_{[\f{11}{5},r_7]}(r)\big);\ \ |B(r)-1+\lambda+\tilde{Q}^{2}(r)|<{3}/{2500}
   \end{equation}
   \end{lem}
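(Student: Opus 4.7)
The plan is to mirror the proof of Lemma~\ref{L4.4} almost verbatim, since the structural setup is identical: by construction, $\hat g_1^-,\hat g_2^-$ are piecewise polynomials in $r$ whose coefficients are polynomials in $\lambda$, so the numerators and denominators defining $A(r)$ and $B(r)$ are polynomials in $(r,\lambda)$, and on each piece of the partition used to define $\tilde g_{1,2}^-$ the functions $A$ and $B$ are genuine rational functions. The only subtlety is the extra factor~$1/r^{j-1}$ in $g_j^-=\hat g_j^-/r^{j-1}$ (with $j=1$ for $g_1^-$, $j=2$ for $g_2^-$), which produces no pole at $r=0$ in $A$ and $B$ because the Wronskian combinations are designed to cancel it; in practice one clears the factors of $r$ from the numerator and denominator before applying any bound.

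First I would bound $A(r)$. I would introduce a partition $\boldsymbol{\pi}$ of $[0,r_7]$ respecting the breakpoints of $\tilde g_{1,2}^-$ (namely $0,r_5,r_1$ and the interior matching points used in the $C^1$ extension) together with an additional split at $r=11/5$ in order to accommodate the piecewise bound $2\cdot 10^{-4}(\boldsymbol{\chi}_{[0,11/5)}+3\boldsymbol{\chi}_{[11/5,r_7]})$; a reasonable candidate is
\[
\boldsymbol{\pi}=\bigl(0,\tfrac{7}{50},\tfrac{7}{25},r_1,\tfrac{9}{25},\tfrac12,\tfrac{18}{25},\tfrac{46}{50},r_4,\tfrac{11}{10},\tfrac{7}{5},r_5,\tfrac{19}{10},\tfrac{11}{5},r_6,r_7\bigr),
\]
possibly refined. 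On each subinterval, I would write $A(r)=M(z,y)/P(z,y)$ with $r=\ell_k(z)$, $\lambda=\tfrac12(1+y)$, and $|y|,|z|\leqslant 1$, and then apply Note~\ref{met2}(i) in the form
\[
|A(r)|\leqslant \frac{\overline M(1,1)}{|P(0,0)|-\overline{(P-P(0,0))}(1,1)},
\]
where $\overline{(\,\cdot\,)}$ denotes the sum of absolute values of the coefficients. Piece-by-piece this produces the two desired constants $2\cdot 10^{-4}$ (on $[0,11/5)$) and $6\cdot 10^{-4}$ (on $[11/5,r_7]$).

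For $B(r)$ I would proceed in the same way after reformulating the target. Set
\[
\Phi(r,\lambda):=\bigl(\hat g_2^-\hat g_1^{-\prime}-\hat g_1^-\hat g_2^{-\prime}\bigr)\bigl[B(r)-1+\lambda+\tilde Q^2(r)\bigr],
\]
which, after multiplying through by a sufficiently high power of the denominator $p_j(r)^2$ of $\tilde Q$ (see Definition~\ref{def:tildeQ}), becomes a genuine polynomial in $(r,\lambda)$ on each piece of the common refinement of the two partitions. On the same partition $\boldsymbol{\pi}$ as above I would re-expand this polynomial around the midpoint of each subinterval, apply Note~\ref{met2}(i) to the numerator and Note~\ref{met2}(ii)/the lower bound from $A$ to the denominator, and verify that the quotient stays below $3/2500$. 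The transition point $r=11/5$ plays no role for $B$, so a single uniform bound suffices across all subintervals.

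The main obstacle is the tight constant $3/2500$ for $B$: the pieces where $\tilde g_{1,2}^-$ were glued via the $C^1$-correction (at $r_1$, $r_4$, $r_5$ in Lemma~\ref{bdm}'s construction) introduce small jumps in $\hat g_j^{-\prime\prime}$, and the partition must be refined enough near those points so that the contribution of degree-$\geqslant 4$ coefficients after re-expansion is well below $3/2500$. If the first attempt of $\boldsymbol{\pi}$ does not achieve this, the remedy is standard: further subdivide near the problematic gluing points and repeat the re-expansion step until the $\ell^1$ tail of the coefficients falls under the required threshold. All remaining computations are polynomial arithmetic in $\Q[r,\lambda]$, hence exact and performable by the same procedure used in Lemmas~\ref{L4.4} and~\ref{lpr1}.
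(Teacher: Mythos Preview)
Your proposal is correct and follows essentially the same approach as the paper: the paper's entire proof consists of specifying the partition
\[
\boldsymbol{\pi}=\bigl(0,\tfrac{1}{10},\tfrac15,r_1,\tfrac{8}{25},\tfrac25,\tfrac12,\tfrac35,\tfrac45,\tfrac{9}{10},\tfrac{23}{25},1,\tfrac{13}{10},\tfrac{9}{5},\tfrac{21}{10},\tfrac{11}{5},r_6,r_7\bigr)
\]
and saying ``as usual maximize the numerators and minimize the denominators,'' which is exactly the Note~\ref{met2} scheme you describe. Your candidate partition (lifted from Lemma~\ref{L4.4}) is slightly coarser than the paper's, so your caveat about possible refinement is well placed; note also that $A,B$ are defined directly via the $\hat g^-_j$ (which are polynomials), so your discussion of clearing the $1/r^{j-1}$ factor is unnecessary here.
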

\begin{proof}
We use the
partition
$\boldsymbol{\pi}=\(0,\f1{10},\f15,r_1,\f{8}{25},\f25,\f12,\f35,\f45,\f9{10},\f{23}{25},1,\f{13}{10},
\f{9}{5},\f{21}{10},\f{11}{5},r_{6},r_{7}\)$ and as usual maximize the numerators and minimize the denominators. \end{proof}
\begin{Note}
 We now look for  a nearby actual solution $\tilde{u}_1=w^{-}_1-\delta$, $
\delta(0)=0$. In the following we write  $\|f\|$ for  $\sup_{[0,r_{7}]} |f|$.
\end{Note}
 \noindent  By \eqref{lmm} and \eqref{eq:eqR1} $\delta$ satisfies
$-\delta''-\f{2}{r}\delta'+(1-Q^2-\lambda) \delta+R_1=0$, or
\begin{equation}
  -\delta''+(A(r)-2/r)\delta'+(B(r)+A(r)/r)\delta=h_2(r)
\end{equation}
where $h_2(r)=R_1(r)+A(r)\delta'(r)
  +\Big[B(r)-1+\lam+A(r)/r+Q(r)^{2}\Big]\delta(r)$.
\begin{lem}
  We have
  \begin{multline*}
     |h_{2}(\delta,r)|\leqslant 10^{-4}\left\{\L\f{3}{5}+2\|\delta'\bigr\|+(20+\f{2}{r})\|\delta\|\J\h_{01}+\left( \f{3}{5}+2\|\delta'\|+22\|\delta\|\right)\h_{14}\right.\\\left.\left( \f{3}{10}+2\|\delta'\|+15\|\delta\|\right)\h_{[r_4,\tfrac{11}5)}+\left( \f{3}{10}+6\|\delta'\|+15\|\delta\|\right)\h_{[\tfrac{11}5,r_7]}\right\}
  \end{multline*}
 \end{lem}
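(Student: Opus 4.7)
The plan is to split $h_2 = R_1 + A\delta' + K\delta$ where $K := B - 1 + \lambda + A(r)/r + Q^2$, and to bound each of the three pieces on each of the four displayed sub-intervals using the lemmas proved above.

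First, Lemma~\ref{64} gives $|R_1| \leqslant 6\cdot 10^{-5}$ on $[0,r_4)$ and $|R_1| \leqslant 3\cdot 10^{-5}$ on $[r_4, r_7]$, which (in units of $10^{-4}$) reproduces the displayed constants $3/5$ and $3/10$. Similarly, the $\|\delta'\|$ piece is immediate from the bound on $|A|$ in Lemma~\ref{65}: $|A| \leqslant 2\cdot 10^{-4}$ on $[0, 11/5)$ and $|A| \leqslant 6\cdot 10^{-4}$ on $[11/5, r_7]$, which produces the coefficients $2,2,2,6$ in the four sub-intervals, respectively.

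For the zeroth-order piece I would rewrite
\[
K = \bigl(B - 1 + \lambda + \tilde Q^2\bigr) + \bigl(Q^2 - \tilde Q^2\bigr) + A(r)/r,
\]
so that Lemma~\ref{65} controls the first summand by $3/2500 = 12\cdot 10^{-4}$, the refined estimate \eqref{3q2} controls $|Q^2 - \tilde Q^2|$ (with its exponential decay), and the third summand is again governed by Lemma~\ref{65}. On $\h_{01}$ the term $|A|/r$ produces the displayed $(2/r)\cdot 10^{-4}$ contribution explicitly, while the other two summands fit into the integer constant $20$; on each of the three outer sub-intervals the lower bound on $r$ absorbs $|A|/r$ into a finite constant, yielding the displayed $22$, $15$, $15$.

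The main obstacle is the numerical bookkeeping on each piece: one must check that $12 + 10^4\,|Q^2 - \tilde Q^2| + 10^4\,|A|/r$ stays below the stated integer constant, and this is tight enough that the crude pointwise bound $|Q - \tilde Q| \leqslant 7\cdot 10^{-5}$ from Proposition~\ref{prop:Q error} does not by itself suffice. One really needs the exponential decay in \eqref{3q2}, combined with the tail bound on $\tilde Q$ from Lemma~\ref{lem:tildeQ1}, to get the margin on $[r_1,r_4)$ and on $[r_4,11/5)$. Once these sub-interval estimates are in hand, the claim follows by the triangle inequality.
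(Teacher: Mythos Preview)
Your proposal is correct and follows essentially the same route as the paper: the paper applies the triangle inequality to $h_2$, invokes Lemma~\ref{64} for $R_1$, Lemma~\ref{65} for $A$ and for $B-1+\lambda+\tilde Q^2$, and \eqref{3q2} for $Q^2-\tilde Q^2$, then on each outer subinterval replaces $1/r$ by its value at the left endpoint and the decaying exponentials by their left-endpoint values. Your remark that the numerics are tight on $[r_1,r_4)$ and require the exponential decay in \eqref{3q2} (not merely the sup bound from Proposition~\ref{prop:Q error}) is exactly the point.
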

\begin{proof}
We write \begin{multline}
h_2(r)\leqslant |R_1(r)|+|A(r)||\de'(r)|
+\L|B(r)-1+\lam+\tilde{Q}(r)^{2}|\right.\\+\left.|A(r)|/r+|Q(r)^{2}-\tilde{Q}(r)^{2}|\J|\de(r)|$$
 \end{multline}
The  estimates now follow from the corresponding bounds satisfied by $Q^2-\tilde{Q}^2$, $R_1, A$ and  $B(r)-1+\lambda+\tilde{Q}^{2}(r)$, which are established in \eqref{3q2}, Lemma~\ref{64} and Lemma~\ref{65}. On every subinterval we use the monotonicity of the exponential and replace it by the
value at the left endpoint. On the intervals $[r_1,r_4] ,[r_4,11/5],[11/5,r_7]$ we  simply replace $1/r$ by its
left endpoint value. The resulting rational functions are of low degree and are maximized explicitly.
\end{proof}
\zn As in the $L_+$ case $\delta$ satisfies
\begin{equation}
  \label{eq:syst2}
  \delta =H_0(\delta),\ \ \delta'=H'_0(\delta)
\end{equation}
 where
$H_0,H_0'$ are of the form given in \eqref{eq:eqH0} and \eqref{eq:eqH0p} with $g^{+}_{j}$ replaced by $g^{-}_{j}$.
\begin{lem}
\label{lpconm}
(i)   We have
  \begin{equation}
    \label{eq:estN0}
 N_0:=     |H_{0}(\delta)(r)|+|H_{0}(\delta)'(r)|/5 \leqslant  5\cdot10^{-4}+\f{23\|\de\|_{\infty}}{1000}+5\cdot10^{-3}\|\de'\|_{\infty}
  \end{equation}
Consequently,  $(H_0,H_0')$ is a contraction in the ball
\[
X:= \big\{ f\in C^{1}((0,r_{7}))\mid \|f\|_{X}\leqslant 6\cdot 10^{-4}\}
\]
where
$\| f \|_{X}:=\sup_{r\in[0,r_{7})}\, \left(|f(r)|+\tfrac15|f'(r)|\right)$. Thus
there  in an actual solution $\hat{u}_1$ within $6\cdot 10^{-4}$ in $\|\cdot\|_X$ of $w^{-}_1$.

(ii)  $ |w^{-}_1{'}(r_{7})-\hat{u}_1{'}( r_{7})|=|\delta'(r_7)|<6\cdot10^{-4}$ (cf.~again footnote~\ref{f2} on p.~\pageref{f2}).
\end{lem}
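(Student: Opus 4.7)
The plan is to mimic closely the proof of Lemma~\ref{lpcon} for $L_+$, adapting the bookkeeping to the new fundamental system $g_1^-,g_2^-$ and the new forcing $h_2$ that arise in the $L_-$ problem. The essential observation is that since $g_1^-,g_2^-$ solve the same type of equation \eqref{eq:unkn*} as their $L_+$ analogues, their Wronskian obeys $W'(r)=(A(r)-2/r)W(r)$, so that
\[
W(r)=r^{-2}\exp\!\Big(\!\int_{0}^{r}\!A(s)\,ds\Big),
\]
normalized so that $r^2 W(r)\to 1$ at the origin. Using Lemma~\ref{65} to bound $|A|\leqslant 2\cdot 10^{-4}$ on $[0,\tfrac{11}{5})$ and $|A|\leqslant 6\cdot 10^{-4}$ on $[\tfrac{11}{5},r_7]$, and integrating, I obtain an explicit constant $c>0$ with $|W(r)|^{-1}\leqslant c\, r^{2}$ throughout $[0,r_7]$, with $c$ very close to $1$.

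First I would substitute the pointwise bounds on $|g_{1,2}^-(s)|$ and $|{g_{1,2}^-}'(r)|$ from Lemma~\ref{bdm}, the bound on $|W|^{-1}$ above, and the bound on $|h_2(\delta,s)|$ from the lemma immediately preceding \eqref{eq:syst2}, into the explicit integral representations
\[
H_0(\delta)(r)=g_2^-(r)\!\!\int_0^r\!\frac{g_1^-(s)h_2(\delta,s)}{W(s)}ds-g_1^-(r)\!\!\int_0^r\!\frac{g_2^-(s)h_2(\delta,s)}{W(s)}ds,
\]
and its derivative $H_0'(\delta)$ given by \eqref{eq:eqH0p}. Taking absolute values in all integrands as in the proof of Lemma~\ref{lpcon}, the integrands become piecewise polynomial (times a factor $1/r^{j}$ for small $j$) in $s$, with coefficients linear in the three supremum quantities $1$, $\|\delta\|_\infty$, $\|\delta'\|_\infty$. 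I would then evaluate the antiderivatives explicitly on each of the subintervals $[0,r_1)$, $[r_1,r_4)$, $[r_4,\tfrac{11}{5})$, $[\tfrac{11}{5},r_7]$ (the partition on which the bounds for $g_{1,2}^-$ and $A$ change form), and on each subinterval proceed exactly as in Lemma~\ref{lpcon}: replace any occurrence of $1/r$ or $1/r^2$ in factors whose coefficients are already positive by their value at the relevant endpoint, and check monotonicity/sign of the resulting low-degree polynomials in $r$ by inspecting their derivatives (whose coefficients are explicit rationals). The worst of the interval-by-interval constants then yields \eqref{eq:estN0}.

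For the contraction conclusion, I would observe that $h_2$ is affine in $\delta$: the map $\delta\mapsto h_2(\delta,\cdot)-R_1$ is linear in $(\delta,\delta')$ with the same coefficients $(A,B-1+\lambda+Q^2,A/r)$ that appear in \eqref{eq:estN0} as the constants $23/1000$ and $5\cdot 10^{-3}$. Hence the same chain of estimates, applied to $\delta_1-\delta_2$ with $R_1$ dropped, yields
\[
\|H_0(\delta_1)-H_0(\delta_2)\|_X\leqslant \Big(\tfrac{23}{1000}\cdot 1+\tfrac{5}{5}\cdot 10^{-3}\Big)\|\delta_1-\delta_2\|_X<\tfrac{1}{30}\|\delta_1-\delta_2\|_X,
\]
which is contractive. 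Preservation of the ball of radius $6\cdot 10^{-4}$ follows since $5\cdot 10^{-4}+(23/1000+10^{-3})\cdot 6\cdot 10^{-4}<6\cdot 10^{-4}$. The fixed point $\hat u_1=w_1^--\delta$ is then an actual solution of $L_-\hat u_1=\lambda\hat u_1+\tilde Q/\tilde Q(0)$ type equation, which combined with the construction~\eqref{eq:eqy-u} gives back the desired $y_1$.

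Finally, for part (ii), the bound on $|\delta'(r_7)|$ follows by specializing $r=r_7$ in the pointwise estimate for $|H_0'(\delta)|$ derived along the way. At $r_7$ the bounds on $g_{1,2}^-(r_7)$ and ${g_{1,2}^-}'(r_7)$ are both explicit rationals, and since $\|\delta\|_X\leqslant 6\cdot 10^{-4}$ inserting these values into the estimate for $|H_0'(\delta)(r_7)|$ yields the claimed numerical bound. The main obstacle throughout is purely bookkeeping: ensuring that on every subinterval the explicit coefficients match up and the rational arithmetic is carried out consistently; the analytic content is entirely contained in Lemmas~\ref{bdm}--\ref{65} and the previous $h_2$-bound.
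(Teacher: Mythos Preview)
Your proposal is correct and follows essentially the same approach as the paper: bound $1/|W(r)|$ by $cr^2$ via the ODE for $W$ and Lemma~\ref{65}, substitute the pointwise bounds from Lemma~\ref{bdm} and the $h_2$ estimate into $H_0,H_0'$, integrate piecewise on $[0,r_1),[r_1,r_4),[r_4,\tfrac{11}{5}),[\tfrac{11}{5},r_7]$, and observe that the maxima occur at right endpoints, with the last interval giving the worst constants in~\eqref{eq:estN0}. One minor arithmetic slip: in your Lipschitz estimate you should use $\|\delta'\|_\infty\leqslant 5\|\delta\|_X$, giving a contraction constant $\tfrac{23}{1000}+5\cdot 5\cdot 10^{-3}=\tfrac{48}{1000}$ rather than $\tfrac{1}{30}$, but this is still well below~$1$ and the ball of radius $6\cdot 10^{-4}$ is still preserved.
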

\begin{proof}
     Since $1/W(r):=({g}^{-}_1(r){g}^{-}_2{'}(r)-{g}^{-}_1{'}(r){g}^{-}_2(r))^{-1}=r^{2}\exp\Big(\int_{r}^{5/2}
A(s)ds\Big)$, we have
$$1/|W(r)|<{51r^2}/{50}$$
The proof follows from a piecewise analysis of $N_0$. This is done by
straightforward integration of the polynomials involved in $N_0$ and
then maximization of the rational functions (whose numerators are of degree at most 5)
multiplying $\|\delta\|$, $\|\delta'\|$ and of the free term in
the result of the integration.  Taking the derivative of the rational
functions and re-expanding the numerators at the left endpoint of each
interval, we see that all these re-expanded
polynomials have positive coefficients. Thus the maximum is obtained
by evaluating $N_0$ at the right endpoint of each interval. We provide
the intermediate bounds; the largest one, from which
the result follows,  is on the last interval.
\begin{multline*}
   N_0\leqslant 10^{-4}\Big \{ ( 1/10+5\|\de\|+3/10\|\de'\|)\h_{01}+(7/10+23\|\de\|+3\|\de'\|)\h_{14}\\+ (4+170\|\de\|+22\|\de'\|)\h_{[r_4,11/5)} +(5+230\|\de\|+47\|\de'\|)\h_{[11/5,r_7]} \Big\}
\end{multline*}
  (ii) This follows in the same way as \eqref{eq:estN0}, by estimating $H'_0$ on $[11/5,r_{7}]$.
\end{proof}
\begin{Note}\label{N4.9}{\rm
We have  $\hat{u}_1(0)=w^{-}_1(0)=P_5(\lambda)$, a quintic polynomial.
Substituting $\lambda=1/2(1+z)$ and using Note  \ref {met1} to estimate
this quintic, we get
\begin{equation}
  \label{eq:bd4}
 |\hat{u}_1(0)|<2\cdot 10^{-6}
\end{equation}
Thus $Q/Q(0)+\lam \hat{u}_1$ is close to but not exactly equal to the function $y_1$ defined immediately after Theorem~\ref{P1m}. We will  modify it multiplicatively to make up for the discrepancy.}
\end{Note}
\begin{lem}
 The function
\end{lem}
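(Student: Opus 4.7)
The plan is to show that the desired exact solution $y_1$ of $L_-y=\lambda y$ normalized by $y_1(0;\lambda)=1$ coincides with a simple multiplicative rescaling of $Q/Q(0)+\lambda\hat u_1$, and to transfer the $\|\cdot\|_X$--closeness of $\hat u_1$ to $w^-_1$ into pointwise bounds at $r=r_7$ that feed into the Wronskian estimate of Proposition~\ref{p42}.

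First I would verify by direct computation that $\psi(r;\lambda):=Q(r)/Q(0)+\lambda\hat u_1(r)$ is an exact solution of $L_-\psi=\lambda\psi$. Since $Q$ itself satisfies $-Q''-2Q'/r+(1-Q^2)Q=0$, we have $L_-Q=0$; together with the identity $L_-\hat u_1=\lambda\hat u_1+Q/Q(0)$, which is exactly equation~\eqref{lmm} satisfied by the fixed point produced in Lemma~\ref{lpconm}, one obtains $L_-\psi=\lambda\psi$. Evaluating at $r=0$ gives $\psi(0;\lambda)=1+\lambda\hat u_1(0)$, which by Note~\ref{N4.9} is within $2\cdot 10^{-6}$ of $1$ and in particular nonzero. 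Thus the function
\[
y_1(r;\lambda):=\frac{Q(r)/Q(0)+\lambda\hat u_1(r)}{1+\lambda\hat u_1(0)}
\]
satisfies $L_-y_1=\lambda y_1$ and $y_1(0;\lambda)=1$, and standard Sturm--Liouville theory (solutions linearly independent of the one regular at the origin blow up like $1/r$) identifies it with the $y_1$ introduced after Theorem~\ref{P1m}.

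Next I would use the contraction estimate $\|\hat u_1-w^-_1\|_X\leqslant 6\cdot10^{-4}$ from Lemma~\ref{lpconm}, together with $|\delta'(r_7)|<6\cdot10^{-4}$, to derive bounds of the form
\[
\bigl|\,y_1(r_7;\lambda)-\bigl(Q(r_7)/Q(0)+\lambda w^-_1(r_7)\bigr)\bigr|\;<\;C\lambda\cdot 6\cdot10^{-4},
\]
and analogously for $y_1'(r_7;\lambda)$. The algebra is one line: the numerator difference is $\lambda(\hat u_1-w^-_1)$, while the factor $(1+\lambda\hat u_1(0))^{-1}$ deviates from $1$ by at most a harmless $2\cdot 10^{-6}$ which only inflates the constant. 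The upshot is an explicit rational polynomial approximant in $\lambda$ for $y_1(r_7;\lambda)$ and $y_1'(r_7;\lambda)$ with a controlled error of order $10^{-3}$, playing exactly the role of Lemma~\ref{L14} in the $L_+$ case.

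The only mildly delicate point is that $w^-_1$ was defined as $(g^-_1-g^-_1|_{\lambda=0})/\lambda$, so the factor $\lambda$ reappearing in $Q/Q(0)+\lambda w^-_1$ kills the explicit $\lambda^{-1}$; one must keep this cancellation exact so that the resulting approximants are polynomials in $\lambda$ valid down to $\lambda=0$, since the quantity of interest in Proposition~\ref{p42} is $\lambda^{-1}W[y_1,y_2](r_7)$. Once this is done, estimating the Wronskian follows the same pattern as in Section~\ref{ExQ}: substitute the resulting polynomial approximants into $W[y_1,y_2](r_7)/\lambda$, re-expand in $z$ with $\lambda=(1+z)/2$, and apply Note~\ref{met1} to bound the resulting univariate polynomial from below by $17/1000$. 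I expect the bookkeeping of the $\lambda\to0$ behaviour to be the only place where one has to be careful, the remaining steps being routine applications of methods already established.
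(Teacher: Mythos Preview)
Your first two paragraphs correctly prove the content of the lemma, and by essentially the same computation as the paper. The paper simply notes that $\tilde u_1(0)=0$ is immediate from the formula and that plugging into \eqref{lmm} and using \eqref{mainODE} (i.e.\ $L_-Q=0$) verifies the equation; your route---checking that $\psi=Q/Q(0)+\lambda\hat u_1$ solves $L_-\psi=\lambda\psi$ and then dividing by $\psi(0)=1+\lambda\hat u_1(0)$---is the same verification written at the level of $y_1$ rather than $\tilde u_1$. The two formulations are algebraically equivalent, since
\[
Q/Q(0)+\lambda\tilde u_1=\frac{Q/Q(0)+\lambda\hat u_1}{1+\lambda\hat u_1(0)}.
\]

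Where you overshoot is scope. The lemma is only this two-line verification; everything in your third and fourth paragraphs (transferring $\|\hat u_1-w^-_1\|_X$ to pointwise bounds at $r_7$, controlling the $\lambda\to 0$ cancellation, and bounding $\lambda^{-1}W$) belongs to the separate results that follow---Lemma~\ref{w1dm}, Proposition~\ref{acts}, Lemma~\ref{L14m}, and Corollary~\ref{C15m}. Those steps are carried out in the paper exactly along the lines you sketch, so your outline is sound, but it is not part of the proof of the present lemma.
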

\begin{equation}
  \label{eq:equu}
\tilde{u}_1(r)=\hat{u}_1(r)-\hat{u}_1(0)\frac{Q(r)/Q(0)+\lambda \hat{u}_1(r)}{1+\lambda \hat{u}_1(0)}
\end{equation}
is the solution of  \eqref{lmm} with $\tilde{u}_1(0)=0$ (and thus $Q/Q(0)+\lam \hat{u}_1=y_1$).
\begin{proof}
  Clearly, $\tilde{u}_1(0)=0$.  Using \eqref{mainODE}, it is straightforward  to check that $\tilde{u}_1$ solves \eqref{lmm}.
\end{proof}

\begin{lem}\label{w1dm}
One has the estimates $
|w^{-}_1(r_{7})|<\f25$ and  $|w^{-}_1{'}(r_{7})|<\tfrac{1}{4}$.
\end{lem}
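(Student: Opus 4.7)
The plan is to mimic exactly the argument used in Lemma~\ref{w1d} for $w^+_1$, exploiting the fact that evaluation at the single point $r=r_7$ reduces the problem to a one-variable polynomial inequality in~$\lambda$.

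First, I would unravel the definition $w^-_1(r) = (g^-_1(r)-g^-_1(r)|_{\lambda=0})/\lambda$. Since $g^-_1 = \hat g^-_1/r^0$ (i.e.\ $j=1$) coincides on the rightmost subinterval $[r_4,r_7]$ with the raw piece $\tilde g^-_1$, which is a polynomial in $\lambda$ and $z$ with $z = z(r)$ (as specified in the tables), the values $w^-_1(r_7)$ and $w^-_1{'}(r_7)$ are obtained by plugging $r=r_7$ into the explicit rational expressions built from the coefficients $d_{kl;3}^-$ in the appendix. Writing $\tilde g^-_1(r_7;\lambda) = \sum_k \alpha_k \lambda^k$ and $\partial_r\tilde g^-_1(r_7;\lambda) = \sum_k \beta_k \lambda^k$ with $\alpha_k,\beta_k\in\mathbb Q$, one gets
\[
w^-_1(r_7;\lambda)=\sum_{k\geqslant 1}\alpha_k\lambda^{k-1},\qquad w^-_1{'}(r_7;\lambda)=\sum_{k\geqslant 1}\beta_k\lambda^{k-1},
\]
so each is a polynomial in $\lambda$ of degree at most $M_3-1\leqslant 14$.

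Next, following Lemma~\ref{w1d}, I would perform the affine change of variable $\lambda = \tfrac12(1+y)$ with $|y|\leqslant 1$ and re-expand both polynomials in $y$ by hand (or via {\tt Maple}). I would then invoke Note~\ref{met1}(ii): bound the resulting polynomial from above/below on $|y|\leqslant 1$ by the extremum of its cubic truncation, plus or minus the sum of the absolute values of the remaining coefficients. Since only the \emph{one-variable} version of Note~\ref{met1} is needed (no $z,y$ interaction), the cubic extremum is obtained by solving a quadratic in $y$ and comparing endpoints. This directly yields the claimed numerical upper bounds $2/5$ and $1/4$.

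The main (minor) obstacle is purely bookkeeping: the coefficients listed in the appendix are lengthy, so one must accurately combine them with the $C^1$-matching corrections inherited from the definition of $\hat g^-_1$ when evaluating at $r_7$. Since $r_7$ lies in the closure of the third subinterval and the matching is done from left to right through $r_1$ and $r_4$, there are no additional corrections beyond the raw polynomial piece at $r=r_7$; i.e.\ the affine adjustments in $\hat g^-_j$ contribute only on $[0,r_1)\cup[r_1,r_4)$. This makes the evaluation particularly clean. Once the reduced polynomials in $y$ are in hand, the estimate is an elementary rigorous calculation in $\mathbb{Q}$ with no transcendental functions involved, completing the proof.
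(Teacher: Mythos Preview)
Your approach is correct and, for the derivative bound, essentially identical to the paper's: $w^{-}_1{'}(r_7)$ is indeed a quintic in~$\lambda$ (the $d^{-}_{kl;3}$ table has $k\leqslant 6$, so after dividing out $\lambda$ the degree is at most~$5$), and the paper also applies the substitution $\lambda=\tfrac12(1+z)$ together with Note~\ref{met1}. For the value bound the paper takes a shortcut you overlooked: Lemma~\ref{bdm} already gives $|w^{-}_1(r)|\leqslant \tfrac{1}{100}+\tfrac{3r}{25}$ on $[0,r_7]$, which at $r=r_7=\tfrac52$ yields $\tfrac{31}{100}<\tfrac25$ with no further work. Your direct polynomial computation would of course also succeed.

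One small correction to your bookkeeping: the $C^1$-matching for $\hat g^{-}_j$ is done from \emph{right to left}, not left to right (inspect the displayed formula: on $\boldsymbol{\chi}_{47}$ one has $\hat g^{-}_j=\tilde g^{-}_j$ with no additive correction, and the adjustments are added on $\boldsymbol{\chi}_{14}$ and $\boldsymbol{\chi}_{01}$). Had the matching truly been left to right, as for $\hat g^{+}_j$, the rightmost piece \emph{would} carry accumulated corrections and your ``particularly clean'' evaluation at $r_7$ would require extra terms. Your conclusion that $\hat g^{-}_1(r_7)=\tilde g^{-}_1(r_7)$ is correct, but the justification should be reversed.
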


\begin{proof}
The bound for $w^{-}_1(r_{7})$ follows from Lemma~\ref{bdm}. We estimate
the quintic polynomial $w^{-}_1{'}(r_{7})$ in $\lambda$ by taking $\lambda=1/2(1+z)$ using again Note  \ref {met1}.
\end{proof}

\begin{prop}\label{acts}
The solution $\tilde{u}_1$ satisfies
$$ |w^{-}_1(r_{7})-\tilde{u}_1( r_{7})|<7\cdot10^{-4};
\ |w^{-}_1{'}(r_{7})-\tilde{u}_1{'}( r_{7})|<7\cdot10^{-4}$$
\end{prop}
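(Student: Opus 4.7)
The plan is to read off the bound directly from the defining formula \eqref{eq:equu}, using that $\hat{u}_1$ lies very close to $w^{-}_1$ (Lemma~\ref{lpconm}) and, crucially, that $\hat{u}_1(0)$ is itself minuscule (Note~\ref{N4.9}). Indeed, from \eqref{eq:equu},
\begin{equation}
w^{-}_1(r_{7})-\tilde{u}_1(r_{7}) \;=\; \bigl(w^{-}_1(r_{7})-\hat{u}_1(r_{7})\bigr) \;+\; \hat{u}_1(0)\,\frac{Q(r_{7})/Q(0)+\lambda\,\hat{u}_1(r_{7})}{1+\lambda\,\hat{u}_1(0)},
\end{equation}
and differentiating \eqref{eq:equu} gives the analogous identity for the derivative, with $Q(r_{7})$ replaced by $Q'(r_{7})$ and $\hat{u}_1(r_{7})$ by $\hat{u}_1{'}(r_{7})$ in the numerator.

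First I would control the ``main'' terms $w^{-}_1(r_{7})-\hat{u}_1(r_{7})$ and $w^{-}_1{'}(r_{7})-\hat{u}_1{'}(r_{7})$. These are precisely $\delta(r_{7})$ and $\delta'(r_{7})$ in the notation of Lemma~\ref{lpconm}: the first is bounded by $6\cdot 10^{-4}$ since $\delta\in X$, and the second is bounded by $6\cdot 10^{-4}$ by part (ii) of that lemma.

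Next I would dispose of the correction term, which is the product of three factors: (a) $\hat{u}_1(0)$, which by Note~\ref{N4.9} satisfies $|\hat{u}_1(0)|<2\cdot 10^{-6}$; (b) the numerator, which is uniformly bounded in $\lambda\in[0,1]$ --- $Q(r_{7})/Q(0)$ is bounded (e.g.\ $Q(r_{7})/Q(0)\leqslant 2$, using Lemma~\ref{lem:tildeQ1}(iii) and Proposition~\ref{prop:Q error} together with $\tilde{Q}(0)>0$), $|\hat{u}_1(r_{7})|\leqslant |w^{-}_1(r_{7})|+|\delta(r_{7})|<1$ by Lemma~\ref{w1dm}, and similarly $|\hat{u}_1{'}(r_{7})|<1$ so the derivative-version numerator is also $O(1)$ (using that $Q'(r_{7})$ is bounded via the Volterra representation in \S\ref{S21}); and (c) the denominator, which is $\geqslant 1-2\cdot 10^{-6}$. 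Multiplied together these produce a correction of size at most $\sim 10^{-5}$, well inside the slack between $6\cdot 10^{-4}$ and $7\cdot 10^{-4}$.

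I do not anticipate a substantive obstacle here, since all the hard analytic work --- the contraction estimate of Lemma~\ref{lpconm} and the polynomial estimate $|\hat{u}_1(0)|<2\cdot 10^{-6}$ of Note~\ref{N4.9} --- is already in place. The only point requiring a small amount of bookkeeping is the uniform-in-$\lambda$ bound on $Q'(r_{7})/Q(0)$ appearing in the derivative formula, which follows from the same analytic tools (Lemma~\ref{lem:infty_asymptotics} and its proof) used earlier for $Q$ itself.
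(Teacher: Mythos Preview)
Your argument is correct and follows the same route as the paper: split $w^{-}_1-\tilde{u}_1$ into $(w^{-}_1-\hat{u}_1)$, handled by Lemma~\ref{lpconm}, plus the correction coming from \eqref{eq:equu}, which is rendered negligible by the smallness of $\hat{u}_1(0)$ from Note~\ref{N4.9}. The only cosmetic difference is that the paper records the sharper bounds $|Q(r_{7})/Q(0)|,\,|Q'(r_{7})/Q(0)|<\tfrac{3}{100}$ (via Lemma~\ref{LQ} and the explicit form of $\tilde{Q}$) rather than your crude $O(1)$ estimates, but since the correction carries the prefactor $|\hat{u}_1(0)|<2\cdot 10^{-6}$ either version comfortably fits within the $10^{-4}$ slack.
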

\begin{proof}
Note that Lemma~\ref{LQ} and the definition of $\tilde{Q}$ imply $|Q(r_7)/Q(0)|<\f{3}{100}$ and $|Q'(r_7)/Q(0)|<\f{3}{100}$, and \eqref{eq:bd4} and \eqref{eq:equu} imply the (crude) bounds $|\tilde{u}_1(r_{7})-\hat{u}_1( r_{7})|<3\cdot10^{-6}(\f{3}{100}+|\hat{u}_1( r_{7})|)$ and $|\tilde{u}_1'(r_{7})-\hat{u}_1'( r_{7})|<3\cdot10^{-6}(\f{3}{100}+|\hat{u}_1'( r_{7})|)$. The rest is straightforward from
Lemma~\ref{lpconm} and Lemma~\ref{w1dm}.  \end{proof}

\subsubsection{The region $r>r_{7}$: the quasi-solution bounded for large $r$}\label{S7}
Let $\sigma =\sqrt{1-\lambda}$,  and $a_{1}=({6839}/{2521})^2$, $\sigma_1=1+\sigma$, $\sigma_2=1-\sigma$.
 We consider
\begin{equation}\label{g3}
  {g}^{-}_3(r)=(\sigma r)^{-1}{e^{- \sigma  r}}( \sigma  -a_{1}\mathrm{Ei}(-2r)\\
  +a_{1}e^{2 \sigma  r}\sigma_1 \mathrm{Ei}(-2\sigma_1r));\ w^{-}_2(r)=\lambda^{-1}({{g}^{-}_{3}(r)-{g}^{-}_{30}(r)})
\end{equation}
where
\begin{equation}
  \label{eq:g30}
  {g}^{-}_{30}(r)={g}^{-}_{3}(r)|_{\lam=0}=e^{-r}/r+a_{1}g(r)
\end{equation}
Since $\lambda=1-\sigma ^2$ we have
\begin{equation}\label{eqw2}
  R_2(r):=  -w^{-}_2{''}-2w^{-}_2{'}/r+(1-\lambda-a_{1}r^{-2}{e^{-2r}})w^{-}_2-{g}^{-}_{30}=\frac{a_{1}^2e^{-4r}}{ \sigma_1r^{3}}\tilde{R}_2
\end{equation}
where
\begin{equation}
  \label{eq:eq48*}
\tilde{R}_2(r)
=(\sigma  \sigma_2)^{-1}{e^{-\sigma r}}(2\sigma e^{(3+\sigma) r}\mathrm{Ei}(-4r)+(e^{2r}-\sigma e^{\sigma_1 r
})\mathrm{Ei}(-2r)-e^{2\sigma_1  r}\sigma_1\mathrm{Ei}(-2\sigma_1r))
\end{equation}

\begin{lem}\label{r2wm}
We have $|R_2(r)|<\f{3}{250}e^{-2r}/{r^3}$ and
$|w^{-}_2(r)|<\f{51}{50}e^{-\sigma r}$.
\end{lem}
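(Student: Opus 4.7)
The cornerstone of the proof is the observation that
\[
g^{-}_3(r;\sigma) = \frac{e^{-\sigma r}}{r}\bigl(1 + a_1 f_1(r;\sigma)\bigr),
\]
where $f_1$ is exactly the function from the $L_+$ analysis in \eqref{eqw22}. Since $f_1(r;1)\, e^{-r}/r = g(r)$ by unfolding the definition of $g$ in \eqref{defy3}, one verifies directly that $g^{-}_3(r;\sigma)|_{\sigma=1} = g^{-}_{30}(r)$, so the apparent pole of $w^{-}_2 = (g^{-}_3 - g^{-}_{30})/\lambda$ at $\lambda=0$ (i.e.\ $\sigma=1$) is removable. Similarly, substituting $\sigma=0$ and $\sigma=1$ into the bracket defining $\tilde R_2$ shows that it vanishes at both endpoints, so the apparent poles of $(\sigma\sigma_2)^{-1}$ are also fake. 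Both claimed bounds therefore reduce to quantifying the rates of these vanishings.

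\textbf{Estimate for $w^{-}_2$.} I would write
\[
g^{-}_3 - g^{-}_{30} = \frac{e^{-\sigma r}-e^{-r}}{r}\bigl(1 + a_1 f_1(r;\sigma)\bigr) + \frac{a_1\, e^{-r}}{r}\bigl(f_1(r;\sigma) - f_1(r;1)\bigr)
\]
and divide by $\lambda = (1-\sigma)(1+\sigma)$. For the first piece, $|e^{-\sigma r}-e^{-r}| \leq r(1-\sigma) e^{-\sigma r}$ (from $e^{-\sigma r}-e^{-r}=\int_\sigma^1 r e^{-sr}\,ds$) combines with the pointwise bound $|f_1(r)| < 1/6500$ for $r\geq r_7$ established in the proof of Lemma \ref{l313} to give at most $(1+a_1/6500)\, e^{-\sigma r}/(1+\sigma)$. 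For the second piece, I would differentiate $\sigma f_1(r;\sigma) = -\mathrm{Ei}(-2r) + e^{2\sigma r}\sigma_1 \mathrm{Ei}(-2\sigma_1 r)$ in $\sigma$ and apply the mean value theorem to extract a factor $(1-\sigma)$ from $f_1(r;\sigma) - f_1(r;1)$; combined with $e^{-r}/r \leq (2/5)e^{-5/2}$ for $r\geq r_7$, this contribution is of much lower order. Summing gives $|w^{-}_2| < (51/50)e^{-\sigma r}$ with margin, given $a_1 \approx 7.4$.

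\textbf{Estimate for $R_2$ and the main obstacle.} Since $R_2 = \frac{a_1^2 e^{-4r}}{\sigma_1 r^3}\tilde R_2$, the required bound is equivalent to $|\tilde R_2(r)| < \frac{3\sigma_1}{250\, a_1^2}\, e^{2r}$. I would apply the expansion \eqref{Ei1 bd} of Lemma \ref{N1} to each of $\mathrm{Ei}(-4r)$, $\mathrm{Ei}(-2r)$, and $\mathrm{Ei}(-2\sigma_1 r)$ --- all arguments exceeding $5$ when $r\geq r_7$, so several terms of the expansion give tiny remainders. After simplifying the exponentials (for instance $e^{-\sigma r}\cdot e^{(3+\sigma)r} = e^{3r}$), the leading $e^{-x}/x$ contributions combine to cancel the $\sigma\sigma_2$ in the denominator explicitly, and the remainder is a sum of elementary functions bounded well below the target. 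The main obstacle in both parts is the bookkeeping needed to exhibit these cancellations with sharp enough constants: formally $\lambda^{-1}$ and $(\sigma\sigma_2)^{-1}$ blow up as $\sigma\to 1$, and the narrow margin $51/50$ vs.\ $1$ in the first bound leaves little slack. The payoff of recognizing the shared structure $g^{-}_3 = (e^{-\sigma r}/r)(1+a_1 f_1)$ is that the $w^{-}_2$ estimate inherits essentially all its quantitative content from the $L_+$ analysis already in place.
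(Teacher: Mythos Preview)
Your route for $w^{-}_2$ is sound and genuinely different from the paper's. You split $g^{-}_3 - g^{-}_{30}$ into an ``exponential difference'' piece and an ``$f_1$ difference'' piece, then extract the $(1-\sigma)$ factor from each; the first piece already gives $\leq (1+a_1/6500)\,e^{-\sigma r}/(1+\sigma)$ and the second is lower order once one bounds $\partial_\sigma f_1$ (which is easiest via the integral form $f_1=e^{-2r}\int_{-\infty}^0 \frac{s e^s}{(s-2r)(s-2\sigma_1 r)}\,ds$ rather than by differentiating $\sigma f_1$ as you suggest). The paper instead first bounds $\tilde R_2$ and then uses the algebraic identity $r e^r\sigma_1\, w^{-}_2(r)=(e^{\sigma_2 r}-1)/\sigma_2 - a_1 e^{-r}\tilde R_2$, so that $w^{-}_2$ falls out of the $R_2$ estimate for free. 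Your approach is more self-contained for $w^{-}_2$; the paper's buys economy by recycling $\tilde R_2$.

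For $R_2$, however, there is a genuine gap. The expansion \eqref{Ei1 bd} produces, for each of the three $\mathrm{Ei}$'s, a polynomial-in-$1/r$ main part plus an integral remainder. The main parts do combine to exhibit the $\sigma\sigma_2$ cancellation (indeed the leading $e^{-x}/x$ contributions sum to exactly zero, and the next order carries an explicit $\sigma\sigma_2$ factor), but the three \emph{remainder} integrals do not: each is just a uniform $O(e^{-x}/x^{N+1})$ bound with no algebraic relationship among them. After dividing by $\sigma\sigma_2$ these remainders are not controlled uniformly as $\sigma\to 0$ or $\sigma\to 1$, so ``tiny remainders'' is not enough. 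The paper avoids this by never truncating: it writes $\mathrm{Ei}(-x)=e^{-x}\int_{-\infty}^0 \frac{e^s}{s-x}\,ds$, combines all three terms into a single integral via partial fractions, and obtains
\[
\tilde R_2(r)=e^{-r}\int_{-\infty}^0 e^s\,\frac{(1-e^{\sigma_2 r})s^2+2rs\bigl(2(e^{\sigma_2 r}-1)+\sigma_2\bigr)}{\sigma_2(s-4r)(s-2r)(s-2\sigma_1 r)}\,ds,
\]
in which the $\sigma$ factor has already been absorbed by the partial-fraction algebra and the remaining $1/\sigma_2$ is cancelled by $(e^{\sigma_2 r}-1)\leq \sigma_2 e^r$. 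The integrand is then bounded pointwise by elementary monotonicity, uniformly in $\sigma\in[0,1]$. If you want to salvage your approach, you would need to keep the $\mathrm{Ei}$'s exact long enough to exhibit the cancellation structurally --- which is precisely what the integral representation does.
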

\begin{proof}
Rewriting $\Ei(-r)$ as $e^{-r}\int_{-\infty}^0e^s/(s-r)\, ds$ we obtain
\begin{equation*}
\tilde{R}_2(r) = e^{-r} \int_{-\infty}^0e^s\frac{\left(1-e^{\sigma_2  r}\right) s^2+2 r s \left(2(e^{\sigma_2  r}-1)+ \sigma_2 \right) }{\sigma_2 (s-4 r) (s-2 r) (s-2 \sigma_1  r)}\, ds
\end{equation*}
We use the following inequalities
to bound $\tilde{R}_2(r)$:  For $\sigma\in (0,1), r>0$, we have  $(e^{\sigma_2 r}-1)\leqslant \sigma_2 e^r$; furthermore, $s<0$,  and thus  $|s-2 \sigma_1  r|\geqslant |s-2r|$ and $|r/(s-4r)|<1/4$;
we are in the range $r\geqslant  r_7$, thus $e^{-r}\leqslant e^{-r_7}$,
  and with $m>0$, $|s-mr|\geqslant |s-mr_7|$. Therefore,
\begin{multline}
  \label{eq:Estr2}
|\tilde{R}_2(r)|\leqslant   \int_{-\infty}^{0}\left|\frac{s^2e^s }{(s-10)(s-5)^2 }\right|+\left|\frac{2(2+e^{-r_{7}})se^s}{4 (s-5)^2}\right|\, ds=-4e^{10} \mathrm{Ei}(-10)+2e^5\mathrm{Ei}(-5)\\
-3\mathrm{Ei}(-5)e^{r_7}-\tfrac{\ds e^{-r_7}}2
\ \ \ \ \Rightarrow \ |R_2|<\f{3a_{1}^2e^{-5}}{100}\frac{e^{-2r}}{r^3}<\f{3}{250}  \frac{e^{-2r}}{r^3}
\end{multline}
 Using \eqref{g3} and \eqref{eq:eq48*}) we get
$
re^r\sigma_1  w^{-}_2(r)=({e^{\sigma_2 r}-1})/{\sigma_2  }-a_{1}e^{-r}\tilde{R}_2
$. Therefore,  since $\forall \sigma_2 r\geqslant 0$ we have ${e^{\sigma_2 r}-1}\leqslant {\sigma_2  } r e^{\sigma_2 r}$ we obtain, using the estimate for $\tilde{R}_2$ in \eqref{eq:Estr2},
\begin{equation}
  \label{eq:w2b}
  |w^{-}_2(r)|\leqslant \frac{e^{-\sigma r}+e^{-2r}a_{1}|\tilde{R_2}|}{r\sigma_1 }< |w^{-}_2(r)|\leqslant\frac{re^{-\sigma r}+e^{-\sigma r-5/2}a_{1}|\tilde{R_2}|}{r\sigma_1}\leqslant
\frac{51e^{-\sigma r}}{50}
\end{equation}
and we are done.
\end{proof}

\noindent Since $Q$ satisfies \eqref{mainODE} and $y_2$ is a solution of \eqref{eq:eqlm},
the definition of $\tilde{u}_2$ (see \eqref{eq:eqy-u} and \eqref{A B}) implies that $\tilde{u}_2$ satisfies the equation
\begin{equation}
  \label{eq:equ2}
  -\tilde{u}_2{''}-2\tilde{u}_2{'}/r+(1-\lambda-Q^{2})\tilde{u}_2=Q/A_1
\end{equation}
Writing $\tilde{u}_2=w^{-}_2+\delta_0$, we obtain from \eqref{eqw2} and \eqref{eq:equ2} after regrouping the terms,
\begin{equation}
  \label{eq:eq50m}
  -\delta_0''-2\delta_0'/r=-R_2+\(\lambda-1+Q^2\)\delta_0+\left(Q^{2}-a_{1}\frac{e^{-2r}}{r^{2}}\right)w^{-}_2+\f{Q}{A_1}-{g}^{-}_{30}=:h_{3}
\end{equation}
Looking  for exponentially decreasing solutions,  we write \eqref{eq:eq50m}
 in the integral form
\begin{equation}
  \label{eq:eqd2m}
  \delta_0 =H_1 (\delta_0)=T_1 (h_3)\ \text{where}\  [T_1(f)](r)=-\int_r^{\infty}\f{dt}{t^2}\int_t^\infty s^2 f(s)\, ds
\end{equation}
Using \eqref{eq:bdQ2} and Note~\ref{NdQ} we get
\begin{equation}\label{estdq}
 \left|Q^{2}(r)-a_{1}\frac{e^{-2r}}{r^{2}}\right|\leqslant \left|Q(r)^{2}-\tilde{Q}(r)^{2}\right|+\left|a_{1}\frac{e^{-2r}}{r^{2}}-\tilde{Q}(r)^{2}\right|<\f{3}{200}\frac{e^{-2r}}{r^{2}}
\end{equation}
Using Lemma~\ref{210} to estimate $Q$ in terms
of $\tilde{Q}$ and to bound $b_{1,2}$,   Definition~\ref{def:tildeQ} and \eqref{tildeQ} to estimate $\tilde{Q}$, \eqref{4p} to estimate $g$, \eqref{A B error} for  $A$ and $B$, \eqref{eq:eqy-u} for $A_1$  and \eqref{eq:g30} we get, for $r\geqslant 5/2$,
\begin{equation}\label{estlt}
\Big|{Q}(r)/{A_1}-{g}^{-}_{30}(r)\Big|=\Big|\f{Bg(r)+b_2(r)}{A_1}-a_1g(r)\Big|\leqslant \Big|\f{b_2(r)}{A_1}\Big|+\Big|\(\f{B}{A_1}-a_1\)g(r)\Big|\leqslant \f{1}{20}\f{e^{-3r}}{r^3}
\end{equation}
In the same way it is shown that for $r\geqslant  r_7$ we have $Q^2<\f1{125}$ which implies $|1-\lambda-Q^{2}|\leqslant 1$.

Therefore denoting $\|f\|=\sup_{r\geqslant r_{7}}r^2e^{2r}|f(r)|$ we use Lemma~\ref{r2wm}, \eqref{eq:eq50m}, \eqref{estdq} and \eqref{estlt} to obtain
\begin{equation}
  \label{eq:esth3}
 |h_3(r)|<\(\f{11}{500}+\|\de_0\|\)\f{e^{-2r}}{r^2}
\end{equation}

\begin{lem}\label{w2dm}
$H_1$ is contractive in the ball $\{f\mid \|f\|\leqslant 1/125\}$.
Furthermore, $|\tilde{u}_2(r_7)-w^{-}_2(r_7)|<10^{-5}$ and $|\tilde{u}_2'(r_7)-w^{-}_2{'}(r_7)|<2\cdot10^{-5}$.
\end{lem}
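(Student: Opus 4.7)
The plan is to reduce the entire lemma to a single weighted-norm estimate of the linear operator $T_1$ from~\eqref{eq:eqd2m}. Whenever $|f(s)|\le M s^{-2}e^{-2s}$ one has $|s^2 f(s)|\le Me^{-2s}$, so the inner integral is bounded by $Me^{-2t}/2$, and pulling $t^{-2}\le r^{-2}$ out of the outer integral gives $|[T_1 f](r)|\le Me^{-2r}/(4r^2)$; in other words $T_1$ has operator norm $\le 1/4$ on the space with norm $\|f\|=\sup_{r\ge r_7} r^2 e^{2r}|f(r)|$. Combined with the a priori bound $|h_3(r)|\le(\tfrac{11}{500}+\|\de_0\|)e^{-2r}/r^2$ from~\eqref{eq:esth3}, this yields the single key estimate
\begin{equation*}
\|H_1(\de_0)\|\le \tfrac{11}{2000}+\tfrac14\|\de_0\|.
\end{equation*}

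For contractivity I would observe that $h_3$ depends on $\de_0$ only through the linear term $(\lambda-1+Q^2)\de_0$, and $|1-\lambda-Q^2|\le 1$ on $r\ge r_7$ (noted just before~\eqref{eq:esth3}). Thus $|h_3(\de_1,s)-h_3(\de_2,s)|\le \|\de_1-\de_2\| s^{-2}e^{-2s}$, and the identical calculation produces the Lipschitz bound $\|H_1(\de_1)-H_1(\de_2)\|\le \tfrac14\|\de_1-\de_2\|$. Self-mapping of the ball $\{\|f\|\le 1/125\}$ then reduces to the arithmetic inequality $\tfrac{11}{2000}+\tfrac{1}{4}\cdot\tfrac{1}{125}=\tfrac{15}{2000}<\tfrac{16}{2000}=\tfrac{1}{125}$, after which Banach's theorem supplies a unique fixed point $\de_0$, actually enjoying the sharper a priori bound $\|\de_0\|\le \tfrac{11}{2000}/(1-\tfrac14)=\tfrac{11}{1500}$.

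Plugging into the identity $\tilde u_2-w^-_2=\de_0$ and evaluating at $r_7=5/2$, where $r_7^{-2}e^{-2r_7}=\tfrac{4}{25}e^{-5}$, yields $|\tilde u_2(r_7)-w^-_2(r_7)|\le \tfrac{11}{1500}\cdot\tfrac{4}{25}e^{-5}<10^{-5}$. For the derivative I differentiate the defining formula $H_1(\de_0)(r)=-\int_r^\infty t^{-2}\int_t^\infty s^2 h_3(s)\,ds\,dt$ to obtain $\de_0'(r)=r^{-2}\int_r^\infty s^2 h_3(s)\,ds$, which, bounded pointwise using~\eqref{eq:esth3} and the fixed-point estimate, gives $|\de_0'(r_7)|\le (\tfrac{11}{500}+\tfrac{11}{1500})\cdot\tfrac{4}{25}e^{-5}/2<2\cdot 10^{-5}$.

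The only mildly delicate point is the tightness of the self-mapping step: $15/2000$ and $16/2000$ differ by one unit in the last place, so the constant $11/500$ in~\eqref{eq:esth3}---itself assembled from the choice of $a_1=(6839/2521)^2$, the bound of Lemma~\ref{r2wm} on $R_2$, the estimate~\eqref{estdq} on $Q^2-a_1 e^{-2r}/r^2$, and~\eqref{estlt} on $Q/A_1-g_{30}^-$---could not have been much larger without the ball radius needing adjustment. Everything beyond verifying that one scalar inequality is direct substitution into the integrals.
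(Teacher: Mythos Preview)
Your proof is correct and follows essentially the same route as the paper: the operator bound $\|T_1\|\le 1/4$ in the weighted norm, combined with~\eqref{eq:esth3}, gives $\|H_1(\de_0)\|\le \tfrac{11}{2000}+\tfrac14\|\de_0\|$, from which self-mapping, contractivity, and the pointwise bounds at $r_7$ all follow by direct substitution. The only cosmetic difference is that you use the sharper fixed-point bound $\|\de_0\|\le 11/1500$ in the final numerics, whereas the paper simply plugs in the ball radius $1/125$; both work.
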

\begin{proof}
We use the crude estimate $ |T_1 (\exp(-2r)/r^2)|< r^{-2}
\int_r^{\infty}dt\int_t^\infty\exp(-2s)\,ds=\tfrac14$  and obtain from \eqref{eq:esth3}
  \begin{equation}\label{Hpb1}
       |H_1 (\delta_0)|(r)  \leqslant \f{e^{-2r}}{r^2}\(\frac{11}{2000}+\frac{1}{4}\|\de_0\|\)
  \end{equation}
\end{proof}

\zn
We simply bound  $|H_1'(\delta)|=\int_r^\infty(s/r)^2 h_3(s)\,ds$  using $|H_1'(\exp(-2s)/s^2)|=\f12r^{-2}e^{-2r}$ and get
\begin{equation}\label{Hpb2}
|H'_{1}(\delta_0)(r)|\leqslant \(\f{11}{1000}+\f12\|\de_0\|\){e^{-2r}}/{r^2}
\end{equation}
It follows from \eqref{Hpb1} and \eqref{Hpb2} that
$|\delta_0(r_{7})|=|H_{1}(\delta_0)(r_{7})|<10^{-5},
|\delta_0'(r_{7})|=|H'_{1}(\delta_0)(r_{7})|<2\cdot10^{-5}$.

\subsubsection{The Wronskian} The formulas for $w^{-}_2$ and $w^{-}_2{'}=\partial_{r} w^{-}_2$ are
\begin{multline}
  w^{-}_2(r_7)=\f{2e^{-r_7}}{5\lambda}(e^{r_7\sigma_2 }-1)-\f{4a_1}{5\lambda}\mathrm{Ei}(-10)e^{r_7}\\-\f{2a_1e^{-r_7}}{5\lambda\sigma}(e^{\sigma_2 r_7}-\sigma)\mathrm{Ei}(-5)+\f{2a_1\sigma_1}{5\lambda\sigma}e^{r_7\sigma}\mathrm{Ei}(-5\sigma_1)
\end{multline}
\begin{multline}
w^{-}_2{'}(r_7)=-\f{2}{25\lambda}(2e^{-r_7\sigma }-7e^{-r_7}+2a_1e^{-3r_7}+5\sigma e^{-r_7\sigma }-2a_1e^{-5-r_7\sigma })-\f{12a_1}{25\lambda}\mathrm{Ei}(-10)e^{r_7}\\+\f{2a_1\mathrm{Ei}(-5)}{25\lambda\sigma}(-7\sigma e^{-r_7}+2e^{-r_7\sigma}+5\sigma e^{-r_7\sigma})+\f{2a_1\sigma_1}{25\lambda\sigma}e^{r_7\sigma}(5\sigma-2)\mathrm{Ei}(-5\sigma_1)
\end{multline}
 It is useful to estimate these in terms of polynomials.
\begin{lem}\label{L14m} (i) With $z=2\sigma -1$ we have
{\small \begin{multline}
\left|\frac{61}{560}-\frac{139}{588}z+\frac{97}{316}\f{z^{2}}{2^2}-\frac{124}{409}\f{z^{3}}{2^3}+\frac{199}{786}\f{z^{4}}{2^4}
-\frac{73}{383}\f{z^{5}}{2^5}+\frac{71}{526}\f{z^{6}}{2^6}-\frac{16}{173}\f{z^{7}}{2^7}+\frac{24}{385}\f{z^{8}}{2^8}-w^{-}_{2}(r_{7})\right|<3\cdot10^{-4}
\\
\left|-\frac{23}{303}+\frac{52}{587}z-\frac{3}{103}\f{z^{2}}{2^2}-\frac{13}{311}\f{z^{3}}{2^3}+\frac{19}{235}\f{z^{4}}{2^4}
-\frac{22}{257}\f{z^{5}}{2^5}+\frac{21}{292}\f{z^{6}}{2^6}-\frac{13}{242}\f{z^{7}}{2^7}+\frac{59}{1563}\f{z^{8}}{2^8}-w^{-}_{2}{'}(r_{7})\right|<2\cdot10^{-4}
\end{multline}}
(ii) We also have $|w^{-}_2(r)|<1/2$ and $|w^{-}_2{'}(r)|<1/5$.
\end{lem}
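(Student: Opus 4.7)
The plan is to mirror the proof of Lemma~\ref{L14} essentially verbatim, with the additional complication that $w^{-}_2$ carries the factor $1/\lambda = 1/(\sigma_1\sigma_2)$ and the constant $a_1$ in place of $a_0$. The only non-elementary ingredient in the closed forms for $w^{-}_2(r_7)$ and $w^{-}_2{'}(r_7)$ is $\mathrm{Ei}(-5\sigma_1)$ viewed as a function of $\sigma$ in a neighbourhood of $\sigma=\tfrac12$ (since $\mathrm{Ei}(-5)$ and $\mathrm{Ei}(-10)$ are numerical constants). Using the integral representation
$$\mathrm{Ei}(-5\sigma_1)=\mathrm{Ei}(-10)+\int_{5\sigma}^{5}\f{e^{-5-u}}{-5-u}\,du$$
and Taylor-expanding the analytic integrand $e^{-5-u}/(-5-u)$ around $u=5/2$, a degree-$12$ truncate gives a remainder bounded by $2\cdot 10^{-7}$ on $|u-5/2|\leqslant 5/2$ via Cauchy's formula (the nearest singularity is at $u=-5$, distance $15/2$); integrating term-by-term produces a polynomial approximation of $\mathrm{Ei}(-5\sigma_1)$ in $\sigma$ accurate to within $10^{-6}$ on $\sigma\in[0,1]$.

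Substituting this polynomial, together with the convergent power series of the elementary factors $e^{r_7\sigma_2}$, $e^{2r_7\sigma}$, $\sigma^{\pm1}$, $\sigma_1^{\pm1}$, $\lambda^{-1}$, into the formulas for $w^{-}_2(r_7)$ and $w^{-}_2{'}(r_7)$ produces honest power series in $\sigma$ once the apparent poles at $\sigma=0$ (from $1/\sigma$) and at $\sigma=\pm1$ (from $1/\lambda$) are cleared. These cancellations are guaranteed by construction, since $w^{-}_2=({g}^{-}_3-{g}^{-}_{30})/\lambda$ subtracts precisely the obstruction to analyticity at $\lambda=0$, while the parenthesised combination inside ${g}^{-}_3$ is even in $\sigma$ near $0$. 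I would then re-expand the resulting polynomial in $z=2\sigma-1$, discard all monomials of degree $\geqslant 9$ (the $\ell^1$ tail is verified to be $< 10^{-4}$), and replace each of the nine retained coefficients by a sufficiently accurate truncated continued fraction, producing the explicit rational polynomials displayed in~(i). Summing the four error contributions---truncation of $\mathrm{Ei}$, truncation of the $z$-series, rationalisation of coefficients, and the elementary $e^{\cdot}$-series remainders---yields the claimed $3\cdot10^{-4}$ and $2\cdot10^{-4}$ bounds.

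For part~(ii), I would invoke (i) directly: on $\sigma\in[0,1]$, i.e.\ $z\in[-1,1]$, apply the partition $\boldsymbol{\pi}=(0,1/2,1)$ (halving into $z\in[-1,0]$ and $z\in[0,1]$) and re-expand each degree-$8$ polynomial in the new local variable per Note~\ref{met1}. On each subinterval the cubic head is extremised explicitly and the degree-$4$-through-$8$ tail is bounded by the $\ell^1$-norm of its coefficients; after adding the $3\cdot10^{-4}$ (resp.\ $2\cdot10^{-4}$) error from~(i), a comfortable margin remains below $1/2$ (resp.\ $1/5$). The main obstacle throughout is bookkeeping: cleanly clearing the removable singularities at $\sigma=0,\pm1$ so as to exhibit honest analytic expansions of $w^{-}_2(r_7)$ and $w^{-}_2{'}(r_7)$, and then tracking all four independent sources of truncation error simultaneously; no new conceptual ingredient beyond those in the proof of Lemma~\ref{L14} is needed.
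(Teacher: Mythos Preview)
The gap in your approach is the handling of the removable singularities at $\sigma=0$ and $\sigma=1$, which is precisely the new obstacle compared to Lemma~\ref{L14}. In Lemma~\ref{L14} there was only the single $1/\sigma$ factor, handled by writing $w^{+}_2(r_7;\sigma) = \int_0^1 (w^{+}_3)'(\sigma s)\,ds$ with $w^{+}_3(0)=0$. Here there is an additional $1/\lambda = 1/(\sigma_1\sigma_2)$ factor, and the paper's proof introduces a specific extra ingredient: an explicit splitting
\[
w^{-}_2(r_7;\sigma)=\sigma_1^{-1}\int_0^1 (w^{-}_3)'(\sigma_2 s)\,ds+\int_0^1 (w^{-}_4)'(\sigma s)\,ds,
\]
where $w^{-}_3,w^{-}_4$ are designed so that $w^{-}_3(0)=w^{-}_4(0)=0$. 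This recasts $w^{-}_2(r_7)$ as a manifestly analytic expression \emph{before} any Taylor truncation, so that no subsequent approximation error is ever divided by a small quantity.

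Your plan instead substitutes a polynomial approximation $P(\sigma)$ of $\mathrm{Ei}(-5\sigma_1)$ (uniform error $\sim 10^{-6}$) together with power series for $\sigma^{-1}$, $\lambda^{-1}$, etc., and then ``clears'' the poles. This does not work as stated: the uniform error $\mathrm{Ei}(-5\sigma_1)-P(\sigma)$ does not vanish at $\sigma=0$ or $\sigma=1$, so after multiplication by the prefactor $\tfrac{2a_1\sigma_1}{5\lambda\sigma}e^{r_7\sigma}$ it becomes an error of order $10^{-6}/(\lambda\sigma)$, unbounded on $[0,1]$. Equivalently, if you expand $\sigma^{-1}$ and $\lambda^{-1}$ as Taylor series about $\sigma=1/2$, each has radius of convergence exactly $1/2$, so term-by-term products have truncates that blow up at the endpoints; the cancellations that restore the true, larger radius of convergence of $w^{-}_2(r_7;\sigma)$ are delicate and you give no mechanism to control them. (Incidentally, the parenthesised combination in $g^{-}_3$ is not even in $\sigma$; it merely vanishes at $\sigma=0$.) Your treatment of part~(ii) via Note~\ref{met1} with the partition $\boldsymbol{\pi}=(0,1/2,1)$ does match the paper.
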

\begin{proof}
We consider
 $$ w^{-}_3(\sigma_2)=\f{2e^{-r_7}}{5}(e^{r_7\sigma_2 }-1)-\f{4a_1}{5}\mathrm{Ei}(-10)e^{r_7}-\f{2a_1e^{-r_7}}{5}(e^{\sigma_2 r_7}-\sigma)\mathrm{Ei}(-5)+\f{4a_1}{5}e^{r_7\sigma}\mathrm{Ei}(-5\sigma_1)
 $$
 and
$$w^{-}_4(\sigma)=-\f{2a_1e^{-r_7}}{5\sigma_1}(e^{\sigma_2 r_7}-\sigma)\mathrm{Ei}(-5)+\f{2a_1}{5\sigma_1}e^{r_7\sigma}\mathrm{Ei}(-5\sigma_1)$$
A direct calculation shows that $w^{-}_2(r_7;\sigma)=\lambda^{-1}w^{-}_3(\sigma_2)+\sigma^{-1} w^{-}_4(\sigma)
=\sigma_1^{-1}\int_{0}^{1}w^{-}_3{'}(\sigma_2 s)\, ds+\int_{0}^{1}w^{-}_4{'}(\sigma s)\, ds$.
The rest of the proof  is the same as that of Lemma~\ref{L14}. The calculations for $w^{-}_2{'}$ are similar.
\end{proof}

\begin{Note}{\rm
To estimate  the Wronskian $W[y_1,y_2](r_7;\lam)$, we express
it in terms of $\tilde{u}_{1,2}$ using \eqref{eq:eqy-u},
  \begin{multline}\label{Wy12}
\lambda^{-1}W[y_1,y_2](r_7;\lam)=   \f{\tilde{u}_{1}(r_{7}) {Q}'(r_{7})-\tilde{u}_{1}  {'}(r_{7}){Q}(r_{7})}{A_1}+\f{{Q}(r_{7})\tilde{u}_{2}{'}(r_{7})-{Q}'(r_{7})\tilde{u}_{2}(r_{7})}{{Q}(0)}\\
+\lam\(
\tilde{u}_{1}(r_{7})\tilde{u}_{2}{'}(r_{7})-\tilde{u}_{2}(r_{7})\tilde{u}_{1}{'}(r_{7})\)
  \end{multline}
 At the first stage, we look for  a nearby quantity solely containing {\em polynomials} with rational coefficients; we find such an approximation by
replacing in \eqref{Wy12} $\tilde{u}_{1,2}$ by $w^{-}_{1,2}$, $Q$ by $\tilde{Q}$, and $A_1$ by $A$ (cf. \eqref{eq:eqy-u}).
Let thus
 \begin{multline}\label{w0}
 \lambda^{-1}\tilde{W}(\lambda):=  \f{w^{-}_{1}(r_{7}) \tilde{Q}'(r_{7})-w^{-}_{1} (r_{7}) {'}\tilde{Q}(r_{7})}{A}+\f{\tilde{Q}(r_{7})w^{-}_{2}{'}(r_{7})-\tilde{Q}'(r_{7})w^{-}_{2}(r_{7})}{\tilde{Q}(0)}\\
+\lam\(
w^{-}_{1}(r_{7})w^{-}_{2}{'}(r_{7})-w^{-}_{2}(r_{7})w^{-}_{1}{'}(r_{7})\)
  \end{multline}}
\end{Note}
\begin{cor}\label{C15m} The following lower bound holds:
  \begin{equation}
    \label{eq:bdbW}
   \sup_{\lambda\in[0,1]}\Big|\lambda^{-1}\tilde{W}(\lambda)\Big| \geqslant \f{1}{55}
  \end{equation}
 \end{cor}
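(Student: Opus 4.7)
\textbf{Proof plan for Corollary~\ref{C15m}.} Since the supremum over $\lambda\in[0,1]$ is bounded below by the value at any single point, it suffices to exhibit one $\lambda_0\in[0,1]$ at which $|\lambda_0^{-1}\tilde W(\lambda_0)|\geqslant 1/55$. The expression \eqref{w0} for $\lambda^{-1}\tilde W(\lambda)$ is \emph{linear} in $w^-_2(r_7)$ and $w^{-'}_2(r_7)$, with coefficients that are rational polynomials in $\lambda$ built from the rational constants $\tilde Q(r_7)$, $\tilde Q'(r_7)$, $A$, $\tilde Q(0)$ and from the $\lambda$-polynomials $w^-_1(r_7)$, $w^{-'}_1(r_7)$ constructed earlier. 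The plan is to substitute the degree-eight polynomial approximants $P_1(z)$ and $P_2(z)$ of Lemma~\ref{L14m}(i) in place of $w^-_2(r_7)$ and $w^{-'}_2(r_7)$, using the change of variables $z=2\sigma-1$ with $\lambda=1-\sigma^2=\tfrac14(1-z)(3+z)$, thereby reducing $\lambda^{-1}\tilde W(\lambda)$ to an explicit rational polynomial $\tilde P(z)$ in the single variable $z$ that can be manipulated exactly in $\mathbb{Q}[z]$.

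Next I will control the discrepancy $E(z):=|\lambda^{-1}\tilde W(\lambda)-\tilde P(z)|$. Writing $C_a(\lambda)=-\tilde Q'(r_7)/\tilde Q(0)-\lambda w^{-'}_1(r_7)$ and $C_b(\lambda)=\tilde Q(r_7)/\tilde Q(0)+\lambda w^-_1(r_7)$ for the coefficients of $w^-_2(r_7)$ and $w^{-'}_2(r_7)$ in \eqref{w0}, the triangle inequality yields $E(z)\leqslant |C_a(\lambda)|\cdot 3\cdot10^{-4}+|C_b(\lambda)|\cdot 2\cdot10^{-4}$. The values $\tilde Q(0)$, $\tilde Q(r_7)$, $\tilde Q'(r_7)$ are explicit rationals from Definition~\ref{def:tildeQ} (with $\tilde Q(0)\geqslant 4$ by construction), while $|w^-_1(r_7)|<2/5$ and $|w^{-'}_1(r_7)|<1/4$ by Lemma~\ref{w1dm}; hence both $|C_a|$ and $|C_b|$ are bounded by small explicit constants on $[0,1]$, giving $E(z)<\eps_\star$ for some concrete $\eps_\star$ of order $10^{-3}$, which is comfortably less than $1/55\approx 0.0182$.

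The final step is to evaluate $\tilde P(z)$ at a convenient rational $z_0\in[-1,1]$ and verify directly by elementary arithmetic in $\mathbb{Q}$ that $|\tilde P(z_0)|\geqslant 1/55+\eps_\star$. A natural first choice is $z_0=0$ (that is, $\sigma=1/2$, $\lambda_0=3/4$), where the Lemma~\ref{L14m}(i) approximants collapse to their leading constants $61/560$ and $-23/303$; if this point fails to give enough margin, one re-expands $\tilde P$ on the partition of $[-1,1]$ supplied by Note~\ref{met1} and evaluates at the endpoints to locate a suitable $z_0$. The triangle inequality then produces \eqref{eq:bdbW}.

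The only real difficulty is the algebraic bookkeeping: expanding products of univariate polynomials in $\mathbb{Q}[z]$ of moderate degree and tracking the coefficients through the substitution $\lambda=(1-z)(3+z)/4$. This is purely mechanical and, as with the earlier sections, can be certified by the same {\tt Maple}/{\tt Mathematica} routines, since no numerical operations outside $\mathbb{Q}$ are required.
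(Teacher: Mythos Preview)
Your plan has a real gap, though it stems from a typo in the statement rather than a mathematical misunderstanding. As written, the corollary asserts a lower bound on the \emph{supremum} of $|\lambda^{-1}\tilde W(\lambda)|$, and you are right that this would follow from evaluation at a single point. But look at how the corollary is used: together with the bound $|W[y_1,y_2](r_7;\lambda)-\tilde W(\lambda)|<\lambda/1000$ it is supposed to yield $W[y_1,y_2](r_7;\lambda)\ne 0$ \emph{for every} $\lambda\in(0,1]$. A supremum bound cannot deliver that conclusion; what is actually needed (and what the paper proves, the $\sup$ notwithstanding) is the uniform lower bound $|\lambda^{-1}\tilde W(\lambda)|\geqslant 1/55$ for all $\lambda\in[0,1]$. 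The same typo occurs in the $L_+$ analogue (Corollary~\ref{C15}), where it is similarly used to feed an infimum statement in Proposition~\ref{P3.2}. So your single-point evaluation at $z_0=0$ proves the literal displayed inequality but not the result the paper requires; you must instead bound the polynomial $\tilde P(z)$ away from zero on the whole interval $[-1,1]$, which is exactly what the re-expansion procedure of Note~\ref{met1} is designed to do and what the paper's proof carries out.

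There is also a smaller omission in your bookkeeping. You describe the coefficients in \eqref{w0} as ``rational polynomials in $\lambda$ built from the rational constants $\tilde Q(r_7)$, $\tilde Q'(r_7)$, $A$, $\tilde Q(0)$,'' but $A$ is \emph{not} rational: by \eqref{A B} it is an explicit combination of $e^{\pm r_7}$ and exponential-integral values. The paper handles this by replacing $1/A$ with the rational approximant $296/803$ and absorbing the error $|A_2|<10^{-4}$ into the overall remainder (which they bound by $2\cdot10^{-4}$). You would need the same step before your polynomial $\tilde P(z)$ genuinely lives in $\mathbb Q[z]$. Once you make that substitution and switch from single-point evaluation to the Note~\ref{met1} interval estimate, your outline coincides with the paper's argument.
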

\begin{proof}
By \eqref{A B error}, we have $1/A=\f{296}{803}+A_2$ where $|A_2|<10^{-4}$. Using the polynomials in Lemma~\ref{L14m}, we can write $\tilde{W}$ as a polynomial with explicit rational coefficients plus a remainder whose absolute value is smaller than $2\cdot10^{-4}$. Then we re-expand the resulting  polynomial approximation of $\tilde{W}$ at $\sigma =\tfrac{1}{2}$ using Note~\ref{met1}.
\end{proof}

\subsubsection{The
  Wronskian of the actual solutions}
\begin{lem}
  The Wronskian  of the actual solutions $y_1,y_2$, $W[y_1,y_2](r_{7};\lam)$,  satisfies
\begin{equation}
  \label{eq:eq54}
  |W[y_1,y_2](r_{7};\lam)-\tilde{W}(\lambda)|<\f{\lam}{1000}\ \forall \lambda\in [0,1]
\end{equation}
\end{lem}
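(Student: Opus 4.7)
My plan is essentially bookkeeping: expand the difference $W[y_1,y_2](r_7;\lam) - \tilde W(\lam)$ by telescoping in each of the three groups of terms that appear on the right-hand sides of \eqref{Wy12} and \eqref{w0}. Writing $a/c - a'/c' = (a-a')/c + a'(c-c')/(cc')$ and applying the Leibniz-type identity $pq-p'q'=(p-p')q+p'(q-q')$ to every bilinear combination, the difference splits into a fixed finite number of summands, each of which is a product of a single \emph{small} factor by $O(1)$ factors.

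The small factors at $r=r_7$, all already established in the excerpt, are: $|\tilde u_1-w^-_1|$ and $|\tilde u_1{'}-w^-_1{'}|<7\cdot 10^{-4}$ from Proposition~\ref{acts}; $|\tilde u_2-w^-_2|<10^{-5}$ and $|\tilde u_2{'}-w^-_2{'}|<2\cdot 10^{-5}$ from Lemma~\ref{w2dm}; $|Q-\tilde Q|(r_7)$ and $|Q'-\tilde Q'|(r_7)$ of order at most $10^{-4}$, via Lemma~\ref{210} (differentiating the decomposition $\delta=b_1e^{-r}/r+b_2$ and using the bound on $b_2$); $|Q(0)-\tilde Q(0)|\leqslant \eps_0=7\cdot 10^{-5}$ from the norm bound $\|\delta\|_X\leqslant \eps_0$ in Proposition~\ref{prop:Q error}, which combined with the explicit lower bounds on $Q(0),\tilde Q(0)$ yields $|Q(0)^{-1}-\tilde Q(0)^{-1}|$ of the same order; and $|A_1-A|=|b_1|<5\cdot 10^{-5}$ with $A>217/80$, giving $|A_1^{-1}-A^{-1}|<10^{-5}$. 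The required $O(1)$ upper bounds on $|w^-_{1,2}(r_7)|$, $|w^-_{1,2}{'}(r_7)|$, $|Q(r_7)|$, $|Q'(r_7)|$, $|\tilde Q(r_7)|$ and the corresponding quantities for $\tilde u_{1,2}$, uniform in $\lam\in[0,1]$, are collected from Lemmas~\ref{w1dm}, \ref{L14m}(ii), \ref{lem:tildeQ1} and the $\|\cdot\|_X$ bounds on $\delta$ and $\delta_0$.

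Multiplying each small factor by its $O(1)$ partners and summing the finitely many resulting terms in each of the three groups gives $|\lambda^{-1}(W - \tilde W)|<10^{-3}$; the third group already carries an extra factor $\lam$ and so remains harmless on $[0,1]$. For $\lam=0$ both $y_1$ and $y_2$ reduce to multiples of $Q$, so $W(r_7;0)=0$, and similarly $\tilde W(0)=0$ by construction, making the inequality trivial there. The only real obstacle is that the arithmetic must be carried out carefully enough to keep the final constant under $1/1000$; since no cancellation is needed and all contributions are of order $10^{-4}$ or smaller, the estimate is comfortable, and the proof reduces to the sort of mechanical bookkeeping the authors indicate is straightforward.
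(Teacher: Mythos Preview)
Your proposal is correct and matches the paper's approach essentially verbatim: telescope the difference between \eqref{Wy12} and \eqref{w0} via the triangle inequality into products of one small factor (the various $\tilde u_{1,2}-w^-_{1,2}$, $Q/A_1-\tilde Q/A$, $Q/Q(0)-\tilde Q/\tilde Q(0)$) times $O(1)$ factors, and invoke precisely the lemmas you list. One small remark: for $|Q'-\tilde Q'|(r_7)$ you should simply use the $\|\delta\|_X$ bound from Proposition~\ref{prop:Q error} (which you mention later) rather than differentiating the decomposition of Lemma~\ref{210}, since that lemma gives no bound on $b_2'$.
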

\begin{proof}
The difference between \eqref{Wy12}  and  \eqref{w0} (we omit
its long and clearly straightforward expression) is  bounded simply using the
triangle inequality. The terms that need to be estimated in
the difference are $\tilde{u}_{1,2}-w^{-}_{1,2}$ and $\tilde{u}'_{1,2}-\partial_{r} w^{-}_{1,2}$ for which we use  Lemma~\ref{lpconm} (ii), Proposition~\ref{acts}, and Lemma~\ref{w2dm}, $Q/A_1-\tilde{Q}/A$ and $Q/Q(0)-\tilde{Q}/\tilde{Q}(0)$ which we estimate using
\eqref{A B error}, Lemma~\ref{LQ} and Lemma~\ref{210}, $|w^{-}_{1,2}|$
and $|\partial_{r} w^{-}_{1,2}|$ which are bounded
in Lemma~\ref{w1dm} and Proposition~\ref{acts}.
\end{proof}

\subsubsection{End of the proof of Proposition~\ref{p42}}
Proposition~\ref{p42} follows from Corollary~\ref{C15m} and \eqref{eq:eq54}. Therefore $W[y_1,y_2](r_{7};\lam)\neq 0$ for $\lam\in(0,1]$,
 implying Proposition~\ref{p42}.

\section{Appendix 1: Further details of proofs}\label{sec:app1}

\subsection{The general solutions of  \eqref{eqorL} and \eqref{eq:eqlm}}\label{S51}
 We have shown existence of two solutions $v_0$ and $v_\infty$ of
\eqref{eqorL} (and separately of \eqref{eq:eqlm}) which belong to~$L^2$ near
$r=0$ and $r=\infty$, respectively.  Here we  address  the question
of uniqueness.
\begin{lem}\label{lpv}
Let  $u_{0}$ and $u_\infty$ be the solutions of \eqref{eqorL} described in Sections \ref{up0} and \ref{upi}. Let $v$ be any solution of \eqref{eqorL}. Then either
the Wronskian of $v$ with respect to $u_0$ ($u_\infty$) is zero
or else $v$ is not in $L^2$ near zero (infinity, resp.).
\end{lem}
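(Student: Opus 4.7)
The plan is a standard two-dimensional linear ODE argument. Equation~\eqref{eqorL} has a two-dimensional solution space, and the Wronskian $W[f,g]=fg'-f'g$ of any two solutions satisfies $W'=-(2/r)W$, whence $W(r)=W(r_0)(r_0/r)^2$; in particular $W[v,u_0]=0$ at one point if and only if $v$ is a scalar multiple of~$u_0$, and the same statement holds with $u_\infty$ in place of $u_0$. Thus it suffices to verify that in the full solution space exactly one independent direction is admissible (bounded/in $L^2$) at each endpoint, and that this direction is the one spanned by~$u_0$ (respectively~$u_\infty$).

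For the behavior at $r=0$, I would substitute $w(r):=ru(r)$; this turns~\eqref{eqorL} into the regular Sturm--Liouville equation
\[
-w''+(1-\lambda-3Q^2)w=0
\]
on $[0,\infty)$, whose coefficients are smooth through the origin because $Q$ is smooth. Its solution space is parametrized by the initial data $(w(0),w'(0))\in\R^2$, and the subspace $\{w(0)=0\}$ is one-dimensional; via $u=w/r$ this is precisely the set of solutions of~\eqref{eqorL} that remain bounded at $r=0$. Since $u_0(0)=1$, its associated $w_0$ satisfies $w_0(0)=0$, so $u_0$ spans this subspace. Any $v$ linearly independent from $u_0$ therefore satisfies $v(r)=w(0)/r+O(1)$ near zero with $w(0)\neq 0$, and since $\int_0^\epsilon r^{-2}\,dr=\infty$ we conclude $v\not\in L^2$ near~$0$.

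For $r\to\infty$, the same substitution $w=ru$ yields $-w''+(1-\lambda)w=3Q^2 w$ with exponentially small right-hand side. A Jost-function / Volterra analysis --- essentially the one already performed in Section~\ref{upi} --- shows that for $\lambda\in[0,1)$ the two asymptotic rays are $w\sim e^{\pm\sqrt{1-\lambda}\,r}$, i.e.\ $u\sim r^{-1}e^{\pm\sqrt{1-\lambda}\,r}$, while at the threshold $\lambda=1$ they degenerate to $u\sim r^{-1}$ and $u\sim 1$. In every case exactly one of the two asymptotic behaviors lies in $L^2$ near infinity, and by construction $u_\infty$ realizes that behavior. Any $v$ linearly independent from $u_\infty$ must then pick up a nonzero coefficient in front of the non-$L^2$ asymptotic ray, whence $v\not\in L^2$ near~$\infty$. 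The only mildly delicate point is the threshold case $\lambda=1$, where the two exponential rates coalesce; this is handled by iterating the Volterra equation on $[R,\infty)$ starting from the two unperturbed solutions $1$ and $r$ of $-w''=0$, which disentangles the two asymptotic behaviors with only lower-order polynomial corrections and thus still separates them inside $L^2(dr)$ near infinity.
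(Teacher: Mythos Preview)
Your proof is correct and follows essentially the same route as the paper: pass to $w=ru$, which satisfies the regular equation $-w''+(1-\lambda-3Q^2)w=0$, and then identify the unique admissible direction at each endpoint. The only minor difference is at infinity: the paper argues directly from the constancy of the Wronskian of the transformed equation together with $U_\infty\to\mathrm{const}$, $U_\infty'\to 0$ (exponentially fast, since the potential $3Q^2$ decays exponentially), which handles all $\lambda\in[0,1]$ at once without splitting off the threshold case; you instead invoke the Jost/Volterra asymptotics and treat $\lambda=1$ separately. Both are valid, but the constant-Wronskian argument is a bit more economical and avoids having to disentangle the $\lambda=1$ degeneracy.
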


\begin{proof}
The functions  $U_{0,\infty}:=ru_{0,\infty}(r)$ satisfy the linear equation
\begin{equation}
  \label{eq:eqU3}
  -U''+(1-\la-3Q^2)U=0
\end{equation}
The Wronskian of any two solutions of \eqref{eq:eqU3} is constant
(since the coefficient of $U'$ is zero). If the Wronskian of $v$ with respect to $u_0$ ($u_\infty$) is not zero, then obviously the Wronskian $W_0$ ($W_\infty$) of $V_1=rv$ with respect to $U_0$ ($U_\infty$) is a nonzero constant. Now it follows from the expression for $w^+_{1,2}$, Corollary \ref{d1d}, \eqref{eq:eqd2}, and Lemma~\ref{l315} that $U_{0}(r)\to 0,U_{0}'(r)\to u_{1}(0)\ne 0$ as $r\to 0$, and $U_{\infty}(r)\to \const,U_{\infty}'(r)\to 0$ ($U_{\infty}'$ decays exponentially) as $r\to \infty$. Thus if $W_0\ne0$ then either $V_{1}(r)\to \const\ne 0$ or $V_{1}'(r)\sim \const/r$ as $r\to 0$, and if $W_\infty\ne0$ then $V_{1}(r)$ must increase to $\infty$ either exponentially or like $\const.r$ as $r\to \infty$. The conclusion then follows.
\end{proof}

\begin{lem}
Let  $y_1$ and $y_2$ be the solutions of \eqref{eq:eqlm} described in the beginning of Section~\ref{sec:lmm}. Let $v$ be any solution of \eqref{eq:eqlm}. Then either
the Wronskian of $v$ with respect to $u_0$ ($u_\infty$) is zero
or else $v$ is not in $L^2$ near zero (infinity, resp.).
\end{lem}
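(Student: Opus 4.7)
My plan is to mirror the argument of Lemma~\ref{lpv}, with the only changes being the potential ($Q^2$ instead of $3Q^2$) and the distinguished solutions ($y_1, y_2$ instead of $u_0, u_\infty$). The key ingredient, as before, is that the substitution $U(r) := r u(r)$ removes the first-order term from~\eqref{eq:eqlm}, turning it into the linear ODE
\begin{equation*}
-U'' + (1-\lambda-Q^2)\, U = 0,
\end{equation*}
whose Wronskian on any pair of solutions is therefore constant in $r$. Setting $Y_j := r y_j$ for $j=1,2$ and $V := r v$, conservation of the Wronskian will let me read off the endpoint behaviour of $V$ from that of $Y_1$ and $Y_2$.

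The endpoint data are extracted from Section~\ref{sec:lmm} as follows. The normalization $y_1(0;\lambda)=1$ gives $Y_1(0)=0$ and $Y_1'(0)=1 \ne 0$; this is the genuine analogue of the behaviour of $U_0$ used in Lemma~\ref{lpv}, and it follows from the construction of $\tilde u_1$ together with \eqref{eq:equu} and the smoothness guaranteed by Proposition~\ref{acts}. At infinity, the combination of the explicit $w_2^{-}$ in~\eqref{g3} with the exponential decay estimates of Lemma~\ref{r2wm} and Lemma~\ref{w2dm} yields $Y_2(r)=e^{-r\sqrt{1-\lambda}}(1+o(1))$ and $Y_2'(r)\to 0$ as $r\to\infty$; for $\lambda\in(0,1)$ both tend to zero exponentially, and at $\lambda=1$ one has $Y_2(r)\to 1$ but $Y_2'(r)\to 0$.

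With this in hand, suppose $v$ is a solution of~\eqref{eq:eqlm} whose Wronskian with $y_1$ is nonzero. Then the constant Wronskian $W_0 = V(r)Y_1'(r) - V'(r)Y_1(r)$ is nonzero, and passing to the limit $r\to 0^+$ (using $Y_1(r)\sim r$ and $Y_1'(r)\to 1$) forces $V(0) = W_0 \ne 0$, i.e.\ $v(r)\sim W_0/r$ near the origin, which is not admissible in the Hilbert-space sense relevant for eigenfunctions/resonances of $L_-$ (precisely, $V$ fails to lie in $L^2$ near $0$ in the transformed variable). The argument at infinity is analogous: if the constant Wronskian $W_\infty = V Y_2' - V' Y_2$ is nonzero, then $V$ cannot share the exponential decay of $Y_2$ (for $\lambda<1$) nor the boundedness (for $\lambda=1$), so $V$ is forced to grow and $v$ is not $L^2$ near infinity.

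This is essentially a routine translation of the proof of Lemma~\ref{lpv}, and I do not expect any real obstacle: all the hard work has already been done in establishing the asymptotic behaviour of $y_1,y_2$. The only point that demands a brief extra line is the borderline case $\lambda=1$, where $Y_2$ merely tends to a nonzero constant rather than decaying; here a nonzero Wronskian still rules out $v$ being regular at infinity, because it forces $V$ to have a second, linearly independent component which cannot decay and hence prevents $V$ from lying in $L^2((r_7,\infty))$.
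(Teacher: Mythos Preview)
Your proof is correct and follows essentially the same approach as the paper, which simply refers back to Lemma~\ref{lpv} and invokes the corresponding $L_-$ ingredients (\eqref{eq:eqy-u}, Lemma~\ref{lpconm}, \eqref{eq:eqd2m}, Lemma~\ref{w2dm}) in place of their $L_+$ counterparts. One small slip: the parenthetical claim that ``$V$ fails to lie in $L^2$ near $0$'' is not literally true, since $V\to W_0$ is bounded there; the operative point, which you also state, is that $v(r)\sim W_0/r$ violates the regularity condition at the origin required for membership in the domain of $L_-$ (equivalently, $V(0)\ne 0$ in the transformed problem).
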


\begin{proof}
Essentially the same as the proof of Lemma~\ref{lpv}, using \eqref{eq:eqy-u}, Lemma~\ref{lpconm}, \eqref{eq:eqd2m}, and Lemma~\ref{w2dm}.
\end{proof}

\subsection{Detailed proof of Lemma~\ref{hh12}}
In the sequel, we use the bounds in  Lemma~\ref {lem:tildeQ1} (\ref{item4}), \eqref{H},  \eqref{h1 bd 1},  \eqref{h1 bd 2} and  Lemma~\ref{lem:fund sys 2}.

\begin{Note}\label{Typest}
{\rm We use various bounds to find   estimates for  the integrals in terms of exponentials
and/or polynomials of low degree, avoiding special functions: (i) We replace $\om\left(s\right)$, $s^{-2}$  and $1/(s+1)$ by their left endpoint values
 and  $\f{e^r}{r+1}$  by their values at the right end,
 $\f{1}{4r}+\f{3}{20}$ by
  $\f{9}{10}-\f12 r$ on $[1/2,1]$.
(ii) We use the  partition $\boldsymbol{\pi}=(\f12,\f34,1)$ (chosen so that
the coefficient of the $4$th power is small enough not to alter the desired estimate) for  some quartic polynomials  (iii) A  function $f$ of the form {\em quadratic plus an exponential} is estimated
by explicitly finding the zeros of the second derivative and inferring
the monotonicity properties of $f$. (iv) We also note that $H_{1,2}$
are different from $H'_{1,2}$ only by the prefactor function. The estimates
of  $H'_{1,2}$ are thus obtained easily from those for $H_{1,2}$. (v) In some
integrals where $\omega(s)$ would introduce exponentials in the final result,
we simply bound $\omega(s)$ by  the value at the left endpoint. (vi) Polynomials of the form $x^nq(x)$ with
$q$ quadratic can be clearly maximized explicitly.}

\end{Note}

\subsubsection{$r\in [0,1/2)$}
We start with $H_2$ and $H_2'$. We break the
integral at $\tfrac12,1$ and $\tfrac52$ ($\tfrac12$ is introduced for better
bounds; it is not otherwise a special point; in the second
integral we bound $\omega(s)$ by $2e^{-s}/3$). Using Note~\ref{Typest} (on $[\tfrac12,1]$ we use (v)) to bound the integrands we get
\begin{multline}\label{o119}
|H_{2}\left(\de\right)(r)| \leqslant \tfrac{65}{9} \int_{r}^{\f12}
\left(1/4 +
{3s}/{20}\right)\left[\rho_1\left(11/10-s\right)s+\eps_{0}\left(3s/50+
1/165\right)  \right]\, ds \\
+\tfrac{65}{9} \int_{\f12}^{1} \left(1/4 +
{3s}/{20}\right)\left[\rho_1\left(11/10-s\right)s+\tfrac{2}{3}\eps_{0}e^{-s}\left(3s/50+
1/165\right)  \right]\, ds \\
\quad + \tfrac{65}{9} \int_{1}^{\f{5}{2}}
e^{-s}\left[\rho_2\left(\tfrac{13}{5}-s\right)s+\eps_{0}\left(\tfrac{3s}{100}
+ \tfrac{1}{330}\right)  e^{-s}\right]\, ds+
\tfrac{13}{2} \int_{\f{5}{2}}^{\I}
e^{-s}\left(\rho_3+24\eps_0\right)\tfrac{4e^{-3s}}{25}\, ds
\end{multline}
The right-hand side of \eqref{o119} is a quartic polynomial bounded above and below  by  cubic polynomials, obtained by noting that, in the present interval, $r^4\in [0, \f12 r^3)$.
We obtain
\begin{equation}\label{estH2a}
|H_{2}\left(\de\right)\left(r\right)| < 10^{-5}\left({23}/{25}-{6r^2}/{5}\right)+{\eps_0}/{15}
\end{equation}
by explicit extremization of the cubic polynomials. Using Note~\ref{Typest} (iv) we get
\begin{equation}\label{estH2ad}
|H_{2}'\left(\de\right)\left(r\right)|\leqslant \frac{36}{13}\left[10^{-5}\(\frac{23}{25}-\frac{6r^2}{5}\)+\eps_0/15\right]
\end{equation}
To estimate $H_1$ we replace $\omega(s)=e^{-s}(s+1)^{-1}$ by the upper bound $1$  so that the integral evaluates
to a  polynomial; we get
\begin{multline}\label{estH1a}
|H_{1}\left(\de\right)\left(r\right) | < \left(\f{1}{4r} + \f{3}{20}\right) \f{5}{9} \int_{0}^{r}  13s \left(\rho_1\left(\f{11}{10}-s\right)s+\eps_{0}\left(\f{3s}{50} + \f{1}{165}\right) \right)\, ds \\
 <  \f{11}{10}\cdot 10^{-5}r^2+\f{1}{50}\eps_0
\end{multline}
After evaluating the integral, the part without $\eps_0$ is maximized using
Note~\ref{Typest} (vi). For the derivative we use Note~\ref{Typest} (iv):
\EQ{\nn \label{estH1ad}
|H_{1}'(\delta)(r)|<  10^{-5}r\left(2-\f{6r}{5}\right)+\f{\eps_0}{20}
}
\subsubsection{$r\in [\f12,1)$}
Using Note~\ref{Typest} we get
\begin{multline} \label{estH2b}
|H_{2}\left(\de\right)\left(r\right)| \leqslant  \f{7e}{10}
\Bigg( \frac{10}{9} \int_{r}^{1} \left(\f14 + \f{3}{20}s\right)
\left(\rho_1\left(\f{11}{10}-s\right)s+\eps_{0}\left(\f{3s}{50} + \f{1}{165}\right)\f{2e^{-1/2}}{3}\right) \, ds \\
\quad + \frac{10}{9} \int_{1}^{\f{5}{2}} e^{-s}\left(\rho_2\left(\f{13}{5}-s\right)s+\eps_{0}\left(\f{3s}{50} + \f{1}{165}\right) \f{e^{-s}}{2}\right)\, ds+ \int_{\f{5}{2}}^{\I} e^{-s}\left(\rho_3+24\eps_0\right)\f{4e^{-3s}}{25}\, ds\Bigg)\\
<\f{7e}{10}\left(10^{-6}\left(\f{11}{50}+2\left(1-r\right)\right)+\f{1}{100}\eps_0\right)
\end{multline}
and
\begin{equation}
  \label{estH2bd}
  |H_{2}'\left(\de\right)\left(r\right)| \leqslant 18 \left(10^{-6}\left(\f{11}{50}+2\left(1-r\right)\right)+\f{1}{100}\eps_0\right)
\end{equation}
For $H_1$ we distribute $e^s$ inside the integral and then use Note~\ref{Typest}. We get
\begin{multline} \label{estH1b}
|H_{1}\left(\de\right)\left(r\right)| \leqslant \left(\f{1}{4r}+\f{3}{20}\right) \Bigg( \f{65}{9}\int_{0}^{\f12} \left(\rho_1\left(\f{11}{10}-s\right)s+\eps_{0}\left(\f{3s}{50} + \f{1}{165}\right)e^{-s}\right)s\, ds \\
\qquad + \f{14}{9}\int_{\f12}^{r}  \f{2s}{3}\left(e^1\rho_1\left(\f{11}{10}-s\right)s+\f{2\eps_{0}}3\left(\f{3s}{50} + \f{1}{165}\right) \right)\, ds \Bigg)\\
\leqslant \left(\f{9}{10}-\f{r}2 \right)\left(8\cdot 10^{-6}+\f{\eps_0}{25}\right)
\end{multline}
For $H_{1}'$ we get
\begin{equation}\label{estH1bd}
|H_{1}'\left(\de\right)\left(r\right)| \leqslant \f{1}{2r^2}\left(8\cdot10^{-6}+\f{1}{25}\eps_0\right)\leqslant \left(\f72-3r\right) \left(8\cdot10^{-6}+\f{1}{25}\eps_0\right)
\end{equation}
\subsubsection{$r\in [r_4,r_7)$}\label{sbsbbsb} Here we use Note~\ref{Typest} except for the
re-expansions; other estimates are explained below. Let $B_1=\tfrac{11}{10}\cdot10^{-6}+10^{-2}(6-\tfrac{11}{5} r)\eps_0$. We get
\begin{multline}\label{estH2c}
 \f57(r+1) |H_{2}(\de)(r)| \leqslant  f_1(r):=\f{10e^r}{9}\int_{r}^{\f{5}{2}} e^{-s}\Bigg(\rho_2(13/5-s)s+\eps_{0}\Bigg(\f{3s}{50} + \f{1}{165}\Bigg)  \f{e^{-1}}{2}\Bigg)\, ds\\
+e^r \int_{\f{5}{2}}^{\I} e^{-s}\big(\rho_3+24\eps_0\big)\f{4e^{-3s}}{25}\, ds<B_1\ \ \end{multline}
The coefficients of  $\eps_0^n$, $n=0,1$  of $f_1-B_1$ have the form in Note~\ref{Typest} (iii) and are estimated as explained there.
 A nearly identical calculation yields
\begin{equation}\label{estH2cd}
|H_{2}'(\de)(r)|<\f{B_1}{2r}\leqslant \f{B_1}{r+1}
\end{equation}
 In evaluating $H_1$, we break the interval of integration
as follows: $[0,\tfrac12],[\tfrac12,1]$ and $[1,r]$.
To further simplify the result, in  the integral on $[\tfrac12,1]$ we first replace $s/(s+1)$ by its (alternating) Taylor series at $s=\f34$ mod $O(s-\f12)^4$:
\[
\frac{s}{s+1}\leqslant \frac37+\frac{16}{49}(s-3/4)- \frac{64}{343}(s-3/4)^2+ \frac{256}{2401}(s-3/4)^3
\]
On the same interval, we also bound $\omega(s)\leqslant \frac23 e^{-s}$.
In the
integral on $[1,r]$, we first bound $s^2/(s+1)$ by $5s/7$ (which holds for $s\leqslant 5/2$)
and then apply the bounds  in Note~\ref{Typest}; for example, we use that $\omega(s)\leqslant \frac12 e^{-s}$. After evaluation of the integrals
the result is of the form  described in Note~\ref{Typest}, (iii) and we get
\begin{equation}\label{estH1c}
r|H_{1}\left(\de\right)\left(r\right)|
\leqslant \f32\cdot 10^{-6}\left(3 -
r\right)+\f{13}{1000}\eps_0
\end{equation}
 Using Note~\ref{Typest} (iv) we get
\begin{equation}\label{estH1cd}
|H_{1}'\left(\de\right)\left(r\right)|<
\f2r\left(\f32\cdot 10^{-6}\left(3 -
    r\right)+\f{13}{1000}\eps_0\right)
    \end{equation}
\subsubsection{$r\geqslant  r_{7}$} The bounds for $H_2$ and $H_2'$ are straightforward:
We get
\begin{align}\label{estH2d}
& |H_{2}\left(\de\right)\left(r\right)| \leqslant \f{7e^r}{5r^2\left(r+1\right)}\int_{r}^{\I} e^{-s}\left(\rho_3+24\eps_0\right)e^{-3s}\, ds
\leqslant \f{21e^{-3r}\left(3+1600\eps_0\right)}{4000r^2\left(r+1\right)} \\
&|H_{2}'\left(\de\right)\left(r\right)| \leqslant \f{3e^{-3r}\left(3+1600\eps_0\right)}{1600r^3}
\end{align}
In the estimate for $H_1$ we proceed as in \S\ref{sbsbbsb} (iii) above,
except for the interval $[1,r_7]$ where
instead of the Taylor series of $s/(s+1)$ we use the simple
inequality $\f{1}{s+1}\leqslant \f{16}{25}-\f{7s}{50}$.
After integration, the estimates are elementary and we get
\begin{align}\label{estH1d}
|H_{1}\left(\de\right)\left(r\right)| \leqslant
\f{e^{-r}}{r}\left(10^{-4}\left(\f{33}{100}-36e^{-2r}\right)+\left(\f{7}{50}-\frac{48}{25}e^{-2r}\right)\eps_0\right)\\
|H_{1}'\left(\de\right)\left(r\right)|\leqslant \f{\left(r+1\right)e^{-r}}{r^2}\left(10^{-4}\left(\f{33}{100}-36e^{-2r}\right)+\left(\f{7}{50}-\frac{48}{25}e^{-2r}\right)\eps_0\right)
\end{align}
\subsection{Proof of Corollary \ref{c2.7}}\label{Pc29}
We first note that, by definition, $H_0\geqslant  0$. Throughout this proof,
we use the bounds \eqref{estH2a}--\eqref{estH1ad};  sometimes we replace them by
nearby polynomials with simpler coefficients, or majorize them by close enough
lower order polynomials, easier to maximize.
\subsubsection{$r\in J_1:=[0, \f12)$} Here we write
\EQ{\label{H est 1}
H_0\left(\delta\right)\left(r\right)<B_1(r):=10^{-5}(\tfrac32+\tfrac25 r-r^2)+\tfrac{4}{25}\eps_0
 \ \ \ \forall\; 0\leqslant r<\f12
}
Since  $(r+1)B_1<35\cdot 10^{-6}$ in $J_1$, we have
 \begin{equation}\label{ball1}
|\left(r+1\right)e^{r}H_0\left(\delta\right)\left(r\right)|\leqslant 35 e^{1/2}\cdot 10^{-6}<7\cdot 10^{-5}
\end{equation}
\subsubsection{$r\in J_2:=[\f12,1)$} Here
\EQ{\label{ball2}
H_0\left(\delta\right)\left(r\right)< B_2(r):= 10^{-5}\left(\f{27}{10}-\f{21}{10}r\right)+\left(\f{13}{100}-\f{11}{250}r\right)\eps_0 \quad\forall\; \f12\leqslant r<1
}
and thus $\left(r+1\right)e^{r}H_0\leqslant \max_{J_2}\left(r+1\right)e^rB_2(r)<\eps_0$. Note that the extrema of $\left(r+1\right)e^rB_2(r)$ can be found explicitly.
\subsubsection{$r\in J_3:=[r_4,r_7)$}
Here we get
\begin{multline}\label{ball3}
\left(r+1\right)e^r H_0\left(\delta\right)\left(r\right)<f(r) \ \text{ where $f(r)$ is given by}\\
\f{10^{-3}e^r}{r}\left(10^{-3}\(\f{63}{10}+6r-\f{21r^2}{10}\)+\left(\f{91}{5}+\f{571}{5}r-\f{176r^2}{5}+\f32(r-1)^2\right)\eps_0\right)
\leqslant \eps_0
\end{multline}
We artificially added the  term $\f32(r-1)^2$ to $f(r)$ so that $f'(r)$ is of the form $e^r r^{-2}P_3(r)$ with $P_3$ a
positive cubic polynomial on $J_3$, as it can be checked by studying
its derivative. Thus the maximum in \eqref{ball3}, which is  $<\eps_0$, is attained at
$r=r_7$.
\subsubsection{$r\in J=[r_7,\infty)$}
In this region,  $\left(r+1\right)e^r H_0\left(\delta\right)\left(r\right)$ is bounded by
\begin{multline}\label{ball4}10^{-5}\left(\frac{99}{25}+\frac{231}{50
r}+\frac{33}{50
r^2}\right)+10^{-3}e^{-2r}\left(-\frac{108}{25}-\frac{126}{25
r}+\frac{404}{25 r^2}+\frac{113}{100
r^3}\right)\\
+\eps_0\left(\frac{21}{125}+\frac{49}{250
r}+\frac{7}{250 r^2}+e^{-2r}\left(-\frac{23}{10}-\frac{67}{25
r}+\frac{431}{50
r^2}+\frac{3}{5 r^3}\right)\right)\\<
  10^{-5}\left(\frac{257}{50}+\frac{6}{r}+\frac{107}{125 r_7
r}\right)+10^{-3}e^{-2r}\left(-\frac{112}{25}-\frac{26}{5 r}+\frac{84}{5
r_7 r}+\frac{59}{50 r_7^2 r}\right)=:t_1+t_2<\eps_0
\end{multline}
where the first inequality above follows  after
replacing $\eps_0$ by $7\cdot 10^{-5}$,  simple term-by-term comparison, and then
using the fact that
$r\geqslant  r_7$ in this region. For the very last bound we first check that $t_2<0$
and then show that $(t_1-\eps_0)/t_2>-1$ by finding
the maximum ($>-3/10$) of this ratio by an elementary computation.
\subsection{Details of the proof of Lemma~\ref{64}}\label{d64}
Combining the result for $R_0$ with Lemma~\ref{lem:tildeQ1}, Lemma~\ref{bdm} and Proposition~\ref{prop:Q error}, we get
\begin{equation}
  |R_1(r)|\leqslant 10^{-5}
  \begin{cases}
3+73(1/100+3r/25)e^{-3r}+ (7/4) {e^{-r}}/({1+r}),\ \ r\in[0,r_{2})\\
2+80(1/100 +3r/25){e^{-13r/5}}/({1+r})+(7/4){ e^{-r}}/({1+r}),\ \ r\in[r_{2},r_{7})
  \end{cases}
\end{equation}
On the first interval we use the estimate $e^{-jr}\leqslant 1/(1+r)^j,~j=1,3$ to
obtain a rational function whose numerator is a linear function and
whose denominator is $(1+r)^3$. This linear function is
estimated in an elementary way, using its
derivative. On the second interval, since the derivative
of $(1/100 +3r/25)e^{-13r/5}$ is negative, the function
$80(1/100 +3r/25){e^{-13r/5}}/({1+r})+(7/4){ e^{-r}}/({1+r})$ is
decreasing. Its upper bound on the
interval $[r_2,r_4]$ is  3  and  on interval
$[r_4,r_7]$ it is 1.



\section{\bf \Large Appendix: Polynomial quasi-solutions for the soliton}
 \small
 \begin{table}[h]\label{table:1}
 \begin{tabular}{|@{}c|@{}c@{}c@{}c@{}c@{}c@{}c@{}c@{}c@{}c@{}c@{}c@{}c@{}c@{}c@{} |} \hline \multicolumn{15}{|@{}c|}{Table \ref{table:1}:  $q_{1}(r)$ and  $q_{2}(r)$  }\\[.5ex]
	\hline
$\ell$&$a_{0}^{\ell}$&$a_{1}^{\ell}$&$a_{2}^{\ell}$&$a_{3}^{\ell}$&$a_{4}^{\ell}$&$a_{5}^{\ell}$&$a_{6}^{\ell}$&$a_{7}^{\ell}$&$a_{8}^{\ell}$&$a_{9}^{\ell}$&$a_{10}^{\ell}$&$a_{11}^{\ell}$&&\\[.5ex]
	\hline\hline
$1$&$\frac{21539}{93423}$&$0$&$\frac{127023}{185578}$&$-\f{1}{8885055}$&$\f{54169}{401949}$&$-\f{3}{44981}$&$\f{202}{73305}$&$-\f{113}{80657}$&$\f{293}{59051}$&$-\f{604}{151861}$&$\f{127}{76892}$&$-\f{28}{94431}$&&\\
$$&$$&&&&&&&&&&&&&\\
$2$&$\f{18176}{78783}$&$-\f{21}{15850}$&$\f{295367}{428350}$&$-\f{1415}{123249}$&$\f{31027}{204823}$&$-\f{5162}{329873}$&$\f{3025}{287391}$&$-\f{17}{36388}$&$\f{2}{74523}$&$\f{5}{86563}$
&$-\f{1}{120831}$&$\f{1}{1183575}$&&\\\hline
	\multicolumn{15}{|@{}c|}{ $q_{3}(r)$  }  \\
\hline
$r\in$&$b_{0}^{3}$&$b_{1}^{3}$&$b_{2}^{3}$&$b_{3}^{3}$&$b_{4}^{3}$&$b_{5}^{3}$&$b_{6}^{3}$&$b_{7}^{3}$&$b_{8}^{3}$&$b_{9}^{3}$&$b_{10}^{3}$&$b_{11}^{3}$&$b_{12}^{3}$&$b_{13}^{3}$\\
	\hline\hline
$J_{1}$&$-\frac{7}{44}$&$-\frac{17}{58}$&$\frac{341}{696}$&$\frac{330}{1097}$&$-\frac{19}{62}$&$-\frac{9}{59}$&$\frac{19}{138}$&
$\frac{8}{107}$&$-{\frac{9}{122}}$&$-{\frac{15}{1688}}$&${\frac{48}{3227}}$&${\frac{23}{3181}}$&$-{\frac{65}{8481}}$
&$\frac{2}{1337}$\\
&$$&&&&&&&&&&&&&\\
$J_{2}$&${\frac{73}{344}}$&${\frac{394}{459}}$&$-{\frac{234}{131}}$&$-{
\frac{5}{53}}$&${\frac{505}{94}}$&$-{\frac{7957}{816}}$&${\frac{2219}{429}}$&${\frac{2577}{209}}$&$-{
\frac{600}{19}}$&${\frac{7783}{269}}$&$-{\frac{190}{
173}}$&$-{\frac{1836}{115}}$&${\frac{309}{74}}$&${\frac{1439}{334}}$\\
&$$&&&&&&&&&&&&&\\
$J_{3}$&${\frac{12}{89}}$&$-{\frac{9}{68}}$&${\frac{241}{3975}}$&$-{
\frac{12}{917}}$&$-{\frac{15}{3659}}$&${\frac{35}{5002
}}$&$-{\frac{2}{385}}$&${\frac{23}{6263}}$&$-{
\frac{23}{6019}}$&$-{\frac{1}{861}}$&${\frac{13}{7234}
}$&${\frac{19}{3199}}$&${\frac{22}{6021}}$&${\frac{5}{7713}}$\\
	\hline
\multicolumn{15}{|@{}c|}{$q_{4}(r)$  }  \\
\hline
$r\in$&$b_{0}^{4}$&$b_{1}^{4}$&$b_{2}^{4}$&$b_{3}^{4}$&$b_{4}^{4}$&$b_{5}^{4}$&$b_{6}^{4}$&$b_{7}^{4}$&$b_{8}^{4}$&$b_{9}^{4}$&$b_{10}^{4}$&$b_{11}^{4}$&$b_{12}^{4}$&$b_{13}^{4}$\\
	\hline\hline
$J_{1}$&$\frac{136}{271}$&$-\frac{226}{193}$&$-\frac{286}{185}$&
$\frac{743}{618}$&${\frac{527}{545}}$&
$-{\frac{83}{135}}$&$-{\frac{229}{527}}$&${\frac{2170}{9217}}$&${\frac{13}{57}}$&$-{\frac{252}{1787}}$&$-{\frac{377}{4372}}$
&${\frac{283}{2552}}$&$-{\frac{97}{2281}}$&${\frac{4}{669}}$\\
&$$&&&&&&&&&&&&&\\
$J_{2}$&$-{\frac{407}{604}}$&${\frac{2470}{1243}}$&${\frac{1106}{195}}$&$-{\frac{3554}{275}}$&${\frac{268}{31}}$&${\frac{
5785}{323}}$&$-{\frac{1239}{23}}$&${\frac{5816}{99}}$&${\frac{719}{85}}$&$-{\frac{4122}{31}}$&${\frac{
8351}{39}}$&$-{\frac{21281}{136}}$&${\frac{5030}{173}
}$&${\frac{861}{50}}$\\
&$$&&&&&&&&&&&&&\\
$J_{3}$&${\frac{1640}{447}}$&${\frac{1485}{389}}$&${\frac{33}{20}}$&${
\frac{127}{168}}$&${\frac{65}{867}}$&${\frac{68}{1053
}}$&$-{\frac{7}{1313}}$&
${\frac{2}{859}}$&$-{
\frac{3}{1670}}$&$-{\frac{1}{149}}$&${\frac{23}{4038}}
$&${\frac{11}{817}}$&${\frac{92}{7641}}$&${
\frac{20}{7719}}$\\
	\hline
	\end{tabular}
\end{table}
\eject
\section{\bf \Large Appendix: Polynomial quasi-solutions for $L_+$ and $L_-$}
\hskip -2.5cm \begin{tabular}{|@{}c@{}c@{}c@{}c@{}c@{}c|@{}c @{}c @{}c @{}c @{}c @{}c |@{}c@{}c@{}c@{}c@{}c@{}c@{}c |}\hline
\multicolumn{6}{|c}{$c_{kl;j}$ for $r\in[0,r_2),z=\f{25r}{17}$}& \multicolumn{6}{|c}{$c_{kl;j}$ for $r\in[r_2,r_5), z=r-1$}&\multicolumn{7}{|c|}{$c_{kl;j}$ for $r\in[r_5,r_7], z=r-2$}\\
\hline$l:k$&$0$&$1$&$2$&$3$&$4$&$l:k$&$0$&$1$&$2$&$3$&$4$&                                                                                                                                                                  $l:k$  & $0  $&$1$    &$2$    &$3$    &$4$    &$5$    \\
\hline0&$1$&$0$&$0$&$0$&$0$&0&$\tfrac{-648}{2321}$&$\tfrac{13}{1467}$&$\tfrac{2}{2157}$&$\tfrac{-1}{21701}$&$\tfrac{1}{1118563}$&                                                                                          0&  $\tfrac{-746}{2291}$    &$\tfrac{416}{2757}$    &$\tfrac{-68}{4263}$    &$\tfrac{1}{3261}$    &$\tfrac{1}{24208}$    &$\tfrac{-1}{300133}$  \\
[.5ex]1&$0$&$0$&$0$&$0$&$0$&1&$\tfrac{-134}{1519}$&$\tfrac{157}{2061}$&$\tfrac{1}{5558}$&$\tfrac{-1}{5273}$&$\tfrac{1}{167616}$&                                                                                           1&  $\tfrac{-265}{2463}$    &$\tfrac{413}{1808}$    &$\tfrac{-117}{2305}$    &$\tfrac{12}{4069}$    &$\tfrac{1}{22487}$    &$\tfrac{-1}{82587}$  \\
[.5ex]2&$\tfrac{-1286}{301}$&$\tfrac{-83}{1077}$&$0$&$0$&$0$&2&$\tfrac{1187}{3641}$&$\tfrac{128}{2291}$&$\tfrac{-23}{4259}$&$\tfrac{-1}{4261}$&$\tfrac{1}{61302}$&                                                         2&  $\tfrac{-785}{8474}$    &$\tfrac{1495}{12838}$    &$\tfrac{-155}{2708}$    &$\tfrac{43}{6477}$    &$\tfrac{-1}{6371}$    &$\tfrac{-1}{62120}$  \\
[.5ex]3&$\tfrac{-1}{417}$&$\tfrac{-1}{19057}$&$0$&$0$&$\tfrac{1}{71024}$&3&$\tfrac{-5583}{8174}$&$\tfrac{-7}{1692}$&$\tfrac{-11}{1456}$&$\tfrac{1}{24417}$&$\tfrac{1}{44438}$&                                             3&  $\tfrac{-31}{2947}$    &$\tfrac{91}{2278}$    &$\tfrac{-151}{4774}$    &$\tfrac{17}{2446}$    &$\tfrac{-1}{2343}$    &$\tfrac{-1}{198555}$  \\
[.5ex]4&$\tfrac{13222}{1453}$&$\tfrac{334}{1683}$&$\tfrac{1}{559}$&$0$&$\tfrac{-1}{3668}$&4&$\tfrac{3158}{3977}$&$\tfrac{24}{1111}$&$\tfrac{-19}{5789}$&$\tfrac{1}{2997}$&$\tfrac{1}{71808}$&                              4&  $\tfrac{1}{3172}$    &$\tfrac{74}{6611}$    &$\tfrac{-29}{2777}$    &$\tfrac{9}{2204}$    &$\tfrac{-1}{2115}$    &$\tfrac{1}{84243}$  \\
[.5ex]5&$\tfrac{-380}{1021}$&$\tfrac{-20}{2461}$&$\tfrac{-1}{13736}$&$0$&$\tfrac{1}{346}$&5&$\tfrac{-2380}{3639}$&$\tfrac{-59}{4489}$&$\tfrac{-3}{4595}$&$\tfrac{1}{3475}$&$\tfrac{-1}{512265}$&                           5&  $\tfrac{-43}{6113}$    &$\tfrac{15}{7912}$    &$\tfrac{-11}{4122}$    &$\tfrac{2}{1343}$    &$\tfrac{-1}{3308}$    &$\tfrac{1}{53636}$  \\
[.5ex]6&$\tfrac{-26425}{1858}$&$\tfrac{-175}{566}$&$\tfrac{-1}{347}$&$\tfrac{-1}{56456}$&$\tfrac{-37}{1949}$&6&$\tfrac{1008}{3337}$&$\tfrac{9}{1369}$&$\tfrac{-1}{11075}$&$\tfrac{1}{10309}$&$\tfrac{-1}{112008}$&         6&  $\tfrac{19}{3527}$    &$\tfrac{1}{2072}$    &$\tfrac{-1}{1743}$    &$\tfrac{1}{2619}$    &$\tfrac{-1}{8075}$    &$\tfrac{1}{72145}$  \\
[.5ex]7&$\tfrac{-689}{122}$&$\tfrac{-101}{821}$&$\tfrac{-1}{913}$&$\tfrac{-1}{184536}$&$\tfrac{61}{739}$&7&$\tfrac{320}{2401}$&$\tfrac{7}{2421}$&$\tfrac{-1}{15816}$&$\tfrac{1}{80612}$&$\tfrac{-1}{176777}$&              7&  $\tfrac{-21}{5575}$    &$\tfrac{-1}{51636}$    &$\tfrac{-1}{11581}$    &$\tfrac{1}{12393}$    &$\tfrac{-1}{28593}$    &$\tfrac{1}{154234}$  \\
[.5ex]8&$\tfrac{20843}{669}$&$\tfrac{405}{598}$&$\tfrac{4}{659}$&$\tfrac{1}{31276}$&$\tfrac{-185}{749}$&8&$\tfrac{-1908}{3757}$&$\tfrac{-149}{13558}$&$\tfrac{-1}{14577}$&$\tfrac{1}{238641}$&$\tfrac{-1}{672528}$&        8&  $\tfrac{11}{4880}$    &$\tfrac{1}{19213}$    &$\tfrac{-1}{73329}$    &$\tfrac{1}{71256}$    &$\tfrac{-1}{132884}$    &$\tfrac{1}{480205}$  \\
[.5ex]9&$\tfrac{18587}{642}$&$\tfrac{632}{1001}$&$\tfrac{3}{526}$&$\tfrac{1}{33735}$&$\tfrac{93}{179}$&9&$\tfrac{1477}{2105}$&$\tfrac{160}{10507}$&$\tfrac{1}{8021}$&$\tfrac{1}{580882}$&$\tfrac{-1}{5058064}$&            9&  $\tfrac{-4}{3271}$    &$\tfrac{-1}{39754}$    &$\tfrac{-1}{458329}$    &$\tfrac{1}{525744}$    &$\tfrac{-1}{736052}$    &$\tfrac{1}{2039005}$  \\
[.5ex]10&$\tfrac{-165287}{969}$&$\tfrac{-1567}{422}$&$\tfrac{-25}{751}$&$\tfrac{-1}{5892}$&$\tfrac{-305}{394}$&10&$\tfrac{-1043}{1602}$&$\tfrac{-51}{3605}$&$\tfrac{-1}{7915}$&$\tfrac{-1}{1015424}$&$0$&                  10&  $\tfrac{1}{1662}$    &$\tfrac{1}{75918}$    &$\tfrac{1}{9779923}$    &$\tfrac{1}{3561333}$    &$\tfrac{-1}{4924768}$  &   $0$\\
[.5ex]11&$\tfrac{351973}{1352}$&$\tfrac{8834}{1559}$&$\tfrac{53}{1045}$&$\tfrac{1}{3893}$&$\tfrac{1375}{1693}$&11&$\tfrac{832}{2037}$&$\tfrac{18}{2027}$&$\tfrac{1}{12149}$&$\tfrac{1}{1662352}$&$0$&                      11&  $\tfrac{-1}{3754}$    &$\tfrac{-1}{170856}$    &$\tfrac{-1}{7716870}$  &   $0$&   $0$&   $0$\\
[.5ex]12&$\tfrac{-519457}{2400}$&$\tfrac{-16569}{3517}$&$\tfrac{-41}{973}$&$\tfrac{-1}{4702}$&$\tfrac{-295}{503}$&12&$\tfrac{-586}{3849}$&$\tfrac{-27}{8155}$&$\tfrac{-1}{31427}$&$\tfrac{-1}{4652814}$&$0$&               12&  $\tfrac{1}{9714}$    &$\tfrac{1}{439897}$  &   $0$&   $0$&   $0$&   $0$\\
[.5ex]13&$\tfrac{194127}{1816}$&$\tfrac{1822}{783}$&$\tfrac{16}{769}$&$\tfrac{1}{9539}$&$\tfrac{299}{1078}$&13&$\tfrac{91}{3737}$&$\tfrac{1}{1888}$&$\tfrac{1}{186011}$&$0$&$0$& &&&&&&\\
[.5ex]14&$\tfrac{-33551}{1130}$&$\tfrac{-223}{345}$&$\tfrac{-19}{3288}$&$\tfrac{-1}{34377}$&$\tfrac{-15}{194}$&&&&&&& &&&&&&\\
[.5ex]15&$\tfrac{2537}{705}$&$\tfrac{38}{485}$&$\tfrac{1}{1427}$&$\tfrac{1}{283750}$&$\tfrac{7}{727}$&&&&&&& &&&&&&\\
[.5ex]\hline\end{tabular}

\zn $ $\hskip -0.5cm \begin{tabular}{|@{}c@{}c@{}c@{}c@{}|c@{}c@{}c@{}c@{}c@{}c|@{}c@{}c@{}c@{}c@{}c@{}c@{}c|}
\hline \multicolumn{4}{|c}{$d^+_{kl;j}$ for $r\in[0,r_1), z=3r$} &\multicolumn{7}{|c}{$d^+_{kl;j}$ for $r\in[r_1,r_4), z=r-\f12$}& \multicolumn{6}{|c|}{$d^+_{kl;j}$ for $r\in[r_4,r_7], z=r-2$}\\
\hline$l:k$&$0$&$1$&$2$&$l:k$&$0$&$1$&$2$&$3$&$4$&$l:k$&$0$&$1$&$2$&$3$&$4$&$5$\\
\hline0&$0$&$0$&$0$&0&$\tfrac{1}{1489}$&$\tfrac{-16}{2463}$&$\tfrac{1}{8158}$&$0$&$0$&0&$\tfrac{-1492}{2291}$&$\tfrac{577}{1912}$&$\tfrac{-26}{815}$&$\tfrac{1}{1630}$&$\tfrac{1}{12104}$&$\tfrac{-1}{150067}$\\
[.5ex]1&$1$&$0$&$0$&1&$\tfrac{-661}{885}$&$\tfrac{-11}{956}$&$\tfrac{1}{1073}$&$\tfrac{-1}{92371}$&$0$&1&$\tfrac{-550}{1017}$&$\tfrac{894}{1471}$&$\tfrac{-39}{332}$&$\tfrac{6}{967}$&$\tfrac{1}{7677}$&$\tfrac{-1}{36299}$\\
[.5ex]2&$0$&$0$&$0$&2&$\tfrac{-11}{1948}$&$\tfrac{122}{2245}$&$\tfrac{1}{451}$&$\tfrac{-1}{18565}$&$0$&2&$\tfrac{-193}{659}$&$\tfrac{871}{1888}$&$\tfrac{-77}{466}$&$\tfrac{8}{493}$&$\tfrac{-1}{3711}$&$\tfrac{-1}{22571}$\\
[.5ex]3&$\tfrac{-4509}{488}$&$\tfrac{-1}{6}$&$0$&3&$\tfrac{898}{427}$&$\tfrac{47}{547}$&$\tfrac{1}{1561}$&$\tfrac{-1}{7433}$&$\tfrac{1}{594591}$&3&$\tfrac{-49}{431}$&$\tfrac{43}{219}$&$\tfrac{-87}{722}$&$\tfrac{16}{779}$&$\tfrac{-1}{989}$&$\tfrac{-1}{38211}$\\
[.5ex]4&$\tfrac{-27}{9226}$&$0$&$0$&4&$\tfrac{-4572}{1123}$&$\tfrac{-17}{205}$&$\tfrac{-3}{832}$&$\tfrac{-1}{6225}$&$\tfrac{1}{238553}$&4&$\tfrac{-6}{607}$&$\tfrac{47}{754}$&$\tfrac{-24}{457}$&$\tfrac{13}{860}$&$\tfrac{-1}{729}$&$\tfrac{1}{53463}$\\
[.5ex]5&$\tfrac{56619}{1333}$&$\tfrac{729}{787}$&$\tfrac{243}{29086}$&5&$\tfrac{1833}{898}$&$\tfrac{41}{879}$&$\tfrac{-4}{1877}$&$\tfrac{-1}{24306}$&$\tfrac{1}{156927}$&5&$\tfrac{-13}{956}$&$\tfrac{25}{1668}$&$\tfrac{-8}{507}$&$\tfrac{5}{708}$&$\tfrac{-1}{928}$&$\tfrac{1}{20342}$\\
[.5ex]6&$\tfrac{-2187}{1040}$&$\tfrac{-243}{5291}$&$0$&6&$\tfrac{4957}{830}$&$\tfrac{67}{513}$&$\tfrac{1}{1820}$&$\tfrac{1}{12187}$&$\tfrac{1}{186773}$&6&$\tfrac{3}{785}$&$\tfrac{3}{1048}$&$\tfrac{-1}{262}$&$\tfrac{1}{444}$&$\tfrac{-1}{1818}$&$\tfrac{1}{21568}$\\
[.5ex]7&$\tfrac{-118098}{851}$&$\tfrac{-2187}{725}$&$\tfrac{-2187}{77746}$&7&$\tfrac{-16502}{1069}$&$\tfrac{-433}{1292}$&$\tfrac{-3}{1121}$&$\tfrac{1}{16955}$&$\tfrac{1}{697800}$&7&$\tfrac{-4}{1315}$&$\tfrac{1}{2355}$&$\tfrac{-1}{1339}$&$\tfrac{1}{1841}$&$\tfrac{-1}{5160}$&$\tfrac{1}{37272}$\\
[.5ex]8&$\tfrac{-452709}{2441}$&$\tfrac{-6561}{1621}$&$\tfrac{-6561}{180628}$&8&$\tfrac{34798}{2423}$&$\tfrac{475}{1523}$&$\tfrac{1}{385}$&$\tfrac{1}{42868}$&$\tfrac{-1}{800114}$&8&$\tfrac{-1}{1953}$&$\tfrac{1}{17286}$&$\tfrac{-1}{8792}$&$\tfrac{1}{9195}$&$\tfrac{-1}{19986}$&$\tfrac{1}{93864}$\\
[.5ex]9&$\tfrac{642978}{439}$&$\tfrac{19683}{617}$&$\tfrac{6561}{22889}$&9&$\tfrac{4697}{807}$&$\tfrac{10}{79}$&$\tfrac{1}{952}$&$\tfrac{-1}{724921}$&$\tfrac{-1}{925029}$&9&$\tfrac{1}{945}$&$\tfrac{1}{34655}$&$\tfrac{-1}{56303}$&$\tfrac{1}{56043}$&$\tfrac{-1}{97604}$&$\tfrac{1}{326251}$\\
[.5ex]10&$\tfrac{-3956283}{1526}$&$\tfrac{-6561}{116}$&$\tfrac{-59049}{116578}$&10&$\tfrac{-29915}{954}$&$\tfrac{-381}{559}$&$\tfrac{-4}{675}$&$\tfrac{-1}{40723}$&$0$&10&$\tfrac{2}{563}$&$\tfrac{1}{12822}$&$\tfrac{-1}{639743}$&$\tfrac{1}{403473}$&$\tfrac{-1}{572379}$&$0$\\
[.5ex]11&$\tfrac{2302911}{1145}$&$\tfrac{177147}{4031}$&$\tfrac{19683}{49996}$&11&$\tfrac{26559}{790}$&$\tfrac{920}{1259}$&$\tfrac{5}{779}$&$\tfrac{1}{31728}$&$0$&11&$\tfrac{1}{436}$&$\tfrac{1}{20441}$&$0$&$0$&$0$&$0$\\
[.5ex]12&$\tfrac{-177147}{307}$&$\tfrac{-531441}{41978}$&$0$&12&$\tfrac{-39879}{3046}$&$\tfrac{-249}{875}$&$\tfrac{-1}{398}$&$\tfrac{-1}{77233}$&$0$&12&$\tfrac{1}{6262}$&$\tfrac{1}{318712}$&$0$&$0$&$0$&$0$\\
 [.5ex] \hline\end{tabular}
\zn  \vskip -2cm  \hskip -2cm  \raisebox{2.5cm}{\begin{tabular}{|@{}c @{}c @{}c @{}c @{}c @{}c| @{}c @{}c @{}c @{}c @{}c @{}c| @{}c @{}c @{}c @{}c @{}c @{}c  @{}c |}
\hline \multicolumn{6}{|c|}{$e^+_{kl;j}$ for $r\in[0,r_1), z=3r$} &\multicolumn{7}{c|}{$e^+_{kl;j}$ for $r\in[r_1,r_4), z=r-\f12$}& \multicolumn{6}{c|}{$e^+_{kl;j}$ for $r\in[r_4,r_7), z=r-2$}\\
 \hline $l:k$  & $0  $&$1$    &$2$    &$3$    &$4$    &                                                                                              $l:k$  & $0  $&$1$    &$2$    &$3$    &$4$    &                                                                        $l:k$  & $0  $&$1$    &$2$    &$3$    &$4$    &$5$    \\
 \hline 0&   $1$&   $0$&   $0$&   $0$&   $0$&                                                                                                        0&  $\tfrac{-507}{379}$    &$\tfrac{6}{643}$    &$\tfrac{1}{1403}$    &$\tfrac{-1}{102995}$  &   $0$&                          0&  $\tfrac{9296}{3379}$    &$\tfrac{424}{839}$    &$\tfrac{-279}{1040}$    &$\tfrac{35}{1234}$    &$\tfrac{-1}{759}$    &$\tfrac{1}{34562}$  \\
 [.5ex] 1&  $\tfrac{-3}{858242}$  &   $0$&   $0$&   $0$&   $0$&                                                                                      1&  $\tfrac{-821}{647}$    &$\tfrac{193}{724}$    &$\tfrac{2}{1129}$    &$\tfrac{-1}{11194}$  &   $0$&                         1&  $\tfrac{13848}{3625}$    &$\tfrac{-1971}{5221}$    &$\tfrac{-972}{1735}$    &$\tfrac{295}{2911}$    &$\tfrac{-35}{5158}$    &$\tfrac{1}{4588}$  \\
 [.5ex] 2&  $\tfrac{-9369}{338}$    &$\tfrac{-19305}{38611}$  &   $0$&   $0$&   $0$&                                                                 2&  $\tfrac{11429}{1015}$    &$\tfrac{317}{537}$    &$\tfrac{-13}{1219}$    &$\tfrac{-1}{3641}$    &$\tfrac{1}{226467}$  &     2&  $\tfrac{3859}{3119}$    &$\tfrac{-26003}{22645}$    &$\tfrac{-1328}{3557}$    &$\tfrac{177}{1205}$    &$\tfrac{-35}{2369}$    &$\tfrac{2}{2977}$  \\
 [.5ex] 3&  $\tfrac{-27}{1141}$    &$\tfrac{-27}{40190}$  &   $0$&   $0$&   $0$&                                                                     3&  $\tfrac{-24009}{2177}$    &$\tfrac{-589}{1355}$    &$\tfrac{-42}{1009}$    &$\tfrac{-1}{6648}$    &$\tfrac{1}{74939}$  &   3&  $\tfrac{1298}{1827}$    &$\tfrac{-2392}{3581}$    &$\tfrac{-179}{5186}$    &$\tfrac{292}{2655}$    &$\tfrac{-47}{2615}$    &$\tfrac{3}{2575}$  \\
 [.5ex] 4&  $\tfrac{38691}{247}$    &$\tfrac{2862}{617}$    &$\tfrac{81}{1933}$  &   $0$&   $0$&                                                     4&  $\tfrac{-8644}{767}$    &$\tfrac{-1839}{4681}$    &$\tfrac{-5}{163}$    &$\tfrac{1}{1149}$    &$\tfrac{1}{49139}$  &       4&  $\tfrac{394}{4905}$    &$\tfrac{-733}{3355}$    &$\tfrac{117}{1822}$    &$\tfrac{59}{1355}$    &$\tfrac{-23}{1708}$    &$\tfrac{3}{2333}$  \\
 [.5ex] 5&  $\tfrac{-14337}{1301}$    &$\tfrac{-243}{773}$    &$\tfrac{-243}{67030}$  &   $0$  &$\tfrac{243}{387208}$  &                             5&  $\tfrac{25973}{519}$    &$\tfrac{2033}{1357}$    &$\tfrac{16}{1129}$    &$\tfrac{1}{572}$    &$\tfrac{1}{143450}$  &       5&  $\tfrac{127}{3946}$    &$\tfrac{-185}{3047}$    &$\tfrac{67}{2065}$    &$\tfrac{11}{1522}$    &$\tfrac{-55}{8589}$    &$\tfrac{5}{5226}$  \\
 [.5ex] 6&  $\tfrac{-103275}{196}$    &$\tfrac{-26973}{1693}$    &$\tfrac{-81}{422}$    &$\tfrac{-729}{636424}$    &$\tfrac{-81}{13156}$  &          6&  $\tfrac{-184251}{2800}$    &$\tfrac{-2774}{1411}$    &$\tfrac{-13}{750}$    &$\tfrac{1}{1028}$    &$\tfrac{-1}{39776}$  &  6&  $\tfrac{114}{8405}$    &$\tfrac{-539}{44197}$    &$\tfrac{22}{2419}$    &$\tfrac{-2}{2797}$    &$\tfrac{-7}{3683}$    &$\tfrac{1}{2033}$  \\
 [.5ex] 7&  $\tfrac{-562059}{671}$    &$\tfrac{-18225}{758}$    &$\tfrac{-243}{877}$  &   $0$  &$\tfrac{2187}{56503}$  &                             7&  $\tfrac{9627}{1240}$    &$\tfrac{482}{2131}$    &$\tfrac{4}{1571}$    &$\tfrac{1}{6744}$    &$\tfrac{-1}{25900}$  &        7&  $\tfrac{-49}{4514}$    &$\tfrac{-5}{2344}$    &$\tfrac{21}{9209}$    &$\tfrac{-1}{1687}$    &$\tfrac{-1}{3088}$    &$\tfrac{1}{5705}$  \\
 [.5ex] 8&  $\tfrac{106191}{17}$    &$\tfrac{321489}{1760}$    &$\tfrac{2187}{1037}$    &$\tfrac{6561}{486394}$    &$\tfrac{-6561}{42389}$  &        8&  $\tfrac{33099}{265}$    &$\tfrac{5679}{1523}$    &$\tfrac{112}{2739}$    &$\tfrac{1}{9497}$    &$\tfrac{-1}{47430}$  &     8&  $\tfrac{47}{3968}$    &$\tfrac{-1}{67547}$    &$\tfrac{1}{2567}$    &$\tfrac{-1}{5711}$    &$\tfrac{-1}{47026}$    &$\tfrac{1}{23415}$  \\
 [.5ex] 9&  $\tfrac{-7735419}{692}$    &$\tfrac{-373977}{1156}$    &$\tfrac{-6561}{1762}$    &$\tfrac{-19683}{851923}$    &$\tfrac{2187}{5782}$  &   9&  $\tfrac{-102332}{451}$    &$\tfrac{-6050}{897}$    &$\tfrac{-29}{383}$    &$\tfrac{-1}{2360}$    &$\tfrac{-1}{265525}$  &  9&  $\tfrac{-8}{4399}$    &$\tfrac{-1}{11726}$    &$\tfrac{1}{19947}$    &$\tfrac{-1}{24581}$    &$\tfrac{1}{270039}$    &$\tfrac{1}{144354}$  \\
 [.5ex] 10&  $\tfrac{2007666}{227}$    &$\tfrac{236196}{949}$    &$\tfrac{19683}{6878}$  &   $0$  &$\tfrac{-59049}{115346}$  &                       10&  $\tfrac{92881}{600}$    &$\tfrac{5614}{1221}$    &$\tfrac{46}{879}$    &$\tfrac{1}{2940}$    &$\tfrac{1}{315365}$  &      10&  $\tfrac{-4}{3061}$    &$\tfrac{-1}{24496}$    &$\tfrac{1}{95846}$    &$\tfrac{-1}{167437}$    &$\tfrac{1}{576914}$    &$\tfrac{1}{1404425}$  \\
 [.5ex] 11&  $\tfrac{-1771470}{677}$    &$\tfrac{-177147}{2579}$    &$\tfrac{-177147}{224000}$  &   $0$  &$\tfrac{59049}{201628}$  &                 11&  $\tfrac{33374}{623}$    &$\tfrac{1042}{695}$    &$\tfrac{27}{1621}$    &$\tfrac{1}{14089}$  &   $0$&                      11&  $\tfrac{-11}{1218}$    &$\tfrac{-1}{3679}$    &$\tfrac{-1}{294819}$    &$\tfrac{-1}{1077142}$    &$\tfrac{1}{2623485}$  &   $0$\\
        12 &&&&&&                                                                                                                                    12&  $\tfrac{-137367}{890}$    &$\tfrac{-3853}{874}$    &$\tfrac{-32}{641}$    &$\tfrac{-1}{3474}$    &$\tfrac{-1}{920175}$  & 12&  $\tfrac{-9}{3994}$    &$\tfrac{-1}{15039}$    &$\tfrac{-1}{1354218}$    &$\tfrac{-1}{6141074}$  &   $0$&   $0$\\
        13 &&&&&&                                                                                                                                    13&  $\tfrac{59338}{817}$    &$\tfrac{233}{113}$    &$\tfrac{17}{726}$    &$\tfrac{1}{7200}$  &   $0$&                         13 &&&&&&\\
 \hline\end{tabular}}
$ $ \hskip -1.6cm \begin{tabular}{|@{}c @{}c @{}c @{}c @{}c |@{}c @{}c @{}c @{}c @{}c @{}c|@{}c @{}c @{}c @{}c @{}c @{}c @{}c @{}c | }
\hline \multicolumn{5}{|c|}{$d^-_{kl;j}$ for $r\in[0,r_1), z=3r$} & \multicolumn{6}{c|}{$d^-_{kl;j}$ for $r\in[r_1,r_4), z=r-\f12$} &  \multicolumn{8}{c|}{$d^-_{kl;j}$ for $r\in[r_4,r_7), z=r-2$}\\
\hline$l:k$&$0$&$1$&$2$&$3$&$l:k$&$0$&$1$&$2$&$3$&$4$&                                                                                                                                                $l:k$  & $0  $&$1$    &$2$    &$3$    &$4$    &$5$    &$6$    \\
\hline0&$1$&$0$&$0$&$0$&0&$\tfrac{484}{861}$&$\tfrac{-27}{892}$&$\tfrac{1}{2399}$&$\tfrac{-1}{382886}$&$0$&                                                                                           0&  $\tfrac{226}{5353}$    &$\tfrac{-263}{1306}$    &$\tfrac{104}{1763}$    &$\tfrac{-17}{2511}$    &$\tfrac{1}{2373}$    &$\tfrac{-1}{60373}$    &$\tfrac{1}{2224934}$  \\
[.5ex]1&$0$&$0$&$0$&$0$&1&$\tfrac{-3235}{3137}$&$\tfrac{-108}{1165}$&$\tfrac{1}{326}$&$\tfrac{-1}{33259}$&$0$&                                                                                        1&  $\tfrac{-183}{2903}$    &$\tfrac{-334}{2205}$    &$\tfrac{307}{2854}$    &$\tfrac{-63}{3259}$    &$\tfrac{4}{2449}$    &$\tfrac{-1}{12346}$    &$\tfrac{1}{377035}$  \\
[.5ex]2&$\tfrac{-6570}{2213}$&$\tfrac{-1}{6}$&$0$&$0$&2&$\tfrac{150}{223}$&$\tfrac{-34}{1633}$&$\tfrac{2}{251}$&$\tfrac{-1}{7051}$&$0$&                                                               2&  $\tfrac{266}{5123}$    &$\tfrac{-68}{1593}$    &$\tfrac{142}{1883}$    &$\tfrac{-66}{2857}$    &$\tfrac{4}{1443}$    &$\tfrac{-1}{5610}$    &$\tfrac{1}{139417}$  \\
[.5ex]3&$\tfrac{9}{315394}$&$0$&$0$&$0$&3&$\tfrac{565}{909}$&$\tfrac{39}{920}$&$\tfrac{11}{1392}$&$\tfrac{-1}{2881}$&$\tfrac{1}{263339}$&                                                             3&  $\tfrac{-25}{781}$    &$\tfrac{-31}{1981}$    &$\tfrac{230}{8399}$    &$\tfrac{-53}{3518}$    &$\tfrac{5}{1849}$    &$\tfrac{-1}{4293}$    &$\tfrac{1}{85007}$  \\
[.5ex]4&$\tfrac{9882}{1201}$&$\tfrac{648}{2183}$&$\tfrac{1}{120}$&$0$&4&$\tfrac{-1535}{729}$&$\tfrac{-8}{105}$&$\tfrac{1}{680}$&$\tfrac{-1}{2198}$&$\tfrac{1}{111414}$&                               4&  $\tfrac{97}{5791}$    &$\tfrac{-4}{1389}$    &$\tfrac{23}{3308}$    &$\tfrac{-22}{3683}$    &$\tfrac{6}{3577}$    &$\tfrac{-1}{4980}$    &$\tfrac{1}{76633}$  \\
[.5ex]5&$\tfrac{81}{2762}$&$\tfrac{243}{252596}$&$0$&$0$&5&$\tfrac{2313}{958}$&$\tfrac{45}{628}$&$\tfrac{1}{9833}$&$\tfrac{-1}{3472}$&$\tfrac{1}{75265}$&                                             5&  $\tfrac{-21}{2677}$    &$\tfrac{-1}{2455}$    &$\tfrac{9}{5621}$    &$\tfrac{-3}{1877}$    &$\tfrac{1}{1435}$    &$\tfrac{-1}{8318}$    &$\tfrac{1}{96698}$  \\
[.5ex]6&$\tfrac{-34992}{1513}$&$\tfrac{-81}{104}$&$\tfrac{-729}{67664}$&$\tfrac{-729}{3653291}$&6&$\tfrac{-541}{1122}$&$\tfrac{-5}{799}$&$\tfrac{1}{2333}$&$\tfrac{-1}{16948}$&$\tfrac{1}{84280}$&    6&  $\tfrac{82}{24519}$    &$\tfrac{-1}{9696}$    &$\tfrac{1}{3919}$    &$\tfrac{-1}{2955}$    &$\tfrac{1}{4932}$    &$\tfrac{-1}{19435}$    &$\tfrac{1}{165730}$  \\
[.5ex]7&$\tfrac{10935}{2777}$&$\tfrac{2187}{16928}$&$\tfrac{2187}{1369664}$&$0$&7&$\tfrac{-6207}{1946}$&$\tfrac{-22}{203}$&$\tfrac{-1}{767}$&$0$&$\tfrac{1}{188890}$&                                 7&  $\tfrac{-2}{1403}$    &$\tfrac{-1}{66500}$    &$\tfrac{1}{28487}$    &$\tfrac{-1}{16359}$    &$\tfrac{1}{22558}$    &$\tfrac{-1}{62158}$    &$\tfrac{1}{379198}$  \\
[.5ex]8&$\tfrac{78732}{2089}$&$\tfrac{6561}{5257}$&$\tfrac{6561}{408190}$&$0$&8&$\tfrac{2003}{349}$&$\tfrac{55}{298}$&$\tfrac{1}{486}$&$\tfrac{1}{308534}$&$\tfrac{1}{633769}$&                       8&  $\tfrac{1}{2216}$    &$\tfrac{1}{262906}$    &$\tfrac{1}{148304}$    &$\tfrac{-1}{116407}$    &$\tfrac{1}{125066}$    &$\tfrac{-1}{263845}$    &$\tfrac{1}{1144641}$  \\
[.5ex]9&$\tfrac{354294}{3235}$&$\tfrac{19683}{5482}$&$\tfrac{19683}{444148}$&$0$&9&$\tfrac{-2787}{803}$&$\tfrac{-394}{3677}$&$\tfrac{-1}{893}$&$\tfrac{-1}{182317}$&$\tfrac{1}{547500}$&              9&  $\tfrac{1}{3736}$    &$\tfrac{1}{107868}$    &$\tfrac{1}{8213080}$    &$\tfrac{-1}{873713}$    &$\tfrac{1}{815756}$    &$\tfrac{-1}{1409020}$    &$\tfrac{1}{4491233}$  \\
[.5ex]10&$\tfrac{-275562}{541}$&$\tfrac{-59049}{3541}$&$\tfrac{-59049}{287186}$&$0$&10&$\tfrac{-2403}{869}$&$\tfrac{-53}{561}$&$\tfrac{-1}{812}$&$\tfrac{-1}{138711}$&$\tfrac{-1}{126895}$&           10&  $\tfrac{1}{8315}$    &$\tfrac{1}{292450}$    &$\tfrac{1}{4486050}$    &$\tfrac{-1}{7201156}$    &$\tfrac{1}{6368801}$    &$\tfrac{-1}{8904667}$  &   $0$\\
[.5ex]11&$\tfrac{1062882}{1513}$&$\tfrac{177147}{7715}$&$\tfrac{19683}{69778}$&$0$&11&$\tfrac{47576}{8223}$&$\tfrac{157}{828}$&$\tfrac{3}{1303}$&$\tfrac{1}{73137}$&$\tfrac{1}{95558}$&               11&  $\tfrac{-1}{3146}$    &$\tfrac{-1}{105133}$    &$\tfrac{-1}{7531672}$  &   $0$&   $0$&   $0$&   $0$\\
[.5ex]12&$\tfrac{-531441}{1490}$&$\tfrac{-531441}{45622}$&$\tfrac{-531441}{3722458}$&$0$&12&$\tfrac{-1748}{635}$&$\tfrac{-77}{860}$&$\tfrac{-1}{930}$&$\tfrac{-1}{154886}$&$\tfrac{-1}{206912}$&      12&  $\tfrac{-1}{3177}$    &$\tfrac{-1}{108754}$    &$\tfrac{-1}{9654221}$  &   $0$&   $0$&   $0$&   $0$\\
 [.5ex] \hline\end{tabular}
$ $ \hskip -0.2cm \begin{tabular}{|@{}c @{}c @{}c @{}c |@{}c@{}c@{}c@{}c@{}c|@{}c @{}c @{}c @{}c @{}c @{}c @{}c |}\hline\multicolumn{4}{|@{}c}{$e^-_{kl;j}$ for $r\in[0,r_1), z=3r$}& \multicolumn{5}{|c}{$e^-_{kl;j}$for$r\in[r_1,r_4),z=r-\f12$} &\multicolumn{7}{|c|}{$e^-_{kl;j}$ for $r\in[r_4,r_7], z=r-2$}\\
\hline$l:k$&$0$&$1$&$2$&$l:k$&$0$&$1$&$2$&$3$&                                                                                                               $l:k$&$0$&$1$&$2$&$3$&$4$&$5$\\
\hline0&$1$&$0$&$0$&0&$\tfrac{-437}{1105}$&$\tfrac{-62}{955}$&$\tfrac{2}{1085}$&$\tfrac{-1}{58355}$&                                                            0&$\tfrac{-5044}{847}$&$\tfrac{1487}{645}$&$\tfrac{-71}{409}$&$\tfrac{-13}{1337}$&$\tfrac{1}{506}$&$\tfrac{-1}{8085}$\\
[.5ex]1&$0$&$0$&$0$&1&$\tfrac{-2087}{576}$&$\tfrac{-5}{51}$&$\tfrac{20}{1579}$&$\tfrac{-1}{5179}$&                                                              1&$\tfrac{-4393}{741}$&$\tfrac{6801}{1360}$&$\tfrac{-274}{345}$&$\tfrac{17}{1879}$&$\tfrac{7}{1182}$&$\tfrac{-1}{1819}$\\
[.5ex]2&$\tfrac{-7713}{866}$&$\tfrac{-1}{2}$&$0$&2&$\tfrac{4181}{4276}$&$\tfrac{484}{1351}$&$\tfrac{55}{1971}$&$\tfrac{-1}{1137}$&                              2&$\tfrac{-1341}{466}$&$\tfrac{5716}{1397}$&$\tfrac{-3021}{2443}$&$\tfrac{61}{743}$&$\tfrac{13}{2235}$&$\tfrac{-1}{954}$\\
[.5ex]3&$\tfrac{-9}{138082}$&$0$&$0$&3&$\tfrac{1877}{1212}$&$\tfrac{1333}{2971}$&$\tfrac{11}{873}$&$\tfrac{-2}{993}$&                                           3&$\tfrac{-1146}{1087}$&$\tfrac{8149}{4448}$&$\tfrac{-3563}{3695}$&$\tfrac{27}{202}$&$\tfrac{-1}{1919}$&$\tfrac{-1}{928}$\\
[.5ex]4&$\tfrac{6399}{284}$&$\tfrac{1863}{1255}$&$\tfrac{81}{1943}$&4&$\tfrac{-17991}{3070}$&$\tfrac{-269}{1211}$&$\tfrac{-23}{975}$&$\tfrac{-2}{883}$&         4&$\tfrac{-233}{1147}$&$\tfrac{1135}{1954}$&$\tfrac{-529}{1189}$&$\tfrac{49}{446}$&$\tfrac{-2}{315}$&$\tfrac{-1}{1748}$\\
[.5ex]5&$\tfrac{-81}{1906}$&$\tfrac{-243}{285199}$&$0$&5&$\tfrac{13329}{2014}$&$\tfrac{479}{1461}$&$\tfrac{-14}{897}$&$\tfrac{-1}{1296}$&                       5&$\tfrac{-123}{1744}$&$\tfrac{105}{737}$&$\tfrac{-187}{1339}$&$\tfrac{73}{1329}$&$\tfrac{-15}{2236}$&$\tfrac{-1}{35688}$\\
[.5ex]6&$\tfrac{-106434}{1721}$&$\tfrac{-3645}{1046}$&$\tfrac{-243}{3281}$&6&$\tfrac{-2131}{1574}$&$\tfrac{-12}{571}$&$\tfrac{-1}{456}$&$\tfrac{1}{1721}$&      6&$\tfrac{1}{1122}$&$\tfrac{15}{548}$&$\tfrac{-32}{937}$&$\tfrac{11}{595}$&$\tfrac{-11}{2839}$&$\tfrac{1}{5193}$\\
[.5ex]7&$\tfrac{-4374}{973}$&$\tfrac{-2187}{42902}$&$0$&7&$\tfrac{-9309}{1106}$&$\tfrac{-2251}{4727}$&$\tfrac{-10}{1213}$&$\tfrac{1}{2268}$&                    7&$\tfrac{-7}{1443}$&$\tfrac{6}{1339}$&$\tfrac{-1}{148}$&$\tfrac{13}{2807}$&$\tfrac{-1}{681}$&$\tfrac{1}{6286}$\\
[.5ex]8&$\tfrac{164025}{842}$&$\tfrac{6561}{697}$&$\tfrac{6561}{38831}$&8&$\tfrac{13085}{854}$&$\tfrac{191}{233}$&$\tfrac{28}{2249}$&$\tfrac{1}{8134}$&         8&$\tfrac{1}{599}$&$\tfrac{1}{1378}$&$\tfrac{-1}{924}$&$\tfrac{1}{1062}$&$\tfrac{-1}{2504}$&$\tfrac{1}{13765}$\\
[.5ex]9&$\tfrac{-59049}{1054}$&$\tfrac{6561}{2810}$&$\tfrac{6561}{163655}$&9&$\tfrac{-4249}{360}$&$\tfrac{-419}{750}$&$\tfrac{-13}{1541}$&$\tfrac{-1}{20568}$&  9&$\tfrac{1}{1378}$&$\tfrac{1}{9217}$&$\tfrac{-1}{6252}$&$\tfrac{1}{6329}$&$\tfrac{-1}{11794}$&$\tfrac{1}{44200}$\\
[.5ex]10&$\tfrac{-472392}{913}$&$\tfrac{-19683}{458}$&$\tfrac{-59049}{79175}$&10&$\tfrac{336}{517}$&$\tfrac{-136}{915}$&$\tfrac{-2}{639}$&$\tfrac{-1}{69993}$&  10&$\tfrac{-1}{1496}$&$\tfrac{-1}{48504}$&$\tfrac{-1}{51815}$&$\tfrac{1}{44446}$&$\tfrac{-1}{67140}$&$\tfrac{1}{190642}$\\
[.5ex]11&$\tfrac{236196}{347}$&$\tfrac{177147}{2930}$&$\tfrac{177147}{170006}$&11&$\tfrac{6443}{1141}$&$\tfrac{621}{1240}$&$\tfrac{9}{1033}$&$\tfrac{1}{15389}$&11&$\tfrac{-1}{547}$&$\tfrac{-1}{10720}$&$\tfrac{-1}{264802}$&$\tfrac{1}{356662}$&$\tfrac{-1}{473695}$&$0$\\
[.5ex]12&$\tfrac{-531441}{2569}$&$\tfrac{-531441}{19307}$&$0$&12&$\tfrac{-3106}{1039}$&$\tfrac{-429}{1775}$&$\tfrac{-5}{1209}$&$\tfrac{-1}{31521}$&             12&$\tfrac{-1}{873}$&$\tfrac{-1}{17714}$&$\tfrac{-1}{834241}$&$0$&$0$&$0$\\
[.5ex]\hline\end{tabular}

\end{document}